\setlist{
listparindent=\parindent,
parsep=0pt,
}
\newtheorem{theorem}{Theorem}[section]
\newtheorem{corollary}[theorem]{Corollary}
\newtheorem{lemma}[theorem]{Lemma}
\newtheorem{definition}[theorem]{Definition}
\newtheorem{remark}[theorem]{Remark}
\newtheorem{proposition}[theorem]{Proposition}
\newtheorem{fact}[theorem]{Fact}
\numberwithin{equation}{section}
\numberwithin{theorem}{section}
\newcommand{\set}[1]{\langle #1\rangle}
\newcommand{\conNu}{C^{\nu}}
\newcommand{\delL}{\delta^L}
\newcommand{\conN}{C^N}
\newcommand{\delN}{\delta^N}
\newcommand{\den}{\textnormal{den}}
\newcommand{\lk}{\textnormal{lk}}
\newcommand{\st}{\textnormal{st}}
\newcommand{\tr}{\top}
\newcommand{\rk}{\textnormal{rank}}
\newcommand{\eye}{\mathbb{I}}
\newcommand{\Real}{\mathbb R}
\newcommand{\dReal}{\mathbb{R}^d}
\newcommand{\pReal}{\mathbb{R}_{> 0}}
\newcommand{\pSeq}{\pReal^\infty}
\newcommand{\Idx}{\Gamma}
\renewcommand{\d}[1]{\mathrm{d #1}}
\newcommand{\Gp}{G^\prime}
\newcommand{\Vp}{V^\prime}
\newcommand{\Ep}{E^\prime}
\newcommand{\Grid}{\mathbb{Z}}
\newcommand{\dGrid}{\Grid^d}
\newcommand{\dGridCrDir}{\dGrid \times \tbinom{\langle d \rangle}{b_k}}
\newcommand{\dGridCrPk}{\dGrid \times \cP_k}
\newcommand{\dGridCrNk}{\dGrid \times \cN_k}
\newcommand{\IdxnCrDir}{\Idx_n \times \tbinom{\langle d \rangle}{b_k}}
\newcommand{\IdxnCrPk}{\Idx_n \times \cP_k}
\newcommand{\IdxnCrNk}{\Idx_n \times \cN_k}
\newcommand{\setd}{\langle d \rangle}
\newcommand{\bGrp}{\mathcal{B}}
\newcommand{\cGrp}{\mathcal{Z}}
\newcommand{\hGrp}{\mathcal{H}}
\newcommand{\covM}{W}
\newcommand{\cov}{\textnormal{Cov}}
\newcommand{\var}{\textnormal{Var}}
\newcommand{\ones}{\vec{\mathbf{1}}}
\newcommand{\onesM}{\mathbf{1}}
\newcommand{\indc}{\mathds{1}}
\renewcommand{\Pr}{\mathbb{P}}
\newcommand{\alphap}{\alpha^\prime}
\newcommand{\omegap}{\omega^\prime}
\newcommand{\gammap}{\gamma^\prime}
\newcommand{\rhoa}{\tilde{\rho}}
\newcommand{\rhoaN}{\tilde{\rho}^N}
\newcommand{\rhoaL}{\tilde{\rho}^L}
\newcommand{\rhop}{\rho^\prime}
\newcommand{\mup}{\mu^\prime}
\newcommand{\vecp}[1]{\bar{#1}^\prime}
\newcommand{\tOm}{(\vec{t}, \, \vec{\Omega})}
\newcommand{\tpOmp}{(\vecp{t}, \vecp{\Omega})}
\newcommand{\tG}{(\vec{t}, \, G)}
\newcommand{\tpGp}{(\vecp{t}, \, G^\prime)}
\newcommand{\kmin}{\kappa^{\min}}
\newcommand{\kmax}{\kappa^{\max}}
\newcommand{\Nxik}{\indc^{^{N}}_{\vec{0}, k}}
\newcommand{\Nxitk}{\indc^{^{N}}_{\vec{t}, k}}
\newcommand{\Lxitk}{\indc^{^{L}}_{\vec{t}, k}}
\newcommand{\Lxik}{\indc^{^{L}}_{\vec{0}, k}}
\newcommand{\xit}{\indc_{\vec{t}}}
\newcommand{\xitp}{\indc_{\vecp{t}}}
\newcommand{\xitOm}{\indc_{\vec{t}, \vec{\Omega}}}
\newcommand{\xitpOmp}{\indc_{\vecp{t}, \vecp{\Omega}}}
\newcommand{\fC}{\mathfrak{C}}
\newcommand{\cG}{\mathcal{G}}
\newcommand{\cP}{\mathcal{P}}
\newcommand{\tv}[2] {\|#1 - #2\|_{\textnormal{TV}}}
\newcommand{\vsupp}{\textnormal{vsupp}}
\newcommand{\supp}{\textnormal{supp}}
\newcommand{\cC}{\mathcal{C}}
\newcommand{\cCp}{\cC^\prime}
\newcommand{\hcK}{\hat{\cK}}
\newcommand{\bdr}{\partial}
\newcommand{\bF}{\mathbb{F}}
\newcommand{\cF}{\mathcal{F}}
\newcommand{\cX}{\mathscr{X}}
\newcommand{\cV}{\mathcal{V}}
\newcommand{\cU}{\mathcal{U}}
\newcommand{\sB}{\mathscr{B}}
\newcommand{\sL}{\mathscr{L}}
\newcommand{\cK}{\mathcal{K}}
\newcommand{\cI}{\mathcal{I}}
\newcommand{\cA}{\mathcal{A}}
\newcommand{\Poi}{\textnormal{Poi}}
\newcommand{\Gau}{N}
\newcommand{\ExP}{\mathbb{E}}
\newcommand{\CiD}{\Rightarrow}
\newcommand{\Cech}{\v{C}ech }
\newcommand{\Rips}{Vietoris-Rips }
\newcommand{\EP}{Euler-Poincar\'{e} }
\newcommand{\LKC}{Lipschitz-Killing }
\newcommand{\Bnk}{\beta_{n, k}}
\newcommand{\Dnk}{D_{n, k}}
\newcommand{\Spnk}{S^\prime_{n, k}}
\newcommand{\cSnk}{\check{S}_{n, k}}
\newcommand{\cSk}{\check{S}_k}
\newcommand{\Nnk}{N_{n, k}}
\newcommand{\cNnk}{\check{N}_{n, k}}
\newcommand{\cNk}{\check{N}_k}
\newcommand{\Lnk}{L_{n, k}}
\newcommand{\cLnk}{\check{L}_{n, k}}
\newcommand{\cLk}{\check{L}_k}
\renewcommand{\vec}[1]{\bar{#1}}
\newcommand{\vecXtG}{\vec{X}_{\vec{t}, G}}
\newcommand{\vecXtpGp}{\vec{X}_{\vecp{t}, G^\prime}}
\newcommand{\GtOm}{G_{\vec{t}, \vec{\Omega}}}
\newcommand{\vecXtOm}{\vec{X}_{\vec{t}, \vec{\Omega}}}
\newcommand{\vecXtpOmp}{\vec{X}_{\vecp{t}, \vecp{\Omega}}}
\newcommand{\Snk}{S_{n, k}}
\newcommand{\Snku}{S_{n, k}(u)}
\newcommand{\lbnk}{\lambda_{n, k}}
\newcommand{\lbnku}{\lambda_{n, k}(u)}
\newcommand{\cN}{\mathcal{N}}
\newcommand{\Var}{\textnormal{Var}}
\newcommand{\DGFX}{\{X_{\vec{t}}\}_{\vec{t} \in \Grid^d}}
\newcommand{\tNeigh}{\mathfrak{N}}
\newcommand{\mN}{\tNeigh^{^{-}}} %Neighbour left
\newcommand{\pN}{\tNeigh^{^{+}}} %Neighbour right
\newcommand{\mv}{\vec{v}_{_-}}
\newcommand{\pv}{\vec{v}_{_+}}
\newcommand{\un}{u_n}
\newcommand{\Term}{T}
\newcommand{\remove}[1]{}
\begin{document}

\begin{frontmatter}
\title{Betti Numbers of Gaussian Excursions in the Sparse Regime}
\runtitle{Topology of Gaussian Excursions}

\begin{aug}
\author{\fnms{Gugan} \snm{Thoppe}\thanksref{t1, m1} \ead[label=e1]{gugan.thoppe@gmail.com}}
\and
\author{\fnms{Sunder Ram} \snm{Krishnan}\thanksref{t1, m2}
\ead[label=e2]{eeksunderram@gmail.com}}

\thankstext{t1}{Both GT and SRK were previously supported by URSAT, European Research Council's Advanced Grant 320422. GT is now supported by grants NSF IIS-1546331, NSF DMS-1418261, and NSF DMS-1613261, while SRK is supported by the Viterbi Postdoctoral Fellowship. A portion of this work was done when GT was a postdoc at Technion.}
\runauthor{Thoppe and Krishnan}

\affiliation{Duke University\thanksmark{m1} and Technion--Israel Institute of Technology\thanksmark{m2}}

\address{Gugan Thoppe\\
Dept. of Statistical Science \\
Duke University \\
Durham, NC 27708 \\
USA \\
\printead{e1}}

\address{Sunder Ram Krishnan\\
Faculty of Electrical Engineering\\
Technion - Israel Institute of Technology \\
Haifa 32000 \\
Israel\\
\printead{e2}
}
\end{aug}

\begin{abstract}
Random field excursions is an increasingly vital topic within data analysis in medicine, cosmology, materials science, etc. This work is the first detailed study of their Betti numbers in the so-called `sparse' regime. Specifically, we consider a piecewise constant Gaussian field whose covariance function is positive and satisfies some local, boundedness, and decay rate conditions. We model its excursion set via a \Cech complex. For Betti numbers of this complex, we then prove various limit theorems as the window size and the excursion level together grow to infinity. Our results include asymptotic mean and variance estimates, a vanishing to non-vanishing phase transition with a precise estimate of the transition threshold, and a weak law in the non-vanishing regime. We further obtain a Poisson approximation and a central limit theorem close to the transition threshold. Our proofs combine extreme value theory and combinatorial topology tools.
\end{abstract}

\begin{keyword}[class=MSC]
\kwd[Primary ]{60G15}
\kwd{60F05}
\kwd{05E45}
\kwd[; secondary ]{60G60}
\kwd{60G70}
\kwd{60G10}
\kwd{55U10}
\end{keyword}

\begin{keyword}
\kwd{Gaussian}
\kwd{field}
\kwd{topology}
\kwd{Betti numbers}
\kwd{excursion}
\kwd{Cech}
\kwd{Vietoris-Rips}
\kwd{complex}
\end{keyword}

\end{frontmatter}

\section{Introduction}
\label{sec:intro}

%\red{Briefly describe what we have done and key ideas that we have used.}

% Give a strong motivation

% Quick Related work: Focus on others has been

Key insights into the behaviour of a random field can be inferred from its excursion set, the sub-domain where the field value exceeds some level. Hence, functionals of excursion sets have generated considerable interest both in theory \cite{adler2009random, azais2009level} and in practice \cite{adler2017applications, marinucci2011random, torquato2013random}. In this paper, we provide limit theorems, together with rates of convergence, for  Betti numbers $\{\beta_k\}$ of Gaussian excursions in the so-called `sparse' regime. This work is the first study of the sparse regime behaviour of Betti number of random field excursions of any kind. Intuitively, for a topological space, $\beta_0$ is the number of components while $\beta_k,$ for $k \geq 1,$ is the number of $(k + 1)-$dimensional `holes'. A $(k + 1)-$dimensional hole is loosely the hollow region that opens up when the interior of a solid $(k + 1)-$dimensional object is removed. Specifically, in three dimensions, $\beta_1$ is the number of `tunnels' (regions through which one can poke one's hand), $\beta_2$ is the number of `voids' (regions that look like the interior of a tennis ball, doughnut, etc.), while other higher order Betti numbers are all zero. Thus, our results statistically quantify the topology of high-level Gaussian excursions.

The majority of recent work on Gaussian excursions concerns what is known as the `thermodynamic' regime. Even within these, the major focus of most papers (\cite{Estrade, kratz2016central, muller2017central}, etc.) has been on a different set of functionals called the \LKC curvatures (LKCs), which include as special cases the \EP characteristic (EPC) and the $d-$dimensional volume. LKCs do provide useful topological information, but not at the level of individual Betti numbers. In relation to Betti numbers, the only paper of which we are aware is \cite{Reddy2018}. There, a weak law and a (multivariate) CLT have been shown for a generic class of (quasi-) local statistics of spin models on Cayley graphs. By using a suitable random cubical complex to model Gaussian excursions on $\dGrid,$ one can apply these results to Betti numbers in the thermodynamic regime. As can be seen in Section 2.2.3 there, it is required that the underlying spin model be subcritical and the covariance function decay exponentially. We point out that, unlike this fast decay assumption, the results in \cite{Estrade, kratz2016central, muller2017central} and related papers require only that the covariance function be integrable (see also Remark~\ref{rem:tightphase}).

In \cite{adler2014existence, adler2017climbing}, existence probabilities of connecting paths and holes in high level excursions of Gaussian fields on the continuum have been studied. However, as noted in those papers, it is difficult to handle such questions in the continuum and those works obtained estimates for these probabilities only at the level of large deviations. Separately, in \cite{nazarov2016asymptotic, sarnak2016topologies}, existence results on topology of level sets of stationary Gaussian fields have been provided. It is not clear whether any of those results can easily be translated to excursion sets.

In contrast, and as highlighted previously, we study Gaussian excursions in the sparse regime (see Remark~\ref{rem:SparseRegime} for a formal description of sparse/thermodynamic regime). Our results are explicit and we obtain them under significantly weaker assumptions; in fact, in some cases, these assumptions are tight. We achieve these by focussing on Gaussian fields on discrete parameter spaces and using a novel approach of modelling their excursion sets via simplicial complexes (generalization of graphs). The latter enables use of suitable combinatorial approximators to study the Betti numbers of these excursions, a trick motivated by studies in the random simplicial complex literature \cite{linial2006homological, meshulam2009homological, Kahle2009, kahle2011random, kahle2013}. We then obtain precise distributional limit theorems for these approximators by employing the Stein-Chen approach \cite{holst1990poisson}, together with Slepian's lemma and Savage's multivariate Gaussian tail estimates \cite{savage1962mills}. We finally transfer these results to Betti numbers, building upon the topological ideas from \cite{kahle2011random, kahle2013}. A detailed description of all these notions is given in Section~\ref{sec:BgPrelim}.

\begin{figure}
\centering
\begin{minipage}{0.45\textwidth}
\centering
\includegraphics[width=\textwidth]{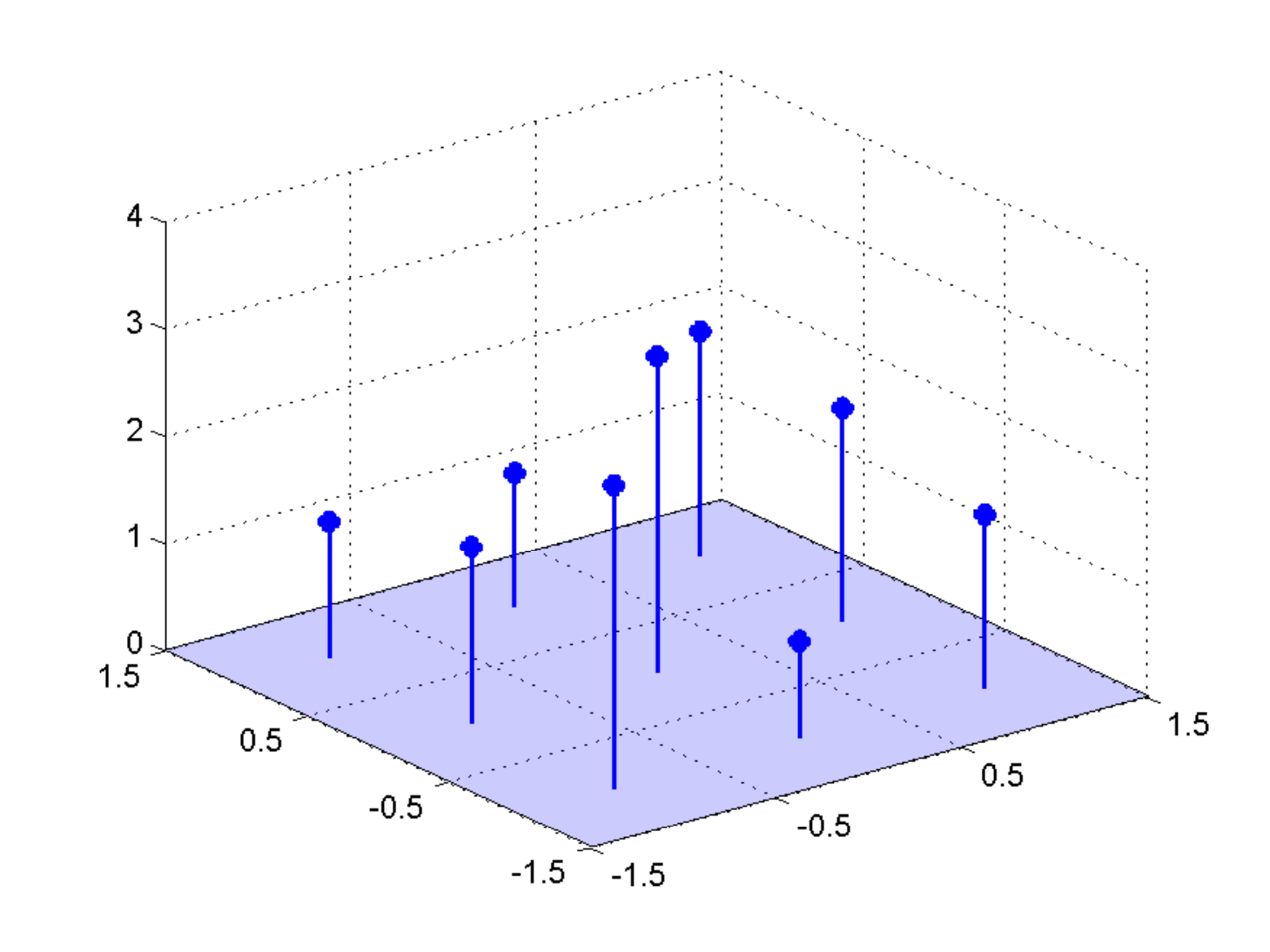} % first figure itself
\end{minipage}\hfill
\begin{minipage}{0.45\textwidth}
\centering
\includegraphics[width=1.05\textwidth]{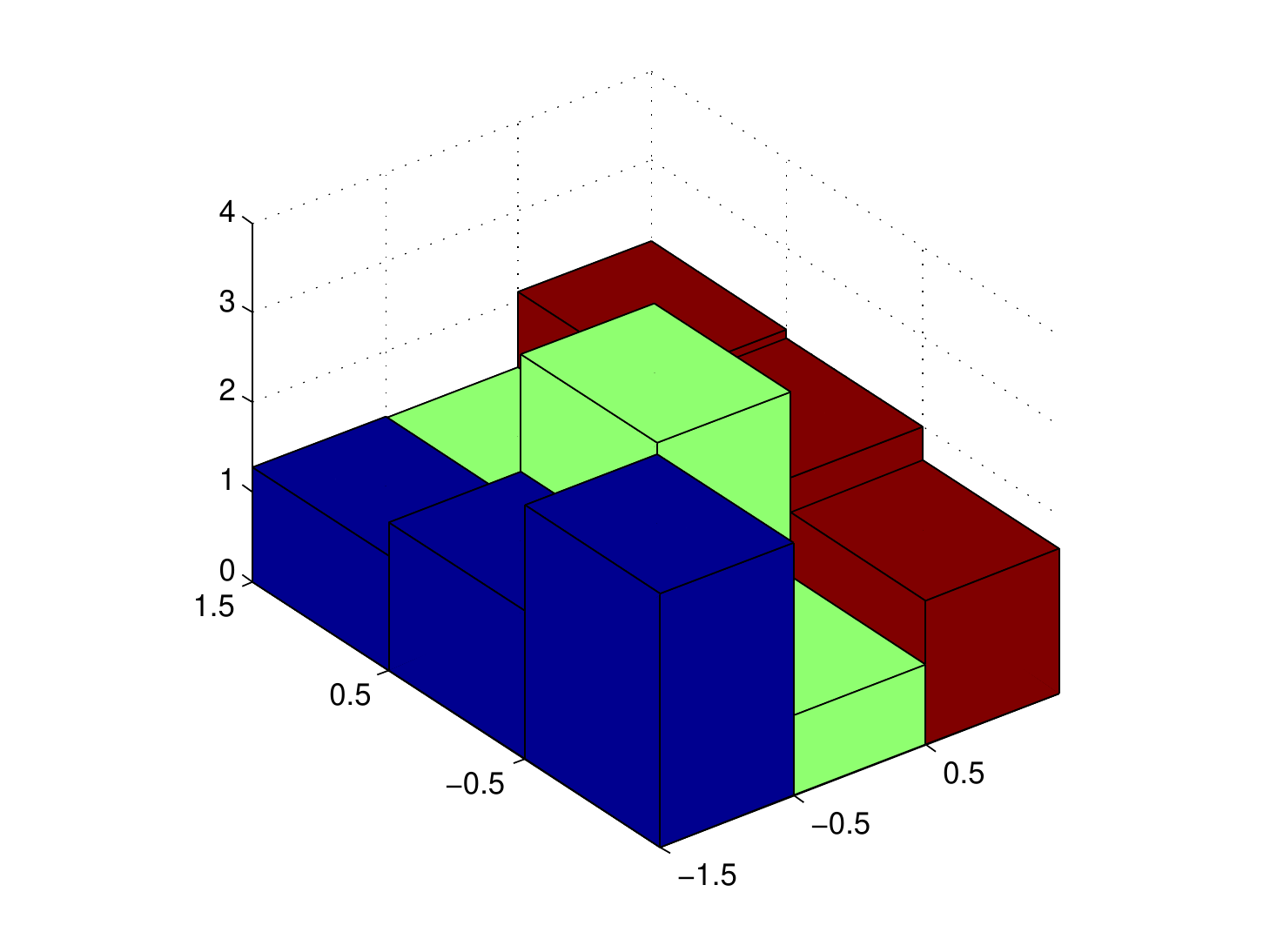} % second figure itself
\end{minipage}
\caption{\label{fig:GaussianField} Possible realization of our Gaussian field $X$ on $\Grid^2$ and its piecewise constant extension to $\Real^2$ (colors are used only to provide a clearer illustration).}
\end{figure}

\begin{figure}
\centering
\begin{minipage}{0.45\textwidth}
\centering
\includegraphics[width=0.9\textwidth]{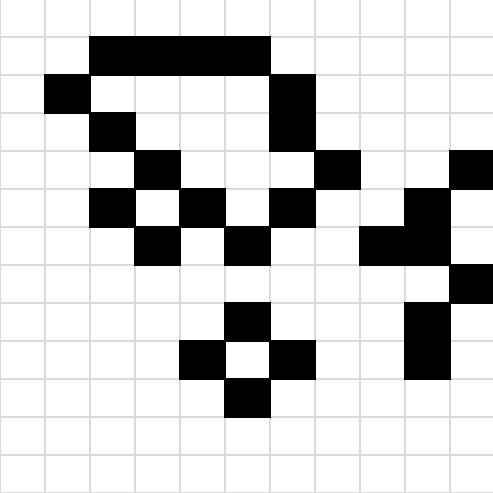} % first figure itself
\end{minipage}\hfill
\begin{minipage}{0.45\textwidth}
\centering
\includegraphics[width=0.9\textwidth]{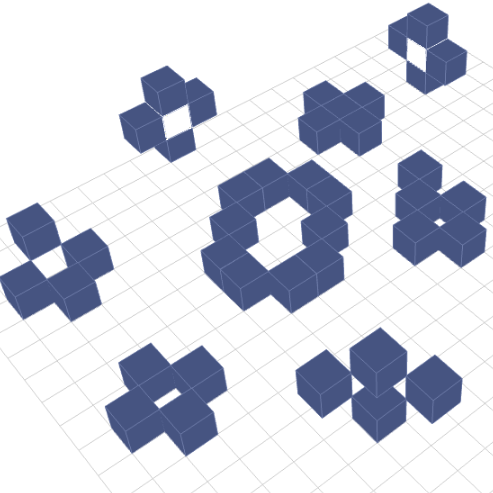} % second figure itself
\end{minipage}
\caption{\label{fig:GaussianExcursions} Possible realizations of the excursion set $\cA(n; u)$ in two and three dimensions with non-trivial first Betti numbers(created using voxelbuilder).}
\end{figure}

Formally, we consider a piecewise constant random field $X : \dReal \to \Real$ with the below properties.
\begin{enumerate}
\item $\DGFX$ is a zero mean, stationary, $\|\cdot\|_1-$isotropic, discrete parameter Gaussian field with the covariance sequence  $\{\rho_q\}_{q \geq 0}.$ That is,
\begin{enumerate}
\item  $\ExP[X_{\vec{t}}] = 0,$ for all $\vec{t} \in \dGrid,$ and

\item $\cov[X_{\vec{t}}, X_{\vecp{t}}] = \rho_{_{\|\vec{t} - \vecp{t}\|_1}},$ for all $\vec{t}, \vecp{t} \in \dGrid.$
\end{enumerate}

\item For $\vec{t} \in \dReal \backslash \dGrid,$ $X_{\vec{t}}$ equals the value of the above discrete field at the lattice point closest to $\vec{t}$ in the $\|\cdot\|_\infty$ norm  (if there is more than one closest point, we pick the one that is the largest in lexicographical ordering).
\end{enumerate}
Its excursion set of interest to us is
\begin{equation}
\label{defn:ExcursionSet}
\cA(n; u) := \{\vec{t} \in [-n-1/2, n + 1/2)^d: X_{\vec{t}}  \geq u\}
\end{equation}
where $n \geq 0$ is the window size parameter and $u \in \pReal$ is the excursion level; here, $\pReal$ is the set of positive real numbers. See Figures~\ref{fig:GaussianField} and \ref{fig:GaussianExcursions} for an illustration.

To study the topology of $\cA(n; u),$ as mentioned above, we model it using a simplicial complex defined as follows. Let  $\Idx_n := \{\vec{t} \in \Grid^d: \|\vec{t}\|_\infty \leq n\}$ be the discrete window of side length $(2n + 1)$ and let $\sB_\infty(\vec{t}, r):= \{\vecp{t} \in \Real^d: \|\vec{t} - \vecp{t}\|_\infty \leq r\}.$

\begin{definition}
\label{Defn:Cech}
The random \Cech complex $\cK(n; u)$ on the excursion set $\cA(n; u)$ is the simplicial complex with vertex set $\cF_0:= \{\vec{t} \in \Idx_n:  X_{\vec{t}} \geq u\},$ and $\sigma \subset \cF_0$ is a face of $\cK(n; u)$ if
\[
\bigcap_{\vec{t} \in \sigma} \sB_\infty \left( \vec{t}, \tfrac{1}{2} \right) \neq \emptyset.
\]
\end{definition}

Observe that the vertex set of $\cK(n; u)$ is random, while its faces are decided using a deterministic rule based on the $\|\cdot\|_\infty-$distance between the vertex pairs. This places it in the family of random geometric complexes. It is called a \Cech complex since each of its face is chosen based on the mutual intersection of suitable balls centered at the vertices in that face.

An alternative way to model $\cA(n; u)$ could have been the complex $\hat{\cK}(n; u)$ defined as follows.
\begin{definition}
\label{Defn:Rips}
The random \Rips complex $\hat{\cK}(n; u)$ on the excursion set $\cA(n; u)$ is the simplicial complex with vertex set $\cF_0 := \{\vec{t} : \vec{t} \in \Idx_n, X_{\vec{t}} \geq u\},$ and $\sigma \subset \cF_0$ is a face of $\hat{\cK}(n; u)$ if
\[
\sB_\infty \left( \vec{t}_i, \tfrac{1}{2} \right) \cap \sB_\infty \left( \vec{t}_j, \tfrac{1}{2} \right)\neq \emptyset
\]
for every pair $\vec{t}_i, \vec{t}_j \in \sigma.$
\end{definition}

This is also a geometric complex. It is called the \Rips complex because its faces are decided based on pairwise intersections. Usually, a \Cech and \Rips complex are different as the rules for defining them differ. However, in our setup, using the fact that they are defined on a lattice, it turns out that $\cK(n; u) = \hat{\cK}(n; u);$ see Proposition~\ref{prop:EqCechRips} for details. Thus, according to need, we shall view $\cK(n; u)$ sometimes as a \Cech and at other times as a \Rips complex. Taking one of the views, let $\beta_{n, k}(u)$ be the $k-$th Betti number of $\cK(n; u);$ Figures~\ref{fig:GaussianExcursions} and ~\ref{fig:Betti_Illus} show a few representative examples. With details given in Section~\ref{sec:BgPrelim}, we point out that the coefficients to define these Betti numbers can be from either a field or $\mathbb{Z}.$
\begin{figure}
\centering
\begin{minipage}{0.45\textwidth}
\centering
\includegraphics[width=0.75\textwidth]{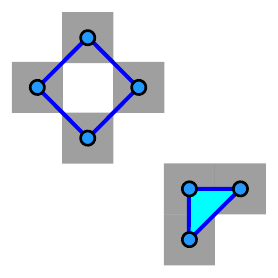} % first figure itself
\end{minipage}\hfill\vline\hfill
\begin{minipage}{0.45\textwidth}
\centering
\includegraphics[width=0.75\textwidth]{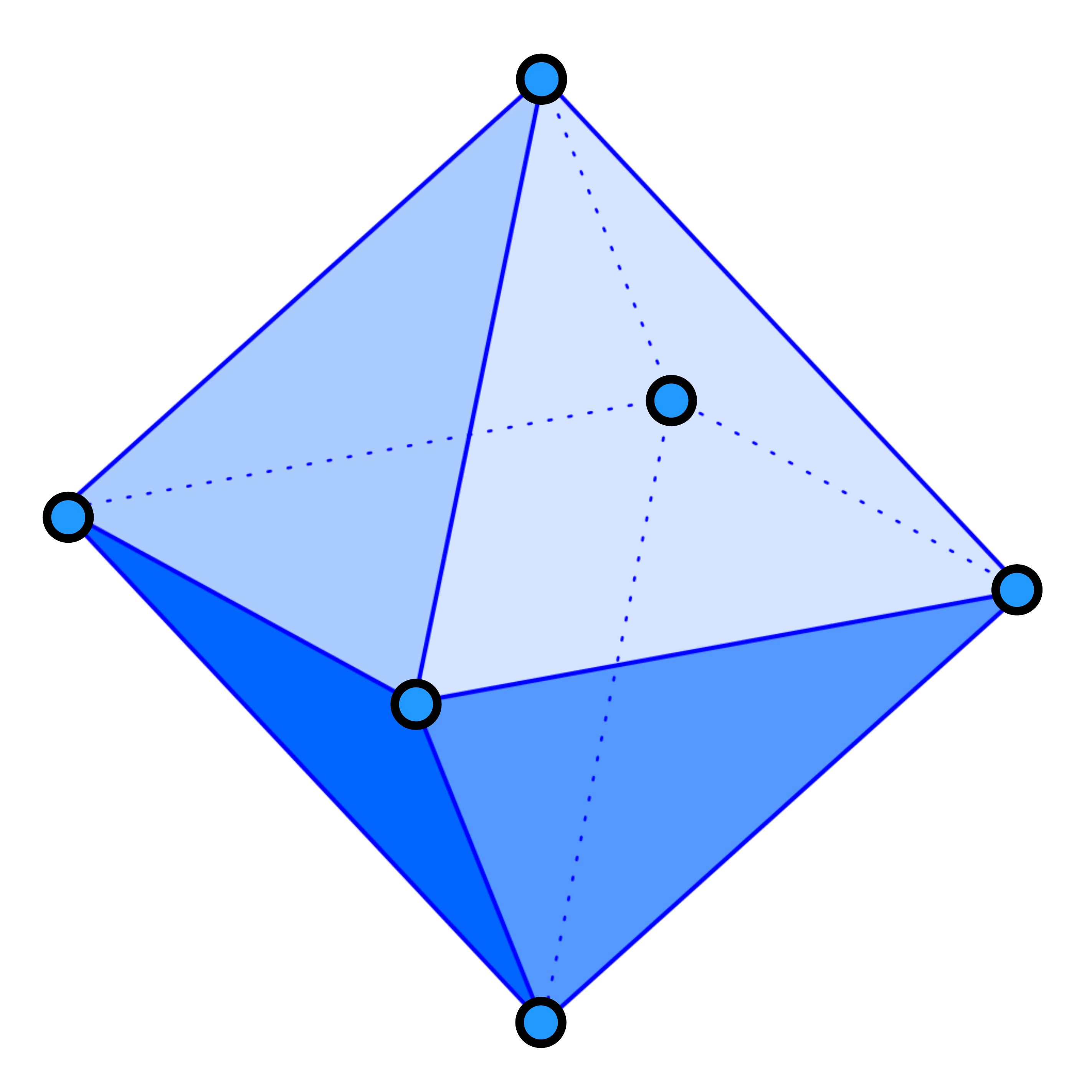}  % second figure itself
\end{minipage}
\caption{\label{fig:Betti_Illus} Possible subcomplexes of $\cK(n; u).$ The left image shows how a part of our excursion set may appear (grey color) when $d = 2$   and the simplicial complex associated with it; because of the top left formation, the first Betti number of this subcomplex is $1.$ The right image shows a subcomplex, the second Betti number of which equals $1;$ this may appear only when $d \geq 3.$ (Source: Wikipedia).}
\end{figure}

\begin{table}[t]
\begin{tabular}{c | l | c | c}
\hline

Id & Assumption & Applicable when & Type \\

\hline

\setcounter{cst@counter@Asm}{-1}

& & & \\[-2ex]

$\Cl[Asm]{a:r0Asm}$ & $\rho_0 = 1$ & & Local \\[2ex]
$\Cl[Asm]{a:r1Asm}$ & $0 \leq \rho_1 < \rho_0$ & & Local \\[2ex]
$\Cl[Asm]{a:rCAsm}$ & $
\begin{cases}
0 \leq \sup\limits_{q \geq 2}\rho_q \leq \rho_1\\[2ex]
0 \leq \sup\limits_{q \geq 4}\rho_q \leq \rho_3 < \rho_2 < \rho_1
\end{cases}
$
&
$
\parbox{7em}{
\vspace{-1.25ex}
$k = 0$\\[2.75ex]
$1 \leq  k < d$
}
$ & Boundedness \\[6ex]
$\Cl[Asm]{a:ComC}(k)$ & $
\begin{cases}
1 = 1\\[2ex]
1 + (2k + 1) \rho_2 > 2(k + 1) \rho_1
\end{cases}
$
&
$
\parbox{7em}{
\vspace{-1.25ex}
$k = 0$\\[2.75ex]
$1 \leq  k < d$
}
$ & Local \\[6ex]
$\Cl[Asm]{a:PoiG}$ & $\lim_{q \to \infty} \rho_q \log q = 0$ & & Decay Rate \\[2ex]
$\Cl[Asm]{a:CLTG}$ & $\sum_{q = 0}^{\infty} q^{d - 1} \rho_q < \infty$ & & Decay Rate \\[1.5ex]
\hline
\end{tabular}\\[2ex]
\caption{\label{tab:Assumptions} Assumptions for different results.}
\end{table}

Our key results can now be summarized as follows. The different assumptions are listed in Table~\ref{tab:Assumptions}; these are in addition to the conditions on $\{\rho_q\}$ imposed by the fact that the covariance function of $X$ must be positive definite. For $k$ such that $1 \leq k < d,$ define the constants $\tau_k,$ $a_k,$ and $b_k$ using
\begin{equation}
\label{Defn:Tauk}
\tau_{k} := \frac{ \tbinom{d}{k + 1}}{(2\pi)^{k + 1}}  \frac{[1 + (2k + 1) \rho_2]^{2k + \tfrac{1}{2}}}{[1 - \rho_2]^{k + \tfrac{1}{2}}}, \; \; a_k := \frac{k + 1}{1 + (2k + 1) \rho_2}, \text{ and } b_k := k + 1.
\end{equation}
Separately, let
\begin{equation}
\label{Defn:Tau0}
\tau_0 := \frac{1}{\sqrt{2\pi}}, \; \; a_0 := \tfrac{1}{2}, \text{ and } b_0 := \tfrac{1}{2}.
\end{equation}
For $n \geq 0,$ $k$ such that $0 \leq k < d,$ and $u \in \pReal,$ define $\lbnk(u)$ using
\begin{equation}
\label{Defn:Lambdanku}
\lambda_{n, k}(u) := \tau_{k} \;   (2n + 1)^d u^{-2b_k} \exp[-a_k u^2].
\end{equation}
Finally, let $\pSeq$ be the set of sequences in $\pReal.$

Our first result gives a precise asymptotic formula for $\ExP[\beta_{n, k}(u)].$ Here, and elsewhere, $O(\cdot)$ denotes the Big-O notation and it characterizes a function's behaviour as $n \to \infty.$
\begin{theorem}[Asymptotic Mean of $\Bnk(\un)$]
\label{thm:ExPMainResult}
Let $k$ be such that $0 \leq k < d$ and $\{\un\} \in \pSeq$ be such that $\lim_{n \to \infty} \un = \infty.$ Suppose $\Cr{a:r0Asm}, \Cr{a:r1Asm}, \Cr{a:rCAsm},$ and $\Cr{a:ComC}(k)$ hold. Then,
\[
\left|\frac{\ExP[\beta_{n, k}(\un)]}{\lambda_{n, k}(\un)} - 1\right| = O\left(\frac{1}{\un^2} + \frac{1}{n}\right).
\]
The constants involved in the $O$ notation depend on $k.$
\end{theorem}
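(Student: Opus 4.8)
The plan is to pass from the random Betti number $\Bnk(\un)$ of the \Cech complex $\cK(n;\un)$ to a purely combinatorial count of ``minimal hole configurations'' whose mean can be computed by multivariate Gaussian tail asymptotics, and then to bound all the discrepancies incurred.

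\textbf{Step 1 (topological reduction).} In the sparse regime $\cK(n;\un)$ breaks into small connected components, so $\Bnk(\un)=\sum_{\cC}\beta_k(\cC)$, the sum being over the components $\cC$. The case $k=0$, where $\Bnk(\un)$ is just the number of components, reduces directly to counting single-vertex components via Mills' ratio, so assume $k\ge1$. Since $\cK(n;\un)$ equals the \Rips complex $\hcK(n;\un)$ (Proposition~\ref{prop:EqCechRips}), it is a flag complex, so a component carries $\beta_k\ge1$ only if, for some $(k+1)$-element set of coordinate directions $i_1,\dots,i_{k+1}$, it contains the $2(k+1)$ lattice points $\vec{c}\pm\vec{e}_{i_j}$ arranged as the $1$-skeleton of the $(k+1)$-dimensional cross-polytope, i.e.\ the vertex-minimal flag triangulation of $S^k$; call this a \emph{minimal $k$-sphere}. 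Building on the topological arguments of \cite{kahle2011random,kahle2013}, I would establish the deterministic sandwich $M_{n,k}(\un)\le\Bnk(\un)\le M_{n,k}(\un)+R_{n,k}(\un)$, where $M_{n,k}(\un)$ counts the components of $\cK(n;\un)$ that are exactly minimal $k$-spheres and $R_{n,k}(\un)$ is dominated by counts of strictly larger configurations: components on more than $2(k+1)$ vertices with nontrivial $k$-homology, components with $\beta_k\ge2$, and minimal $k$-spheres that fail to be isolated.

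\textbf{Step 2 (the leading term).} By stationarity and $\|\cdot\|_1$-isotropy, $\ExP[M_{n,k}(\un)]$ equals the number of admissible positions --- a $(k+1)$-subset of directions, contributing the factor $\binom{d}{k+1}$, together with a lattice centre ranging over a slightly shrunk copy of $\Idx_n$ --- times the probability that one fixed minimal $k$-sphere is present and isolated. The $2(k+1)$ facet-centres $\vec{c}\pm\vec{e}_{i_j}$ are pairwise at $\|\cdot\|_1$-distance $2$, so their joint law is equicorrelated with correlation $\rho_2$ and is nondegenerate by $\Cr{a:r0Asm}$ and $\Cr{a:r1Asm}$; Savage's multivariate Mills-ratio estimate \cite{savage1962mills} applied to this $2(k+1)$-dimensional vector gives $\Pr[\text{all of them}\ge\un]\sim(\mathrm{const})\cdot\un^{-2(k+1)}e^{-a_k\un^2}\bigl(1+O(\un^{-2})\bigr)$, the exponent being $\tfrac12\,\ones^{\top}\Sigma^{-1}\ones\,\un^2$ for the $2(k+1)\times 2(k+1)$ equicorrelated matrix $\Sigma$, which evaluates to $a_k\un^2$ with $a_k$ as in \eqref{Defn:Tauk}--\eqref{Defn:Tau0}. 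Multiplying this constant by the combinatorial multiplicity recovers exactly $\tau_k$. The remaining ``isolation'' event --- the centre $\vec{c}$ and the other nearby lattice points all lying below $\un$ --- has conditional probability $1-O(\un^{-2})$, and this is precisely where $\Cr{a:ComC}(k)$ enters: it is equivalent to the assertion that the conditional mean of each ``blocking'' vertex, given that the facet-centres equal $\un$, stays strictly below $\un$, while $\Cr{a:rCAsm}$ propagates this to the remaining neighbours. Summing over positions and discarding the $O(n^{d-1})$ centres within $O(1)$ of $\partial\Idx_n$ then yields $\ExP[M_{n,k}(\un)]=\lambda_{n,k}(\un)\bigl(1+O(\un^{-2}+n^{-1})\bigr)$.

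\textbf{Step 3 (errors are lower order), and the main obstacle.} It remains to bound $\ExP[R_{n,k}(\un)]$ by $O\bigl(\lambda_{n,k}(\un)(\un^{-2}+n^{-1})\bigr)$. Each of the finitely many combinatorial types of ``larger'' configuration has expected count equal to polynomially many positions times a Gaussian upper-orthant probability, and the crucial observation is that adjoining any further vertex to a minimal $k$-sphere, or replacing it by a larger $k$-cycle, \emph{strictly increases} the governing exponent $\tfrac12\,\ones^{\top}\Sigma^{-1}\ones$, since $\rho_0=1$ ($\Cr{a:r0Asm}$) and $\rho_q<1$ force the relevant added conditional variance to be positive. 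Using $\Cr{a:r1Asm}$, the boundedness hierarchy $\Cr{a:rCAsm}$, and $\Cr{a:ComC}(k)$ to control these quadratic forms, each such probability is shown to be smaller than $\un^{-2(k+1)}e^{-a_k\un^2}$ by a factor $e^{-c\un^2}$ for some $c>0$, hence negligible; the non-isolation corrections are handled exactly as in Step~2 and contribute $O\bigl(\lambda_{n,k}(\un)\un^{-2}\bigr)$, while boundary effects contribute $O\bigl(\lambda_{n,k}(\un)/n\bigr)$. Combining Steps~1--3 gives the theorem, the $k$-dependence of the $O$-constant entering through $\binom{d}{k+1}$ and the size of the local neighbourhoods examined. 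I expect Step~3 to be the main obstacle: one must classify \emph{all} non-minimal configurations that can carry extra $k$-homology and obtain tail bounds for the associated high-dimensional Gaussian upper-orthant probabilities that are uniform in the configuration and summable over the $O(n^d)$ positions --- this is where the structural assumptions on $\{\rho_q\}$ are needed in full strength --- while also distilling from \cite{kahle2011random,kahle2013} a sufficiently explicit deterministic form of the Step~1 sandwich.
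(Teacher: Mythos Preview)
Your overall strategy matches the paper's: sandwich $\Bnk$ between a minimal-configuration count and that count plus correction terms, compute the leading term via Savage's multivariate Mills ratio on an equicorrelated Gaussian, and show the corrections carry a strictly larger exponent. There is, however, a concrete gap in your Step~1 classification.

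The assertion that a component with $\beta_k\ge1$ must contain $2(k+1)$ points of the specific form $\vec{c}\pm\vec{e}_{i_j}$ is false for $1\le k<d-1$. Lemma~\ref{lem:CardNTC} only gives that a minimal such component has $1$-skeleton \emph{isomorphic} to $O_k$; it need not be $\|\cdot\|_1$-\emph{isometric} to $O_k$, and only the isometric ones are of the form $\{\vec{c}\pm\vec{e}_{i_j}\}$ (Lemma~\ref{lem:CentrePtSP}; see also Figure~\ref{fig:BettiConfig}). The non-isometric minimal spheres --- the paper's $\cNnk$ term --- lie in neither your $M_{n,k}$ nor your listed $R_{n,k}$, and since they have exactly $2(k+1)$ vertices, your Step~3 mechanism (``adjoining any further vertex \dots strictly increases the exponent'') does not touch them. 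The paper disposes of them via a separate structural fact (Lemma~\ref{lem:PwL1Dist}): any non-isometric $O_k$ splits into two nonempty blocks at pairwise $\|\cdot\|_1$-distance $\ge3$, so some covariances drop from $\rho_2$ to at most $\rho_3$, and the hypothesis $\rho_3<\rho_2$ in $\Cr{a:rCAsm}$ then forces a strictly larger exponent $\varrho_k>a_k$ (Lemma~\ref{lem:limRates}.\ref{itm:ExPNnku}). You never invoke $\rho_3<\rho_2$, which is the tell that this case was missed; for $k=d-1$ it does not arise (Lemma~\ref{lem:AllwaysIsoCase}), but for $1\le k<d-1$ it is essential.

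A secondary difference worth noting: the paper does not use your isolated count $M_{n,k}$ as the main approximator but rather the non-isolated $\Snk(u)=\sum_{\tOm}\indc[\vecXtOm\ge u\ones_{2b_k}]$, which carries no condition on $X_{\vec t}$ or on neighbours. The leading term is then a single equicorrelated-Gaussian orthant probability with no conditioning, and the isolation discrepancy is absorbed wholesale into $\Lnk$ and $\Dnk$ via the deterministic inequality $\Snk\le\cSnk+C\,\cLnk+\Dnk$ (Lemma~\ref{lem:BettiApprox}). Your route of estimating the isolation event directly is viable --- your reading of $\Cr{a:ComC}(k)$ as ``the conditional mean of the centre stays strictly below $u$'' is exactly right, and in fact gives an exponentially small rather than $O(u^{-2})$ correction --- but the paper's detour through $\Snk$ is cleaner and makes the $O(u^{-2})$ in the final bound come solely from the Savage remainder, with the $O(1/n)$ coming solely from the boundary term $\Dnk$.
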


From \eqref{Defn:Lambdanku}, note that the limit of the sequence $\{\lbnk(\un)\}$ can be either zero, a constant, or infinity, depending on the growth rate of  $\{\un\}.$ An alternative way to view this is to let
\begin{equation}
\label{eqn:unForm}
\un = \sqrt{\frac{1}{a_k} \left[\log[\tau_k (2n + 1)^d] - b_k \log\left[\frac{\log[\tau_k (2n + 1)^d]}{a_k}\right] + \nu_n \right]},
\end{equation}
where $\{\nu_n\}$ is any real sequence that ensures $\{\un\} \in \pSeq$ and $\lim_{n \to \infty} \un = \infty.$ Then, $\{\lambda_{n, k}(\un)\}$ has the above three limits precisely when $\lim_{n \to \infty}\nu_n$ is either $\infty,$ some constant, or $-\infty.$ Our next result gives the asymptotic behaviour of the $k-$th Betti number in these three scenarios.

\begin{theorem}[Asymptotic Distributional Behaviour of $\Bnk(\un)$]
\label{thm:BehMainResult}
Let $k$ be such that $0 \leq k < d$ and $\{\un\} \in \pSeq$ be such that $\lim_{n \to \infty} \un = \infty.$ In the different regimes dictated by the limit of  $\{\nu_n\},$ or equivalently of $\{\lbnk(\un)\},$ the $k-$th Betti number behaves as follows.
\begin{enumerate}
\item \label{itm:Vanishing}\textit{Vanishing regime $(\nu_n \to \infty$ or $\lambda_{n, k}(\un) \to 0$):} If $\Cr{a:r0Asm}, \Cr{a:r1Asm}, \Cr{a:rCAsm},$ and $\Cr{a:ComC}(k)$ hold, then
\[
1 - \Pr\{\beta_{n, k}(\un) = 0\} = O\left(e^{-\nu_n} \left[\frac{\log n} {\log n+ \nu_n}\right]^{b_k} \right).
\]

\item \label{itm:Poisson} \textit{Poisson regime $(\nu_n \to \log[\tfrac{1}{\lambda}]$ or $\lambda_{n, k}(\un) \to \lambda,$ for some $\lambda \in \pReal$):} If $\Cr{a:r0Asm}, \Cr{a:r1Asm}, \Cr{a:rCAsm}, \Cr{a:ComC}(k),$ and $ \Cr{a:PoiG}$ hold,  then $\Bnk(\un) \CiD \Poi(\lambda);$ in particular, we have
\[
\tv{\Bnk(\un)}{\Poi(\lambda)} = O\left(\frac{\log n}{n} \sum_{q = 1}^{2 dn} \rho_q + \frac{\log \log n}{\log n} + |e^{-\nu_n} - \lambda|\right).
\]

\item \label{itm:NonVanishing} \textit{Non-vanishing regime ($\nu_n \to - \infty$ or $\lambda_{n, k}(\un) \to \infty$):}  If  $\Cr{a:r0Asm}, \Cr{a:r1Asm}, \Cr{a:rCAsm}, \Cr{a:ComC}(k),$ and $ \Cr{a:CLTG}$ hold, then
\begin{equation}
\label{eqn:BnkVarEst}
\lim_{n \to \infty} \frac{\Var[\Bnk(\un)]}{\lbnk^2(\un)} = 0,
\end{equation}
\begin{equation}
\label{eqn:NonVanishing}
\lim\limits_{n \to \infty} \Pr\{\beta_{n, k}(\un) = 0\}  = 0,
\end{equation}
and
\begin{equation}
\label{eqn:WeakLaw}
\frac{\beta_{n, k}(\un) - \ExP[\beta_{n, k}(\un)]}{\lambda_{n, k}(\un)} \overset{i.p.}{\longrightarrow} 0.
\end{equation}
Additionally, there exists a constant $\conNu_k > 0,$ depending on $k,$ such that, if $C \in (0, \conNu_k)$ and $\nu_n \geq - C \log (2n + 1)^d,$ then
\begin{equation}
\label{eqn:BnkCLT}
\dfrac{\beta_{n, k}(\un) - \ExP[\beta_{n, k}(\un)]}{\sqrt{\lambda_{n, k}(\un)}} \CiD \Gau(0, 1),
\end{equation}
where $\Gau$ denotes the Gaussian distribution. The explicit formula for $\conNu_k$ is given in \eqref{Defn:conNu}.
\end{enumerate}
\end{theorem}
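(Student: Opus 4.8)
\textbf{A combinatorial skeleton.} The plan is to reduce all three parts to the behaviour of one explicitly local functional. For $1 \le k < d$, let $\mathcal{O}_k$ denote, up to lattice symmetry, the $2(k+1)$-point configuration $\{\vec{c} \pm \vec{e}_i : i \in I\}$ with $|I| = k+1$ -- the vertices of a $(k+1)$-dimensional cross-polytope, all mutually at $\|\cdot\|_1$-distance $2$, whose \Cech complex triangulates $S^k$; for $k=0$ let $\mathcal{O}_k$ be a single vertex. Let $S_{n,k}(\un)$ be the number of connected components of $\cK(n; \un)$ isomorphic to $\mathcal{O}_k$ (for $k=0$, the number of isolated vertices). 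Since homology is additive over connected components and $\widetilde{H}_j(\mathcal{O}_k) = 0$ for $j \neq k$ while $H_k(\mathcal{O}_k) \cong \mathbb{Z}$,
\[
\beta_{n,k}(\un) \;=\; S_{n,k}(\un) + R_{n,k}(\un), \qquad R_{n,k}(\un) := \sum_{c \,\not\cong\, \mathcal{O}_k} \beta_k(c) \;\ge\; 0 .
\]
Bounding $\beta_k(c)$ for a non-minimal component $c$ by the number of its critical cells (discrete Morse theory, in the spirit of \cite{kahle2011random, kahle2013}) and counting the resulting ``bad'' local configurations -- the estimate already underlying Theorem~\ref{thm:ExPMainResult} -- yields $\ExP[R_{n,k}(\un)] \le \lambda_{n,k}(\un)\,\un^{-c_k}\,e^{-\delta_k \un^2}$, where $\delta_k > 0$ is the gap between the Gaussian large-deviation rate $a_k$ of $\mathcal{O}_k$ and the (strictly larger) rate of any local configuration containing a $\|\cdot\|_1$-unit pair or more than $2(k+1)$ vertices; positivity of $\delta_k$ is exactly Assumption~$\Cr{a:ComC}(k)$ (for $k=0$, Assumption~$\Cr{a:r1Asm}$, i.e. $\rho_1 < 1$). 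So it suffices to analyse $S_{n,k}(\un) = \sum_\alpha \indc_\alpha$, a sum of $O((2n+1)^d)$ Bernoulli indicators indexed by the placements $\alpha$ of $\mathcal{O}_k$ inside $\Idx_n$, where $\indc_\alpha$ marks the event that all vertices of the copy $\alpha$ have value $\ge \un$ while their $\|\cdot\|_\infty$-neighbours outside the copy have value $< \un$; Slepian's lemma, Assumption~$\Cr{a:ComC}(k)$, and Savage's multivariate Gaussian tail estimate give $\ExP[S_{n,k}(\un)] = \lambda_{n,k}(\un)\big(1 + O(\un^{-2} + n^{-1})\big)$.

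\textbf{Vanishing and Poisson regimes.} In the vanishing regime Markov's inequality suffices: $1 - \Pr\{\beta_{n,k}(\un)=0\} \le \ExP[\beta_{n,k}(\un)] = O(\lambda_{n,k}(\un))$ by Theorem~\ref{thm:ExPMainResult}, and inserting the form \eqref{eqn:unForm} of $\un$ into \eqref{Defn:Lambdanku} gives, after elementary algebra, $\lambda_{n,k}(\un) = \big[M_n/(M_n - b_k\log(M_n/a_k) + \nu_n)\big]^{b_k} e^{-\nu_n}$ with $M_n := \log[\tau_k(2n+1)^d] \asymp \log n$, which is precisely the claimed $O\big(e^{-\nu_n}[\log n/(\log n + \nu_n)]^{b_k}\big)$; this is part~\ref{itm:Vanishing}. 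In the Poisson regime I would first transfer: $\tv{\beta_{n,k}(\un)}{S_{n,k}(\un)} \le \ExP[R_{n,k}(\un)] = O(\log\log n/\log n)$ (the logarithms coming from the polynomial prefactors in Savage's estimate). Then the Stein--Chen bound $\tv{S_{n,k}(\un)}{\Poi(\ExP[S_{n,k}(\un)])} \le b_1 + b_2 + b_3$ applies with a bounded-size dependency neighbourhood $N_\alpha$: the local terms $b_1, b_2$ are $O(\log\log n/\log n)$ since overlapping copies force extra exceedances (Slepian plus $\Cr{a:ComC}(k)$), while the long-range term $b_3 = \sum_\alpha \ExP\big|\ExP[\indc_\alpha \mid (\indc_\beta)_{\beta \notin N_\alpha}] - \ExP[\indc_\alpha]\big|$ is controlled by the normal comparison lemma -- conditioning a Gaussian vector on far-away coordinates moves the relevant orthant probability by at most the sum of the absolute cross-covariances -- giving $b_3 = O\big(\tfrac{\log n}{n}\sum_{q=1}^{2dn}\rho_q\big)$, which vanishes under $\Cr{a:PoiG}$ since $\rho_q\log q \to 0$ forces $\sum_{q\le N}\rho_q = o(N/\log N)$. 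Finally $\tv{\Poi(\ExP[\beta_{n,k}(\un)])}{\Poi(\lambda)} \le |\ExP[\beta_{n,k}(\un)] - \lambda| = O\big(|e^{-\nu_n}-\lambda| + \log\log n/\log n\big)$ by Theorem~\ref{thm:ExPMainResult} and the algebra above. Assembling these proves part~\ref{itm:Poisson}.

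\textbf{Non-vanishing regime.} Now $\lambda_{n,k}(\un) \to \infty$. A second-moment computation for $\sum_\alpha \indc_\alpha$ splits $\cov(\indc_\alpha,\indc_\beta)$ into a local part (boundedly many neighbours, each contributing $O(\lambda_{n,k}(\un)\,\un^{-c})$) and a far part, which the normal comparison lemma bounds by $\lambda_{n,k}(\un)^2\,\un^2\,(2n+1)^{-d}\sum_{q\ge 1} q^{d-1}\rho_q$, the last sum finite by $\Cr{a:CLTG}$ and $\un^2(2n+1)^{-d}\to 0$; together with the bound on $R_{n,k}(\un)$ this yields $\Var[\beta_{n,k}(\un)] = o(\lambda_{n,k}(\un)^2)$ in general, and $= O(\lambda_{n,k}(\un))$ once $\nu_n \ge -C\log(2n+1)^d$. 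Then \eqref{eqn:BnkVarEst} is immediate, and \eqref{eqn:NonVanishing}, \eqref{eqn:WeakLaw} follow from Chebyshev's inequality together with $\ExP[\beta_{n,k}(\un)] \sim \lambda_{n,k}(\un)$. For the central limit theorem \eqref{eqn:BnkCLT} I would write
\[
\frac{\beta_{n,k}(\un) - \ExP[\beta_{n,k}(\un)]}{\sqrt{\lambda_{n,k}(\un)}} = \frac{S_{n,k}(\un) - \ExP[S_{n,k}(\un)]}{\sqrt{\lambda_{n,k}(\un)}} + \frac{R_{n,k}(\un) - \ExP[R_{n,k}(\un)]}{\sqrt{\lambda_{n,k}(\un)}} .
\]
The remainder tends to $0$ in $L^1$ exactly when $\ExP[R_{n,k}(\un)] = o(\sqrt{\lambda_{n,k}(\un)})$; using $\ExP[R_{n,k}(\un)] \le \lambda_{n,k}(\un)\un^{-c_k}e^{-\delta_k\un^2}$, $\log\lambda_{n,k}(\un) = -\nu_n + O(\log\log n)$, and $a_k\un^2 = \log[\tau_k(2n+1)^d] + \nu_n + O(\log\log n)$, this reduces to $\tfrac{1}{2}\log\lambda_{n,k}(\un) \lesssim \delta_k\un^2$, i.e. to $\nu_n \ge -C\log(2n+1)^d$ with $C$ below an explicit threshold $\conNu_k \in (0,1)$ -- the balance point of $C/2 = \delta_k(1-C)/a_k$, hence $\conNu_k = 2\delta_k/(a_k + 2\delta_k)$. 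The main term is a central limit theorem for the weakly dependent array $S_{n,k}(\un) = \sum_\alpha \indc_\alpha$ with $\ExP[S_{n,k}(\un)] = \lambda_{n,k}(\un) \to \infty$: Stein's method with a bounded-degree dependency neighbourhood, plus a normal-comparison correction for the covariance tail (finite by $\Cr{a:CLTG}$), yields a Berry--Esseen bound of order $\lambda_{n,k}(\un)^{-1/2} + o(1) \to 0$. Slutsky's theorem then gives \eqref{eqn:BnkCLT}, completing part~\ref{itm:NonVanishing}.

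\textbf{Expected main obstacle.} Everything except the central limit theorem is, given the Gaussian-comparison estimates, routine. The hard part is \eqref{eqn:BnkCLT}: one must run the combinatorial transfer at the finer scale $\sqrt{\lambda_{n,k}(\un)}$ rather than $\lambda_{n,k}(\un)$ -- which is precisely what forces and identifies the threshold $\conNu_k$ -- and one must establish a normal approximation for $S_{n,k}(\un)$ quantitative enough to beat $\sqrt{\lambda_{n,k}(\un)}$ while the underlying field has only polynomially decaying correlations; controlling, uniformly as $\un\to\infty$, the long-range contribution to both the variance and the Stein bound through the normal comparison lemma is the delicate estimate of the whole argument.
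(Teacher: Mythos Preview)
Your overall architecture matches the paper: approximate $\beta_{n,k}$ by a cross-polytope count, apply Markov for the vanishing regime, Stein--Chen for the Poisson regime, and a variance bound plus Slutsky for the non-vanishing regime and CLT. The substantive gap is in how you define $S_{n,k}$. You retain the isolation condition (each $\indc_\alpha$ requires the $2b_k$ cross-polytope vertices to exceed $\un$ \emph{and} their $\|\cdot\|_\infty$-neighbours to lie below $\un$), whereas the paper's $S_{n,k}$ in \eqref{Defn:Snk} deliberately drops isolation and asks only that the $2b_k$ vertices exceed $\un$, with no constraint on the centre or the neighbours. This is not cosmetic. With isolation, $\indc_\alpha$ is a mixed above/below event, so Savage's orthant estimate (Lemma~\ref{lem:GaussianTailBounds}) does not apply to $\ExP[\indc_\alpha]$ directly; more seriously, the $\indc_\alpha$ are no longer increasing functionals of the Gaussian field, so the coupling of Theorem~\ref{thm:ProbSpaceExistence} (which needs increasing events and nonnegative Gaussian covariances) is unavailable. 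That coupling is precisely what lets the paper place every index in $\Idx_i^+$ in Theorem~\ref{thm:SuffCondPoissonConv} and thereby dispense with any $b_3$-type term. Your proposed $b_3$ control via ``normal comparison'' is not justified as written: normal comparison bounds how an orthant probability moves under a perturbation of the covariance matrix, but $b_3$ conditions on the $\sigma$-algebra generated by far-away \emph{indicators}, a nonlinear conditioning that does not reduce to a covariance perturbation. The paper explicitly flags handling mixed above/below events in this Stein--Chen framework as open (Section~\ref{sec:Discussion}); it is exactly what confines the CLT to the range $\nu_n \geq -C\log(2n+1)^d$.

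The paper's remedy is to work with the isolation-free $S_{n,k}$ and sandwich via Lemma~\ref{lem:BSDiffBd}: $|\beta_{n,k}-S_{n,k}| \leq N_{n,k} + {\Cr{c:Betti}}_{,k}\,L_{n,k} + D_{n,k}$, where $N_{n,k}$ handles copies of $O_k$ that are isomorphic but not $\|\cdot\|_1$-isometric, $L_{n,k}$ handles larger connected patterns, and $D_{n,k}$ is a boundary correction. Each remainder is itself a sum of pure orthant indicators, so Savage and Slepian apply cleanly throughout (Lemma~\ref{lem:limRates}, Lemma~\ref{lem:CovBounds}). Your single $R_{n,k}$ conflates these three sources; this is why your formula $\conNu_k = 2\delta_k/(a_k+2\delta_k)$ recovers only the $(\varphi_k-a_k)/(\varphi_k-a_k/2)$ piece of \eqref{Defn:conNu} and misses the $\varrho_k$ term (from $N_{n,k}$) and the $2/d$ term (from $D_{n,k}$). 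One further methodological difference worth noting: the paper does not run a separate normal Stein bound for \eqref{eqn:BnkCLT}. It reuses the Poisson Stein--Chen estimate, shown in \eqref{eqn:tvBdCLT} to be $o(1)$ under $\Cr{a:CLTG}$ even when $\ExP[S_{n,k}]\to\infty$, and then passes from Poisson to Gaussian via the elementary fact that a centred, scaled $\Poi(\mu)$ converges to $\Gau(0,1)$ as $\mu\to\infty$.
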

This result proves that the $k-$th Betti number undergoes a vanishing to non-vanishing phase transition as its asymptotic mean changes from zero to infinity. It also shows a weak law in the non-vanishing regime. In addition, close to the transition threshold, the result gives a Poisson approximation and a CLT. Loose rates of convergence for \eqref{eqn:BnkVarEst} and \eqref{eqn:BnkCLT} are given in their proofs.

We now provide some remarks on our assumptions, model setup,  and results.
{\renewcommand*\theenumi{$\pmb{\mathcal{R}_{\arabic{enumi}}}$}
\begin{enumerate}

\item \label{rem:L1Iso} We assume $\|\cdot\|_1-$isotropy in our model mainly in order to simplify our estimates and computations. With other forms of isotropy, we strongly believe that results with a similar flavor to ours should hold, but will require some more involved calculations; see Section~\ref{sec:Discussion}.

\item For all our results, we need the covariance function to be positive. This is because it is unclear at present how to apply our Stein-Chen approach when negative covariances are involved; in particular, we are then unable to establish a result such as Theorem~\ref{thm:ProbSpaceExistence}. For the specific case of $\beta_0,$ however, this assumption can be relaxed; see \cite{holst1990poisson} for the outline.

\item \label{rem:LocRest} We also need that $\Cr{a:ComC}(k)$ holds. For Gaussian fields on the continuum with covariance functions given by $\exp[-\|\vec{t} - \vecp{t}\|_2^q],$ $\|\vec{t} - \vecp{t}\|_2^{-q},$ etc. with $q > 0,$ this condition places a lower bound on the distance at which these fields need to be sampled so that our results hold. However, note that this condition is natural in ARMA models. It may be possible to eliminate this assumption, but the computations would then become involved. See Section~\ref{sec:Discussion} for details.

\item For the vanishing and Poisson regime results, we require the covariance function to also satisfy $\Cr{a:PoiG},$ the so-called `Berman' condition. This condition is known to be tight for Poisson approximation; see \cite[Remark 1]{holst1990poisson} for example. It is worth noting that this condition holds even for Gaussian fields that are long-range dependent and also for Gaussian free fields on $\dGrid$ with $d \geq 3$ \cite{chiarini2015extremes}.

\item \label{rem:tightphase} The non-vanishing regime results including the CLT need the stronger $\Cr{a:CLTG}$ condition, which basically means that covariances should be summable in the domain dimension $d.$ This is a common assumption in the Gaussian excursions literature \cite{Estrade, kratz2001central, kratz2016central, berman1989central}. However, it is unclear whether it is tight.

\item \label{rem:SparseRegime} In all the three regimes determined by the limit of $\nu_n,$ the average vertex degree (i.e., $\ExP[\mbox{number of edges}]/\ExP[\mbox{number of vertices}]$) of our \Cech complex asymptotically vanishes. In random graphs terminology, this places all our key results in the `sparse' regime. The asymptotics in \cite{Estrade, kratz2016central, muller2017central, Reddy2018},  from the perspective of our setup, loosely translates to keeping $u$ fixed and letting only $n$ increase to infinity. It is easy to see that the average vertex degree would asymptotically then be a constant. This positions the results of these studies concerning LKCs and Betti numbers in what is usually referred to as the thermodynamic regime.

\item \label{rem:TransitionThreshold} From Theorem~\ref{thm:BehMainResult} and \eqref{eqn:unForm}, in terms of $\un^2,$
\begin{equation}
\label{eqn:TransitionThreshold}
\frac{1}{a_k} \left[\log[\tau_k (2n + 1)^d] - b_k \log\left[\tfrac{\log[\tau_k(2n + 1)^d]}{a_k}\right]\right]
\end{equation}
is the transition threshold for non-triviality of the $k-$th Betti number. The lower order $\log\log$ term in the above expression is due to the sharp multivariate Gaussian tail estimates given in \cite{savage1962mills}. By using only the large deviation approach, such precision cannot be obtained.

\item In \cite{Werman2016} and \cite{hiraoka2016limit}, studies of random cubical complexes have been carried out. In \cite{hiraoka2016limit}, a strong law and a CLT for Betti numbers have been derived; while the strong law holds for generic distributions, the CLT needs independence. It may be possible to translate these results to Gaussian excursions, but then they would again apply in the thermodynamic regime. As opposed to cubical complexes, we model $\cA(n; u)$ here using simplicial complexes, since a richer set of results are available for the latter class.

\item From the point of view of the random simplicial complexes literature, our work extends recent advances. We introduce a new random simplicial complex model. While being in the family of geometric complexes, it facilitates the study of random field excursions. However, the in-built dependence amongst the faces in our model contrasts it with most existing random complexes such as Linial-Meshulam complexes \cite{linial2006homological, meshulam2009homological}, clique complexes \cite{Kahle2009, kahle2014sharp}, and random geometric complexes \cite{bobrowski2014topology}. In all the latter models, independence plays a crucial role in both their definitions and their study.

\item Despite the dependence, however, our results share several similarities with the sparse regime phenomena in random geometric complexes studied in \cite{kahle2011random, kahle2013}; the presence of faces in their model is decided based on the Euclidean proximity of the vertices that are generated as an IID sequence, while the asymptotics is in terms of the coupling between the number of vertices and the proximity thresholds. As in these works, the behaviour of our Betti numbers is dictated by the simplest, minimal subcomplexes that generate them. Furthermore, for each Betti number, we sequentially observe a vanishing, Poisson, and non-vanishing behaviour. We also see that the lower order Betti numbers appear earlier than the higher order ones as the growth rate of the excursion levels is lowered. Mathematically, we mean that, as $k$ increases from $0$ to $d - 1,$ the phase transition threshold decreases; this follows from Remark~\ref{rem:TransitionThreshold} and the fact that $a_k$ monotonically increases with $k.$ In fact, a consequence of Theorem~\ref{thm:ExPMainResult} is that
\[
\lim_{n \to \infty} \frac{\ExP[\beta_{n, k_1}(\un)]}{\ExP[\beta_{n, k_2}(\un)]} \to \infty,
\]
whenever $k_1 < k_2$ and $\{\un\}$ is such that $\lim_{n \to \infty} \un = \infty.$

\item \label{rem:Ind} If we assume independence in our model ($\rho_q = 0 \; \forall q \geq 1$), our calculations simplify significantly. However, the basic nature of our results will remain the same, except that the associated constants, and hence regimes, will be different. For a more detailed discussion, see Section \ref{sec:Discussion}.

\item \label{rem:VarEstBnk} As compared to the estimate for variance of Betti numbers in Theorem~\ref{thm:BehMainResult}, we conjecture that
\[
\lim_{n \to \infty} \frac{\Var[\Bnk(\un)]}{\lbnk(\un)} = 1.
\]
The evidence for this follows from Lemma~\ref{lem:VarianceEstimates}.\ref{itm:SnkVarEst} and Remark~\ref{rem:VarNL}.

\item \label{rem:CLTlimit} In our CLT result, the additional requirement that $\nu_n \geq - C \log (2n + 1)^d$ restricts the validity of the theorem to a regime close to the phase transition threshold. Outside this regime, the CLT for Betti numbers does not follow from that of the approximators that we use throughout this work. To obtain better approximators necessitates, as of now, an equivalent of Theorem~\ref{thm:ProbSpaceExistence} when negative covariances are also involved. But, presently, it is unclear how to obtain such a result. Some more details about this are given in Section \ref{sec:Discussion}.

\end{enumerate}
}

We end this section by describing how the rest of the paper is structured.

\emph{Structure of the Paper}: The following section provides all the requisite background material concerning the topological and probabilistic aspects of this work. In Section~\ref{sec:ProofOutline}, we outline our proofs for the key results. Specifically, we first state all our major intermediate results and then, assuming them to true, prove our key results. The topological portions of these intermediate results are proved in Section~\ref{sec:BettiBounds}, while the remaining ones are proved in Sections~\ref{sec:IntResults},~\ref{sec:ConclResults}, and the Appendix. We end with a discussion on future directions in Section~\ref{sec:Discussion}.

\section{Background}
\label{sec:BgPrelim}

This section begins with a brief overview of relevant notions from simplicial homology with a focus on Betti numbers; this is based on \cite{Kahle2009} and the references therein. After that, we give the description of a non-trivial cycle and, in the context of a special simplicial complex called the clique complex, recall some useful results concerning it from \cite{Kahle2009, kahle2011random}. These results are crucially used in Section~\ref{sec:BettiBounds} later to obtain the approximators for our Betti numbers, mentioned before. Following all of that, we discuss few pertinent results from probability theory. First, we give Savage's multivariate Gaussian tail estimates from \cite{savage1962mills}. Here, we also provide the Savage condition under which this bound holds. We then describe Slepian's lemma which relates tail probabilities to covariance relations. We finally state a special case of the Stein-Chen method, which is the same one that was used in \cite{holst1990poisson}. This method gives a bound on the total variation distance between a sum of indicator random variables and a Poisson random variable having the same mean.

% Define the Betti numbers of a complex here.

% Show that the Cech complex and Rips complex are equivalent

% Recall results from Kahle's work concerning the bounds on number of vertices in a k-NTC.

% Tail bounds for Multivariate Gaussian. Also, give Savage condition.

% Mention Slepian's inequality and how it can be used.

\subsection{Topological Background}

A key object of study across this work is a simplicial complex---a generalization of a graph to higher dimensions. Specifically, an (abstract) simplicial complex $\cK$ on a vertex set $V$ is a collection of non-empty subsets of $V$ such that for any $\sigma \in \cK,$ if $\sigma^\prime \neq \emptyset$ and $\sigma^\prime \subset \sigma,$ then $\sigma^\prime \in \cK$ as well. That is, $\cK$ is closed under the subset operation. The elements of $\cK$ are called faces and the dimension of a face $\sigma$ is $| \sigma | - 1,$ where $|\cdot|$ denotes cardinality. The dimension of $\cK$ itself is the maximum over the dimension of all its faces. The $k-$skeleton of $\cK$ is the simplicial complex made up of all the faces of $\cK$ with dimension $k$ or less.

Given a simplicial complex $\cK,$ one way to study its topology (or shape) is via its Betti numbers $\{\beta_k(\cK)\}_{k \geq 0}.$ These are described next, first intuitively and then formally.

\textit{Intuitive Description of Betti Numbers:} Imagine the $k-$dimensional faces of $\cK,$ or $k-$faces in short, to be solid $k-$dimensional objects. Then, a $k-$cycle in $\cK$ is a collection of its $k-$faces whose union is `topologically equivalent to' the boundary of a solid $(k + 1)-$dimensional object. If it is not the boundary of any subset of $(k + 1)-$faces in $\cK,$ then that $k-$cycle, in fact, represents a $(k + 1)-$dimensional hole. Finally, $\beta_0(\cK)$ is one more than the number of `independent' $1-$holes in $\cK,$ while, for $k \geq 1,$ $\beta_k(\cK)$ is exactly the number of `independent' $(k + 1)-$dimensional holes. In this sense, the simplicial complex in the left image of Figure~\ref{fig:Betti_Illus} has two $1-$cycles, but only the top left cycle represents a $2-$hole. This description extends the intuitive picture given at the outset.

\textit{Formal Description of Betti Numbers:} Let $\sigma$ be a $k-$face made up of the vertices $v_0, \ldots, v_k.$ An \emph{orientation} of $\sigma$ is an ordering of its vertices and is denoted by $(v_0, \ldots, v_k ).$ Two orderings induce the same orientation if and only if they differ by an even permutation of the vertices. We shall assume henceforth that each face in $\cK$ is assigned a specific orientation, i.e., ordering. Let $\bF$ be $\Grid$ or some field. Then, a simplicial $k$-chain is a formal sum of oriented $k-$faces, i.e., $\sum_i c_i \sigma_i,$ with $c_i \in \bF.$ The $k-$chain group $\fC_k(\cK)$ is the free Abelian group generated by all $k$-chains, i.e.,
\[
\fC_k(\cK) := \left\{ \sum_ic_i \sigma_i : c_i \in \bF, \sigma_i \in \cF_k(\cK) \right\},
\]
where $\cF_k(\cK)$ is the set of all $k-$faces in $\cK.$ Clearly, if $\bF = \Grid,$ then $\fC_k(\cK)$ is a $\Grid$-module, and if $\bF$ is a field, then $\fC_k(\cK)$ is a $\bF$-vector space. Separately, set $\fC_{-1} = \bF.$ Now, for $k \geq 0,$ define the boundary operator $\bdr_k : \fC_k \to \fC_{k - 1}$ first on each $k-$simplex using
\[
\bdr_k\left( (v_0, \ldots, v_k) \right) = \sum_{i=0}^k(-1)^i (v_0,\ldots,\hat{v}_i,\ldots, v_k )
\]
and then extend it linearly on $\fC_k.$ Here, $\hat{v}_i$ implies that the vertex $v_i$ is to be omitted. It is easy see that  $\bdr_{k - 1} \circ \bdr_k = 0$ for all $k \geq 1,$ i.e., boundary of a boundary is zero. The $k-$th {\em boundary space}, denoted by $\bGrp_k,$ is the image of  $\bdr_{k + 1}$ and the $k-$th {\em cycle space} $\cGrp_k$ is the kernel of $\bdr_k$. The elements of $\cGrp_k$ are called {\em $k-$cycles}, while the elements of $\bGrp_k$ are called {\em $k-$boundaries}. The $k$-th homology group $\hGrp_k$ is defined to be the quotient group $\hGrp_k = \cGrp_k / \bGrp_k.$ Clearly, $\hGrp_k$ is also a $\bF$-module or a $\bF$-vector space depending on whether $\bF = \Grid$ or $\bF$ is a field. Finally, $\beta_0(\cK) = \rk(\hGrp_0) + 1,$ while $\beta_k(\cK) = \rk(\hGrp_k)$ when $k \geq 1.$

We now give a couple of definitions including that of a non-trivial cycle (NTC) and then state two of its useful properties from \cite{Kahle2009, kahle2011random} in the context of what is known as a clique complex.

Continuing with the above notions, for a chain $\gamma \in \fC_k,$  let $[\gamma] := \{\gammap \in \fC_k: \gammap - \gamma \in \bGrp_k\}$ be its equivalence class with respect to $\bGrp_k.$ Then, it is easy to see that $\hGrp_k = \{[\gamma] : \gamma \in \cGrp_k\}.$

For $\sigma \in \cK,$ let $\vsupp(\sigma)$ be its vertex support. For $\gamma = \sum_i c_i \sigma_i,$ let $\vsupp(\gamma) := \bigcup_{i: c_i \neq 0} \vsupp(\sigma_i).$ A chain $\gamma$ is a $k-$NTC if $\gamma \in \cGrp_k \setminus \bGrp_k,$ and it has minimal vertex support if $|\vsupp(\gamma)| \leq |\vsupp(\gammap)|$ for each $\gammap \in [\gamma].$ Clearly, $\beta_0(\cK)$ is one more than the maximal number of independent $0-$NTCs, while, for $k \geq 1,$ $\beta_k(\cK)$ is the maximal number of independent $k-$NTCs. Based on this, given a set $\{\cC\}$ of isolated induced subcomplexes of $\cK,$ one can show that $\beta_k(\cK) = \sum_{\cC} \beta_k(\cC).$

Separately, for a vertex $v \in \cK,$ its link $\lk(v) := \{\sigma \in \cK: v \notin \sigma, \text{ but } \sigma \cup \{v\} \in \cK\},$ while its star, denoted $\st(v),$ is the smallest simplicial complex containing $\{\sigma \in \cK: v \in \sigma\}.$ Clearly, both the link and star of a vertex are themselves simplicial complexes.

The $(k + 1)-$dimensional cross-polytope is the convex hull of the $2k + 2$ points $\{\pm e_1, \ldots, \pm e_{k + 1}\},$ where $\vec{e}_1, \vec{e}_2$ etc. are the standard basis vectors. The boundary of this polytope can be represented using a $k$-dimensional simplicial complex. Let $O_k$ denote the $1-$skeleton of this complex. This is the graph on the above $2k + 2$ points where an edge is present between a pair if and only if the $\|\cdot\|_\infty-$distance between them is $1.$

Given a graph $G \equiv (V, E),$  a set of vertices $v_1, \ldots, v_k \in V$ is said to form a clique if $\{v_i, v_j\} \in E$ for all $v_i, v_j.$ The associated clique complex $\cX(G)$ is the simplicial complex made up of all the subsets of $V$ that form a clique in $G.$

\begin{lemma}
\label{lem:CardNTC}
\cite[Lemma 3.4]{kahle2011random}
Let $\cX(G)$ be the clique complex associated with a graph $G.$ Let $k \geq 0$ and $\gamma$ be a $k-$NTC of $\cX(G).$ Then, $|\vsupp(\gamma)| \geq 2(k + 1).$ If $\vsupp(\gamma) = 2(k + 1),$ then the $1-$skeleton of the induced subcomplex of $\cX(G)$ restricted to $\vsupp(\gamma)$ is isomorphic to $O_k.$
\end{lemma}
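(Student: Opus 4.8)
The plan is to combine a purely algebraic lower bound on the vertex support of any non-trivial cycle with a combinatorial rigidity statement forcing the extremal configuration to be a cross-polytope boundary. Let $\gamma = \sum_i c_i \sigma_i$ be a $k$-NTC of $\cX(G)$ with minimal vertex support (it suffices to treat this case, since a general NTC supported on $2(k+1)$ vertices is automatically minimal), and set $W := \vsupp(\gamma)$ and $\cX_W$ for the induced subcomplex of $\cX(G)$ on $W$. First I would record the trivial observation that $\gamma$ is supported on $k$-faces, each contributing $k+1$ vertices, so $|W| \geq k+1$; the content is to push this up to $2(k+1)$ and then identify the boundary case.

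The first main step is the inequality $|W| \geq 2(k+1)$. I would argue by contradiction: suppose $|W| \leq 2k+1$. The key local fact is that if some vertex $v \in W$ has the property that its link in $\cX_W$ is a full simplex on $W \setminus \{v\}$ — equivalently, $\st(v)$ is a cone over all of $W\setminus\{v\}$ — then $\gamma$ can be ``coned off'' through $v$, i.e. one exhibits a $(k+1)$-chain in $\cX_W$ whose boundary is $\gamma$ (this is the standard fact that the homology of a cone vanishes, applied to the chain-level cone formula $\gamma \mapsto v * \gamma$ with $\partial(v*\gamma) = \gamma - v*\partial\gamma = \gamma$ since $\partial\gamma = 0$). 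So no such vertex can exist, as it would make $\gamma$ a boundary. But when $|W| \leq 2k+1$, a counting argument on $\cX_W$, which is a clique complex, forces some vertex of $W$ to be adjacent (in $G$) to all the others and hence — because $\cX_W$ is a clique complex, so adjacency to everyone means being in a top simplex with everyone — to have exactly this coning property: indeed, each $k$-face of $\gamma$ uses $k+1$ of the at most $2k+1$ vertices, so any two $k$-faces of $\gamma$ share a vertex, and by a Helly-type / sunflower pigeonhole one extracts a vertex in every maximal clique of $W$; I would make this precise by showing the complement graph on $W$ has no perfect matching-free obstruction, so some vertex is universal. This contradiction yields $|W| \geq 2(k+1)$.

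The second main step is the rigidity statement for $|W| = 2(k+1)$. Here I would again use that no vertex of $W$ can be universal in $G[W]$ (else $\gamma$ is a boundary, as above), which means: for every $v \in W$ there is at least one $w \in W$ with $\{v,w\}\notin E$. On $2(k+1)$ vertices, having every vertex miss at least one other vertex means the complement graph $\bar G[W]$ has minimum degree $\geq 1$. I would then show the complement must be exactly a perfect matching — equivalently $G[W]$ is exactly $O_k$, the $1$-skeleton of the $(k+1)$-cross-polytope — by arguing that any extra non-edge would drop the dimension of the $k$-th chain group of the clique complex $\cX_W$ below what is needed to host a $k$-cycle that is not a boundary, using Lemma references to the structure of clique complexes and the Euler-characteristic / Morse-theoretic count for $\cX_W$. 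Concretely: $\cX(O_k)$ has the homotopy type of $S^k$ with $\beta_k = 1$, and removing any further edge from $O_k$ makes the clique complex collapsible (it acquires a universal-type vertex again after the removal), killing $\tilde H_k$; since $\gamma$ is a non-trivial $k$-cycle living in $\cX_W \subseteq \cX(G[W]) \subseteq \cX(O_k)$-minus-edges, $G[W]$ cannot be a proper spanning subgraph of $O_k$, and it cannot have an edge outside $O_k$ either without creating a universal vertex. Hence $G[W] \cong O_k$, and so the $1$-skeleton of $\cX_W$ is $O_k$ as claimed.

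I expect the main obstacle to be making the ``some vertex is universal, hence $\gamma$ is a boundary'' mechanism fully rigorous in the regime $|W| \leq 2k+1$ with arbitrary (possibly degenerate, $\bF = \Grid$) coefficients: one must be careful that the coning formula $\partial(v*\gamma) = \gamma$ uses $\partial\gamma = 0$ and the fact that $v*\sigma_i$ is a genuine face of $\cX_W$ for every $\sigma_i$ in the support of $\gamma$ — which is exactly where universality of $v$ in the clique complex is needed — and that this argument is coefficient-ring agnostic. The pigeonhole extraction of a universal vertex from the constraint ``every two supporting $k$-faces overlap'' on $\leq 2k+1$ vertices is the combinatorial heart; I would phrase it via the observation that the sets $W \setminus \vsupp(\sigma_i)$ each have size $\leq k$, so they cannot cover $W$, and any $v$ outside their union lies in every $\sigma_i$ — then bootstrap from ``$v$ in every supporting face'' to ``$v$ adjacent to everything in $W$'' using minimality of the support together with the clique-complex property.
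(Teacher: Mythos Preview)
The paper does not prove this lemma; it merely cites \cite[Lemma~3.4]{kahle2011random}. So the relevant comparison is between your proposal and Kahle's actual argument, which proceeds by induction via the link lemma (stated here as Lemma~\ref{lem:RelLinkNTC}). Your proposal takes a different route and has a genuine gap.

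The gap is in Step~1. Your extraction of a universal vertex from $|W|\le 2k+1$ does not work as written. You claim ``the sets $W\setminus\vsupp(\sigma_i)$ each have size $\le k$, so they cannot cover $W$,'' but there is no bound on the number of faces $\sigma_i$ in the support of $\gamma$, so these complements certainly can cover $W$. Likewise, ``any two $k$-faces share a vertex'' is correct, but there is no Helly theorem for arbitrary finite sets: pairwise intersection does not yield a common point. You flag this as the ``combinatorial heart,'' and indeed it is where the argument fails; I do not see a way to repair it along these lines.

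Kahle's proof avoids this entirely. Assume the result for $k-1$. For a minimal $k$-NTC $\gamma$ and any $v\in\vsupp(\gamma)$, Lemma~\ref{lem:RelLinkNTC} gives a $(k-1)$-NTC in $\lk(v)\cap\cX_W$, so by induction $v$ has at least $2k$ neighbours inside $W$. The coning argument (which you do state correctly) shows $v$ cannot be universal, so it has at least one non-neighbour in $W$. Hence $|W|\ge 2k+1+1=2(k+1)$. When $|W|=2(k+1)$, the same count forces every vertex to have exactly $2k$ neighbours and exactly one non-neighbour, so the complement graph on $W$ is a perfect matching and $G[W]\cong O_k$. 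This also replaces your Step~2, whose claim that deleting an edge from $O_k$ produces a universal vertex is false (remove $\{e_1,e_2\}$: no vertex becomes universal).
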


\begin{lemma}
\label{lem:RelLinkNTC}
\cite[Lemma 5.2]{Kahle2009}
Let $\cK$ be a simplicial complex. For $k \geq 1$ and $F$ being some set of $k-$faces, let $\gamma = \sum_{\sigma \in F} c_\sigma \sigma$ be a $k-$NTC in $\cK$ with minimal vertex support and $c_\sigma \neq 0$ for all $\sigma \in F.$ Then, for any $v \in \vsupp(\gamma),$ $\gamma \cap \lk(v) := \sum_{\sigma \in F \cap \st(v)} c_\sigma  \bdr_k (\sigma)$ is a $(k - 1)-$NTC in $\lk(v) \cap \{\sigma \in \cK: \sigma \subseteq \vsupp(\gamma)\}.$
\end{lemma}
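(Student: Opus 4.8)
## Proof Plan for Lemma~\ref{lem:RelLinkNTC}

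The statement is quoted from \cite[Lemma 5.2]{Kahle2009}, so the plan is to reconstruct the standard argument. The target is to show that $\gamma \cap \lk(v) := \sum_{\sigma \in F \cap \st(v)} c_\sigma \,\bdr_k(\sigma)$, viewed as a chain, is (i) a well-defined $(k-1)$-chain supported on the link $\lk(v)$ restricted to $\vsupp(\gamma)$, (ii) a $(k-1)$-cycle there, and (iii) \emph{not} a $(k-1)$-boundary there. I would set $L := \lk(v) \cap \{\sigma \in \cK : \sigma \subseteq \vsupp(\gamma)\}$ for brevity throughout.

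The first step is the algebraic bookkeeping. For each $\sigma \in F \cap \st(v)$, write $\sigma = \{v\} \cup \sigma'$ with $\sigma' \in \lk(v)$ a $(k-1)$-face. With a fixed orientation convention (put $v$ first, say $\sigma = (v, v_1, \ldots, v_k)$), we have $\bdr_k(\sigma) = \sigma' - \sum_{i=1}^{k}(-1)^{i-1}(v, v_1, \ldots, \hat v_i, \ldots, v_k)$, where only the first term $\sigma'$ avoids $v$. This shows that the ``link component'' of $\gamma$ is naturally extracted: define $\partial_v\gamma$ to be the sum over $\sigma \in F \cap \st(v)$ of $c_\sigma$ times the $v$-free face of $\bdr_k(\sigma)$; this is a $(k-1)$-chain in $L$. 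I would then verify that the two formulations agree, i.e. that $\sum_{\sigma \in F\cap\st(v)} c_\sigma \bdr_k(\sigma)$, once one discards the terms still containing $v$, equals $\partial_v\gamma$ — which is immediate from the orientation convention. The cleanest way to organize (ii) is the identity $\bdr_{k-1}(\partial_v\gamma) = \pm\,\partial_v(\bdr_k\gamma) = 0$ on $L$, using that $\gamma$ is a cycle in $\cK$ ($\bdr_k\gamma = 0$) together with $\bdr\circ\bdr = 0$; one must check the contributions involving the other vertices of $\vsupp(\gamma)$ cancel, which follows because $\gamma$ is a cycle and all of $\vsupp(\gamma)$ lies in the restricted complex.

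The heart of the lemma, and the main obstacle, is (iii): that $\partial_v\gamma$ is not a boundary in $L$. This is where minimality of $\vsupp(\gamma)$ is used. The plan is to argue by contradiction: suppose $\partial_v\gamma = \bdr_k(\mu)$ for some $k$-chain $\mu$ in $L$ (so $\mu$ is supported on $k$-faces of $\cK$ not containing $v$, with all vertices in $\vsupp(\gamma)$). Form the ``cone'' $v * \mu$, the $(k+1)$-chain obtained by prepending $v$ to each face of $\mu$; since $\mu$ lives in $\lk(v)$, each such face of $v*\mu$ is genuinely in $\cK$. Then compute $\bdr_{k+1}(v * \mu) = \mu - v * \bdr_k(\mu) = \mu - v * (\partial_v\gamma)$ (signs depending on convention). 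Now consider the modified chain $\gamma' := \gamma - \bdr_{k+1}(v*\mu)$: it is homologous to $\gamma$, hence still a $k$-NTC. One checks that every $k$-face of $\gamma'$ that contains $v$ is cancelled — the $v*\partial_v\gamma$ term is designed precisely to kill, up to the cone relation, all $v$-containing faces of $\gamma$ — so $v \notin \vsupp(\gamma')$, contradicting minimality of $\vsupp(\gamma)$ since $\gamma' \in [\gamma]$ and $|\vsupp(\gamma')| < |\vsupp(\gamma)|$. The delicate point to nail down is exactly that all $v$-faces cancel: this requires checking that the terms of $\bdr_k\mu$ and of $\partial_v\gamma$ match up so that $v * \partial_v\gamma$ accounts for the $v$-containing part of $\gamma$ with the correct coefficients and signs. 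I expect this sign/coefficient verification, rather than any conceptual difficulty, to be the fiddly part, and I would handle it by expanding $\gamma = \sum_{\sigma \in F \cap \st(v)} c_\sigma \sigma + (\text{faces not containing }v)$ and comparing term by term using the fixed ``$v$-first'' orientation.
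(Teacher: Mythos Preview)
The paper does not prove this lemma; it is quoted from \cite[Lemma~5.2]{Kahle2009} and used as a black box, with only Remark~\ref{rem:RelLinkNTC} added to record that $\gamma\cap\lk(v)=\bdr_k(\gamma\cap\st(v))$ and $v\notin\vsupp(\gamma\cap\lk(v))$. Your reconstruction is the standard argument and is correct in outline.

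One small sign correction in step~(iii): with your $v$-first orientation you have $\bdr_{k+1}(v*\mu)=\mu - v*\bdr_k\mu=\mu-(\gamma\cap\st(v))$, so the choice $\gamma':=\gamma-\bdr_{k+1}(v*\mu)$ produces $2(\gamma\cap\st(v))$ in the $v$-part rather than cancelling it (over $\mathbb{Z}$ or a field of characteristic $\neq 2$). Taking instead $\gamma':=\gamma+\bdr_{k+1}(v*\mu)$ gives $\gamma'=(\gamma-\gamma\cap\st(v))+\mu$, which is supported on $\vsupp(\gamma)\setminus\{v\}$, and the minimality contradiction goes through exactly as you describe.
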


\begin{remark}
\label{rem:RelLinkNTC}
Consider the setup as in Lemma~\ref{lem:RelLinkNTC}. For  $v \in \vsupp(\gamma),$ define $\gamma \cap \st(v)$ to be $\sum_{\sigma \in F : v \in F} c_\sigma \sigma.$ Then, it is easy to see that $\gamma \cap \lk(v) = \bdr_k(\gamma \cap \st(v)).$ Also, since $\gamma$ is a $k-$cycle, it follows that $v \notin \vsupp(\gamma \cap \lk(v)).$

\end{remark}

\subsection{Probabilistic Background}
%

% Tail bounds for Multivariate Gaussian. Also, give Savage condition.

% Mention Slepian's inequality and how it can be used.

Another key element across our computations is the tail probability of  a multivariate Gaussian random vector. A tight estimate for this has been given in \cite{savage1962mills}. We state this result below after introducing some relevant notations.

Henceforth, we  use the bar notation for vectors such as $\vec{Y}, \vec{t},$ etc. To refer to their $l$-th coordinate we use $\vec{Y}(l), \vec{t}(l),$ etc. All our vectors are row vectors and we denote their transpose using {\scriptsize $\tr.$} All vector inequalities mean that they hold coordinate wise. Specifically, for any random vector $\vec{X} \in \Real^i$ and $\vec{u} \in \Real^i,$ by $\{\vec{X} \geq \vec{u}\},$ we mean the event $\{\vec{X}(1) \geq \vec{u}(1), \ldots, \vec{X}(i) \geq \vec{u}(i)\}.$  The determinant of a matrix $M$ is denoted by $|M|,$ while $|r|$ denotes the absolute value for any $r \in \Real.$ Lastly, we use $\indc$ to denote both the indicator random variable as well as the indicator function.

\begin{lemma} \cite[(I), (II)]{savage1962mills}
\label{lem:GaussianTailBounds}
Let $i \geq 1$ and let $\vec{Y} \in \Real^i$ be a zero mean multivariate Gaussian random vector with positive definite covariance matrix $M.$ Then, for each $\vec{u} \in \Real^i$ such that $\vec{\Delta} := \vec{u} M^{-1} > 0$ holds, the so called `Savage condition', we have
\begin{equation}
\label{eqn:GaussianTailEst}
1 -  \frac{1}{2}\sum_{j, \ell = 1}^{i} \frac{M_{j\ell} [1 + \delta_{j \ell}]}{\vec{\Delta}(j) \; \vec{\Delta}(\ell) }  \leq \dfrac{\Pr\{\vec{Y} \geq \vec{u}\}}{(2 \pi)^{- i /2}  |M|^{-1/2} \exp[-[\vec{u} M^{-1} \vec{u}^\tr]/2]\left[\prod_{j = 1}^{i} \vec{\Delta}(j) \right]^{-1}  } \leq 1,
\end{equation}
where $M_{j \ell}$ is $j \ell-$th entry of $M^{-1},$ and $\delta_{j \ell} = 1$ if $j = \ell,$ and $0$ otherwise.
\end{lemma}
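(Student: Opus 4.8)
The plan is to reproduce Savage's argument: an exponential change of variables turns the Gaussian orthant integral into a small perturbation of a product of one-dimensional exponential integrals. Starting from $\Pr\{\vec{Y}\geq\vec{u}\} = \int_{\vec{y}\geq\vec{u}}(2\pi)^{-i/2}|M|^{-1/2}\exp[-\tfrac12\vec{y}M^{-1}\vec{y}^{\tr}]\,d\vec{y}$, I would substitute $\vec{y} = \vec{u}+\vec{z}$ with $\vec{z}\geq 0$ and expand the quadratic form as $\vec{y}M^{-1}\vec{y}^{\tr} = \vec{u}M^{-1}\vec{u}^{\tr} + 2\,\vec{\Delta}\vec{z}^{\tr} + \vec{z}M^{-1}\vec{z}^{\tr}$, using that $\vec{\Delta}=\vec{u}M^{-1}$ by definition. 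The density then factors as the constant $(2\pi)^{-i/2}|M|^{-1/2}\exp[-\tfrac12\vec{u}M^{-1}\vec{u}^{\tr}]$ times $\exp[-\vec{\Delta}\vec{z}^{\tr}]\exp[-\tfrac12\vec{z}M^{-1}\vec{z}^{\tr}]$, so dividing through by the normalizing quantity in the denominator of \eqref{eqn:GaussianTailEst} reduces the claim to showing that
\[
\Big(\prod_{j=1}^{i}\vec{\Delta}(j)\Big)\int_{\vec{z}\geq 0}\exp[-\vec{\Delta}\vec{z}^{\tr}]\,\exp\!\big[-\tfrac12\vec{z}M^{-1}\vec{z}^{\tr}\big]\,d\vec{z}
\]
lies between $1 - \tfrac12\sum_{j,\ell=1}^{i}M_{j\ell}[1+\delta_{j\ell}]/(\vec{\Delta}(j)\vec{\Delta}(\ell))$ and $1$.

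For the upper bound I would use positive definiteness of $M^{-1}$, so that $\exp[-\tfrac12\vec{z}M^{-1}\vec{z}^{\tr}]\leq 1$ on the orthant; the remaining integral factors coordinatewise as $\prod_j\int_0^\infty e^{-\vec{\Delta}(j)z}\,dz = \prod_j\vec{\Delta}(j)^{-1}$, where the Savage condition $\vec{\Delta}>0$ is precisely what makes each factor finite. For the lower bound I would instead use $e^{-x}\geq 1-x$ (legitimate since $x=\tfrac12\vec{z}M^{-1}\vec{z}^{\tr}\geq 0$), which leaves the task of evaluating $\int_{\vec{z}\geq 0}\exp[-\vec{\Delta}\vec{z}^{\tr}]\,\vec{z}M^{-1}\vec{z}^{\tr}\,d\vec{z}$. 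Writing $\vec{z}M^{-1}\vec{z}^{\tr}=\sum_{j,\ell}M_{j\ell}z_jz_\ell$ and integrating term by term with the elementary moments $\int_0^\infty ze^{-az}\,dz=a^{-2}$ and $\int_0^\infty z^2e^{-az}\,dz=2a^{-3}$ gives exactly $\big(\prod_m\vec{\Delta}(m)^{-1}\big)\sum_{j,\ell}M_{j\ell}[1+\delta_{j\ell}]/(\vec{\Delta}(j)\vec{\Delta}(\ell))$, the Kronecker delta absorbing the extra factor of $2$ that appears on the diagonal. Multiplying through by $\prod_j\vec{\Delta}(j)$ and combining the two estimates yields \eqref{eqn:GaussianTailEst}.

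I do not anticipate a genuine obstacle here; the only care needed is the bookkeeping of the cross term in the quadratic form (so that $\vec{\Delta}$ emerges cleanly) and the case split $j=\ell$ versus $j\neq\ell$ in the moment computation. The hypotheses enter minimally: positive definiteness of $M$ makes both the density and the bound $\exp[-\tfrac12\vec{z}M^{-1}\vec{z}^{\tr}]\leq 1$ legitimate, while the Savage condition $\vec{\Delta}>0$ is exactly the integrability requirement for the one-dimensional exponential integrals. (The lower bound has content only when the correction term is smaller than $1$, but the inequality itself holds unconditionally.)
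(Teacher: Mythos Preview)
Your argument is correct and is precisely Savage's original proof; the paper itself does not supply a proof but simply cites \cite{savage1962mills}. Every step checks out: the substitution $\vec{y}=\vec{u}+\vec{z}$ correctly produces the linear term $\vec{\Delta}\vec{z}^{\tr}$, the upper bound uses only positive definiteness of $M^{-1}$ and the Savage condition for integrability, and the moment computation for the lower bound is exactly right (the $1+\delta_{j\ell}$ factor arises from $\int_0^\infty z^2 e^{-az}\,dz = 2a^{-3}$ versus $\int_0^\infty z\,e^{-az}\,dz = a^{-2}$ as you note).
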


To get elegant closed form expressions for tail probabilities using the above result, it is important that the covariance matrix be `nice'. Often, this will not be the case in our computations. To deal with the same, we shall be using the following comparison inequality.

\begin{lemma} [Slepian's lemma]
\label{lem:SlepainInequality}
Let $i \geq 0.$ Let $\vec{Y}, \vec{Z} \in \Real^i$ be multivariate Gaussian random vectors such that $\ExP[\vec{Y}] = \ExP[\vec{Z}],$ $\Var[\vec{Y}(j)] = \Var[\vec{Z}(j)]$ for all $ j,$ and $\cov[\vec{Y}(j), \vec{Y}(\ell)] \geq \cov[\vec{Z}(j), \vec{Z}(\ell)]$ for all $j, \ell$ with $j \neq \ell.$ Then, for any $\vec{u} \in \Real^i,$ we have $\Pr\{\vec{Y} \geq \vec{u}\} \geq \Pr\{\vec{Z} \geq \vec{u}\}.$
\end{lemma}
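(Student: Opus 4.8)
The plan is to prove the inequality by a standard interpolation argument, reducing it to a statement about partial derivatives of a multivariate Gaussian probability with respect to the covariances. First I would dispose of trivial cases: if $i = 0$ the statement is vacuous, and if $i = 1$ both sides equal the same univariate tail since the covariance hypothesis is empty and the variances agree. So assume $i \geq 2$. I would also first reduce to the case where both $\vec Y$ and $\vec Z$ have \emph{nondegenerate} (positive definite) covariance matrices: the general case follows by adding an independent $N(\vec 0, \varepsilon \eye)$ perturbation to each vector, which preserves the ordering of off-diagonal covariances and the equality of variances (both get shifted by $\varepsilon$ on the diagonal, by nothing off-diagonal), then letting $\varepsilon \downarrow 0$ and using continuity of the probabilities in question.

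The main step is the interpolation. Let $\Sigma^{(0)} = \cov[\vec Z]$ and $\Sigma^{(1)} = \cov[\vec Y]$, and for $s \in [0,1]$ set $\Sigma(s) := (1-s)\Sigma^{(0)} + s\Sigma^{(1)}$. Since the diagonals of $\Sigma^{(0)}$ and $\Sigma^{(1)}$ coincide and each is positive definite, every $\Sigma(s)$ is positive definite with the same diagonal; let $\vec W_s \sim N(\vec 0, \Sigma(s))$ and define
\[
\varphi(s) := \Pr\{\vec W_s \geq \vec u\} = \int_{[\vec u(1), \infty) \times \cdots \times [\vec u(i), \infty)} f_{\Sigma(s)}(\vec w)\, \d{\vec w},
\]
where $f_{\Sigma(s)}$ is the corresponding Gaussian density. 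The goal is $\varphi(1) \geq \varphi(0)$, for which it suffices to show $\varphi'(s) \geq 0$ on $(0,1)$. By the chain rule, $\varphi'(s) = \sum_{j < \ell} (\Sigma^{(1)}_{j\ell} - \Sigma^{(0)}_{j\ell}) \, \partial \varphi / \partial \Sigma_{j\ell}$, and by hypothesis each coefficient $\Sigma^{(1)}_{j\ell} - \Sigma^{(0)}_{j\ell} = \cov[\vec Y(j),\vec Y(\ell)] - \cov[\vec Z(j),\vec Z(\ell)]$ is nonnegative; so it is enough to prove $\partial \varphi / \partial \Sigma_{j\ell} \geq 0$ for each pair $j \neq \ell$. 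The key identity here is the Gaussian heat-type relation
\[
\frac{\partial f_\Sigma}{\partial \Sigma_{j\ell}} = \frac{\partial^2 f_\Sigma}{\partial w_j \, \partial w_\ell} \qquad (j \neq \ell),
\]
which one verifies directly from the explicit form of the Gaussian density (differentiating $\log f_\Sigma$ and using $\partial \Sigma^{-1}/\partial \Sigma_{j\ell}$). Substituting this and integrating the two derivatives $\partial_{w_j}, \partial_{w_\ell}$ over the orthant, both of which produce boundary terms at $w_j = \vec u(j)$ and $w_\ell = \vec u(\ell)$, collapses the $i$-fold integral to an $(i-2)$-fold integral of the density of $(\vec W_s(m))_{m \neq j, \ell}$ conditioned on $\vec W_s(j) = \vec u(j), \vec W_s(\ell) = \vec u(\ell)$ over the remaining orthant, times the positive marginal density at those two points. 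This quantity is manifestly nonnegative, giving $\partial \varphi/\partial\Sigma_{j\ell} \geq 0$ and hence $\varphi' \geq 0$.

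The step I expect to be the main obstacle is making the differentiation under the integral sign and the orthant boundary-term bookkeeping fully rigorous: one must justify interchanging $\d/\d s$ with the improper integral over an unbounded region (dominated-convergence bounds on the Gaussian density and its derivatives, uniform in $s$ on compact subintervals, handle this), and one must carefully track that the only surviving boundary contributions come from the lower limits $w_j = \vec u(j)$, $w_\ell = \vec u(\ell)$ since the density and its first derivatives vanish at $+\infty$. Once the identity $\partial_{\Sigma_{j\ell}} f_\Sigma = \partial^2_{w_j w_\ell} f_\Sigma$ is in hand, the rest is bookkeeping; I would present that identity as the crux and relegate the domination estimates to a short remark. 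An alternative, if one prefers to avoid the PDE identity, is to note that the result is classical and simply cite it, since the excerpt uses it only as a black-box comparison tool; but the interpolation proof above is self-contained and short.
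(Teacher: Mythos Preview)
Your interpolation argument is correct and is in fact the standard proof of Slepian's inequality. The paper does not reproduce this argument; it simply invokes \cite[Theorem~2.3]{adler1990introduction}, a general Gaussian comparison theorem for coordinatewise increasing functionals, and specializes it to $h(\vec x) = \prod_{j=1}^{i} \indc[\vec x(j) \geq \vec u(j)]$, noting that the non-smoothness of the indicators is handled by approximating them with smooth increasing functions. Under the hood, the cited theorem in Adler is proved by exactly the interpolation-plus-heat-identity argument you outline, so the mathematical content is the same; the difference is that the paper outsources the work to a citation (plus a smoothing remark), while you spell out the full self-contained proof. Your version has the advantage of being self-contained and of making explicit where the nonnegativity comes from; the paper's version is shorter and, since Slepian's lemma is used here only as a black box, arguably all that is needed.
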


The proof of this result follows from \cite[Theorem 2.3]{adler1990introduction} by substituting $h(\vec{x}(1), \vec{x}(2), \ldots, \vec{x}(i)) = \prod_{j = 1}^{i} \indc[\vec{x}(j) \geq \vec{u}(j)],$ where $h$ is as defined there. The only catch is that this function is not smooth. But this can be easily overcome by approximating the indicators with smooth increasing functions.

We prove both the Poisson as well the CLT portions of Theorem~\ref{thm:BehMainResult} using the Stein-Chen method. As is the case here, this method is useful when we are dealing with limit distributions of a sum of dependent indicator random variables. To apply this method, a bound on the total variation distance between the above mentioned sum and a Poisson random variable having the same mean is required. We give one such bound in Theorem~\ref{thm:SuffCondPoissonConv}. In order to be able to apply this bound to excursions of Gaussian random vectors, an additional technical result is required. This is given in Theorem~\ref{thm:ProbSpaceExistence}, the proof of which is along the lines given in \cite[Theorem 2.2]{holst1990poisson}.

\begin{definition}
\label{Defn:DistTV}
Let $Y, Z$ be $\Real-$valued random variables. The total variation distance between $Y$ and $Z$ is $\tv{Y}{Z} := \sup_{R \subseteq \Real}|\Pr\{Y \in R\} - \Pr\{Z \in R\}|.$
\end{definition}

\begin{remark}
\label{rem:DistTVInt}
If $Y$ and $Z$ are integer valued random variables defined on the same probability space, then $\tv{Y}{Z} \leq \Pr\{Y \neq Z\};$ see \cite[pg 129]{grimmett2001probability}.
\end{remark}

\begin{remark}
\label{rem:DistTVPoiDiffMean}
$\tv{\Poi(\lambda_1)}{ \Poi(\lambda_2)} \leq |\lambda_1 - \lambda_2|;$ see
\cite[Corollary 3.1]{adell2005sharp}.
\end{remark}

\begin{theorem} \cite[Theorem 2.1]{holst1990poisson}
\label{thm:SuffCondPoissonConv} \textnormal{(Stein-Chen method)}
Let $\Idx$ be a finite index set. For $i \in \Idx,$ let $\Idx_i := \Idx \setminus \{i\}.$ Let $S = \sum_{i \in \Idx}\indc_i$ and $\lambda = \ExP[S],$ where $\{\indc_i\}_{i \in \Idx}$ are some indicator random variables. For each $i,$ let there be a probability space with indicator random variables $\{\zeta_{ji}\}_{j \in \Idx}$ and $\{\eta_{ji}\}_{j \in \Idx_i}$ defined on  it such that
\[
\sL(\zeta_{ji} : j \in \Idx) = \sL(\indc_j : j \in \Idx),
\]
and
\[
\sL(\eta_{ji}: j \in \Idx_i) = \mathscr{L}(\indc_j: j \in \Idx_i \;|\; \indc_i = 1),
\]
where $\mathscr{L}(\cdot)$ denotes the distribution function. Then, for $\cU_i = \sum_{j \in \Idx} \zeta_{ji}$ and $\cV_{i} = \sum_{j \in \Idx_i} \eta_{ji},$
\[
\tv{S}{\Poi(\lambda)} \leq \frac{1 - e^{-\lambda}}{\lambda} \sum_{i \in \Idx} \ExP[\indc_i] \; \ExP|\cU_i - \cV_i|.
\]
In addition, if there exists a partition $\Idx_i = \Idx_i^{+} \cup \Idx_i^{-} \cup \Idx_i^{0}$ with $\zeta_{ji} \leq \eta_{ji}$ a.s. for $j \in \Idx_i^{+},$ and $\zeta_{ji} \geq \eta_{ji}$ a.s. for $j \in \Idx_i^{-},$ then
\[
\tv{S}{\Poi(\lambda)} \leq \frac{1 - e^{-\lambda}}{\lambda} \left[\sum_{i \in \Idx}[\ExP[\indc_i]]^2 + \sum_{i \in \Idx} \sum_{j \in \Idx_i^{+} \cup \Idx_i^{-}} |\cov[\indc_i, \indc_j]| + \sum_{i \in \Idx} \sum_{j \in \Idx_i^{0}} \left[\ExP[\indc_i \indc_j] + \ExP[\indc_i] \ExP[\indc_j]\right] \right].
\]
\end{theorem}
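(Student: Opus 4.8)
The plan is to run the classical Stein--Chen argument for the Poisson distribution, exactly as in \cite{holst1990poisson}. The single genuinely analytic input is the ``magic factor'' estimate for the Poisson Stein equation: for every $R \subseteq \pGrid$ there is a bounded function $g_R : \pGrid \to \Real$ solving
\[
\lambda\, g_R(m + 1) - m\, g_R(m) = \indc[m \in R] - \Pr\{\Poi(\lambda) \in R\}, \qquad m \geq 0,
\]
and satisfying $\sup_{m \geq 0}|g_R(m + 1) - g_R(m)| \leq (1 - e^{-\lambda})/\lambda$; I would import this from the literature rather than reprove it. Substituting $m = S$ and taking expectations gives the basic identity $\Pr\{S \in R\} - \Pr\{\Poi(\lambda) \in R\} = \ExP[\lambda\, g_R(S + 1) - S\, g_R(S)]$.

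Next I would rewrite the right-hand side through the two couplings. Since $\lambda = \sum_{i \in \Idx}\ExP[\indc_i]$ and $\cU_i$ has the law of $S$ by the first distributional hypothesis, $\ExP[\lambda\, g_R(S + 1)] = \sum_i \ExP[\indc_i]\,\ExP[g_R(\cU_i + 1)]$. For the subtracted term, because each $\indc_i$ is an indicator, $\ExP[S\, g_R(S)] = \sum_i \ExP[\indc_i g_R(S)] = \sum_i \ExP[\indc_i]\,\ExP[g_R(S) \mid \indc_i = 1]$ (summands with $\ExP[\indc_i] = 0$ drop out), and under $\sL(\indc_j : j \in \Idx_i \mid \indc_i = 1)$ we have $S = 1 + \sum_{j \in \Idx_i}\indc_j$ equal in law to $1 + \cV_i$ by the second hypothesis, whence $\ExP[S\, g_R(S)] = \sum_i \ExP[\indc_i]\,\ExP[g_R(\cV_i + 1)]$. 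Therefore
\[
\Pr\{S \in R\} - \Pr\{\Poi(\lambda) \in R\} = \sum_{i \in \Idx}\ExP[\indc_i]\,\ExP\big[g_R(\cU_i + 1) - g_R(\cV_i + 1)\big].
\]
Telescoping $g_R$ along unit increments and using the magic factor gives $|g_R(\cU_i + 1) - g_R(\cV_i + 1)| \leq \frac{1 - e^{-\lambda}}{\lambda}|\cU_i - \cV_i|$; taking expectations and then the supremum over $R$ produces the first bound of the theorem.

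For the refined bound I would not apply the crude estimate $|\cU_i - \cV_i| \leq \zeta_{ii} + \sum_j|\zeta_{ji} - \eta_{ji}|$ but instead route through the monotone intermediate variable $W_i := 1 + \sum_{j \in \Idx_i^{+}}\zeta_{ji} + \sum_{j \in \Idx_i^{-}}\eta_{ji}$. Expanding $\cU_i + 1$ and $\cV_i + 1$ and using $\zeta_{ji} \leq \eta_{ji}$ on $\Idx_i^{+}$ and $\zeta_{ji} \geq \eta_{ji}$ on $\Idx_i^{-}$, both differences $\cU_i + 1 - W_i = \zeta_{ii} + \sum_{j \in \Idx_i^{-}}(\zeta_{ji} - \eta_{ji}) + \sum_{j \in \Idx_i^{0}}\zeta_{ji}$ and $\cV_i + 1 - W_i = \sum_{j \in \Idx_i^{+}}(\eta_{ji} - \zeta_{ji}) + \sum_{j \in \Idx_i^{0}}\eta_{ji}$ are a.s.\ nonnegative, so
\[
\big|g_R(\cU_i + 1) - g_R(\cV_i + 1)\big| \leq \tfrac{1 - e^{-\lambda}}{\lambda}\big[(\cU_i + 1 - W_i) + (\cV_i + 1 - W_i)\big].
\]
Taking expectations, multiplying by $\ExP[\indc_i]$, and summing over $i$, I would use the marginal identities $\ExP[\zeta_{ii}] = \ExP[\indc_i]$, $\ExP[\zeta_{ji}] = \ExP[\indc_j]$, and $\ExP[\eta_{ji}] = \ExP[\indc_i\indc_j]/\ExP[\indc_i]$; the sign-definiteness forced by the monotonicity on $\Idx_i^{\pm}$ collapses each such contribution to $|\cov[\indc_i, \indc_j]|$, the $\Idx_i^{0}$ terms to $\ExP[\indc_i\indc_j] + \ExP[\indc_i]\ExP[\indc_j]$, and the diagonal term to $[\ExP\indc_i]^2$. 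Taking $\sup_R$ then gives the second bound.

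The main obstacle is solely the Stein-equation estimate $\sup_m|g_R(m + 1) - g_R(m)| \leq (1 - e^{-\lambda})/\lambda$; everything past it is bookkeeping with the two couplings. The only points requiring a little care are the vanishing of summands with $\ExP[\indc_i] = 0$ (so conditioning on $\{\indc_i = 1\}$ never actually occurs where it is undefined) and the routine interchanges of finite sums with expectations, which are automatic because $\Idx$ is finite.
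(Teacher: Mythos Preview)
Your proof is correct and is essentially the standard Stein--Chen argument from \cite{holst1990poisson}; the paper itself does not supply a proof but simply cites that reference (noting only that the statement has been lightly reformatted). In particular, your use of the intermediate variable $W_i$ to route the telescoping through a common lower bound of $\cU_i + 1$ and $\cV_i + 1$ is exactly the device that produces the refined second inequality, and your bookkeeping of the marginal identities and covariance signs on $\Idx_i^{\pm}$ is accurate.
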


\begin{remark}
For ease of use here, the above result is stated slightly differently from the original version given in \cite[Theorem 2.1]{holst1990poisson}.
\end{remark}

\begin{theorem}
\label{thm:ProbSpaceExistence}
Let $j, k \geq 0.$ Let $\vec{Y}_i,$ for $0 \leq i \leq j,$ be a random vector in $\Real^k$ so that $(\vec{Y}_0, \ldots,  \vec{Y}_j)$ is multivariate Gaussian with $\cov[\vec{Y}_0(\ell_1), \vec{Y}_i(\ell_2)] \geq 0$ for all $0 \leq i \leq j$ and $1 \leq \ell_1, \ell_2 \leq k.$ Then, for any $\vec{u} \in \Real^k,$ there exists a probability space with $\Real^k-$valued random vectors $\{\vec{Z}_{i}: 1 \leq i \leq j\}$ and $\{\vecp{Z}_{i}: 1 \leq i \leq j\}$ such that
\[
\mathscr{L}(\vec{Z}_{i}: 0 \leq i \leq j) = \mathscr{L}(\vec{Y}_{i}: 0 \leq i \leq j),
\]
\[
\mathscr{L}(\vecp{Z}_{i}: 0 \leq i \leq j) = \mathscr{L}(\vec{Y}_{i}: 0 \leq i \leq j \;|\; \vec{Y}_{0} \geq \vec{u}),
\]
and, for all $0 \leq i \leq j, 1 \leq \ell \leq k,$
\[
\vec{Z}_{i}(\ell) \leq \vecp{Z}_{i}(\ell) \text{ a.s.}
\]
The latter implies that $\indc[\vec{Z}_i \geq \vec{u}] \leq \indc[\vecp{Z}_i \geq \vec{u}]$ a.s. for all $0 \leq i \leq j.$
\end{theorem}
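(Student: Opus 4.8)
The plan is to reduce the assertion to a stochastic-domination statement for the two laws in question and then establish that domination via a Gaussian covariance identity. Equip $\Real^{(j+1)k}$ with the coordinatewise partial order. By a standard monotone-coupling theorem (Strassen's theorem), a probability space carrying $(\vec{Z}_0, \ldots, \vec{Z}_j)$ with law $\mathscr{L}(\vec{Y}_i : 0 \le i \le j)$ and $(\vecp{Z}_0, \ldots, \vecp{Z}_j)$ with law $\mathscr{L}(\vec{Y}_i : 0 \le i \le j \mid \vec{Y}_0 \ge \vec{u})$ such that $\vec{Z}_i(\ell) \le \vecp{Z}_i(\ell)$ a.s.\ for all $i, \ell$ exists precisely when the latter law stochastically dominates the former; and, once such a coupling is in hand, $\vec{Z}_i(\ell) \le \vecp{Z}_i(\ell)$ for all $\ell$ immediately gives $\indc[\vec{Z}_i \ge \vec{u}] \le \indc[\vecp{Z}_i \ge \vec{u}]$, which is the last claim of the theorem. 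So it suffices to prove that $\ExP[f(\vec{Y}_0, \ldots, \vec{Y}_j) \mid \vec{Y}_0 \ge \vec{u}] \ge \ExP[f(\vec{Y}_0, \ldots, \vec{Y}_j)]$ for every bounded, coordinatewise-nondecreasing $f$; equivalently (assuming $\Pr\{\vec{Y}_0 \ge \vec{u}\} > 0$, the only case in which the conditional law is defined), that $\cov[f(\vec{Y}_0, \ldots, \vec{Y}_j), \indc[\vec{Y}_0 \ge \vec{u}]] \ge 0$.

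For the covariance inequality I would invoke the Gaussian integration-by-parts (Price) identity. Write $\vec{W} := (\vec{Y}_0, \ldots, \vec{Y}_j) \in \Real^N$, $N := (j+1)k$, with covariance matrix $\Sigma = (\sigma_{ab})$, let $\vec{W}'$ be an independent copy of $\vec{W}$, and set $\vec{W}_t := t \vec{W} + \sqrt{1 - t^2}\, \vec{W}'$. For bounded smooth $F, G$,
\[
\cov\big[F(\vec{W}), G(\vec{W})\big] = \int_0^1 \sum_{a, b = 1}^{N} \sigma_{ab}\, \ExP\big[\partial_a F(\vec{W})\, \partial_b G(\vec{W}_t)\big]\, dt .
\]
Apply this with $F = f$ and with $G$ a bounded, smooth, coordinatewise-nondecreasing mollification of $\indc[\vec{Y}_0 \ge \vec{u}]$. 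Since $G$ depends only on the $\vec{Y}_0$-block of $\vec{W}$, $\partial_b G \equiv 0$ unless $b$ indexes a coordinate of $\vec{Y}_0$; for every such $b$ (indexing, say, $\vec{Y}_0(\ell_b)$) and every coordinate $a$ of $\vec{W}$ (indexing, say, $\vec{Y}_i(\ell_a)$) one has $\sigma_{ab} = \cov[\vec{Y}_0(\ell_b), \vec{Y}_i(\ell_a)] \ge 0$ by hypothesis. As $\partial_a F \ge 0$ and $\partial_b G \ge 0$ by monotonicity, the integrand is nonnegative throughout, so $\cov[F(\vec{W}), G(\vec{W})] \ge 0$. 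A routine monotone-class / mollification argument (approximating a general bounded nondecreasing $f$ by smooth nondecreasing functions, and $\indc[\vec{Y}_0 \ge \vec{u}]$ from below by smooth nondecreasing $G$) upgrades this to $\cov[f(\vec{Y}_0, \ldots, \vec{Y}_j), \indc[\vec{Y}_0 \ge \vec{u}]] \ge 0$, completing the reduction.

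The crux, and the point where this departs from the scalar conditioning in \cite[Theorem 2.2]{holst1990poisson}, is that when $\vec{Y}_0$ is genuinely vector-valued one cannot write $\vec{Y}_i = R_i \vec{Y}_0 + \vec{W}_i$ with $R_i = \cov[\vec{Y}_i, \vec{Y}_0]\,\cov[\vec{Y}_0]^{-1}$ having nonnegative entries: the inverse covariance of $\vec{Y}_0$ typically has negative entries, so the explicit comonotone construction available in the scalar case fails, and iterated one-coordinate-at-a-time conditioning does not preserve the relevant positivity structure either. One is therefore forced to argue at the level of the covariance functional (an FKG-type inequality), using only the hypothesis that \emph{each} coordinate of the whole vector is nonnegatively correlated with \emph{each} coordinate of the conditioning block $\vec{Y}_0$ --- which is exactly what the Price identity consumes. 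The subsidiary technical nuisances are the approximation step (mollifying, and controlling integrability so that the identity applies and survives the limit) and the degenerate cases $\Pr\{\vec{Y}_0 \ge \vec{u}\} = 0$ or singular $\Sigma$, both of which can be absorbed by perturbing $\Sigma$ to $\Sigma + \varepsilon I$ and letting $\varepsilon \downarrow 0$; the final passage from stochastic domination to the coupling asserted in the statement is then just an appeal to Strassen's theorem.
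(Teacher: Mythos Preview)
Your proposal is correct and follows essentially the same route as the paper: both reduce the coupling to stochastic domination via Strassen's theorem (the paper cites Kamae--Krengel--O'Brien for the equivalence), and both obtain the key inequality $\cov\big[h(\vec{Y}_0,\ldots,\vec{Y}_j),\,\indc[\vec{Y}_0\ge\vec{u}]\big]\ge 0$ for increasing $h$ from Gaussian positive-association. The only difference is cosmetic: you prove that inequality from scratch via the interpolation/Price identity, whereas the paper simply cites \cite[Corollary~3]{joag1983association} for it.
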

\begin{proof}
Let $g_{\vec{u}}: \Real^k \to \Real$ be the map $g_{\vec{u}}(\vec{y}) = \indc[\vec{y} \geq \vec{u}].$ Clearly, $g_{\vec{u}}$ is monotonically increasing in its arguments (with respect to coordinate-wise partial order). Let $h: \Real^{(j + 1)k} \to \Real$ be an arbitrary increasing function in the above sense.
Then, from \cite[Corollary 3]{joag1983association},
\begin{multline}
\label{eqn:ExPprodToProdExp}
\ExP[g_{\vec{u}}(\vec{Y}_{0}(1), \ldots, \vec{Y}_{0}(k)) \,  h(\vec{Y}_0(1), \ldots, \vec{Y}_0(k), \ldots, \vec{Y}_{j}(1), \ldots, \vec{Y}_j(k)) ] \geq \\
\ExP[g_{\vec{u}}(\vec{Y}_{0}(1), \ldots, \vec{Y}_{0}(k))] \ExP[h(\vec{Y}_0(1), \ldots, \vec{Y}_0(k), \ldots, \vec{Y}_{j}(1), \ldots, \vec{Y}_j(k))].
\end{multline}
Hence, we have
\begin{multline*}
\ExP[h(\vec{Y}_0(1), \ldots, \vec{Y}_0(k), \ldots, \vec{Y}_{j}(1), \ldots, \vec{Y}_j(k)) \;|\; \vec{Y}_0 \geq \vec{u}] \geq \\
\ExP[h(\vec{Y}_0(1), \ldots, \vec{Y}_0(k), \ldots, \vec{Y}_{j}(1), \ldots, \vec{Y}_j(k))].
\end{multline*}
The desired result now holds from the equivalence of conditions (i) and (iv) in \cite[Theorem 1]{kamae1977stochastic} (under the coordinate-wise partial order).
\end{proof}

\section{Outline of Proof of Key Results}
\label{sec:ProofOutline}
Here we first state all our major intermediate results and then prove Theorems~\ref{thm:ExPMainResult} and \ref{thm:BehMainResult}. Proofs of these intermediate results are given later in Sections~\ref{sec:BettiBounds}, \ref{sec:IntResults}, \ref{sec:ConclResults}, and the Appendix; the page numbers are noted near the statements. The result here either needs none or only a subset of the assumptions from Table~\ref{tab:Assumptions}. While $\Cr{a:rCAsm}$ is never needed in its entirety, various weaker implications of it are used at different times. These are stated in Table~\ref{tab:MoreAssumptions}.

\begin{table}
\begin{tabular}{c | l | c | c | c}
\hline

Id & Assumption & Applicable when & Type & Implied by\\

\hline

& & & & \\[-2ex]

$\Cl[Asm]{a:r2Asm}$ & $\begin{cases}
1 = 1\\[2ex]
0 \leq \rho_2 < \rho_0
\end{cases}
$
&
$
\parbox{7em}{
\vspace{-1.25ex}
$k = 0$\\[2.75ex]
$1 \leq  k < d$
}
$ & Local & $\Cr{a:r1Asm}$ and $\Cr{a:r2LAsm}$ \\[5ex]
$\Cl[Asm]{a:rqAsm}$ & $\begin{cases}
1 = 1\\[2ex]
0 \leq \sup\limits_{q \geq 4}\rho_q \leq \rho_3 < \rho_2
\end{cases}
$
&
$
\parbox{7em}{
\vspace{-1.25ex}
$k = 0, d - 1$\\[2.75ex]
$1 \leq  k < d - 1$
}
$ & Boundedness & $\Cr{a:rCAsm}$ \\[6ex]
$\Cl[Asm]{a:r2LAsm}$ & $
\begin{cases}
0 \leq \sup\limits_{q \geq 2}\rho_q \leq \rho_1\\[2ex]
0 \leq \sup\limits_{q \geq 3}\rho_q \leq \rho_2 < \rho_1
\end{cases}
$
&
$
\parbox{7em}{
\vspace{-1.25ex}
$k = 0$\\[2.75ex]
$1 \leq  k < d$
}
$ & Boundedness & $\Cr{a:rCAsm}$  \\[6ex]
\hline
\end{tabular}\\[2ex]
\caption{\label{tab:MoreAssumptions} Weaker Implications of Assumption~$\Cr{a:r1Asm}$ and $\Cr{a:rCAsm}.$}
\end{table}

Understanding the statistical behaviour of Betti numbers is not straightforward. This is because they, being rank of some space, are not nice enough combinatorial objects to be handled directly. Hence, as remarked in Section~\ref{sec:intro}, the trick is to use good approximators. We provide these in Theorem~\ref{thm:RBettiBounds}. But to see the motivation behind them, we initially discuss few relevant properties of the Betti numbers of our \Cech complex $\cK(n; u),$ which we establish separately.

First we show that the \Cech and \Rips complexes from Definitions~\ref{Defn:Cech} and \ref{Defn:Rips} are equivalent. The statement is given below and the proof follows from Lemma~\ref{lem:EqGenCechRips}.

\begin{proposition}
\label{prop:EqCechRips}
Let $n \geq 0$ and $u \in \pReal.$ Then, $\cK(n; u) = \hat{\cK}(n; u).$
\end{proposition}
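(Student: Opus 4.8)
The plan is to prove the equality $\cK(n; u) = \hat{\cK}(n; u)$ by showing that the defining conditions for a face coincide in this lattice setting. Since both complexes have the same vertex set $\cF_0$, it suffices to show that for any $\sigma \subset \cF_0$, the \Cech condition $\bigcap_{\vec{t} \in \sigma} \sB_\infty(\vec{t}, \tfrac{1}{2}) \neq \emptyset$ holds if and only if the \Rips condition $\sB_\infty(\vec{t}_i, \tfrac{1}{2}) \cap \sB_\infty(\vec{t}_j, \tfrac{1}{2}) \neq \emptyset$ holds for every pair $\vec{t}_i, \vec{t}_j \in \sigma$. The forward implication is immediate and holds for any metric: a point in the common intersection of all balls lies in each pairwise intersection. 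So the real content is the reverse implication, and the excerpt tells us this will be extracted from a more general statement, Lemma~\ref{lem:EqGenCechRips}; thus I would simply invoke that lemma once its hypotheses are verified for our balls $\sB_\infty(\vec{t}, \tfrac{1}{2})$ centered at lattice points.

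For the reverse implication itself (the heart of the auxiliary lemma), the key observation is that $\sB_\infty(\vec{t}, \tfrac{1}{2})$ with $\vec{t} \in \dGrid$ is the axis-aligned cube $\prod_{\ell=1}^d [\vec{t}(\ell) - \tfrac{1}{2}, \vec{t}(\ell) + \tfrac{1}{2}]$, i.e.\ a product of intervals, and the $\|\cdot\|_\infty$ balls are exactly such boxes. Pairwise intersection of boxes reduces coordinatewise to pairwise intersection of intervals on $\Real$, and on the real line the \emph{Helly property in dimension one} says that a family of intervals has a common point if and only if every pair of them intersects. Applying this coordinate by coordinate, if every pair $\sB_\infty(\vec{t}_i, \tfrac{1}{2}) \cap \sB_\infty(\vec{t}_j, \tfrac{1}{2})$ is nonempty, then in each coordinate $\ell$ the intervals $[\vec{t}_i(\ell) - \tfrac{1}{2}, \vec{t}_i(\ell) + \tfrac{1}{2}]$ pairwise intersect, hence have a common point $x_\ell$; then $(x_1, \ldots, x_d) \in \bigcap_{\vec{t}_i \in \sigma} \sB_\infty(\vec{t}_i, \tfrac{1}{2})$, giving the \Cech condition. (Concretely, pairwise intersection of the unit-side cubes centered at lattice points forces $|\vec{t}_i(\ell) - \vec{t}_j(\ell)| \leq 1$ for all $\ell$, and one can just take $x_\ell = \tfrac{1}{2}(\max_i \vec{t}_i(\ell) + \min_i \vec{t}_i(\ell))$ after checking $\max_i \vec{t}_i(\ell) - \min_i \vec{t}_i(\ell) \leq 1$, which follows from pairwise closeness since lattice coordinates are integers.) The proof then concludes: $\sigma$ is a face of $\hat{\cK}(n;u)$ iff it is a face of $\cK(n;u)$, so the two complexes are equal.

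I expect the main (and only mild) obstacle to be purely organizational rather than mathematical: making sure the general lemma \ref{lem:EqGenCechRips} is stated at the right level of generality — presumably for finite intersections of products of convex sets, or specifically for $\|\cdot\|_\infty$ balls of a common radius centered at integer points — so that Proposition~\ref{prop:EqCechRips} follows by a one-line specialization. The subtlety worth flagging is that the equivalence is \emph{not} true for general radii or general center configurations (it genuinely uses the one-dimensional Helly property together with the boxes being products), so the statement of the auxiliary lemma must encode exactly those features; once it does, the proof of the proposition is a direct citation with no further computation.
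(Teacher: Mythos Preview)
Your proposal is correct and follows essentially the same coordinate-wise reduction as the paper's Lemma~\ref{lem:EqGenCechRips}: both arguments observe that $\ell_\infty$ balls are products of intervals and reduce the global intersection to a one-dimensional statement. The paper carries out the one-dimensional step by an explicit case analysis using the integrality of the centers (via Fact~\ref{Fact:AbsD1}), constructing the common point with coordinates in $\{\pm\tfrac12\}$; you instead invoke the one-dimensional Helly property for intervals, which is cleaner and in fact more general --- it yields $\cK = \hat\cK$ for $\ell_\infty$ balls of any common radius centered at arbitrary points, not only lattice points.

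One small correction: your closing caveat that ``the equivalence is not true for general radii or general center configurations'' is off. For the $\ell_\infty$ norm, axis-aligned boxes always satisfy the Helly property coordinate-wise, so the \v{C}ech--Rips equality holds regardless of centers or (common) radius; it is a change of \emph{norm} (e.g.\ Euclidean) that can break the equivalence, not the lattice hypothesis. The lattice structure is what the paper's concrete argument uses, but your own Helly argument shows it is not actually needed.
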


An immediate and an important consequence of the above result and Definition~\ref{Defn:Rips} is that its $1-$skeleton, the underlying graph, completely characterizes $\cK(n; u).$ In that, $\sigma$ is a face in $\cK(n; u)$ if and only if the vertices in $\sigma$ form a clique in the $1-$skeleton of $\cK(n; u).$ In other words, $\cK(n; u)$ is the clique complex associated with its $1-$skeleton.

Henceforth, we shall say that the $k-$th Betti number is non-trivial if it is at least $2$ when $k = 0,$ and at least $1$ when $k \geq 1.$ Then, from Proposition~\ref{prop:EqCechRips}, and Lemma~\ref{lem:CardNTC} along with the discussions above it on the relationship between Betti numbers and NTCs, we right away have that every induced subcomplex of $\cK(n; u)$ with non-trivial $k-$th Betti number has at least $2k + 2$ vertices. Note that the induced subcomplex whose $1-$skeleton is isomorphic to $O_k$ has non-trivial $\beta_k.$ Hence, it follows that the minimal induced subcomplex having non-trivial $\beta_k$ must be the one that is isolated, has $2k + 2$ vertices, and whose $1-$skeleton is isomorphic to $O_k.$ Extending these facts to our setup, we prove the following additional characteristic of induced subcomplexes with non-trivial $\beta_k.$ Its proof is via induction and follows from Theorem~\ref{thm:DistantVertices}.

\begin{theorem}
\label{thm:RDistantVertices}
In every induced subcomplex $\cC \subseteq \cK(n; u)$ with non-trivial $k-$th Betti number, where $0 \leq k < d,$ there exist at least $2k + 2$ vertices with pairwise $\|\cdot\|_1$-distances at least $2.$
\end{theorem}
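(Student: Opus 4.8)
## Proof proposal

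The plan is to combine the topological structure lemmas already recalled (Lemma~\ref{lem:CardNTC} and Lemma~\ref{lem:RelLinkNTC}) with the special geometry of $\cK(n;u)$ on the lattice. Since $\cK(n;u)$ is a clique complex (by Proposition~\ref{prop:EqCechRips} and the discussion following it), any induced subcomplex $\cC$ with non-trivial $\beta_k$ contains a $k$-NTC $\gamma$, and by Lemma~\ref{lem:CardNTC} we know $|\vsupp(\gamma)| \geq 2(k+1) = 2k+2$. So the issue is \emph{not} having enough vertices; it is that the $2k+2$ vertices guaranteed by the cardinality bound might be clustered within $\|\cdot\|_1$-distance $1$ of each other. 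The goal is to upgrade ``$2k+2$ vertices'' to ``$2k+2$ vertices that are pairwise $\|\cdot\|_1$-far'', exploiting the fact that on $\dGrid$ two vertices at $\|\cdot\|_1$-distance $1$ are adjacent, so a cluster of such vertices forms a clique, i.e., a single simplex, which is topologically trivial and cannot on its own carry a $k$-dimensional cycle.

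First I would set up an induction on $k$. The base case $k=0$ should be essentially immediate: if $\beta_0(\cC)$ is non-trivial (at least $2$), then $\cC$ has at least two connected components, hence at least two vertices; but more is needed---we want $2k+2 = 2$ vertices at $\|\cdot\|_1$-distance $\geq 2$. Two vertices in distinct components are not adjacent, so in particular their $\|\cdot\|_1$-distance exceeds $1$, hence is $\geq 2$ on the lattice. That gives the base case. For the inductive step, I would take a $k$-NTC $\gamma$ in $\cC$ with minimal vertex support, pick a vertex $v \in \vsupp(\gamma)$, and invoke Lemma~\ref{lem:RelLinkNTC}: $\gamma \cap \lk(v)$ is a $(k-1)$-NTC in the induced subcomplex of $\lk(v)$ restricted to $\vsupp(\gamma)$. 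By Remark~\ref{rem:RelLinkNTC}, $v \notin \vsupp(\gamma \cap \lk(v))$. The induction hypothesis, applied to this link complex (which is again a clique complex, being an induced subcomplex-type object inside $\cK(n;u)$), produces $2k$ vertices in $\lk(v)$ with pairwise $\|\cdot\|_1$-distances $\geq 2$.

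The crux is then to adjoin two more vertices, keeping all pairwise $\|\cdot\|_1$-distances $\geq 2$, to reach $2k+2$. One of them should be $v$ itself---but here is the catch: $v$ is adjacent to every vertex of $\lk(v)$, so $v$ is automatically within $\|\cdot\|_\infty$-distance $1$ of the $2k$ link-vertices, and could be within $\|\cdot\|_1$-distance $1$ of some of them. So a naive ``add $v$'' fails. The fix I anticipate is to \emph{not} necessarily use $v$, but to use the cross-polytope structure from the second half of Lemma~\ref{lem:CardNTC}, or more carefully: among $\vsupp(\gamma)$ there must be a pair of vertices that is \emph{non-adjacent} (otherwise $\vsupp(\gamma)$ is a clique, $\gamma$ is a boundary, contradiction); on the lattice a non-adjacent pair is at $\|\cdot\|_1$-distance $\geq 2$. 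Iterating this ``non-adjacency forces $\|\cdot\|_1$-distance $\geq 2$'' observation, and pairing it with the link-induction to handle the higher-dimensional count, is where the real work lies. Concretely I expect the right statement to prove by induction is: any $k$-NTC with minimal support in a clique complex on $\dGrid$ admits $2k+2$ vertices in its support that are pairwise non-adjacent---equivalently pairwise at $\|\cdot\|_1$-distance $\geq 2$---and this is deduced by choosing $v$ in the support, restricting to $\lk(v)$, getting $2k$ pairwise-non-adjacent vertices there by induction, then showing one can further choose $v$ and a ``partner'' $w$ (the vertex opposite $v$ in the cross-polytope / a vertex not in $\st(v)\cap\vsupp(\gamma)$) so that $v$, $w$, and those $2k$ vertices are all pairwise non-adjacent. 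The main obstacle is precisely this last combinatorial-geometric bookkeeping: verifying that the minimality of vertex support, together with $\gamma$ being a genuine cycle (every boundary face is covered an even/net-zero number of times), forces enough ``spread'' in the support that two extra mutually-non-adjacent vertices can always be extracted alongside the inductively-obtained $2k$. I would expect Theorem~\ref{thm:DistantVertices} (the non-relative version referenced in the text) to carry out exactly this argument, with the induction threading through links just as in the original clique-complex arguments of Kahle, and the $\|\cdot\|_1$-distance $\geq 2$ conclusion being the lattice-specific translation of ``pairwise non-adjacent.''
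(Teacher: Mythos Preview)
Your proposal has a concrete error and, separately, does not confront the actual difficulty.

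First, the claimed equivalence ``pairwise non-adjacent $\Longleftrightarrow$ pairwise $\|\cdot\|_1$-distance $\geq 2$'' is false. Adjacency in $\cK(n;u)$ means $\|\cdot\|_\infty$-distance $\leq 1$; two lattice points can be adjacent and still have $\|\cdot\|_1$-distance $\geq 2$ (e.g.\ $\vec{0}$ and $\vec{e}_1+\vec{e}_2$). In fact your proposed strengthened inductive statement---that one can find $2k+2$ vertices in $\vsupp(\gamma)$ that are \emph{pairwise non-adjacent}---is simply false: for the minimal $k$-NTC supported on a cross-polytope $O_k$, every vertex is adjacent to all but one other vertex, so no such $2k+2$ pairwise non-adjacent vertices exist for $k\geq 1$. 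The theorem is genuinely about $\|\cdot\|_1$, not about adjacency.

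Second, even with the correct target, your plan stalls exactly where you say it does: the $2k$ vertices produced by induction all lie in $\lk(v)$, hence at $\|\cdot\|_\infty$-distance $1$ from $v$, and nothing prevents some of them from being at $\|\cdot\|_1$-distance $1$ from $v$; your suggested fixes (``use the cross-polytope structure'', ``iterate non-adjacency'') do not address this. The paper does \emph{not} try to build the $2k+2$ vertices directly. Instead it argues by contradiction: assume every $(2k+2)$-subset of $\vsupp(\gamma)$ contains a pair at $\|\cdot\|_1$-distance $1$, fix such a pair (say $\vec{0},\vec{e}_1$), and then, via two geometric lemmas about $\|\cdot\|_\infty$-neighbourhoods (one showing certain ``half-neighbourhood'' induced subcomplexes are acyclic, the other forcing $\mN_j/\pN_j$ to meet $\vsupp(\gamma)$), runs a sequence of claims that progressively confine all of $\vsupp(\gamma)$ to $\tNeigh(\vec{0})\cup\tNeigh(\vec{e}_1)$. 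The final contradiction comes from applying the acyclicity lemma to $\lk(\mv)$. The induction hypothesis is used, but inside these claims (to extract $2k$ far vertices in various links and derive contradictions with the standing assumption), not to directly assemble $2k+2$ vertices. The missing ingredient in your outline is precisely this pair of neighbourhood lemmas and the confinement argument they enable.
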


In general, the $1-$skeleton of a minimal induced subcomplex with non-trivial $k-$th Betti number can either be $\|\cdot\|_1-$isometric to $O_k$ or not; Figure~\ref{fig:BettiConfig} gives an example for each of these cases when $d = 3$ and $k = 1.$ Keeping in mind this observation, Figure~\ref{fig:GaussianExcursions},  and the well known topological fact, that the $k-$th Betti number is bounded from above by the number of $k-$faces, we now introduce the different terms with which we approximate the Betti numbers. Let
\begin{align*}
\cSnk(u)  & :=
\begin{cases}
\text{number of isolated vertices in $\cK(n; u),$} & \hspace{1.5em} \text{if $k = 0,$} \\[1ex]
\parbox{27em}{
number of subgraph components in the $1-$skeleton of $\cK(n; u)$ that are isomorphic and $\|\cdot\|_1-$isometric to $O_k,$
}
& \hspace{1.5em}  \text{if $1 \leq k < d;$}
\end{cases}
\end{align*}
and
\begin{align*}
\cNnk(u) & :=
\begin{cases}
0,                  & \hspace{1.5em}  \text{if $k = 0,$} \\[1ex]
\parbox{27em}{
number of subgraph components in the $1-$skeleton of
$\cK(n; u)$ that are isomorphic but not $\|\cdot\|_1$-isometric to $O_k,$
}
& \hspace{1.5em}  \text{if $1 \leq k < d.$}
\end{cases}
\end{align*}\\[1ex]
Further, for $0 \leq k < d,$ to deal with non-minimal subcomplexes that contribute to the $k-$th Betti number, let $\cLnk(u)$ be the number of subgraphs in the $1-$skeleton of $\cK(n; u)$ that are isomorphic to a graph $G \in \cP_k,$ where $\cP_k$ is as specified below. With $\Idx_{2k + 1}$ defined as above Definition~\ref{Defn:Cech}, let $\cG_k$ be the geometric graph  on $\Idx_{2k + 1}$ with respect to $\sB_\infty(\cdot, \tfrac{1}{2});$ i.e., an edge is present between two vertices whenever the $\|\cdot\|_\infty-$distance between them equals $1.$ Now, if $k \geq 2,$ let
\begin{multline}
\label{Defn:cPk}
\cP_k := \{\text{Subgraphs } G \equiv (V, E) \subseteq \cG_k : G \text{ is connected, }  \text{ elements of $V$ are ordered } \\
\text{lexicographically, } |V| \geq 2k + 3, \; \exists \text{ $2k+2$ vertices in $V$ with pairwise} \\ \text{$\|\cdot\|_1$-distances $\geq 2,$ $\exists$ \text{unique $(k + 1)-$sized subset of $V$ that forms a clique in $G$}} \};
\end{multline}
whereas, if $k = 1,$ let
\begin{multline}
\label{Defn:cP1}
\cP_1 := \{\text{Subgraphs } G \equiv (V, E) \subseteq \cG_1 : G \text{ is connected, }  \text{ elements of $V$ are ordered } \\
\text{lexicographically, } |V| \geq 5, \; \text{ $\exists \; 4$ vertices in $V$ with pairwise $\|\cdot\|_1$-distances $\geq 2$} \};
\end{multline}
and, lastly, if $k = 0,$ let
\begin{multline}
\label{Defn:cP0}
\cP_0 := \{\text{Subgraphs } G \equiv (V, E) \subseteq \cG_0 : G \text{ is connected, }  \\ \text{ elements of $V$ are ordered }
\text{lexicographically, } |V| = 2 \}.
\end{multline}

Loosely, $\cP_k$ is the set of subgraphs to which each $k-$face, of an induced subcomplex with non-trivial $\beta_k,$ can be extended into. For $1 \leq k < d,$ the idea is similar  to \cite[Fig. 1]{kahle2011random}; the difference is, while there the fact that each subgraph has $2k + 3$ vertices is harnessed, we additionally exploit the property that there is a further subset of $2k + 2$ vertices with pairwise $\|\cdot\|_1-$distances at least $2.$

\begin{figure}[t]
\centering
\includegraphics[width=0.5\textwidth]{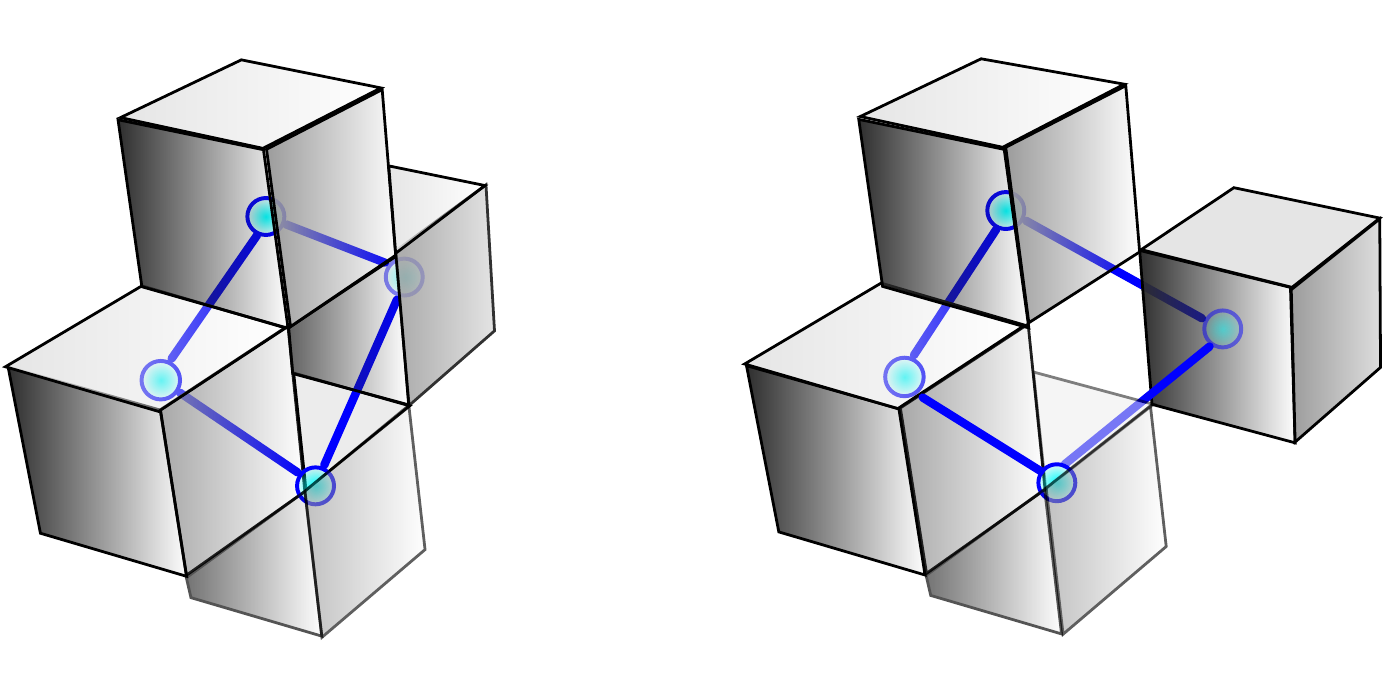} \caption{\label{fig:BettiConfig} A few examples of minimal induced subcomplexes with non-trivial $\beta_1$ when $d = 3.$}
\end{figure}

Using the above notions, the aforementioned approximators for Betti numbers of $\cK(n; u)$ are the lower and upper bounds given below. Its proof is via induction and  follows from Theorem~\ref{thm:BettiBounds}.

\begin{theorem}
\label{thm:RBettiBounds}
Let $u \in \pReal$ and $k$ be such that $0 \leq k < d.$ Then,
\[
\cSnk(u) \leq \beta_{n, k}(u) \leq \cSnk(u) + \cNnk(u) + {\Cl[Const]{c:Betti}}_{, k} \, \cLnk(u),
\]
where ${\Cr{c:Betti}}_{, k} = 1 $ when $k = 0$ or $2 \leq k < d,$ while  ${\Cr{c:Betti}}_{, 1}$ is an universal upper bound on the number of edges in a graph $G \in \cP_1.$
\end{theorem}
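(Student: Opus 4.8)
\emph{Proof strategy.} The statement is a purely topological fact about $\cK(n;u)$, and the plan is to derive it from a deterministic counterpart for clique complexes of geometric graphs on $\dGrid$ (this is Theorem~\ref{thm:BettiBounds}), which one would establish by induction on $k$. By Proposition~\ref{prop:EqCechRips}, $\cK(n;u)$ is the clique complex of its $1$-skeleton $G$, so its homology is determined by $G$. Let $\{\cC\}$ be the connected components of $G$, viewed as isolated induced subcomplexes of $\cK(n;u)$; by the additivity recorded after Remark~\ref{rem:RelLinkNTC} one has $\beta_{n,k}(u)=\sum_{\cC}\beta_k(\cC)$ when $1\le k<d$, while $\beta_{n,0}(u)$ equals the number of components. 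If $\cC$ has non-trivial $k$-th Betti number, then applying Lemma~\ref{lem:CardNTC} to a $k$-NTC of $\cC$ with minimal vertex support (and, for the metric refinement, Theorem~\ref{thm:RDistantVertices}) forces $|V(\cC)|\ge 2k+2$. The lower bound is then immediate: a component whose $1$-skeleton is isomorphic and $\|\cdot\|_1$-isometric to $O_k$ is, as a clique complex, the boundary complex of the $(k+1)$-cross-polytope, i.e.\ a triangulation of $S^k$, hence contributes exactly $1$ to $\beta_k$; summing over these pairwise disjoint components (respectively over isolated vertices when $k=0$) gives $\cSnk(u)\le\beta_{n,k}(u)$.

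For the upper bound, split the contributing components by size. If $|V(\cC)|=2k+2$, any $k$-NTC of $\cC$ with minimal vertex support must be supported on all of $V(\cC)$, so Lemma~\ref{lem:CardNTC} forces the $1$-skeleton of $\cC$ to be isomorphic to $O_k$; thus $\beta_k(\cC)=1$ and $\cC$ is counted once by $\cSnk(u)$ or by $\cNnk(u)$ according to whether or not it is $\|\cdot\|_1$-isometric to $O_k$. For the remaining, ``non-minimal'' components, those with $|V(\cC)|\ge 2k+3$, it suffices to prove
\[
\beta_k(\cC)\ \le\ {\Cr{c:Betti}}_{,k}\,\#\{\text{subgraphs of the $1$-skeleton of }\cC\text{ isomorphic to some }G\in\cP_k\},
\]
since no such $\cP_k$-subgraph can sit inside a minimal component (too few vertices), so summing over the disjoint non-minimal components gives $\sum_{\cC\text{ non-min.}}\beta_k(\cC)\le{\Cr{c:Betti}}_{,k}\,\cLnk(u)$, and the three pieces combine into the asserted bound. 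For $k=0$ this is just the remark that a component with an edge contains a subgraph in $\cP_0$ (a single edge), so the number of non-isolated components is at most $\cLnk(u)$.

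For $1\le k<d$ the displayed inequality is where the induction does the work. One reduces $\beta_k$ to $\beta_{k-1}$ of vertex links---either via Mayer--Vietoris applied to $\cC=(\cC\setminus v)\cup\st(v)$, using that $\st(v)$ is contractible, which yields $\beta_k(\cC)\le\sum_v\beta_{k-1}(\lk_{\cC'}(v))$ over the shrinking subcomplexes $\cC'$, or directly through the NTC-to-link passage of Lemma~\ref{lem:RelLinkNTC} and Remark~\ref{rem:RelLinkNTC}. Each link here is the clique complex of at most $3^d-1$ lattice points, so the $(k-1)$-case of the theorem bounds $\beta_{k-1}$ of it by a bounded number of $O_{k-1}$- and $\cP_{k-1}$-configurations; one then promotes each such local configuration, together with its host vertex and with $2k+2$ mutually $\|\cdot\|_1$-distant vertices supplied by Theorem~\ref{thm:RDistantVertices} applied at the appropriate bounded scale, to a subgraph of $\cC$ isomorphic to an element of $\cP_k$. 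The constant is ${\Cr{c:Betti}}_{,k}=1$ for $k\ge 2$ because $\cP_k$ is defined so that each of its elements has a \emph{unique} $(k+1)$-clique, making the promotion map injective onto $k$-faces, whence $\beta_k(\cC)\le(\text{number of }k\text{-faces of }\cC)\le\#\{\cP_k\text{-subgraphs}\}$; for $k=1$ the definition \eqref{Defn:cP1} of $\cP_1$ has no uniqueness clause, so the multiplicity is absorbed into ${\Cr{c:Betti}}_{,1}$, the universal bound on the number of edges of a graph in $\cP_1$ (such graphs being subgraphs of the fixed finite geometric graph $\cG_1$).

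The main obstacle is precisely this promotion step. An element of $\cP_k$ is a subgraph of the fixed finite graph $\cG_k$ and hence confined to a window of side $4k+3$, whereas a priori the cycles certifying $\beta_k(\cC)$ and the $2k+2$ pairwise $\|\cdot\|_1$-distant vertices produced by Theorem~\ref{thm:RDistantVertices} could be spread all over $\cC$. The induction on $k$ through vertex links is what localizes everything, since a vertex link already has bounded diameter; the delicate bookkeeping is to secure, inside a single such window and without losing any $k$-face, connectivity together with the $2k+2$ mutually distant vertices and (for $k\ge 2$) the uniqueness of the $(k+1)$-clique. This is the content of Theorem~\ref{thm:BettiBounds}, carried out in Section~\ref{sec:BettiBounds}.
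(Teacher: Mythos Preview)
Your overall framework is correct and matches the paper: reduce to the deterministic Theorem~\ref{thm:BettiBounds}, decompose $\beta_k$ over connected components, and split into minimal ($|V(\cC)|=2k+2$) versus non-minimal ($|V(\cC)|\ge 2k+3$) components. The lower bound, the $k=0$ case, and the treatment of minimal components via Lemma~\ref{lem:CardNTC} are exactly as in the paper.

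For the non-minimal components, however, the paper does \emph{not} use the induction on $k$ through vertex links that you propose as the main engine. The chain of inequalities you write at the very end, $\beta_k(\cC)\le|\cF_k(\cC)|\le\#\{\cP_k\text{-subgraphs of }\cC\}$, \emph{is} the paper's argument; the Mayer--Vietoris/link reduction is an unnecessary detour, and as you describe it there is a gap: there is no canonical way to promote an $O_{k-1}$- or $\cP_{k-1}$-configuration in $\lk(v)$ to a $\cP_k$-subgraph of $\cC$, and no evident reason such a promotion would be injective onto $k$-faces---yet that injectivity is precisely what forces the constant to be $1$ for $k\ge 2$. The paper resolves your localization concern by a one-shot combinatorial construction instead. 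Given a non-minimal component $\cC$ with $\beta_k(\cC)\ge 1$, Theorem~\ref{thm:RDistantVertices} (this is where the induction on $k$ actually lives) is used to produce a tree $G_\cC$ in the $1$-skeleton of $\cC$ with $\|\cdot\|_\infty$-diameter at most $4k+3$, at least $2k+3$ vertices, and a $(2k+2)$-subset of pairwise $\|\cdot\|_1$-distance at least $2$. For each $k$-face $\sigma$ of $\cC$ one connects $\vsupp(\sigma)$ to $G_\cC$ by a path in the $1$-skeleton and then, depending on the $\|\cdot\|_\infty$-distance from $\sigma$ to $G_\cC$, prunes the result (removing edges to kill unwanted cliques) to obtain a member of $\cP_k$ in which $\vsupp(\sigma)$ is the unique $(k{+}1)$-clique. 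The uniqueness clause in the definition of $\cP_k$ makes $\sigma\mapsto(\text{witness})$ injective, giving $|\cF_k(\cC)|\le\cLk(\cC)$ for $k\ge 2$; the $k=1$ constant is absorbed exactly as you say. So drop the link induction for the upper bound: the face-count inequality is the entire homological input, and the remaining work is the tree-plus-path construction per face.
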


\begin{remark}
The constant ${\Cr{c:Betti}}_{, 1}$ in the above result differs for the same reasons as discussed below \cite[Fig.~3]{kahle2011random}. But, roughly, $\check{L}_{n, 1}(u)$ needs to be scaled suitably for it be a valid upper bound for the number of edges and subsequently for the first Betti number.
\end{remark}

We now elaborate on how we use these approximators or bounds to prove our key results. The core idea is to show that, as $n$ and $u$ become large, $\cNnk(u)$ and $\cLnk(u)$ become negligible compared to $\cSnk(u)$ and that the latter more or less determines $\beta_{n, k}(u).$ At this point, it is worth noting that the two terms $\cSnk(u)$ and $\cNnk(u)$ are not simple combinatorial objects themselves. In that, these terms count components and the isolation condition, enforced thereby, is not easy to deal with directly. We overcome this difficulty by further approximating these terms with even simpler expressions. Specifically, instead of $\cSnk(u),$ we actually deal with
\begin{equation}
\label{Defn:Snk}
S_{n, k}(u) =
\begin{cases}
\sum_{\vec{t} \in \Idx_n}\indc[X_{\vec{t}} \geq u], & \text{ if } k = 0,\\[1ex]
\sum_{\tOm \in \IdxnCrDir} \indc[\vecXtOm \geq u \ones_{2b_k}], & \text{ if }1 \leq k < d.
\end{cases}
\end{equation}
Here, for $i \geq 1,$ $\set{i}$ is the increasingly ordered set $(1, \ldots, i);$ $\tbinom{\set{d}}{b_k}$ is the collection of all increasingly ordered subsets of $\setd$ with size $b_k;$ for $\vec{\Omega} \equiv (\omega_1, \ldots, \omega_{k + 1}) \in \tbinom{\set{d}}{b_k},$
\begin{equation}
\label{Defn:vecX}
\vecXtOm = (X_{\vec{t} + \vec{e}_{\omega_1}}, X_{\vec{t} - \vec{e}_{\omega_1}}, \ldots, X_{\vec{t} + \vec{e}_{\omega_{b_k}}}, X_{\vec{t} - \vec{e}_{\omega_{b_k}}});
\end{equation}
and $\ones_i$ is the all ones $i-$dimensional row vector. The set $\tbinom{\set{d}}{b_k}$ captures the different orientations in which the minimal Betti generating complexes can occur; see the image on the right in Figure~\ref{fig:GaussianExcursions} for an illustration. Working with $\Snku$ is much easier than $\cSnk(u)$ as the former does not involve conditions enforcing isolation and, for $k$ such that $1 \leq k < d,$ it also has no condition on $X_{\vec{t}}$ itself.

In the same spirit, as against $\cNnk(u),$ we work with $\Nnk(u)$ which is defined as follows. For each $k$ such that $1 \leq k < d - 1,$ let
\begin{multline}
\label{Defn:cNk}
\cN_k := \{\text{Subgraphs } G \equiv (V, E) \subseteq \cG_0 : |V| = 2k + 2,  \text{ elements of $V$ are ordered} \\
\text{lexicographically, } G \text{ is isomorphic but not $\|\cdot\|_1-$isometric to } O_k \},
\end{multline}
where $\cG_0$ is as defined above \eqref{Defn:cPk}. For $\vec{t} \in \dGrid$ and $G \equiv (V, E),$ where $V \subseteq \dGrid$ is ordered, let $\vecXtG$ be the ordered vector $(X_{\vec{t} + \vec{v}} : \vec{v} \in V).$ Then,
\begin{equation}
\label{Defn:Nnk}
\Nnk(u) :=
\begin{cases}
0, & \text{ if $k = 0, d - 1,$} \\
\sum_{\tG \in \IdxnCrNk} \indc[\vecXtG \geq u \ones_{2b_k}], & \text{ if $1 \leq k < d - 1.$}
\end{cases}
\end{equation}

Separately, for $0 \leq k < d,$ let
\begin{equation}
\label{Defn:Lnk}
\Lnk(u) :=
\sum_{\tG \in \IdxnCrPk} \indc[\vecXtG \geq u \ones_{|V|}],
\end{equation}
where $\cP_k$ is as in \eqref{Defn:cPk}, \eqref{Defn:cP1}, or \eqref{Defn:cP0}; $V$ is the vertex set of $G;$ and $\vecXtG$ is as defined above. While $\cLnk(u)$ does not count components, nevertheless, the above expression is the easier to handle.

The next result compares $\Snk(u), \Nnk(u),$ and $\Lnk(u)$ with the approximators in Theorem~\ref{thm:RBettiBounds}; its proof is given in Section~\ref{sec:IntResults}, p.~\pageref{Pf:BettiApprox}. As we shall see in Remark~\ref{rem:SnkImp} later, the additional $\Dnk(u)$ term is due to boundary conditions.
\begin{lemma}
\label{lem:BettiApprox}
Fix $k$ so that $0 \leq k < d.$ Let $n \geq 0$ and $u \in \pReal.$ Then, the below statements hold:
\begin{enumerate}
\item \label{itm:BdSnku} $\cSnk(u) \leq \Snku \leq \cSnk(u) + {\Cr{c:Betti}}_{,k} \, \cLnk(u) + \Dnk(u),$
where ${\Cr{c:Betti}}_{, k}$ is as in Theorem~\ref{thm:RBettiBounds}, and
\begin{equation}
\label{Defn:Dnk}
\Dnk(u) :=
\begin{cases}
0, & \text{ if $k = 0,$} \\[1ex]
\sum\limits_{\substack{\tOm \in \IdxnCrDir:\\ \|\vec{t}\|_\infty = n}} \indc[\vecXtOm \geq u \ones_{2b_k}], & \text{ if $1 \leq k < d.$}
\end{cases}
\end{equation}

\item \label{itm:BdNnku} $\cNnk(u) \leq \Nnk(u).$

\item \label{itm:BdLnku} $\cLnk(u) \leq \Lnk(u).$

\end{enumerate}
\end{lemma}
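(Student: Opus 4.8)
The plan is to prove each of the three inclusions in Lemma~\ref{lem:BettiApprox} by unwinding the definitions of the combinatorial quantities $\cSnk(u), \cNnk(u), \cLnk(u)$ (which count \emph{isolated} subgraph components of the $1$-skeleton with a prescribed isomorphism/isometry type) versus the ``raw'' sums $\Snku, \Nnk(u), \Lnk(u)$ (which count \emph{all} occurrences, isolated or not, as translated-and-oriented configurations indexed over $\Idx_n$). The key structural fact, established just before the statement of Theorem~\ref{thm:RBettiBounds}, is that $\cK(n;u)$ is the clique complex of its $1$-skeleton, so every subgraph of the $1$-skeleton isomorphic to $O_k$ (resp.\ to some $G \in \cN_k$ or $\cP_k$) corresponds to choosing a base point $\vec{t}\in\dGrid$ and an orientation $\vec\Omega\in\tbinom{\setd}{b_k}$ (resp.\ a placement of $G$) such that all the relevant lattice points lie in the excursion set, i.e.\ such that the corresponding Gaussian coordinate vector exceeds $u\ones$. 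Each of the three parts then becomes a counting comparison.

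For part~\ref{itm:BdLnku}, the inequality $\cLnk(u)\leq\Lnk(u)$ is the easiest: $\cLnk(u)$ counts subgraphs of the $1$-skeleton isomorphic to some $G\in\cP_k$, and every such subgraph, after fixing the lexicographic ordering of its vertex set and a base point $\vec t\in\Idx_n$, is counted \emph{at least once} by the sum $\Lnk(u)=\sum_{\tG\in\IdxnCrPk}\indc[\vecXtG\geq u\ones_{|V|}]$; since the summands are nonnegative indicators and the map from subgraphs to index pairs $\tG$ is injective (by the lexicographic ordering convention), we get the bound. The same reasoning gives part~\ref{itm:BdNnku}: a subgraph counted by $\cNnk(u)$ is isomorphic but not $\|\cdot\|_1$-isometric to $O_k$ and, being an isolated component, in particular \emph{occurs} as a configuration of type $G\in\cN_k$; hence it is counted by $\Nnk(u)$. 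The only subtlety is that $\cNnk(u)$ insists on isolation, which $\Nnk(u)$ does not, so this is automatically a one-sided (lower) bound; one must also handle the boundary cases $k=0$ and $k=d-1$ where $\cNnk(u)=\Nnk(u)=0$ by definition, so the inequality is trivial there.

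Part~\ref{itm:BdSnku} is the heart of the lemma and where I expect the real work. The lower bound $\cSnk(u)\leq\Snku$ is the same flavor as above: each isolated $O_k$-component (or isolated vertex, when $k=0$) of the $1$-skeleton gives a base point $\vec t$ and orientation $\vec\Omega$ with $\vecXtOm\geq u\ones_{2b_k}$, hence is counted by $\Snku$, and the correspondence is injective. For the upper bound $\Snku\leq\cSnk(u)+{\Cr{c:Betti}}_{,k}\,\cLnk(u)+\Dnk(u)$, the idea is to classify each index pair $\tOm$ contributing to $\Snku$ (i.e.\ with $\vecXtOm\geq u\ones_{2b_k}$) according to the component $\cC$ of the $1$-skeleton that contains the corresponding $O_k$-shaped cluster of vertices: (i) if that cluster is itself an isolated component $\|\cdot\|_1$-isometric to $O_k$, it is counted by $\cSnk(u)$; (ii) if the component is larger but still has non-trivial $\beta_k$ and lies in the interior of the window, then by Theorem~\ref{thm:RDistantVertices} its $1$-skeleton contains a subgraph of type $G\in\cP_k$, and the cluster is counted by $\cLnk(u)$ — the factor ${\Cr{c:Betti}}_{,k}$ accounts for the multiplicity, i.e.\ how many $O_k$-shaped sub-configurations a single $G\in\cP_k$ can contain (for $k\geq 2$ the uniqueness clause in \eqref{Defn:cPk} of the $(k+1)$-clique makes this $1$; for $k=1$ one needs the bound on the number of edges, matching the remark after Theorem~\ref{thm:RBettiBounds}); (iii) the remaining contributions come from clusters sitting at the boundary $\|\vec t\|_\infty=n$, where the full $O_k$ neighborhood may stick out of $\Idx_n$ so the component-level bookkeeping of Theorem~\ref{thm:RBettiBounds} does not directly apply, and these are exactly the terms collected by $\Dnk(u)$. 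The main obstacle will be making the case analysis in (ii)--(iii) airtight: one must verify that every $O_k$-shaped configuration lying in the window, if it is \emph{not} an isolated $O_k$-component, either extends within the $1$-skeleton to a subgraph genuinely in $\cP_k$ (using that $\cK(n;u)$ is a clique complex together with Theorem~\ref{thm:RDistantVertices} to supply the $2k+2$ pairwise-$\|\cdot\|_1$-far vertices and the appropriate size/connectivity conditions in \eqref{Defn:cPk}--\eqref{Defn:cP0}), or else has its defining point on the boundary; and one must control how many $O_k$-configurations can be charged to the same $\cP_k$-subgraph, which is precisely the role of the constant ${\Cr{c:Betti}}_{,k}$ inherited from Theorem~\ref{thm:RBettiBounds}. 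The $k=0$ case is degenerate throughout — $\cSnk(u)$ counts isolated vertices, $\Snku$ counts all vertices, $\cNnk=\Dnk=0$, and $\cP_0$ is just edges — so I would dispatch it first as a warm-up before handling $1\leq k<d$.
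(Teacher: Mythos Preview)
Your plan for Statements~\ref{itm:BdNnku} and~\ref{itm:BdLnku}, for the lower bound in Statement~\ref{itm:BdSnku}, and for the $k=0$ case is fine and matches the paper's reasoning.

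The gap is in your case analysis for the upper bound in Statement~\ref{itm:BdSnku} when $1\le k<d$. Your case~(ii) assumes that the component $\cC$ containing the $O_k$-shaped cluster has non-trivial $\beta_k$, and you invoke Theorem~\ref{thm:RDistantVertices} under that hypothesis. But this hypothesis need not hold. For instance, if the center $\vec t$ itself satisfies $X_{\vec t}\ge u$, then every vertex of the $O_k$ cluster is joined to $\vec t$ in the $1$-skeleton; the induced subcomplex on $\{\vec t\}\cup\{\vec t\pm \vec e_{\omega_i}\}$ is a cone, and the ambient component may well have $\beta_k=0$. Such configurations fall into neither your~(i) nor your~(ii), and they are not on the boundary either, so your trichotomy is not exhaustive and Theorem~\ref{thm:RDistantVertices} is inapplicable.

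In fact Theorem~\ref{thm:RDistantVertices} is unnecessary here: the $2b_k$ vertices $\vec t\pm\vec e_{\omega_i}$ defining $\vecXtOm$ are already pairwise at $\|\cdot\|_1$-distance exactly $2$, so they \emph{themselves} supply the $2k+2$ far-apart vertices required by the definition of $\cP_k$. The paper's argument exploits this directly and is purely combinatorial, with no homological input. One writes
\[
\Snku=\sum_{\tOm}\xitOm(u)\,\indc[\GtOm\text{ is a component}]+\sum_{\tOm}\xitOm(u)\,\indc[\GtOm\text{ is not a component}],
\]
where $\GtOm$ is the geometric graph on the $2b_k$ points $\vec t\pm\vec e_{\omega_i}$. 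For the first sum: if $\|\vec t\|_\infty<n$ the configuration lies entirely in $\Idx_n$ and contributes to $\cSnk(u)$; if $\|\vec t\|_\infty=n$ it is absorbed by $\Dnk(u)$. For the second sum: if $\GtOm$ is not a component there is at least one extra excursion vertex adjacent to it, giving $\ge 2k+3$ vertices in a connected subgraph that already contains $2k+2$ pairwise $\|\cdot\|_1$-far vertices; this is (after selecting edges appropriately) a subgraph of type $\cP_k$, so the term is bounded by $\cLnk(u)$. Adjust your plan for~(ii) along these lines.
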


An immediate consequence of Theorem~\ref{thm:RBettiBounds} and Lemma~\ref{lem:BettiApprox} is the following result, which bounds the difference between $\Snk(u)$ and $\Bnk(u).$

\begin{lemma}
\label{lem:BSDiffBd}
Fix $k$ such that $0 \leq k < d.$ Let $n \geq 0$ and $u \in \pReal.$ Then,
\[
|\Bnk(u) - \Snk(u)| \leq \Nnk(u) +  {\Cr{c:Betti}}_{,k} \, \Lnk(u) +  \Dnk(u),
\]
where ${\Cr{c:Betti}}_{, k}$ is as in Theorem~\ref{thm:RBettiBounds}.
\end{lemma}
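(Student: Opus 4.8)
The plan is to combine Theorem~\ref{thm:RBettiBounds} with Lemma~\ref{lem:BettiApprox} and then apply the triangle inequality. This is a genuinely short deduction, so the ``proof'' amounts to chaining the two displayed bounds carefully while keeping track of which terms are nonnegative.

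\textbf{Upper bound on $\Bnk(u) - \Snk(u)$.} First I would start from the right inequality in Theorem~\ref{thm:RBettiBounds}, namely $\beta_{n,k}(u) \leq \cSnk(u) + \cNnk(u) + {\Cr{c:Betti}}_{,k}\,\cLnk(u)$, and then use the left inequality in Lemma~\ref{lem:BettiApprox}.\ref{itm:BdSnku}, i.e. $\cSnk(u) \leq \Snku$, together with Lemma~\ref{lem:BettiApprox}.\ref{itm:BdNnku} ($\cNnk(u) \leq \Nnk(u)$) and Lemma~\ref{lem:BettiApprox}.\ref{itm:BdLnku} ($\cLnk(u) \leq \Lnk(u)$). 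Since ${\Cr{c:Betti}}_{,k} \geq 0$, this yields
\[
\Bnk(u) \leq \Snku + \Nnk(u) + {\Cr{c:Betti}}_{,k}\,\Lnk(u),
\]
and hence $\Bnk(u) - \Snk(u) \leq \Nnk(u) + {\Cr{c:Betti}}_{,k}\,\Lnk(u) \leq \Nnk(u) + {\Cr{c:Betti}}_{,k}\,\Lnk(u) + \Dnk(u)$, the last step using $\Dnk(u) \geq 0$.

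\textbf{Lower bound on $\Bnk(u) - \Snk(u)$.} For the other direction I would start from the left inequality in Theorem~\ref{thm:RBettiBounds}, $\cSnk(u) \leq \beta_{n,k}(u)$, and the right inequality in Lemma~\ref{lem:BettiApprox}.\ref{itm:BdSnku}, which gives $\Snku \leq \cSnk(u) + {\Cr{c:Betti}}_{,k}\,\cLnk(u) + \Dnk(u)$. Combining and using Lemma~\ref{lem:BettiApprox}.\ref{itm:BdLnku} gives
\[
\Snku \leq \Bnk(u) + {\Cr{c:Betti}}_{,k}\,\Lnk(u) + \Dnk(u),
\]
so $\Snk(u) - \Bnk(u) \leq {\Cr{c:Betti}}_{,k}\,\Lnk(u) + \Dnk(u) \leq \Nnk(u) + {\Cr{c:Betti}}_{,k}\,\Lnk(u) + \Dnk(u)$, again invoking $\Nnk(u) \geq 0$. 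Putting the two bounds together establishes $|\Bnk(u) - \Snk(u)| \leq \Nnk(u) + {\Cr{c:Betti}}_{,k}\,\Lnk(u) + \Dnk(u)$.

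\textbf{Main obstacle.} There is no real obstacle here: once Theorem~\ref{thm:RBettiBounds} and Lemma~\ref{lem:BettiApprox} are in hand, the argument is purely arithmetic. The only points requiring care are (i) making sure all three quantities $\Nnk(u)$, $\Lnk(u)$, $\Dnk(u)$ (and the constant ${\Cr{c:Betti}}_{,k}$) are nonnegative so that one may freely pad a bound by adding a missing term, which is immediate from their definitions as sums of indicators, and (ii) the bookkeeping of which of the two-sided Lemma~\ref{lem:BettiApprox}.\ref{itm:BdSnku} inequalities to use on each side. All the conceptual and topological content has already been absorbed into the two cited results, so this lemma is best viewed as a convenient repackaging for use in the later asymptotic analysis.
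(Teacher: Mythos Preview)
Your proof is correct and follows exactly the approach the paper takes: the paper simply states that Lemma~\ref{lem:BSDiffBd} is ``an immediate consequence of Theorem~\ref{thm:RBettiBounds} and Lemma~\ref{lem:BettiApprox},'' and your two-sided chaining of those bounds is precisely that immediate consequence spelled out in full.
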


Because of the above result, in order to prove Theorems~\ref{thm:ExPMainResult} and \ref{thm:BehMainResult}, it suffices to establish the corresponding results first for $\Snk(u)$ and then show that $\Nnk(u),$ $\Lnk(u),$ and $\Dnk(u)$ are all negligible relative to $\Snk(u).$ Indeed, this is precisely what we do.

In the above line of thought, our first result describes the asymptotic behaviour of the mean value, along with the rate of convergence, of the different terms in Lemma~\ref{lem:BSDiffBd}. Before stating it formally, we introduce few notations. For $m \geq 1,$ and $\rho, \mup \in [0, 1),$ let
\begin{equation}
\label{Defn:phiCons}
\phi_m(\rho, \mup) = \dfrac{m + 1 + (m - 1) \rho - 2 m \mup}{2(1 + (m - 1) \rho - m [\mup]^2)}.
\end{equation}
Now, define constants
\begin{equation}
\label{Defn:VarRho}
\varrho_k := \phi_{2b_k -1}(\rho_2, \rho_3), \quad \text{ if $1 \leq k < d,$}
\end{equation}
and
\begin{equation}
\label{Defn:VarPhi}
\varphi_k := \phi_{2b_k}(\rho_2, \rho_1) = \frac{1 - 2a_k [2\rho_1 - 1]}{2[1 - 2a_k \rho_1^2]}, \quad \text{ if $0 \leq k < d,$}
\end{equation}

\begin{lemma}
\label{lem:limRates}
Fix $k$ such that $0 \leq k < d.$ Let $\lbnku$ be as in \eqref{Defn:Lambdanku} and let $\{\un\} \in \pSeq$ be such that $\lim_{n \to \infty} \un = \infty.$ Then, the following statements are true.
\begin{enumerate}[leftmargin=*, labelindent=1.35em]
\itemsep2ex

\item \label{itm:ExPSnku} If $\Cr{a:r0Asm}$ and  $\Cr{a:r2Asm}$ hold, then $\left|\dfrac{\ExP[S_{n, k}(\un)]}{\lambda_{n, k}(\un)} - 1\right| = O\left(\dfrac{1}{\un^2}\right).$
\item \label{itm:ExPNnku} If $\Cr{a:r0Asm}, \Cr{a:r2Asm},$ and $\Cr{a:rqAsm}$ hold, then $\varrho_k > a_k,$ for $1\leq k < d - 1,$ and
\[
\dfrac{\ExP[\Nnk(\un)]}{\lambda_{n, k}(\un)} =
\begin{cases}
0, & \text{ if $k = 0, d - 1,$}\\
O(\exp[-(\varrho_k - a_k)\un^2]), & \text{ if $1 \leq k < d - 1.$}
\end{cases}
\]
\item \label{itm:ExPLnku} If $\Cr{a:r0Asm}, \Cr{a:r1Asm},$ $\Cr{a:ComC}(k),$ and $\Cr{a:r2LAsm}$ hold, then $\varphi_k > a_k$ and  $\dfrac{\ExP[\Lnk(\un)]}{\lambda_{n, k}(\un)} = O(\un^{-1} \exp[-(\varphi_k - a_k) \un^2]).$
\item \label{itm:ExPDnku} If $\Cr{a:r0Asm}$ and  $\Cr{a:r2Asm}$ hold, then
\[
\dfrac{\ExP[\Dnk(\un)]}{\lambda_{n, k}(\un)} =
\begin{cases}
0, & \text{ if $k = 0,$}\\
O\left(\dfrac{1}{n}\right), & \text{ if $1 \leq k < d.$}
\end{cases}
\]
\end{enumerate}
\end{lemma}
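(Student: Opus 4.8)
The plan is to treat all four parts through one common scheme. Each of $S_{n,k}, N_{n,k}, L_{n,k}, D_{n,k}$ is a sum of indicators of the form $\indc[\vec X_S \geq u\ones_{|S|}]$ ranging over translates of finitely many fixed finite ``shapes'' $S \subset \dGrid$, so by linearity of expectation together with stationarity and $\|\cdot\|_1$-isotropy, the expectation equals $(\text{number of translates})\times\sum_{\text{shapes}}\Pr\{\vec X_S \geq u\ones_{|S|}\}$, where $\vec X_S$ is a centered Gaussian vector whose covariance matrix $M_S$ has $(\vec v,\vec w)$-entry $\rho_{\|\vec v - \vec w\|_1}$. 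The analytic engine is Savage's Lemma~\ref{lem:GaussianTailBounds}: whenever the relevant covariance matrix $M$ is positive definite and the Savage condition $u\ones M^{-1}>0$ holds, it gives $\Pr\{\vec X_S \geq u\ones\} = (2\pi)^{-|S|/2}|M|^{-1/2}\bigl(\textstyle\prod_j\vec{\Delta}(j)\bigr)^{-1}\exp[-\tfrac12 u^2\ones M^{-1}\ones^\tr]\,(1+O(u^{-2}))$, the correction term being $O(u^{-2})$ because each $\vec{\Delta}(j)$ is proportional to $u$.

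For part~\ref{itm:ExPSnku} (and part~\ref{itm:ExPDnku}) the starting point is the elementary fact that, for $1\le k<d$, the $2b_k$ points $\{\vec t\pm\vec e_{\omega_j}\}_{j\le b_k}$ are pairwise at $\|\cdot\|_1$-distance exactly $2$, so $M_S=(1-\rho_2)\eye_{2b_k}+\rho_2\onesM\onesM^\tr$, whose eigenvalues $1-\rho_2$ and $1+(2k+1)\rho_2$ are positive under $\Cr{a:r2Asm}$; the Savage condition then holds, a direct computation identifies the exponent $\tfrac12 u^2\ones M^{-1}\ones^\tr$ with $a_k u^2$, and the leading prefactor with $\tbinom{d}{b_k}^{-1}\lbnku/(2n+1)^d$ (for $k=0$ this is just the classical Mills-ratio estimate). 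Summing over the $(2n+1)^d$ translates for $S_{n,k}$, and over the $(2n+1)^d-(2n-1)^d=O(n^{d-1})$ boundary translates for $D_{n,k}$, and dividing by $\lbnku$, yields the claimed $O(\un^{-2})$ and $O(n^{-1})$ bounds.

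For parts~\ref{itm:ExPNnku} and~\ref{itm:ExPLnku} the covariance matrices are not this clean, so first I would use Slepian's Lemma~\ref{lem:SlepainInequality} to dominate $\Pr\{\vec X_S\geq u\ones\}$ by the same probability for a Gaussian whose off-diagonal entries are replaced by the largest values the assumptions permit: $\rho_2$ for a pair at $\|\cdot\|_1$-distance $\geq 2$ (by $\Cr{a:r2LAsm}$), $\rho_3$ for a pair at distance $\geq 3$ (by $\Cr{a:rqAsm}$), and $\rho_1$ for an arbitrary pair of distinct points (by $\Cr{a:r1Asm}$). The combinatorial heart is then to locate, over all shapes in $\cN_k$ (resp.\ $\cP_k$), the dominating matrix with the slowest decay. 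For $\cN_k$: every edge of any lattice realization of $O_k$ is at $\|\cdot\|_1$-distance $\geq 2$ --- an edge at distance $1$ is impossible, since each edge of $O_k$ lies in an induced $4$-cycle and a $4$-cycle cannot be realized in $\dGrid$ with an edge at $\|\cdot\|_1$-distance $1$ --- so ``not $\|\cdot\|_1$-isometric to $O_k$'' forces some pair at distance $\geq 3$, and one checks that the extremal configuration is a single vertex at distance exactly $3$ from all $2k+1$ others (those being pairwise at distance $2$), whose dominating matrix is the bordered matrix with parameters $(\rho_2,\rho_3)$ on $2b_k$ points, of Savage exponent $\phi_{2b_k-1}(\rho_2,\rho_3)=\varrho_k$, every other configuration decaying at least as fast. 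For $\cP_k$: every $G$ has $|V|\geq 2b_k+1$ vertices containing a $(2b_k)$-subset pairwise at distance $\geq2$, and after discarding all but $2b_k+1$ of the constraints the dominating matrix for the extremal case $|V|=2b_k+1$ is the bordered matrix with parameters $(\rho_2,\rho_1)$ (geometrically, a realization of $O_k$ together with its center $\vec t$, which lies at distance $1$ from every other vertex), of Savage exponent $\phi_{2b_k}(\rho_2,\rho_1)=\varphi_k$, plus one extra power $\un^{-1}$. One then checks positive definiteness and the Savage condition for these bordered matrices --- for $\varrho_k$ this follows from $0\le\rho_3<\rho_2<1$, while for $\varphi_k$ it is exactly the content of $\Cr{a:ComC}(k)$, namely $1+(2k+1)\rho_2-(2k+2)\rho_1>0$. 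Finally the two inequalities follow from algebraic identities: $\phi_{2b_k-1}(\rho_2,\rho_2)=a_k$ and $\phi_{2b_k-1}(\rho_2,\cdot)$ is strictly decreasing, so $\rho_3<\rho_2$ gives $\varrho_k>a_k$; and $\varphi_k-a_k$ has the same sign as $(1-2a_k\rho_1)^2$, which is positive and nonzero by $\Cr{a:ComC}(k)$. Dividing the resulting estimates by $\lbnku$ gives parts~\ref{itm:ExPNnku} and~\ref{itm:ExPLnku}.

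The main obstacle is the combinatorial-geometric optimization inside parts~\ref{itm:ExPNnku} and~\ref{itm:ExPLnku}: one must show that among all of the (finitely many but intricate) shapes in $\cN_k$ and $\cP_k$ the extremal tail probability is governed by the single bordered configuration above, which requires a comparison matrix that is simultaneously an entrywise upper bound on the true covariance, positive definite, and compliant with the Savage condition, under only the weak hypotheses of Table~\ref{tab:Assumptions}. Handling the cases with several displaced vertices (in $\cN_k$) or with $|V|>2b_k+1$ (in $\cP_k$) uniformly --- e.g.\ by a convexity argument for the quadratic form $\vec x^\tr M\vec x$ on $\{\sum_i\vec x(i)=1,\ \vec x\ge 0\}$ --- and pinning the exponents to the exact constants $\varrho_k,\varphi_k$ is where essentially all the difficulty lies; the remaining steps (applying Savage's lemma, counting translates) are routine.
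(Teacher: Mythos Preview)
Your proposal is correct and follows essentially the same route as the paper: the paper packages the single-shape tail estimates into a separate Lemma~\ref{lem:ProbEst} (proved via exactly the Savage--Slepian machinery you describe) and then derives Lemma~\ref{lem:limRates} in a few lines by counting translates.

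On the point you flag as the main obstacle --- the extremality over $\cN_k$ --- the paper's resolution is more concrete than a convexity argument and worth knowing. It invokes the structural Lemma~\ref{lem:PwL1Dist}: every non-$\|\cdot\|_1$-isometric lattice embedding of $O_k$ admits a partition $V=V_1\cup V_2$ with all cross-distances $\geq 3$ (and, by Corollary~\ref{cor:DistVertMC}, within-group distances $\geq 2$). Slepian then dominates by $\Gau(\vec 0, W_{j,2b_k-j}(\rho_2,\rho_3))$ with $j=|V_1|$, and a direct computation shows the exponent $\varrho_{k,j}:=\ones_{2b_k} W_{j,2b_k-j}^{-1}(\rho_2,\rho_3)\ones_{2b_k}^\tr$ is strictly increasing in $j$ on $1\le j\le b_k$, so $j=1$ is the extremal case --- precisely your ``single displaced vertex'' configuration. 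For $\cP_k$ no optimization over $|V|$ is needed at all: the paper simply drops all but $2b_k+1$ coordinates (the $2b_k$ pairwise-far vertices guaranteed by the definition of $\cP_k$, plus one more) and dominates via $\Cr{a:r2LAsm}$ by the single bordered matrix $W_{2b_k,1}(\rho_2,\rho_1)$. Your $4$-cycle argument for the distance-$2$ property is correct and slightly more elementary than the paper's route via Theorem~\ref{thm:DistantVertices}, but the paper needs that theorem elsewhere anyway.
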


The proof of this result is given in Section~\ref{sec:IntResults}, p.~\pageref{pf:limRates} and uses Slepian's lemma (Lemma~\ref{lem:SlepainInequality}) and Multivariate Gaussian tail estimates (Lemma~\ref{lem:GaussianTailBounds}); the Savage condition in the latter is ensured due to our assumptions in Table~\ref{tab:Assumptions}.

It is now straightforward to establish Theorem~\ref{thm:ExPMainResult} and the Statement~\ref{itm:Vanishing} of Theorem~\ref{thm:BehMainResult}.

\begin{proof}[Proof of Theorem~\ref{thm:ExPMainResult}] By a simple triangle inequality and Lemma~\ref{lem:BSDiffBd},
\[
\left|\frac{\ExP[\beta_{n, k}(\un)]}{\lambda_{n, k}(\un)} - 1\right| \leq \left|\dfrac{\ExP[S_{n, k}(\un)]}{\lambda_{n, k}(\un)} - 1\right| + \dfrac{\ExP[\Nnk(\un)]}{\lambda_{n, k}(\un)} +  \dfrac{\ExP[\Lnk(\un)]}{\lambda_{n, k}(\un)} + \dfrac{\ExP[\Dnk(\un)]}{\lambda_{n, k}(\un)}.
\]
Because $1/\un^2 + \exp[-(\varphi_k - a_k)\un^2] + \exp[-(\varrho_k - a_k)\un^2] = O(1/\un^2),$ the desired result is now easy to see from Lemma~\ref{lem:limRates}.
\end{proof}

\begin{proof}[Proof of Theorem~\ref{thm:BehMainResult}.\ref{itm:Vanishing}]
From Markov's inequality, we have
\[
1 - \Pr\{\Bnk(\un) = 0\} = \Pr\{\Bnk(\un) \geq 1\} \leq \ExP[\Bnk(\un)].
\]
On the other hand,
\[
\ExP[\Bnk(\un)] = \lbnk(\un) \frac{\ExP[\Bnk(\un)]}{\lbnk(\un)} = \lbnk(\un) O\left(1 + \frac{1}{\un^2} + \frac{1}{n} \right) = \lbnk(\un) O(1) = O(\lbnk(\un)),
\]
where the second relation follows from Theorem~\ref{thm:ExPMainResult}, while the third one follows because both $\un$ and $n$ grow to $\infty.$ Finally, from \eqref{Defn:Lambdanku} and \eqref{eqn:unForm}, and since $\nu_n \to \infty,$ we have
\[
\lbnk(\un) = O\left(e^{-\nu_n} \left[\frac{\log n}{\log n + \nu_n}\right]^{b_k} \right).
\]
The desired result is now easy to see.
\end{proof}

Moving on, our next result shows that $\Snk(\un)$ has a Poisson behaviour when $\lbnk(\un)$ converges to a constant. Let %
\begin{equation}
\label{Defn:VarTheta}
\vartheta_k := \frac{[1 + (2b_k - 1)\rho_2 - 2b_k \rho_1]^2}{1 + (2b_k - 1) \rho_2 - 2b_k \rho_1^2} = \frac{b_k [1 - 2a_k \rho_1]^2}{a_k [1 - 2a_k \rho_1^2]}, \quad \text{ if $0 \leq k < d,$}
\end{equation}

\begin{theorem}
\label{thm:PoiResSnk}
Fix $k$ such that $0 \leq k < d.$ Let $\lbnku$ be as in \eqref{Defn:Lambdanku} and $\{\un\} \in \pSeq$ be as in \eqref{eqn:unForm} with $\lim_{n \to \infty} \nu_n = \log \left[\tfrac{1}{\lambda}\right]$ for some fixed $\lambda \in \pReal.$ Suppose $\Cr{a:r0Asm}, \Cr{a:r1Asm}, \Cr{a:ComC}(k), \Cr{a:PoiG},$ and $\Cr{a:r2LAsm}$ hold. Then, $0 < \vartheta_k < 1;$ further,
\[
\tv{\Snk(\un)}{\Poi(\ExP[\Snk(\un)])} = O\left(n^{-d \vartheta_k/(2 b_k)} [\log n]^{-(1 - \vartheta_k)/2} + \frac{\log n}{n} \sum_{q = 1}^{2dn} \rho_q\right).
\]
\end{theorem}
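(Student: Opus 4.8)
The plan is to apply the Stein--Chen bound of Theorem~\ref{thm:SuffCondPoissonConv} to $\Snk(\un)=\sum_{i\in\Idx}\indc_i$, where $\Idx=\Idx_n$ and $\indc_i=\indc[X_{\vec t}\geq\un]$ when $k=0$, while $\Idx=\IdxnCrDir$ and $\indc_i=\indc[\vecXtOm\geq\un\ones_{2b_k}]$ (for $i=\tOm$) when $1\leq k<d$; write $\lambda':=\ExP[\Snk(\un)]$, so the target is $\Poi(\lambda')$. The crucial structural input is positive association: since all covariances $\cov[X_{\vec s},X_{\vecp s}]=\rho_{\|\vec s-\vecp s\|_1}$ are nonnegative, Theorem~\ref{thm:ProbSpaceExistence}, applied for each $i$ with the Gaussian vector underlying $\indc_i$ as $\vec Y_0$ and those underlying the other $\indc_j$ as $\vec Y_1,\vec Y_2,\dots$, produces exactly the coupling $(\zeta_{ji},\eta_{ji})$ required by Theorem~\ref{thm:SuffCondPoissonConv}, with the further property $\zeta_{ji}\leq\eta_{ji}$ a.s.\ for \emph{every} $j\in\Idx_i$; moreover $\cov[\indc_i,\indc_j]\geq 0$ for all $i,j$ by the same association. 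Taking $\Idx_i^{+}=\Idx_i$ and $\Idx_i^{-}=\Idx_i^{0}=\emptyset$, and using $\tfrac{1-e^{-\lambda'}}{\lambda'}\leq 1$, Theorem~\ref{thm:SuffCondPoissonConv} gives
\[
\tv{\Snk(\un)}{\Poi(\lambda')}\;\leq\;\sum_{i\in\Idx}(\ExP[\indc_i])^2\;+\;\sum_{i\in\Idx}\sum_{j\in\Idx_i}\cov[\indc_i,\indc_j].
\]
By stationarity and $\|\cdot\|_1$-isotropy every $\ExP[\indc_i]$ equals $\lambda'/|\Idx|=\Theta((2n+1)^{-d})$, so the first sum is $O((2n+1)^{-d})$; since $\vartheta_k<1$ this will be dominated by the asserted bound, and it remains to control the double covariance sum.

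Fix a constant radius $R$ and split the inner sums according to whether the $\|\cdot\|_\infty$-distance between the supports of $\indc_i$ and $\indc_j$ is $\leq R$ (\emph{near}) or $>R$ (\emph{far}). For a near pair $i\neq j$ write $\cov[\indc_i,\indc_j]\leq\ExP[\indc_i\indc_j]+\ExP[\indc_i]\ExP[\indc_j]$; there are $O(|\Idx|)$ such pairs, so the $\ExP[\indc_i]\ExP[\indc_j]$ terms again total $O((2n+1)^{-d})$. For $\ExP[\indc_i\indc_j]$, note that the union of the two supports contains the support of $\indc_i$ (whose vertices have pairwise $\|\cdot\|_1$-distance exactly $2$, so its Gaussian vector is equicorrelated with correlation $\rho_2$) together with at least one further vertex $\vec v$; by the monotonicity $\Pr\{\vec Y\geq\un\ones\}\leq\Pr\{\vec Y_{[S]}\geq\un\ones\}$ and then Slepian's lemma (Lemma~\ref{lem:SlepainInequality}), raising the correlations between $\vec v$ and the support of $\indc_i$ up to $\rho_1$ (legitimate since all inter-vertex correlations are $\leq\rho_1$ by $\Cr{a:r1Asm}$ and $\Cr{a:r2LAsm}$, and the resulting matrix is positive definite by $\Cr{a:ComC}(k)$), one gets $\ExP[\indc_i\indc_j]\leq\Pr\{\hat{\vec Y}\geq\un\ones_{2b_k+1}\}$, where $\hat{\vec Y}$ has its first $2b_k$ coordinates equicorrelated with correlation $\rho_2$ and its last coordinate at correlation $\rho_1$ with each of them. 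Lemma~\ref{lem:GaussianTailBounds} (the Savage condition holding by the assumptions of Table~\ref{tab:Assumptions}, as in Lemma~\ref{lem:limRates}) identifies the exponential rate of this probability as $\varphi_k=\phi_{2b_k}(\rho_2,\rho_1)$ of \eqref{Defn:VarPhi}, and a short computation from $\Cr{a:r1Asm}$ and $\Cr{a:ComC}(k)$ (and $\Cr{a:r2LAsm}$ when $k\geq1$) shows $a_k<\varphi_k<a_k(1+\tfrac{1}{2b_k})$, i.e.\ $0<\vartheta_k<1$ for $\vartheta_k=2b_k(\varphi_k-a_k)/a_k$, which is the form in \eqref{Defn:VarTheta}. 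Summing $\ExP[\indc_i\indc_j]\leq C\,\un^{-(2b_k+1)}e^{-\varphi_k\un^2}$ over the $O(1)$ near neighbours of each of the $\Theta((2n+1)^d)$ indices and substituting the expression \eqref{eqn:unForm} for $\un$ (so that $\un^2=\Theta(\log n)$ and the polynomial-in-$\un$ prefactor merges with the powers of $\log n$ produced by $e^{-\varphi_k\un^2}$), the exponents collapse to the contribution $O\!\left(n^{-d\vartheta_k/(2b_k)}(\log n)^{-(1-\vartheta_k)/2}\right)$, which is the first term of the asserted bound.

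For far pairs the cross-correlations between the Gaussian vectors underlying $\indc_i$ and $\indc_j$ are all at most $\rho_{q}$, where $q\geq1$ is, up to an additive constant, the $\|\cdot\|_1$-distance between the two supports; in particular $\rho_q\leq\rho_1<1$ for such pairs by $\Cr{a:r1Asm}$, $\Cr{a:r2LAsm}$. Writing $\cov[\indc_i,\indc_j]=\ExP[\indc_i\indc_j]-\ExP[\indc_i]\ExP[\indc_j]$ and integrating the Plackett--Price identity along the Gaussian interpolation that turns these cross-correlations on from zero---the same interpolation underlying Lemma~\ref{lem:SlepainInequality}---bounds $\cov[\indc_i,\indc_j]$, up to constants, by $(2n+1)^{-2d}\,\rho_{q}\,\un^2\,e^{O(\rho_{q}\un^2)}$; here one uses Lemma~\ref{lem:GaussianTailBounds} and the fact that the merged two-vector exceedance has exponential rate $2a_k$ at zero cross-correlation to see that the polynomial-in-$\un$ factors cancel against the powers of $\log n$, leaving the clean order $(\ExP[\indc_i])^2$ aside from the $\un^2e^{O(\rho_q\un^2)}$ correction. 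Summing over the $O(q^{d-1})$ lattice points at $\|\cdot\|_1$-distance $q$ from a given centre, then over centres in $\Idx_n$ and over $R<q\leq 2dn$, and using $\un^2=\Theta(\log n)$ together with the Berman condition $\Cr{a:PoiG}$---split $q$ at $n^{\alpha}$ for a small $\alpha>0$, using $\sup_{q\geq1}\rho_q=\rho_1<1$ on $q\leq n^\alpha$ and $\rho_q\log q\to0$ (which forces $e^{O(\rho_q\un^2)}=1+o(1)$) on $q>n^\alpha$---produces $O\!\left(\tfrac{\log n}{n}\sum_{q=1}^{2dn}\rho_q\right)$. Together with the two $O((2n+1)^{-d})$ terms, this yields the theorem.

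I expect the main obstacle to be the near-pair step. One must argue \emph{uniformly over all} relative placements of a second cross-polytope skeleton within radius $R$ that the exponential rate of $\ExP[\indc_i\indc_j]$ is at least $\varphi_k$---the coordinate-deletion-plus-Slepian reduction above is engineered precisely to make this uniform and to pin the rate at the extremal configuration (a skeleton together with one adjacent vertex)---and then to track the polynomial-in-$\un$ prefactors through the substitution \eqref{eqn:unForm} carefully enough that they assemble into exactly the exponents $-(1-\vartheta_k)/2$ of $\log n$ and $-d\vartheta_k/(2b_k)$ of $n$ rather than into nearby values; the short verification that $\Cr{a:ComC}(k)$ forces $0<\vartheta_k<1$ is essential here. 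By contrast the far-pair estimate is a fairly standard Berman-type argument, its only delicate point being the choice of the cutoff $n^\alpha$ (which needs the merged two-vector exceedance rate to stay above $a_k$ uniformly for cross-correlation in $[0,\rho_1]$).
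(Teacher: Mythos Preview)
Your overall strategy coincides with the paper's: apply Theorem~\ref{thm:SuffCondPoissonConv} with $\Idx_i^+=\Idx_i$ (legitimate via Theorem~\ref{thm:ProbSpaceExistence} and nonnegative covariances), bound $\sum_i(\ExP[\indc_i])^2=O((2n+1)^{-d})$, and split the double covariance sum by distance. Your near-pair reduction to the $(2b_k{+}1)$-vector at rate $\varphi_k$ is exactly Lemma~\ref{lem:CovBounds}.\ref{itm:WkstBd}, and your Plackett--Price bound for well-separated pairs is Lemma~\ref{lem:CovBounds}.\ref{itm:StBd}; both give the stated contributions.

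There is, however, a real gap in your handling of the \emph{intermediate} far range $R<q\le n^{\alpha}$. You write that here one ``uses $\sup_{q\ge1}\rho_q=\rho_1<1$'' and that the whole far sum then produces $O\bigl(\tfrac{\log n}{n}\sum_q\rho_q\bigr)$. This does not follow: for small $q$ nothing prevents $\rho_q$ from being of order~$\rho_1$, so $\rho_q\un^{2}\asymp\log n$ and your correction factor $e^{O(\rho_q\un^{2})}$ is a genuine power of~$n$, not $O(1)$. Consequently this block is \emph{not} bounded by $\tfrac{\log n}{n}\sum_q\rho_q$. What one actually gets from the interpolation bound, after summing $q^{d-1}$ over $q\le n^{\alpha}$, is of order
\[
n^{\,d\bigl(\alpha-\tfrac{1-2a_k\rho_1}{1+2a_k\rho_1}\bigr)}\cdot(\log n)^{O(1)},
\]
and one must then check the strict inequality
\[
\frac{\vartheta_k}{2b_k}\;=\;\frac{(1-2a_k\rho_1)^2}{2a_k(1-2a_k\rho_1^{2})}\;<\;\frac{1-2a_k\rho_1}{1+2a_k\rho_1},
\]
which holds because $2a_k\rho_1<1$ (from $\Cr{a:ComC}(k)$) and $\rho_1\in(0,1)$, so that for $\alpha$ chosen below the resulting positive gap this block is absorbed into the \emph{first} term $n^{-d\vartheta_k/(2b_k)}(\log n)^{-(1-\vartheta_k)/2}$, not the second. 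The paper makes exactly this comparison (its four-way split $T_1,\dots,T_4$ and the displayed inequality at the end of the proof of Theorem~\ref{thm:PoiResSnk}); in your two-cutoff scheme the same inequality is what fixes how small $\alpha$ must be. Your closing remark that one only needs ``the merged two-vector exceedance rate to stay above $a_k$'' understates this: staying above $a_k$ ensures decay, but you need it to stay \emph{far enough} above $a_k$ to beat the near-pair exponent $\vartheta_k/(2b_k)$.
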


The $k = 0$ case of this result has been shown in \cite[Theorem 3.6]{holst1990poisson} using Theorem~\ref{thm:SuffCondPoissonConv}. We build upon their ideas to show the multi-dimensional version here. Our proof is given in Section~\ref{sec:IntResults}, p.~\pageref{Pf:PoiResSnk}.

Using the above result and Lemma~\ref{lem:limRates}, we now establish the Poisson result for $\Bnk(\un).$

\begin{proof}[Proof of Theorem~\ref{thm:BehMainResult}.\ref{itm:Poisson}]
By the triangle inequality, and Remarks~\ref{rem:DistTVInt} and \ref{rem:DistTVPoiDiffMean}, %
\begin{eqnarray}
\tv{\Bnk(\un)}{\Poi(\lambda)} & \leq & \tv{\Snk(\un)}{\Poi(\ExP[\Snk(\un)])} \nonumber \\
& & \tv{\Poi(\ExP[\Snk(\un)])}{\Poi(\lambda)} + \tv{\Bnk(\un)}{\Snk(\un)} \nonumber\\
& \leq & \tv{\Snk(\un)}{\Poi(\ExP[\Snk(\un)])}  \label{eqn:dTV_TriangleBd} \\
& & + \; |\ExP[\Snk(\un)] - \lambda| + \Pr\{\Bnk(\un) \neq \Snk(\un)\}. \nonumber
\end{eqnarray}
Separately, Markov's inequality and Lemma~\ref{lem:BSDiffBd} show that
\begin{eqnarray}
\Pr\{\Bnk(\un) \neq \Snk(\un)\} & = & \Pr\{|\Bnk(\un) - \Snk(\un)| \geq 1\} \nonumber \\
& \leq & \ExP[\Nnk(\un)] + \ExP[\Lnk(\un)] + \ExP[\Dnk(\un)]. \label{eqn:BNeqSBd}
\end{eqnarray}
Further, we have
\begin{eqnarray}
& & \ExP[\Nnk(\un)] + \ExP[\Lnk(\un)] + \ExP[\Dnk(\un)] \nonumber \\
& = & O(\exp[-(\varrho_k - a_k)\un^2]) + O(\un^{-1} \exp[-(\varphi_k - a_k) \un^2]) + O\left(\frac{1}{n}\right) \label{eqn:NLDSumBd0} \\
& = & O\left(\frac{[\log n]^{(\varrho_k - a_k)b_k/a_k}}{n^{d(\varrho_k - a_k)/a_k}}\right) +  O\left(\frac{[\log n]^{(\varphi_k - a_k)b_k/a_k - 0.5}}{n^{d(\varphi_k - a_k)/a_k}}\right)+  O\left(\frac{1}{n}\right) \label{eqn:NLDSumBd}
\end{eqnarray}
and
\begin{equation}
\label{eqn:PExpSnkBd}
|\ExP[\Snk(\un)] - \lbnk(\un)| = O\left(\frac{1}{\un^2}\right) = O\left(\frac{1}{\log n}\right),
\end{equation}
where \eqref{eqn:NLDSumBd0} and \eqref{eqn:PExpSnkBd} hold due to Lemma~\ref{lem:limRates} and the fact that $\lbnk(\un) \to \lambda,$ a constant, while \eqref{eqn:NLDSumBd} follows by observing $\exp[-(\varrho_k - a_k)\un^2] = [\tau_k (2n + 1)^d \un^{-2b_k}]^{-(\varrho_k - a_k)/a_k}[\lbnk(\un)]^{(\varrho_k - a_k)/a_k},$ and so on, and then making use of the facts that $\lbnk(\un) \to \lambda$ and $a_k \un^2/[d\log n] \to 1.$

By substituting \eqref{eqn:unForm} in \eqref{Defn:Lambdanku}, additionally observe that
\[
|\lbnk(\un) - \lambda| = \left|\left[\frac{\log[\tau_k (2n + 1)^d]}{\left[\log[\tau_k (2n + 1)^d] - b_k \log\left[\frac{\log[\tau_k (2n + 1)^d]}{a_k}\right] + \nu_n \right]}\right]^{b_k} e^{-\nu_n} - \lambda\right|.
\]
Using the triangle inequality and the fact that $\lim_{n \to \infty} \nu_n = \log[1/\lambda],$ it then follows that
\[
|\lbnk(\un) - \lambda| = O(|e^{-\nu_n} - \lambda|) + O\left(\left|\left[\frac{\log[\tau_k (2n + 1)^d]}{\log[\tau_k (2n + 1)^d] - b_k \log\left[\frac{\log[\tau_k (2n + 1)^d]}{a_k}\right] + \nu_n }\right]^{b_k} - 1\right|\right).
\]
Recall that, if $f(x) = x^{b_k},$ then, by the monotonicity of $f'(x)$ and the mean value theorem,
\[
|f(x) - f(1)| \leq b_k \left[\sup_{c \in \{x, 1\}} x^{b_k - 1}\right] |x - 1|.
\]
Applying this above and using the fact that $\lim_{n \to \infty} \nu_n$ is a constant, we get
\[
|\lbnk(\un) - \lambda| = O(|e^{-\nu_n} - \lambda|) + O\left(\frac{\log \log n}{ \log n}\right).
\]
Consequently, the triangle inequality and \eqref{eqn:PExpSnkBd} show
\begin{equation}
\label{eqn:PExpLbBd}
|\ExP[\Snk(\un)] - \lambda| = O(|e^{-\nu_n} - \lambda|) + O\left(\frac{\log \log n}{ \log n}\right).
\end{equation}

The desired result now follows from \eqref{eqn:dTV_TriangleBd}, \eqref{eqn:BNeqSBd}, \eqref{eqn:NLDSumBd}, \eqref{eqn:PExpLbBd}, and Theorem~\ref{thm:PoiResSnk}.
\end{proof}

Finally, it remains to establish the non-vanishing regime behavior in Theorem~\ref{thm:BehMainResult}. Towards that, we establish the following two results. The first one discusses second order moments of $\Snk(u), \Nnk(u), \Lnk(u),$ and $\Dnk(u),$ while the next one obtains a CLT for $\Snk(u).$ Unlike Theorem~\ref{thm:BehMainResult}, note that the CLT for $\Snk(u)$ holds for the entire non-vanishing regime (see    Remark~\ref{rem:CLTlimit}).

\begin{lemma}
\label{lem:VarianceEstimates}
Fix $k$ such that $0 \leq k < d.$ Let $\lbnku$ be as in \eqref{Defn:Lambdanku} and $\{\un\} \in \pSeq$ be as in \eqref{eqn:unForm} with $\nu_n \to -\infty.$ Then, the following statements are true.
\begin{enumerate}

\item \label{itm:SnkVarEst} If $\Cr{a:r0Asm}, \Cr{a:r1Asm}, \Cr{a:ComC}(k),$ $\Cr{a:CLTG},$ and $\Cr{a:r2LAsm}$ hold, then $\left|\dfrac{\Var[\Snk(\un)]}{\lbnk(\un)} - 1\right| = O \left(\frac{1}{\un^2}\right).$

\item \label{itm:SqNnkEst} If $\Cr{a:r0Asm}, \Cr{a:CLTG}, \Cr{a:r2Asm},$ and $\Cr{a:rqAsm}$ hold, then
\[
\dfrac{\ExP[\Nnk^2(\un)]}{\lbnk^2(\un)}  =
\begin{cases}
0, & \text{ if $k = 0, d - 1,$ } \\
O\left(\dfrac{\exp[-(\varrho_k - a_k) \un^2]}{\lbnk(\un)}\right) + O\left(\exp[-2(\varrho_k - a_k) \un^2]\right), & \text{ if $1 \leq k < d - 1.$}
\end{cases}
\]

\item \label{itm:SqLnkEst} If $\Cr{a:r0Asm}, \Cr{a:r1Asm}, \Cr{a:ComC}(k),$ $\Cr{a:CLTG},$ and $\Cr{a:r2LAsm}$ hold, then
\[
\dfrac{\ExP[\Lnk^2(\un)]}{\lbnk^2(\un)}  = O\left(\dfrac{\un^{-1} \exp[-(\varphi_k - a_k) \un^2]}{\lbnk(\un)} \right) +  O\left(\un^{-2} \exp[-2(\varphi_k - a_k) \un^2]\right).
\]

\item \label{itm:SqDnkEst} If $\Cr{a:r0Asm}, \Cr{a:r1Asm}, \Cr{a:ComC}(k),$ $\Cr{a:CLTG},$ and $\Cr{a:r2LAsm}$ hold, then
\[
\dfrac{\ExP[\Dnk^2(\un)]}{\lbnk^2(\un)} =
\begin{cases}
0, & \text{ if $k = 0,$} \\
O\left(\dfrac{1}{n\lbnk(\un)}\right)  + O\left(\dfrac{1}{n^2}\right), & \text{ if $1 \leq k < d.$}
\end{cases}
\]

\end{enumerate}
\end{lemma}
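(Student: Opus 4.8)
The plan is to treat all four quantities through the common lens of the second moment of a sum of indicators $T=\sum_{\alpha}\indc_\alpha$, where $\indc_\alpha=\indc[\vec X_{\alpha}\ge u\ones]$ and $\alpha$ ranges over the relevant index set ($\IdxnCrDir$ for $\Snk$, $\IdxnCrNk$ for $\Nnk$, $\IdxnCrPk$ for $\Lnk$, and the boundary-restricted version $\{\|\vec t\|_\infty=n\}$ for $\Dnk$). Since $\indc_\alpha^2=\indc_\alpha$, one has $\ExP[T^2]=\ExP[T]+\sum_{\alpha\ne\beta}\ExP[\indc_\alpha\indc_\beta]$ and, for the variance, $\Var[T]=\ExP[T]-\sum_\alpha(\ExP[\indc_\alpha])^2+\sum_{\alpha\ne\beta}\cov[\indc_\alpha,\indc_\beta]$. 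In each case the diagonal part $\ExP[T]$ is exactly what Lemma~\ref{lem:limRates} controls, and the extra term $\sum_\alpha(\ExP[\indc_\alpha])^2\le\big(\max_\alpha\ExP[\indc_\alpha]\big)\ExP[T]$ (only relevant for part~\ref{itm:SnkVarEst}) is harmless because $\max_\alpha\ExP[\indc_\alpha]$ decays faster than any power of $\un^{-1}$. So everything reduces to the off-diagonal sum $\sum_{\alpha\ne\beta}\ExP[\indc_\alpha\indc_\beta]$ (resp.\ the covariance sum); the shape of the four claimed bounds --- a mean term plus a mean-squared term, suitably normalized --- already signals that this off-diagonal sum is of product order $(\ExP[T])^2$ up to lower-order corrections.

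I would split the off-diagonal pairs according to the $\|\cdot\|_1$-separation of the vertex supports of $\alpha$ and $\beta$. Fix a large constant $c_0$. For \emph{nearby} pairs --- supports within $\|\cdot\|_1$-distance $c_0$ --- there are only $O((2n+1)^d)$ of them (for $\Dnk$, only $O((2n+1)^{d-1})$, which is the source of the $1/n$ factor), and $\indc_\alpha\indc_\beta\le\indc[\vec X_{\text{union}}\ge u\ones]$ where, up to translation, the union is one of finitely many point configurations. Applying Slepian's lemma (Lemma~\ref{lem:SlepainInequality}) to dominate the covariance matrix of $\vec X_{\text{union}}$ by the equicorrelated or ARMA-type matrix fixed by its minimal pairwise $\|\cdot\|_1$-distances, and then Savage's estimate (Lemma~\ref{lem:GaussianTailBounds}) --- whose Savage condition holds under the assumptions of Table~\ref{tab:Assumptions} --- bounds each such joint probability by $O(\un^{-c}\exp[-\psi\un^2])$ with $\psi$ one of the constants $\phi_m(\rho_2,\rho_1)$ or $\phi_m(\rho_2,\rho_3)$ of \eqref{Defn:phiCons}. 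The crucial point is that, for every distinct pair, $\psi$ strictly exceeds $a_k$ (or exceeds $\varphi_k$, for the $\Lnk$ configurations, and $\varrho_k$ is what is beaten for the $\Nnk$ configurations): this is precisely what $\Cr{a:ComC}(k)$ buys (it is equivalent to $2a_k\rho_1<1$, hence to $\varphi_k>a_k$) together with $\Cr{a:r2LAsm}$, while for the $\Nnk$ configurations it is $\Cr{a:rqAsm}$ that supplies $\varrho_k>a_k$, as already recorded in Lemma~\ref{lem:limRates}. Dividing the nearby-pair total by $\lbnk$ or $\lbnk^2$ and using $\un^2\sim(d/a_k)\log n$, the exponential gap $\exp[-(\psi-a_k)\un^2]$ makes this contribution decay faster than any power of $\un^{-1}$; in particular it is absorbed into the $O(1/\un^2)$ error of part~\ref{itm:SnkVarEst} and into the $\exp[-(\varrho_k-a_k)\un^2]$- and $\exp[-(\varphi_k-a_k)\un^2]$-type terms of parts~\ref{itm:SqNnkEst}--\ref{itm:SqDnkEst}.

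For \emph{distant} pairs (supports separated by more than $c_0$) I would show $\cov[\indc_\alpha,\indc_\beta]$ is of product order times a small multiplicative correction. Writing $\ExP[\indc_\alpha\indc_\beta]=\ExP[\indc_\alpha]\,\Pr\{\vec X_\beta\ge u\mid\vec X_\alpha\ge u\}$ and analysing the conditional Gaussian law of $\vec X_\beta$ given $\{\vec X_\alpha\ge u\}$ --- its mean is shifted by $O(\un\,\delta_{\alpha\beta})$ with $\delta_{\alpha\beta}\le\sum_{\vec v\in\alpha,\ \vec w\in\beta}\rho_{\|\vec v-\vec w\|_1}$ and its variances are only slightly reduced --- Savage's estimate yields $\cov[\indc_\alpha,\indc_\beta]=O\big(\un^{2}\,\delta_{\alpha\beta}\,\mathrm{poly}(\un)\,\exp[-2a_k\un^2]\big)$, and the analogue with $a_k$ replaced by $\varphi_k$ or $\varrho_k$ for $\Lnk$, $\Nnk$. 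Summing over $\beta$ distant from a fixed $\alpha$ brings in $\sum_{\vec v}\rho_{\|\vec v\|_1}\cdot O(1)=O\big(\sum_{q\ge0}q^{d-1}\rho_q\big)<\infty$ by $\Cr{a:CLTG}$, so the distant-pair sum is $O\big((2n+1)^d\un^{c}\exp[-2a_k\un^2]\big)$, which after dividing by $\lbnk$ or $\lbnk^2$ again decays faster than any power of $\un^{-1}$. Assembling the diagonal estimate from Lemma~\ref{lem:limRates} with the nearby- and distant-pair bounds gives all four statements; in part~\ref{itm:SnkVarEst} the $O(1/\un^2)$ rate comes entirely from $|\ExP[\Snk(\un)]/\lbnk(\un)-1|=O(1/\un^2)$.

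The main obstacle is the distant-pair estimate. The off-the-shelf normal comparison / Slepian bound only gives $\cov[\indc_\alpha,\indc_\beta]=O\big(\rho_{\mathrm{dist}}\exp[-\un^2/(1+\rho_{\mathrm{dist}})]\big)$, whose exponent need not beat $a_k$ once $a_k\ge1$ (which happens when $\rho_2$ is small and $k$ large), so one genuinely needs the sharper ``product order times $(1+O(\un^2\delta_{\alpha\beta}))$'' form from the conditional analysis, which requires controlling the conditional mean and covariance of $\vec X_\beta$ and the remainder in Savage's expansion uniformly over the exponentially rare event $\{\vec X_\alpha\ge u\}$. A secondary, more tedious point is the finite enumeration --- for the nearby pairs --- of the ways two $O_k$-, $\cN_k$-, or $\cP_k$-type configurations can overlap, together with the case-by-case check that each merged quadratic form has exponent strictly above $a_k$ (respectively $\varphi_k$, $\varrho_k$).
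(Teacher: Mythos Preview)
Your overall plan --- diagonal from Lemma~\ref{lem:limRates}, off-diagonal split into nearby and distant pairs, nearby pairs via Slepian plus Savage on the merged configuration, distant pairs via a sharp $O(\rho_q\cdot\text{product})$ covariance bound summed with $\Cr{a:CLTG}$ --- is exactly the paper's. For part~\ref{itm:SnkVarEst} the nearby pairs are those with $\rhoa_{\tOm,\tpOmp}=\rho_0$ (handled by Lemma~\ref{lem:CovBounds}.\ref{itm:WkstBd}, which is where the $\varphi_k$ exponent enters), and the distant sum is Lemma~\ref{lem:CovBounds}.\ref{itm:StBd} summed over $q\ge1$.

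The one genuine gap is your derivation of the distant-pair bound. The law of $\vec X_\beta$ conditional on the \emph{event} $\{\vec X_\alpha\ge u\ones\}$ is not Gaussian, so Savage's estimate does not apply to it as you state; fixing this by conditioning on $\vec X_\alpha=\vec x$ and integrating over the orthant, while tracking the Savage remainder uniformly in $\vec x$, is considerably more work than you suggest. The paper sidesteps this with an interpolation argument: embed both blocks in a comparison vector $\vec Z_\mu$ whose cross-block covariance is the constant $\mu$, use Slepian to get $\cov[\indc_\alpha,\indc_\beta]\le\int_0^{\rho_q}\partial_\mu\Pr\{\vec Z_\mu\ge u\ones\}\,d\mu$, and bound the integrand directly (Lemmas~\ref{lem:ZmuTailBeh} and~\ref{lem:GenContDiff}); this yields Lemma~\ref{lem:CovBounds}.\ref{itm:StBd} and its $\cN_k/\cP_k$ analogue Lemma~\ref{lem:CovBdsNL} without any conditional analysis. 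For parts~\ref{itm:SqNnkEst}--\ref{itm:SqLnkEst}, where the indicators over $G\in\cN_k$ or $\cP_k$ are not identically distributed, the paper also avoids the enumeration you anticipate: it Slepian-dominates every distant joint probability by a single reference family $\Pr\{\vec Z_\mu\ge u\ones\}$ (stratifying $\cN_k$ by $|V_1|$, cf.\ Remark~\ref{rem:NonUniPartition}) and then adds and subtracts the squared reference-indicator mean, while the nearby pairs there are handled simply by dropping one factor, $\Pr\{(\vec X_\alpha,\vec X_\beta)\ge u\}\le\Pr\{\vec X_\alpha\ge u\}$, rather than by analysing merged configurations.
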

The proof of this result is given in Section~\ref{sec:IntResults}, p.~\pageref{Pf:VarianceEstimates}. Statements~\ref{itm:SnkVarEst} and \ref{itm:SqDnkEst} need the covariance bounds from Lemmas~\ref{lem:CovBounds0} and \ref{lem:CovBounds}, while Statements~\ref{itm:SqNnkEst} and \ref{itm:SqLnkEst} require the covariance type bounds from Lemma~\ref{lem:CovBdsNL}.

\begin{theorem}
\label{thm:SCLT}
Fix $k$ such that $0 \leq k < d.$ Let $\lbnku$ be as in \eqref{Defn:Lambdanku} and $\{\un\} \in \pSeq$ be as in \eqref{eqn:unForm} with $\lim_{n \to \infty} \nu_n = -\infty.$ Suppose $\Cr{a:r0Asm}, \Cr{a:r1Asm}, \Cr{a:ComC}(k), \Cr{a:CLTG},$ and $\Cr{a:r2LAsm}$  hold. Then,
\[
\frac{S_{n, k}(\un)-\ExP[S_{n, k}(\un)]}{\sqrt{\lambda_{n,k}(\un)}} \CiD \Gau(0, 1).
\]
\end{theorem}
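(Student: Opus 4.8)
The plan is to prove the theorem for the \emph{self-normalized} sum and then pass to the stated normalization. Since $\nu_n\to-\infty$ forces $\lbnk(\un)\to\infty$, Lemma~\ref{lem:VarianceEstimates}.\ref{itm:SnkVarEst} gives $\Var[S_{n,k}(\un)]/\lbnk(\un)\to 1$, so by Slutsky's theorem it suffices to establish $(S_{n,k}(\un)-\ExP[S_{n,k}(\un)])/\sqrt{\Var[S_{n,k}(\un)]}\CiD\Gau(0,1)$. Write $S_{n,k}(\un)=\sum_{\alpha\in\Idx}\indc_\alpha$, where $\Idx=\Idx_n$ if $k=0$ and $\Idx=\IdxnCrDir$ if $1\le k<d$, and each $\indc_\alpha=\indc[\vec{X}_\alpha\ge\un\ones_{2b_k}]$ is a monotone functional of the field $X$ restricted to a set of $2b_k$ lattice points lying in a ball of fixed radius about the lattice centre $\vec{t}_\alpha$ of $\alpha$. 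Thus dependence between $\indc_\alpha$ and $\indc_\beta$ is carried entirely by $\{\rho_q\}$ and is ``strong'' only when $\vec t_\alpha,\vec t_\beta$ are close.

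Next I would apply a quantitative normal-approximation bound for sums of indicators with a \emph{local dependence} structure (Stein's method, in a Penrose/Chen--Shao/Baldi--Rinott type form). Fix a truncation radius $m_n\to\infty$, to be chosen slowly, and set $N_\alpha:=\{\beta\in\Idx:\|\vec t_\alpha-\vec t_\beta\|_\infty\le m_n\}$, so $|N_\alpha|=O(m_n^d)$. Writing $\sigma^2=\Var[S_{n,k}(\un)]$, the Wasserstein distance $d_{\mathrm W}$ between the normalized sum and $\Gau(0,1)$ is then controlled by: (i) a third-moment term $\lesssim\sigma^{-3}|N_\alpha|^2\sum_\alpha\ExP|\indc_\alpha-\ExP\indc_\alpha|^3$ together with a fourth-moment term $\lesssim\sigma^{-2}|N_\alpha|^{3/2}(\sum_\alpha\ExP(\indc_\alpha-\ExP\indc_\alpha)^4)^{1/2}$; and (ii) a residual-dependence term of the form $\sigma^{-2}\sum_\alpha\sum_{\beta\notin N_\alpha}|\cov(\indc_\alpha,\indc_\beta)|$ (and a square-root variant), which is needed because, under Assumption~$\Cr{a:CLTG}$ alone, $\indc_\alpha$ is only \emph{weakly}, not exactly, independent of $\{\indc_\beta:\beta\notin N_\alpha\}$.

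For the terms in (i): by Slepian's lemma (Lemma~\ref{lem:SlepainInequality}) and Savage's multivariate tail estimate (Lemma~\ref{lem:GaussianTailBounds})---whose Savage condition is ensured by $\Cr{a:ComC}(k)$ and $\Cr{a:r2LAsm}$ exactly as in the proof of Lemma~\ref{lem:limRates}---one has $\ExP[\indc_\alpha]=\Theta(\lbnk(\un)/(2n+1)^d)=\Theta(\un^{-2b_k}e^{-a_k\un^2})\to0$. Since $\indc_\alpha\in\{0,1\}$, $\ExP|\indc_\alpha-\ExP\indc_\alpha|^3\le 2\ExP\indc_\alpha$ and likewise for the fourth power, so $\sum_\alpha\ExP|\indc_\alpha-\ExP\indc_\alpha|^3\le 2\ExP[S_{n,k}(\un)]=O(\lbnk(\un))=O(\sigma^2)$. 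Hence the two Stein terms in (i) are $O(m_n^{2d}/\sigma)$ and $O(m_n^{3d/2}/\sigma)$, which vanish as long as $m_n^{2d}=o(\sigma)$; as $\sigma\to\infty$, such an $m_n\to\infty$ exists. For (ii), using the covariance bounds of Lemmas~\ref{lem:CovBounds0} and \ref{lem:CovBounds} (the ones behind Lemma~\ref{lem:VarianceEstimates}.\ref{itm:SnkVarEst}), $|\cov(\indc_\alpha,\indc_\beta)|$ at $\|\vec t_\alpha-\vec t_\beta\|_1=q$ is at most $(\ExP\indc_\alpha)(\ExP\indc_\beta)$ times a factor that stays bounded once $\rho_q\un^2\to0$; grouping $\beta$'s by $q$ and summing gives $\sum_\alpha\sum_{\beta\notin N_\alpha}|\cov(\indc_\alpha,\indc_\beta)|=O\big(\lbnk(\un)^2(2n+1)^{-d}\un^2\sum_{q>m_n}q^{d-1}\rho_q\big)$, which divided by $\sigma^2=O(\lbnk(\un))$ is $O\big(\ExP[\indc_\alpha]\,\un^2\sum_{q>m_n}q^{d-1}\rho_q\big)\to0$, since $\ExP[\indc_\alpha]\,\un^2\to0$ by sparsity and $\sum_{q>m_n}q^{d-1}\rho_q\to0$ by $\Cr{a:CLTG}$. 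Assembling (i) and (ii) yields a rate in $d_{\mathrm W}$, hence the weak convergence.

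The main obstacle is exactly the balancing act in (ii): the Gaussian field has genuine long-range dependence, so $S_{n,k}(\un)$ is not a sum over an exact bounded-degree dependency graph, and $m_n$ must be large enough that $\rho_{m_n}\un^2\to0$ (keeping the pairwise-covariance factors bounded and the tail $\sum_{q>m_n}q^{d-1}\rho_q$ small) yet small enough that the polynomially-in-$m_n$ growing Stein terms in (i) remain $o(1)$. Checking that these requirements are simultaneously met, and making the covariance estimate precise in the form $(\ExP\indc_\alpha)(\ExP\indc_\beta)\cdot(1+O(\rho_q\un^2))$ uniformly over the sparse range of $\{\un\}$, is where the real work lies. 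An alternative that avoids quoting a black-box Stein inequality is a direct cumulant expansion: show that every cumulant of order $\ge3$ of the normalized sum tends to $0$, again using $\Cr{a:CLTG}$ to bound the contribution of well-separated index tuples and the sparsity $\ExP[\indc_\alpha]\to0$ to suppress ``clumped'' configurations; this route needs the same separation-of-scales bookkeeping.
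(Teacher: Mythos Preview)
Your approach via direct Stein normal approximation with approximate dependency neighborhoods is plausible in spirit, but it is genuinely different from---and considerably more laborious than---the paper's route. The paper does not attack the normal limit directly at all. Instead it recycles the Stein--Chen \emph{Poisson} approximation machinery already in place: the total-variation bound \eqref{eqn:dtvBd} from the proof of Theorem~\ref{thm:PoiResSnk} holds verbatim here, and the two terms on its right-hand side (the squared-mean sum and the covariance sum, each divided by $\lbnk(\un)$) were already shown to vanish in the proof of Lemma~\ref{lem:VarianceEstimates}.\ref{itm:SnkVarEst} (see \eqref{eqn:FracSqMeanSumLb}, \eqref{eqn:FracInitCovSumLb}, \eqref{eqn:FracEventualCovSumLb}). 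Hence $\tv{\Snk(\un)}{\Poi(\ExP[\Snk(\un)])}\to 0$. Since $\ExP[\Snk(\un)]\to\infty$, the elementary fact that $(\Poi(\lambda)-\lambda)/\sqrt{\lambda}\CiD\Gau(0,1)$ as $\lambda\to\infty$, together with invariance of total variation under affine maps, gives the CLT for $\Snk(\un)$ normalized by $\sqrt{\ExP[\Snk(\un)]}$; Lemma~\ref{lem:limRates}.\ref{itm:ExPSnku} then lets one replace this by $\sqrt{\lbnk(\un)}$.

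The payoff of the paper's approach is precisely that it sidesteps your ``main obstacle'': there is no truncation radius $m_n$ to balance, no residual-dependence term to control, and no need for a nonstandard local-dependence Stein bound that tolerates approximate independence outside $N_\alpha$. The specific form you invoke---a Baldi--Rinott/Chen--Shao bound with an additional pairwise $\sum_{\beta\notin N_\alpha}|\cov(\indc_\alpha,\indc_\beta)|$ remainder grafted on---is not off-the-shelf and would itself require proof. Your program could likely be pushed through, but it duplicates effort: all the delicate covariance summation under $\Cr{a:CLTG}$ has already been carried out for the variance estimate, and the Poisson bridge converts that work into the CLT in two lines.
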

The proof for this result is given in Section~\ref{sec:IntResults}, p.~\pageref{Pf:SCLT}. Under the new set of assumptions, the key trick, as in Theorem~\ref{thm:PoiResSnk}, again is to show $\tv{\Snk(\un)}{\Poi(\ExP[\Snk(\un)])} \to 0.$ The desired result then follows from the fact that a sequence of Poisson random variables with mean tending to $\infty,$ after suitable normalization, converge in distribution to a standard Gaussian.

We now discuss the proof of Theorem~\ref{thm:BehMainResult}.\ref{itm:NonVanishing}.

\begin{proof}[Proof of Theorem.~\ref{thm:BehMainResult}.\ref{itm:NonVanishing}]
By Chebyshev's inequality,
\[
\Pr\left\{\left|\frac{\Bnk(\un) - \ExP[\Bnk(\un)]}{\lbnk(\un)}\right| \geq \epsilon \right\} \leq \frac{\Var[\Bnk(\un)]}{\epsilon^2 \lbnk^2(\un)}
\]
for all $\epsilon > 0.$ Additionally, if $\epsilon \in (0, 1),$ then it follows from Theorem~\ref{thm:ExPMainResult} that for all large enough $n$
\[
\{\Bnk(\un) = 0\} \subseteq \left\{\left|\frac{\Bnk(\un) - \ExP[\Bnk(\un)]}{\lbnk(\un)}\right| \geq \epsilon \right\},
\]
and hence
\[
\Pr\{\Bnk(\un) = 0\} \leq \Pr\left\{\left|\frac{\Bnk(\un) - \ExP[\Bnk(\un)]}{\lbnk(\un)}\right| \geq \epsilon \right\}.
\]
Consequently, both \eqref{eqn:NonVanishing} and \eqref{eqn:WeakLaw} are simple consequences of \eqref{eqn:BnkVarEst} which we now prove.

Observe that
\begin{eqnarray*}
\Var[\Bnk(\un)] & \leq & 2 \Var[\Snk(\un)] +  2 \,  \Var[\Bnk(\un) - \Snk(\un)]\\
& \leq &  2 \Var[\Snk(\un)] + 2 \ExP|\Bnk(\un) - \Snk(\un)|^2\\
& = & O\left(\Var[\Snk(\un)] + \ExP[\Nnk^2(\un)] + \ExP [\Lnk^2(\un)] + \ExP[\Dnk^2(\un)]\right),
\end{eqnarray*}
where the first relation follows since, for any two random variables $Y_1$ and $Y_2,$ we have $2 |\cov[Y_1, Y_2]|  \leq \Var[Y_1] + \Var[Y_2];$ the second holds because $\Var[Y] \leq \ExP|Y|^2$ for any random variable $Y;$ while the third one follows from Lemma~\ref{lem:BSDiffBd}. From Lemma~\ref{lem:VarianceEstimates}, it is then easy to see that
\[
\frac{\Var[\Bnk(\un)]}{\lbnk^2(\un)} = O\left(\frac{1}{\lbnk(\un)}\right) + O\left(\exp[-2(\varrho_k - a_k)\un^2]\right) + O\left(\un^{-2}\exp[-2(\varphi_k - a_k)]\right) + O\left(\frac{1}{n^2}\right).
\]
Since $\varrho_k > a_k$ and $\varphi_k > a_k$ on account on Lemma~\ref{lem:limRates} and, also, since $\lbnk(\un) \to \infty,$ it follows that \eqref{eqn:BnkVarEst} holds, as desired.

We now turn to proving \eqref{eqn:BnkCLT}. Let
\begin{equation}
\label{Defn:conNu}
\conNu_k :=
\begin{cases}
\frac{\varphi_k - a_k}{\varphi_k - a_k/2}, & \text{ if $k = 0,$}\\[1ex]
\min\left\{\frac{\varrho_k - a_k}{\varrho_k - a_k/2}, \frac{\varphi_k - a_k}{\varphi_k - a_k/2}, \frac{2}{d}\right\}, & \text{ if $1 \leq k < d.$}
\end{cases}
\end{equation}
%
% \varphi_1 \leq 2a_1. So, this bound by 1 may be unnecessary.
%
Since $\Cr{a:rCAsm}$ holds, note that $\rho_2 \geq 0.$ Hence, $a_k \geq 0.$ Therefore, from Lemmas~\ref{lem:limRates}.\ref{itm:ExPNnku} and \ref{lem:limRates}.\ref{itm:ExPLnku} and by the above definition, it follows that $\conNu_k \in (0, 1].$

Suppose that $1 \leq k < d.$ Using Lemma~\ref{lem:limRates}, we have
\begin{eqnarray*}
& & \Pr\left\{\left|\frac{[\Bnk(\un) - \Snk(\un)] - \ExP[\Bnk(\un) - \Snk(\un)]}{\sqrt{\lbnk(\un)}}\right| > \epsilon \right\}  \\
& \leq & \frac{2}{\epsilon}\frac{\ExP[\Nnk(\un)] + \ExP[\Lnk(\un)] + \ExP[\Dnk(\un)]}{\sqrt{\lbnk(\un)}}
\end{eqnarray*}
for any $\epsilon > 0.$ Hence, \eqref{eqn:BnkCLT} follows from Theorem~\ref{thm:SCLT} and Slutsky's Theorem once we show that the three terms on the RHS decays to zero.

Observe that
\begin{eqnarray*}
& & \frac{\ExP[\Nnk(\un)]}{\sqrt{\lbnk(\un)}} \\
& = & O\left(\frac{(2n + 1)^{d/2}}{\un^{b_k}} \exp[-(\varrho_k - a_k/2)\un^2]\right) \\
& = & O\left(\frac{(2n + 1)^{d/2}}{\un^{b_k}} \exp\left[-[\varrho_k/a_k - 1/2]\left[[1 - C] \log (2n + 1)^d - b_k \log\left[\log (2n + 1)^d\right]\right]\right]\right)\\
& = & O\left(\frac{1}{\un^{b_k}} \exp\left[-[\varrho_k/a_k - 1/2]\left[\left[\frac{\varrho_k - a_k}{\varrho_k - a_k/2} - C\right] \log (2n + 1)^d - b_k \log\left[\log (2n + 1)^d \right]\right]\right]\right) \\
& \to & 0,
\end{eqnarray*}
where the first relation follows from Lemma~\ref{lem:limRates}.\ref{itm:ExPNnku} and \eqref{Defn:Lambdanku}, the second holds  due to \eqref{eqn:unForm} and the assumptions on $\nu_n,$ the third relation follows by simple algebra, while the last relation follows from \eqref{Defn:conNu} and since $C < \conNu_k.$

Similarly, one can see that $\lim_{n \to \infty} \ExP[\Lnk(\un)]/\sqrt{\lbnk(\un)} = 0.$

Next, observe that
\begin{eqnarray*}
& & \frac{\ExP[\Dnk(\un)]}{\sqrt{\lbnk(\un)}} \\
& = & O\left(\frac{(2n + 1)^{d/2}}{n \un^{b_k}} \exp[-a_k \un^2/2]\right)\\
& = & O\left(\frac{(2n + 1)^{d/2}}{n \un^{b_k}} \exp\left[-[1/2]\left[[1 - C] \log (2n + 1)^d - b_k \log\left[\log (2n + 1)^d\right]\right]\right]\right) \\
& \to & 0,
\end{eqnarray*}
where the first relation follows from Lemma~\ref{lem:limRates}.\ref{itm:ExPDnku} and \eqref{Defn:Lambdanku}, the second one holds on account of \eqref{eqn:unForm} and the assumptions on $\nu_n,$ while the last relation follows since $\conNu_k \leq 2/d$ and $C < \conNu_k.$

This establishes \eqref{eqn:BnkCLT} for the $1 \leq k < d$ case. One can similarly argue the $k = 0$ case. This completes the proof.
\end{proof}

\section{Key Properties of Geometric Complexes on a Lattice}
\label{sec:BettiBounds}
Here we establish the main properties (Proposition~\ref{prop:EqCechRips} and Theorems~ \ref{thm:RDistantVertices} and~\ref{thm:RBettiBounds}) of our \Cech and equivalently \Rips complex that we mentioned in Section~\ref{sec:ProofOutline} earlier. We also prove some additional features that we use later in Sections~\ref{sec:IntResults} and~\ref{sec:ConclResults}. While we employ these results afterwards for our random setup from Section~\ref{sec:intro}, we emphasize that the discussion here is completely from a deterministic perspective.

Throughout this section, let $V$ be an arbitrary fixed subset of $\dGrid.$ As in Definitions~\ref{Defn:Cech} and \ref{Defn:Rips} respectively, let $\cK$ be the \Cech and $\hat{\cK}$ be the \Rips complex on $V$ with respect to $\sB_\infty(\cdot, \tfrac{1}{2}).$

Our first aim is to show that $\cK = \hat{\cK}.$ This follows from their definitions and holds since these are defined on a lattice. We begin with two trivial facts.

\begin{fact}
\label{Fact:AbsD1}
If $r_1, r_2 \in \Grid$ with $|r_1 - r_2| = 1,$ then for all $r \in \Grid$ with $\max\{|r - r_1|, |r - r_2|\} \leq 1,$ we have
$r \in \{r_1, r_2\}.$
\end{fact}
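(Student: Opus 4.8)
The plan is to reduce to a single canonical case by exploiting the symmetry of the hypothesis. Since $|r_1 - r_2| = 1$, exactly one of $r_2 = r_1 + 1$ or $r_2 = r_1 - 1$ holds, and the two cases are interchanged by swapping the roles of $r_1$ and $r_2$; so without loss of generality I would assume $r_2 = r_1 + 1$.

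Next I would translate so that $r_1 = 0$, which does not affect any of the absolute-value inequalities. The hypothesis $\max\{|r - r_1|, |r - r_2|\} \leq 1$ then becomes $|r| \leq 1$ and $|r - 1| \leq 1$ simultaneously. The first inequality, together with $r \in \Grid$, forces $r \in \{-1, 0, 1\}$, while the second forces $r \in \{0, 1, 2\}$. Intersecting the two finite sets gives $r \in \{0, 1\} = \{r_1, r_2\}$ (after undoing the translation), which is the claim.

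There is essentially no obstacle here: the statement is a finite check on three integer values once the translation and the symmetry reduction are in place. The only thing to be careful about is stating the reduction cleanly (the "without loss of generality" via the symmetry in $r_1 \leftrightarrow r_2$, and the translation invariance of the $\|\cdot\|$ conditions), after which the conclusion is immediate by inspection of the integers satisfying both $|r| \le 1$ and $|r-1| \le 1$.
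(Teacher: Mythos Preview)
Your proof is correct. The paper itself does not give a proof of this statement; it is stated as a trivial fact and used without further justification, so your explicit reduction by symmetry, translation, and finite check is more than adequate.
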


\begin{fact}
\label{Fact:AbsD2}
If $r_1, r_2 \in \Grid$ with $|r_1 - r_2| = 2,$ then for all $r \in \Grid$ with $\max\{|r - r_1|, |r - r_2|\} \leq 1,$ we have $r = (r_1 + r_2)/2.$
\end{fact}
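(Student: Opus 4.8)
The plan is to argue directly from the definition of the absolute value on $\Grid$, splitting into the two possible arrangements of $r_1$ and $r_2$ on the integer line. Since $|r_1 - r_2| = 2$, either $r_2 = r_1 + 2$ or $r_1 = r_2 + 2$; by symmetry it suffices to treat the first case, so assume $r_2 = r_1 + 2$. I would then consider an arbitrary $r \in \Grid$ satisfying $\max\{|r - r_1|, |r - r_2|\} \le 1$, and show $r = r_1 + 1 = (r_1 + r_2)/2$.

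First I would unpack the two constraints: $|r - r_1| \le 1$ gives $r \in \{r_1 - 1, r_1, r_1 + 1\}$, and $|r - r_2| \le 1$ with $r_2 = r_1 + 2$ gives $r \in \{r_1 + 1, r_1 + 2, r_1 + 3\}$. The only element common to both three-element sets is $r_1 + 1$, so $r = r_1 + 1$. Finally, observe $(r_1 + r_2)/2 = (r_1 + r_1 + 2)/2 = r_1 + 1 = r$, which is the claim. The case $r_1 = r_2 + 2$ is handled identically with the roles of $r_1$ and $r_2$ interchanged, and $(r_1+r_2)/2$ is symmetric in the two indices, so the conclusion is the same.

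There is essentially no obstacle here: the statement is an elementary one-dimensional fact and the only thing to be careful about is to enumerate the integer solutions of the two inequality constraints correctly and to note that their intersection is a single point. The symmetry between $r_1$ and $r_2$ lets me avoid writing out the second case in full.
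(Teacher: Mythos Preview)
Your proof is correct. The paper does not actually give a proof of this statement: it is introduced (together with Fact~\ref{Fact:AbsD1}) as one of ``two trivial facts'' and stated without argument, so your direct enumeration of the integer solutions is entirely adequate and matches the intended level of detail.
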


\begin{lemma}
\label{lem:EqGenCechRips}
$\cK = \hat{\cK}.$
\end{lemma}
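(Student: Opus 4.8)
The plan is to prove $\cK \subseteq \hat{\cK}$ and $\hat{\cK} \subseteq \cK$ separately, using the fact that both complexes have vertex set $V$ by construction. The inclusion $\cK \subseteq \hat{\cK}$ is immediate from the definitions: if $\sigma$ is a face of the \Cech complex, then $\bigcap_{\vec{t} \in \sigma} \sB_\infty(\vec{t}, \tfrac{1}{2}) \neq \emptyset$, and any point in this common intersection certifies that every pairwise intersection $\sB_\infty(\vec{t}_i, \tfrac{1}{2}) \cap \sB_\infty(\vec{t}_j, \tfrac{1}{2})$ is nonempty, so $\sigma$ is a face of $\hat{\cK}$. This direction needs no lattice structure.

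The substantive direction is $\hat{\cK} \subseteq \cK$. So suppose $\sigma = \{\vec{t}_1, \ldots, \vec{t}_m\} \subseteq V$ is a face of the \Rips complex, i.e. $\sB_\infty(\vec{t}_i, \tfrac{1}{2}) \cap \sB_\infty(\vec{t}_j, \tfrac{1}{2}) \neq \emptyset$ for every pair $i, j$. Since these are $\|\cdot\|_\infty$-balls of radius $\tfrac12$, their pairwise intersection is nonempty exactly when $\|\vec{t}_i - \vec{t}_j\|_\infty \leq 1$, i.e. coordinatewise $|\vec{t}_i(\ell) - \vec{t}_j(\ell)| \leq 1$ for each $\ell \in \setd$. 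I want to produce a single point $\vec{p} \in \Real^d$ lying in all $m$ balls, which amounts to choosing, for each coordinate $\ell$ separately, a real number $p(\ell)$ with $|p(\ell) - \vec{t}_i(\ell)| \leq \tfrac12$ for all $i$. This reduces the whole problem to the one-dimensional statement: given integers $r_1, \ldots, r_m \in \Grid$ that are pairwise within distance $1$, there is a real $p$ within distance $\tfrac12$ of all of them. The key observation is that pairwise distance $\le 1$ among integers forces $\max_i r_i - \min_i r_i \leq 1$ — indeed, Facts~\ref{Fact:AbsD1} and~\ref{Fact:AbsD2} are set up precisely to rule out a spread of $2$ or more (if $|r_a - r_b| = 2$ for some pair, one checks no valid configuration survives, and larger spreads are even worse). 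Once the spread is at most $1$, the set $\{r_1, \ldots, r_m\}$ consists of at most two consecutive integers $\{c, c+1\}$ (or a single integer), and $p := \tfrac{\min_i r_i + \max_i r_i}{2} \in \{c, c+\tfrac12\}$ works. Assembling these coordinatewise choices gives $\vec{p}$ with $\vec{p} \in \bigcap_{i} \sB_\infty(\vec{t}_i, \tfrac12)$, so $\sigma \in \cK$.

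The main obstacle — really the only place any care is needed — is the one-dimensional spread argument, i.e. ruling out that the integers $r_1, \dots, r_m$ could be pairwise within $1$ yet not all lie in two consecutive values; this is exactly what Facts~\ref{Fact:AbsD1} and~\ref{Fact:AbsD2} are designed to handle, so the work is to invoke them correctly rather than reprove anything. Concretely: let $r_{\min} = \min_i r_i$ and $r_{\max} = \max_i r_i$, attained at indices $a$ and $b$; then $|r_a - r_b| = r_{\max} - r_{\min} \leq 1$ since $a, b$ form a pair in $\sigma$, giving the spread bound directly without even needing the Facts in full generality. I would then note that the Facts are the natural tool if one instead wants to pin down the common intersection point exactly (it is forced to be a half-integer when the spread is $1$), which is why they were stated. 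I would keep the proof short: one line for $\cK \subseteq \hat{\cK}$, then the coordinatewise reduction, the spread bound, and the explicit choice of $\vec{p}$ for $\hat{\cK} \subseteq \cK$.
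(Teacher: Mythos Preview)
Your proposal is correct and follows essentially the same approach as the paper: both show $\cK \subseteq \hat{\cK}$ trivially, then for the reverse inclusion reduce coordinatewise to the observation that integers pairwise within distance $1$ span at most two consecutive values, and exhibit an explicit common point. The paper normalizes $\vec{t}_0 = \vec{0}$ and invokes Fact~\ref{Fact:AbsD1} to run a three-case analysis per coordinate, whereas you go directly via $r_{\max} - r_{\min} \le 1$ and take the midpoint; your version is marginally cleaner but the argument is the same.
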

\begin{proof}
By definition, $\sigma \in \cK$ implies that $\sigma \in \hat{\cK}.$ It thus suffices to show only the converse. Clearly, if $\sigma \in \hcK$ and $|\sigma| \in \{1, 2\},$ then $\sigma \in \cK.$ Consider $\sigma \equiv \{\vec{t}_0, \ldots, \vec{t}_k\} \in  \hcK$ for some $k \geq 2.$ As $\sigma \in \hcK,$ $\sB_\infty(\vec{t}_i, 1/2) \cap \sB_\infty(\vec{t}_j, 1/2) \neq \emptyset,$ or equivalently $\|\vec{t}_i - \vec{t}_j\|_{\infty} = 1,$ for each pair $\vec{t}_i, \vec{t}_j.$ Without loss of generality, let $\vec{t}_0 \equiv \vec{0}.$ Then, using Fact~\ref{Fact:AbsD1}, for any coordinate index $l,$ exactly one of the following cases is true:
\begin{itemize}
\item[i)] $\vec{t}_0(l) = \vec{t}_1(l) = \vec{t}_2(l) = \cdots = \vec{t}_k(l) = 0,$ or

\item[ii)] for some $i \in \{1, \ldots, k\},$ $\vec{t}_i(l) = 1;$ hence, $\vec{t}_{i^\prime}(l) \in \{0, 1\}$ for all $i^\prime \in \{1, \ldots, k\} \setminus i,$ or

\item[ii)] for some $i \in \{1, \ldots, k\},$ $\vec{t}_i(l) = -1;$ hence, $\vec{t}_{i^\prime}(l) \in \{-1, 0\}$ for all $i^\prime \in \{1, \ldots, k\} \setminus i.$
\end{itemize}
Now define the vector $\vec{t} \in \dReal$ as follows:
\[
\vec{t}(l) =
\begin{cases}
\frac{1}{2}, & \text{ if Cases i) or ii) hold, }\\
-\frac{1}{2}, & \text{ if Case iii) holds.}
\end{cases}
\]
Trivially, $\vec{t} \in \sB_\infty(\vec{t}_i, 1/2)$ for each $i \in \{0, \ldots, k\}.$ The desired result is now easy to see.
\end{proof}

Precisely due to this equivalence, $\cK$ here and $\cK(n; u)$ from Definition~\ref{Defn:Cech} is both a \Cech complex as well as a \Rips complex.

We next establish the deterministic equivalent of Theorem~\ref{thm:RDistantVertices}.

\begin{theorem}
\label{thm:DistantVertices}
In every induced subcomplex $\cC \subseteq \cK$ with non-trivial $k-$th Betti number, where $0 \leq k < d,$ there exist at least $2k + 2$ vertices with pairwise $\|\cdot\|_1$-distances at least $2.$
\end{theorem}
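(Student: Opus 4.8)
The plan is to prove this by induction on $k$, reducing the case $k \geq 1$ to the case $k - 1$ via the link structure (Lemma~\ref{lem:RelLinkNTC}), while the base case $k = 0$ is handled by a direct connectivity argument specific to the lattice geometry. Throughout, recall from Proposition~\ref{prop:EqCechRips} (and its deterministic source Lemma~\ref{lem:EqGenCechRips}) that $\cK$ is the clique complex of its own $1$-skeleton, so Lemmas~\ref{lem:CardNTC} and~\ref{lem:RelLinkNTC} apply verbatim, and edges of $\cK$ correspond exactly to pairs of vertices at $\|\cdot\|_\infty$-distance $1$.

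For the base case $k = 0$, suppose $\cC \subseteq \cK$ is an induced subcomplex with $\beta_0(\cC) \geq 2$, i.e. $\cC$ has at least two connected components. Pick vertices $\vec{s}, \vec{t}$ in two distinct components; then $\{\vec{s}, \vec{t}\}$ is not an edge, so $\|\vec{s} - \vec{t}\|_\infty \geq 2$, and in particular $\|\vec{s} - \vec{t}\|_1 \geq 2$. This gives the required $2 \cdot 0 + 2 = 2$ vertices at pairwise $\|\cdot\|_1$-distance at least $2$.

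For the inductive step, fix $k$ with $1 \leq k < d$ and assume the statement for $k - 1$. Let $\cC \subseteq \cK$ have non-trivial $\beta_k$, so there is a $k$-NTC $\gamma = \sum_{\sigma \in F} c_\sigma \sigma$ in $\cC$ with minimal vertex support and all $c_\sigma \neq 0$. Fix any $v \in \vsupp(\gamma)$. By Lemma~\ref{lem:RelLinkNTC} and Remark~\ref{rem:RelLinkNTC}, $\gamma \cap \lk(v)$ is a $(k-1)$-NTC in the induced subcomplex $\lk(v) \cap \{\sigma \in \cK : \sigma \subseteq \vsupp(\gamma)\}$, and $v \notin \vsupp(\gamma \cap \lk(v))$. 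Since this ambient complex is an induced subcomplex of $\cK$ (being the clique complex of an induced subgraph of the $1$-skeleton, intersected appropriately), the induction hypothesis yields $2k$ vertices $w_1, \ldots, w_{2k}$ in $\vsupp(\gamma \cap \lk(v))$ with pairwise $\|\cdot\|_1$-distances at least $2$. Now I must adjoin two more vertices. The vertex $v$ is a candidate, but each $w_i$ lies in $\lk(v)$, hence $\{v, w_i\}$ is an edge, so $\|v - w_i\|_\infty = 1$ and possibly $\|v - w_i\|_1 = 1$ — so $v$ alone does not suffice and one needs a companion vertex "far from $v$."

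The main obstacle is exactly this last step: producing the $(2k+1)$-st and $(2k+2)$-nd vertices at $\|\cdot\|_1$-distance $\geq 2$ from each other and from all the $w_i$. The plan is to exploit the cross-polytope rigidity from Lemma~\ref{lem:CardNTC}: if $|\vsupp(\gamma)| = 2k+2$, then the $1$-skeleton of $\cC$ restricted to $\vsupp(\gamma)$ is isomorphic to $O_k$, so $v$ has a unique non-neighbor $v' \in \vsupp(\gamma)$ (its "antipode"); then $\|v - v'\|_\infty \geq 2$, and a short argument using Facts~\ref{Fact:AbsD1} and~\ref{Fact:AbsD2} on the lattice coordinates — together with the fact that $v', w_1, \ldots, w_{2k}$ all lie in a common $O_k$-type configuration — shows $v, v', w_1, \ldots, w_{2k}$ have pairwise $\|\cdot\|_1$-distances at least $2$. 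If instead $|\vsupp(\gamma)| \geq 2k+3$, there is slack: the $(k-1)$-NTC $\gamma \cap \lk(v)$ uses only vertices in $\st(v)$, so some vertex $v'' \in \vsupp(\gamma) \setminus \vsupp(\gamma \cap \lk(v))$ with $v'' \neq v$ exists, and one iterates the link argument or directly checks that $v''$ can be taken non-adjacent to $v$ (else $\gamma \cap \st(v)$ would be forced into a smaller support, contradicting minimality). Carefully tracking which of these cases occurs, and verifying the pairwise-distance bookkeeping on the lattice in the rigid case, is where the real work lies; the homological reductions themselves are routine given the cited lemmas.
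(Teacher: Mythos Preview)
Your inductive framework via links is the right skeleton, and it is also what the paper uses, but the real content of the theorem is precisely the step you flag as ``where the real work lies'' --- producing the final two vertices --- and your sketch does not carry it through.

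For $|\vsupp(\gamma)| = 2k+2$: you propose to adjoin $v$ and its antipode $v'$ to the $2k$ vertices $w_1, \ldots, w_{2k}$ obtained by applying the induction hypothesis inside $\lk(v)$. But every $w_i$ lies in $\lk(v)$, so $\|v - w_i\|_\infty = 1$, and you still have to show $\|v - w_i\|_1 \geq 2$. That is exactly the assertion that every edge of an $O_k$-configuration on the lattice has $\ell_1$-length at least $2$, which is the content of the theorem in this case (indeed, the paper records it afterwards as a \emph{corollary}, Corollary~\ref{cor:DistVertMC}). It does not follow ``shortly'' from Facts~\ref{Fact:AbsD1} and~\ref{Fact:AbsD2} alone. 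One can attempt to iterate the induction over $\lk(w_i)$ for various $i$ and combine the resulting constraints (this gets traction for $k \geq 2$, while $k = 1$ needs a separate coordinate chase), but you have written none of this.

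For $|\vsupp(\gamma)| \geq 2k+3$: the argument collapses. The existence of $v'' \in \vsupp(\gamma) \setminus \vsupp(\gamma \cap \lk(v))$ with $v'' \neq v$ is unsupported --- the induction hypothesis only promises that \emph{some} $2k$ vertices of $\vsupp(\gamma \cap \lk(v))$ are mutually far, not that this set has size $2k$. Even granting such a $v''$, you give no reason why $\|v'' - w_i\|_1 \geq 2$ for all $i$, and the appeal to ``contradicting minimality'' is not an argument.

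The paper takes a genuinely different route. It argues by contradiction: assume every $(2k+2)$-subset of $\cC$ contains a pair at $\ell_1$-distance $1$, fix such a pair (WLOG $\vec{0}$ and $\vec{e}_1$) inside $\vsupp(\gamma)$, and then --- using the induction hypothesis together with two auxiliary lemmas proved separately by the same link-induction (Lemmas~\ref{lem:NeighNonEmpty} and~\ref{lem:MissingFrontBackVertex}) --- establishes three coordinate-bounding claims forcing $\vsupp(\gamma) \subseteq \tNeigh(\vec{0}) \cup \tNeigh(\vec{e}_1)$. That confinement then contradicts Lemma~\ref{lem:NeighNonEmpty}. The missing idea in your sketch is this geometric localization: rather than building the $2k+2$ vertices directly, the paper shows that their nonexistence squeezes $\vsupp(\gamma)$ into a region too small to support a $k$-NTC.
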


Before giving its proof, we state and prove two technical lemmas. For any $\vec{t} \in \dGrid$ and $1 \leq j \leq d,$ let $\tNeigh(\vec{t}), \mN_j(\vec{t}),$ and $\pN_j(\vec{t})$ be suitable $\|\cdot\|_\infty-$neighbourhoods of $\vec{t}$ in $\dGrid$ given by
\[
\tNeigh(\vec{t}) := \{\vecp{t} \in \dGrid: \|\vec{t} - \vecp{t}\|_\infty \leq 1\},
\]
\[
\mN_j(\vec{t}) := \{\vecp{t} \in \tNeigh(\vec{t}): \vecp{t}(j) = \vec{t}(j) - 1\}, \text{ and }  \pN_j(\vec{t}) := \{\vecp{t} \in \tNeigh(\vec{t}): \vecp{t}(j) = \vec{t}(j) +1\}.
\]

\begin{lemma}
\label{lem:NeighNonEmpty}
Let $\vec{t} \in \dGrid$ and $j \in \{1, \ldots, d\}.$ Then, the following statements hold:
\begin{itemize}
\item For every induced subcomplex $\cC \subseteq \cK$ such that $\cF_{0}(\cC) \subseteq \tNeigh(\vec{t}) \setminus \mN_j(\vec{t})$ and $\vec{t} + \vec{e}_j \in \cC,$ all its Betti numbers are trivial.

\item For every induced subcomplex $\cC \subseteq \cK$ such that $\cF_{0}(\cC) \subseteq \tNeigh(\vec{t}) \setminus \pN_j(\vec{t})$ and $\vec{t} - \vec{e}_j \in \cC,$
all its Betti numbers are trivial.
\end{itemize}
\end{lemma}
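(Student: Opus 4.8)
The plan is to show that each of the two distinguished vertices — write $v := \vec{t} + \vec{e}_j$ in the first bullet and $v := \vec{t} - \vec{e}_j$ in the second — \emph{dominates} $\cC$, i.e.\ is $\|\cdot\|_\infty$-adjacent in $\cK$ to every other vertex of $\cC$. Once that is established, $\cC$ is a simplicial cone with apex $v$, and a cone is contractible, so all its reduced Betti numbers vanish; in the convention of this paper ($\beta_0 \le 1$ and $\beta_k = 0$ for $k \ge 1$) that is precisely the assertion that all Betti numbers of $\cC$ are trivial.

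First I would record why domination suffices. By Lemma~\ref{lem:EqGenCechRips} the complex $\cK$ coincides with the \Rips complex $\hat{\cK}$, hence is a clique complex; therefore any induced subcomplex $\cC \subseteq \cK$ is the clique complex of the subgraph of the $1$-skeleton of $\cK$ induced on $\cF_0(\cC)$. If $v \in \cF_0(\cC)$ is adjacent to every other vertex of $\cF_0(\cC)$, then for any face (clique) $\sigma$ of $\cC$ the set $\sigma \cup \{v\}$ is again a clique, so $\cC = \{v\} * \cC'$, the cone with apex $v$ over the induced subcomplex $\cC'$ on $\cF_0(\cC) \setminus \{v\}$. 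Being a join with a point, $\cC$ is contractible, so $\rk(\hGrp_k(\cC)) = 0$ for all $k$, i.e.\ $\beta_k(\cC) = 0$ for $k \ge 1$ and $\beta_0(\cC) = 1$. The degenerate cases $\cF_0(\cC) = \{v\}$ or $\cF_0(\cC) = \emptyset$ are immediate.

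Then comes the only real computation, which is where the excluded-face hypothesis enters. Fix any $\vecp{t} \in \cF_0(\cC) \subseteq \tNeigh(\vec{t}) \setminus \mN_j(\vec{t})$ and verify $\|v - \vecp{t}\|_\infty \le 1$ coordinate by coordinate. For $\ell \ne j$ we have $v(\ell) = \vec{t}(\ell)$, so $|v(\ell) - \vecp{t}(\ell)| = |\vec{t}(\ell) - \vecp{t}(\ell)| \le 1$ since $\vecp{t} \in \tNeigh(\vec{t})$. For $\ell = j$, the same membership gives $\vecp{t}(j) \in \{\vec{t}(j)-1,\, \vec{t}(j),\, \vec{t}(j)+1\}$, and $\vecp{t} \notin \mN_j(\vec{t})$ excludes the value $\vec{t}(j)-1$; hence $\vecp{t}(j) \in \{\vec{t}(j),\, \vec{t}(j)+1\}$ and $|v(j) - \vecp{t}(j)| = |\vec{t}(j)+1 - \vecp{t}(j)| \le 1$. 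Thus $v$ dominates $\cC$, which proves the first bullet. The second bullet is the mirror image: with $v = \vec{t} - \vec{e}_j$, the condition $\vecp{t} \notin \pN_j(\vec{t})$ excludes $\vecp{t}(j) = \vec{t}(j)+1$, forcing $\vecp{t}(j) \in \{\vec{t}(j)-1,\, \vec{t}(j)\}$ and hence $|v(j) - \vecp{t}(j)| = |\vec{t}(j)-1 - \vecp{t}(j)| \le 1$.

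I do not anticipate a genuine obstacle here. The only points requiring care are to invoke the clique-complex structure of induced subcomplexes of $\cK$ (so that being dominated in the $1$-skeleton is enough to make $\cC$ a cone) and to phrase the ``cone, hence contractible, hence all Betti numbers trivial'' step correctly in the paper's sign conventions. Both the geometric idea and the coordinate check are short, so the proof should be brief.
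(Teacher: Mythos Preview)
Your proof is correct and is in fact more direct than the paper's. Both arguments start from the identical geometric observation---your coordinate check is exactly the paper's displayed inequality $\|\vecp{t} - (\vec{t}+\vec{e}_j)\|_\infty \le 1$---but then diverge. You use this to conclude that $v$ is a cone point of the clique complex $\cC$, whence contractibility kills all reduced homology in one stroke. The paper instead runs an induction on $k$: assuming a $k$-NTC exists, it first argues (via a chain-homotopy swap) that some minimal-support $k$-NTC contains $v$, then applies Lemma~\ref{lem:RelLinkNTC} to pass to a $(k-1)$-NTC in a link and invoke the inductive hypothesis. Your approach is shorter and more conceptual; the paper's route rehearses the link/NTC machinery of Lemma~\ref{lem:RelLinkNTC} that is genuinely needed later in Theorem~\ref{thm:DistantVertices}, but for this particular lemma that machinery is overkill once the domination fact is in hand.
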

\begin{proof}
The first statement is proved here; the other one follows similarly. We use induction on $k,$ the index of Betti numbers.  For any $\vecp{t} \in \tNeigh(\vec{t}) \setminus \mN_j(\vec{t}),$ we have
\begin{equation}
\label{eqn:EveryoneClose}
\|\vecp{t} - (\vec{t} + \vec{e}_j)\|_\infty \leq 1.
\end{equation}
From this, the $k = 0$ case is easy to see. Now assume the result for some $(k - 1).$ For the sake of contradiction, assume that there exists $\cC$ with $\cF_{0}(\cC) \subseteq \tNeigh(\vec{t}) \setminus \mN_j(\vec{t})$ and $\vec{t} + \vec{e}_j \in \cC$  such that $\beta_k(\cC) \geq 1.$ This then immediately implies that there exists a $k-$NTC in $\cC$ with minimal vertex support. In fact, we claim that there exists a $k-$NTC $\gamma$ in $\cC$ having minimal vertex support such that $\vec{v} \equiv \vec{t} + \vec{e}_j \in \vsupp(\gamma).$ For the time being, suppose that this latter claim is true. Then, for any $\vecp{v} \in \vsupp(\gamma)$ such that $\vecp{v} \neq \vec{v},$ from Lemma~\ref{lem:RelLinkNTC}, $\gamma \cap \lk(\vecp{v})$ is a $(k - 1)-$NTC  in $\cCp := \lk(\vecp{v}) \cap \{\sigma \in \cC: \sigma \subseteq \vsupp(\gamma)\},$ i.e., the $(k - 1)-$th Betti number of $\cCp$ is non-trivial. But $\cCp$ is an induced subcomplex of $\cK$ with $\cF_0(\cCp) \subseteq \tNeigh(\vec{t}) \setminus \mN_j(\vec{t})$ and $\vec{v} \in \cCp,$ where the latter is due to \eqref{eqn:EveryoneClose}. These statements contradict the induction hypothesis and we are done.

It only remains to establish our above claim. Let $\gamma \equiv \sum_{\sigma \in \supp(\gamma)} c_\sigma \sigma$ be a $k-$NTC in $\cC$ with minimal vertex support such that $\vec{v} \notin \vsupp(\gamma),$  where $\supp(\gamma)$ restriction in the summation means that the linear sum representation of $\gamma$ has only those faces $\sigma$ for which $c_{\sigma} \neq 0.$ Pick $\vecp{v} \in \vsupp(\gamma);$ clearly, $\vecp{v} \neq \vec{v}.$ Consider the $k-$chain
\[
\gamma_1 =  \gamma \cap \st(\vecp{v}) = \sum_{\sigma \in \st(\vecp{v})} c_{\sigma} \sigma,
\]
and let
\[
\gamma_1 * \{\vec{v}\} := \sum_{\sigma \in \st(\vecp{v})} c_{\sigma} ( \sigma, \vec{v} ),
\]
where $(\sigma, \vec{v})$ is the oriented $(k + 1)-$face whose first $(k + 1)$ vertices are those of $\sigma$ and in the same order as in $\sigma,$ and the last vertex is $\vec{v};$ and $\gamma \cap \st(\vecp{v})$ is defined as in Remark~\ref{rem:RelLinkNTC}. As $\cC$ is an induced complex, $\gamma_1 * \{\vec{v}\}$ is necessarily a $(k + 1)-$chain in $\cC.$ With $(\gamma \cap \lk(\vecp{v})) * \{v\}$ defined as in the spirit of $\gamma_1 * \{\vec{v}\},$ and using the relation $\gamma \cap \lk(\vecp{v}) = \bdr_k(\gamma_1)$ from Remark~\ref{rem:RelLinkNTC}, it is easy to see that
\[
\bdr_{k + 1}(\gamma_1 * \{\vec{v}\}) = (\gamma \cap \lk(\vecp{v})) * \{v\} + (-1)^{k +  1} \gamma_1.
\]
Let $\gamma_2 := \gamma - (-1)^{k + 1} \bdr_{k + 1}(\gammap * \{\vec{v}\}).$ Then, as $\bdr_{k} \circ \bdr_{k + 1} = 0,$  $\gamma_2 \in [\gamma].$ From Remark~\ref{rem:RelLinkNTC}, $\vecp{v} \notin \vsupp(\gamma_2),$ while, from the definition of $\gamma_2,$ $\vec{v} \in \vsupp(\gamma_2)$ and $\vsupp(\gamma_2) \setminus \{\vec{v}\} \subseteq \vsupp(\gamma).$ Consequently, because $\gamma$ is a $k-$NTC with minimal vertex support, $\gamma_2$ is also a $k-$NTC with minimal vertex support. Since $\vec{v} \in \vsupp(\gamma_2),$ our claim follows.
\end{proof}

\begin{lemma}
\label{lem:MissingFrontBackVertex}
Let $j \in \{1, \ldots, d\}$ and $k$ be such that $1 \leq k < d.$ Let $\gamma$ be a $k-$NTC in $\cK$ with minimal vertex support. Then, the following statements hold:
\begin{itemize}
\item If $\vec{t} \in \vsupp(\gamma)$ and $\mN_j(\vec{t}) \cap \vsupp(\gamma) = \emptyset,$ then $\vec{t} + \vec{e}_j \notin \vsupp(\gamma).$

\item If $\vec{t} \in \vsupp(\gamma)$ and $\pN_j(\vec{t}) \cap \vsupp(\gamma) = \emptyset,$ then $\vec{t} - \vec{e}_j \notin \vsupp(\gamma).$
\end{itemize}
\end{lemma}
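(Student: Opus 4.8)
The plan is to argue by contradiction and reduce to the immediately preceding Lemma~\ref{lem:NeighNonEmpty} via the link construction, in close parallel to the proof of that lemma. I would prove only the first bullet; the second follows verbatim after swapping $\mN_j$ for $\pN_j$ and $\vec{e}_j$ for $-\vec{e}_j$, since the hypotheses of Lemma~\ref{lem:NeighNonEmpty} and Lemma~\ref{lem:RelLinkNTC} are symmetric under this replacement.

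First I would suppose, for contradiction, that there is a vertex $\vec{t}\in\vsupp(\gamma)$ with $\mN_j(\vec{t})\cap\vsupp(\gamma)=\emptyset$ and yet $\vec{t}+\vec{e}_j\in\vsupp(\gamma)$. Since $k\ge 1$ and $\vec{t}\in\vsupp(\gamma)$, Lemma~\ref{lem:RelLinkNTC} applies to $\gamma$ and yields that $\gamma\cap\lk(\vec{t})$ is a $(k-1)$-NTC in $\cCp:=\lk(\vec{t})\cap\{\sigma\in\cK:\sigma\subseteq\vsupp(\gamma)\}$; in particular the $(k-1)$-th Betti number of $\cCp$ is non-trivial. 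Next I would identify the vertex set of $\cCp$ as $(\vsupp(\gamma)\cap\tNeigh(\vec{t}))\setminus\{\vec{t}\}$ and check that $\cCp$ is precisely the induced subcomplex of $\cK$ on this set: one inclusion is immediate from the definition, and for the other I would invoke $\cK=\hat{\cK}$ (Lemma~\ref{lem:EqGenCechRips}), so that any $\sigma\in\cK$ whose vertices all lie in $\tNeigh(\vec{t})$ satisfies $\sigma\cup\{\vec{t}\}\in\cK$, whence $\sigma\in\cCp$ as soon as, in addition, $\sigma\subseteq\vsupp(\gamma)$. With this in hand the two hypotheses enter directly: $\mN_j(\vec{t})\cap\vsupp(\gamma)=\emptyset$ forces $\cF_0(\cCp)\subseteq\tNeigh(\vec{t})\setminus\mN_j(\vec{t})$, while $\vec{t},\vec{t}+\vec{e}_j\in\vsupp(\gamma)$ together with $\|\vec{e}_j\|_\infty=1$ make $\{\vec{t},\vec{t}+\vec{e}_j\}$ an edge, so $\vec{t}+\vec{e}_j$ is a vertex of $\cCp$. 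Thus $\cCp$ meets the hypotheses of the first part of Lemma~\ref{lem:NeighNonEmpty}, which gives that all Betti numbers of $\cCp$ are trivial --- contradicting the non-triviality of $\beta_{k-1}(\cCp)$ obtained above.

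I expect the identification of $\cCp$ as an \emph{induced} subcomplex of $\cK$ to be the only non-mechanical step, since that is exactly what makes Lemma~\ref{lem:NeighNonEmpty} applicable; it relies crucially on the flag (clique-complex) structure of $\cK$ supplied by Lemma~\ref{lem:EqGenCechRips}, as pairwise $\|\cdot\|_\infty$-adjacency to $\vec{t}$ would otherwise not suffice to conclude $\sigma\cup\{\vec{t}\}\in\cK$. One small bookkeeping point to keep in mind is the case $k=1$, where a $0$-NTC in $\cCp$ means $\beta_0(\cCp)\ge 2$; this is still incompatible with ``all Betti numbers trivial,'' so the contradiction goes through uniformly for all $1\le k<d$. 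Everything else is just unwinding the definitions of $\tNeigh$, $\mN_j$, $\pN_j$, and the link.
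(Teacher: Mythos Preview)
Your proposal is correct and follows essentially the same route as the paper: apply Lemma~\ref{lem:RelLinkNTC} at $\vec{t}$ to get a $(k-1)$-NTC in $\cCp=\lk(\vec{t})\cap\{\sigma\in\cK:\sigma\subseteq\vsupp(\gamma)\}$, observe $\cF_0(\cCp)\subseteq\tNeigh(\vec{t})\setminus\mN_j(\vec{t})$, and conclude via Lemma~\ref{lem:NeighNonEmpty}. The paper phrases the last step as a direct contrapositive rather than an explicit contradiction, and it does not spell out your verification that $\cCp$ is an \emph{induced} subcomplex of $\cK$ (using the clique-complex structure from Lemma~\ref{lem:EqGenCechRips}); your added detail there is a genuine clarification of a point the paper leaves implicit.
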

\begin{proof}
We only prove the first statement here; the other one follows similarly. From Lemma~\ref{lem:RelLinkNTC}, $\gamma \cap \lk(\vec{t})$ is a $(k - 1)-$NTC in $\cC:= \lk(\vec{t}) \cap \{\sigma \in \cK: \sigma \subseteq \vsupp(\gamma)\},$ i.e., the $(k - 1)-$th Betti number of $\cC$ is non-trivial. Also, $\mN_j(\vec{t}) \cap \vsupp(\gamma) = \emptyset$ implies that $\cF_0(\cC) \subseteq \tNeigh(\vec{t}) \setminus \mN_j(\vec{t}).$ The desired result now follows from Lemma~\ref{lem:NeighNonEmpty}.
\end{proof}

We are now ready to prove Theorem~\ref{thm:DistantVertices}.

\begin{proof}[Proof of Theorem~\ref{thm:DistantVertices}]
We use induction on $k$. The result is trivially true for $k = 0.$ Suppose the result holds for some $k - 1.$ Now consider an induced subcomplex $\cC \subseteq \cK$ such that $\beta_k(C) \geq 1.$ For the sake of contradiction, let the assumption below hold.
\[
\Cl[Asm]{a:pwDist} : \text{every $2k + 2$ vertices in $\cC$ have a pair $\vec{t}, \vecp{t}$ such that $\|\vec{t} - \vecp{t}\|_1 = 1.$}
\]

Since $\beta_{k}(\cC) \geq 1,$ there exists at least one $k-$NTC in $\cC$ with minimal vertex support. Below we show that, under assumption $\Cr{a:pwDist},$ no such cycle can exist. This gives the desired contradiction.

Let $\gamma$ be an arbitrary $k-$NTC with minimal vertex support in $\cC.$ From Lemma~\ref{lem:CardNTC}, $|\vsupp(\gamma)| \geq 2k + 2.$ By Assumption $\Cr{a:pwDist}$ above, every $2k + 2$ vertices in $\vsupp(\gamma)$ have a pair $\vec{t}, \vecp{t}$ such that $\|\vec{t} - \vecp{t}\|_1 = 1;$ without loss of generality, let $\vec{0}, \vec{e}_1 \in \vsupp(\gamma).$  Since $\gamma$ is a $k-$NTC of minimal vertex support and since $\vec{0}, \vec{e}_1 \in \vsupp(\gamma),$ it follows from Lemma~\ref{lem:MissingFrontBackVertex} that $\mN_{1}(\vec{0}) \cap \vsupp(\gamma) \neq \emptyset$ and $\pN_{1}(\vec{e}_1) \cap \vsupp(\gamma) \neq \emptyset.$ Let $\mv, \pv$ respectively lie in $\mN_{1}(\vec{0}) \cap \vsupp(\gamma)$ and $\pN_{1}(\vec{0}) \cap \vsupp(\gamma).$

Through a series of claims, we first show that Assumption $\Cr{a:pwDist}$ forces all vertices in $\vsupp(\gamma)$ to be sufficiently close to either $\vec{0}$ or $\vec{e}_1.$ We then show that this violates Lemma~\ref{lem:NeighNonEmpty}. Since $\gamma$ is arbitrary, the desired result follows.

$\textbf{Claim}_1:$ For all $\vec{t} \in \vsupp(\gamma),$ $-1 \leq \vec{t}(1) \leq 2.$

For sake of contradiction, suppose that there exists $\vec{t} \in \vsupp(\gamma)$ with $\vec{t}(1) \geq 3.$ From Lemma~\ref{lem:RelLinkNTC}, $\gamma \cap \lk(\vec{t})$ is a $(k - 1)-$NTC. Hence, by induction hypothesis, there are $2k$ vertices in $\tNeigh(\vec{t}) \cap \vsupp(\gamma)$ such that their pairwise $\|\cdot\|_1-$distances are $\geq 2.$ Similarly, it follows that there are $2k$ vertices in $\tNeigh(\mv) \cap \vsupp(\gamma)$ with pairwise $\|\cdot\|_1-$distances $\geq 2.$ But then we have a contradiction to Assumption $\Cr{a:pwDist}$ above. By a symmetric argument, one can obtain a contradiction if $\exists \vec{t} \in \vsupp(\gamma)$ with $\vec{t}(1) \leq -2.$ Thus, the above claim follows.

$\textbf{Claim}_2:$ For all $\vec{t} \in \vsupp(\gamma),$ $|\vec{t}(j)| \leq 2$ for each $j \neq 1.$

To see this, suppose that there exists a $\vec{t} \in \vsupp(\gamma)$ with $|\vec{t}(j)| \geq 3$  for  some $j \neq 1.$ Then, from Lemma~\ref{lem:RelLinkNTC} and the induction hypothesis, there exist $2k$ vertices, say $\vec{t}_1, \ldots, \vec{t}_{2k},$ in $\tNeigh(\vec{t}) \cap \vsupp(\gamma)$ with pairwise $\|\cdot\|_1-$distances $\geq 2.$ If $\vec{t}(1) \geq 1,$ then it is easy see that $\{\vec{t}_1, \ldots, \vec{t}_{2k}, \mv, \vec{e}_1\}$ is a set of $2k + 2$ vertices with pairwise $\|\cdot\|_1-$distances $\geq 2.$ On the other hand, if $\vec{t}(1) \leq 0,$ then $\{\vec{t}_1, \ldots, \vec{t}_{2k}, \vec{0}, \pv\}$ is a set of $2k + 2$ vertices with pairwise $\|\cdot\|_1-$distances $\geq 2.$ Both contradict assumption $\Cr{a:pwDist}.$ The above claim follows.

In fact, we can improve upon $\textbf{Claim}_2$ as shown below.

$\textbf{Claim}_3:$ For all $\vec{t} \in \vsupp(\gamma),$ $|\vec{t}(j)| \leq 1$ for each $j \neq 1.$

Again, for sake of contradiction, assume there exists a $\vec{t} \in \supp(\gamma)$ with $|\vec{t}(j)| = 2$ for some $j \geq 1.$ For ease of exposition, first assume $\vec{t}(j) = 2$ and $\vec{t}(1) \geq 1.$ From Lemma~\ref{lem:RelLinkNTC} and the induction hypothesis, there are $2k$ vertices, say $\vec{t}_1, \ldots, \vec{t}_{2k},$ in $\tNeigh(\vec{t}) \cap \vsupp(\gamma)$ with pairwise $\|\cdot\|_1-$distances $\geq 2.$ By similar arguments as for $\textbf{Claim}_1,$ it now follows that for each $\vec{v} \in \vsupp(\gamma),$ $\vec{v}(j) \geq -1.$ In fact, we now show that it cannot even be $-1;$ not even in $\mN_1(\vec{0})$ or $\pN_1(\vec{e}_1).$ To see this, suppose there exists a $\vec{v} \in \vsupp(\gamma)$ such that $\vec{v}(j) = -1.$
\begin{itemize}[leftmargin=*]
\item Subcase $\vec{v}(1) \geq 1.$ From $\textbf{Claim}_1,$ we have $\mN_1(\mv) \cap \vsupp(\gamma) = \emptyset.$ Hence, from Lemma~\ref{lem:MissingFrontBackVertex}, $\mv + \vec{e}_1 \notin \vsupp(\gamma).$ Combining this with $\vec{t}(1) \geq 1,$ it follows that $\|\mv - \vec{t}_i\|_1 \geq 2$ for all $i \in \{1, \ldots, 2k\}.$ Also, since $\vec{t}(j) = 2$ and $\vec{v}(j) = -1,$ $\|\vec{v} - \vec{t}_i\|_1 \geq 2$ for all $i \in \{1, \ldots, 2k\}.$ As $\mv(1) = -1$ and $\vec{v}(1) \geq 1,$ we further have $\|\mv - \vec{v}\|_1 \geq 2.$ In other words, $\{\vec{t}_1, \ldots, \vec{t}_{2k}, \mv, \vec{v}\}$ is a set of $2k + 2$ vertices with pairwise $\|\cdot\|_1-$distances $\geq 2.$ This contradicts $\Cr{a:pwDist}$ above.

\item Subcase $\vec{v}(1) \leq 0.$ Again, from Lemma~\ref{lem:RelLinkNTC} and the induction hypothesis, there are $2k$ vertices, say $\vec{v}_1, \ldots, \vec{v}_{2k},$ in $\tNeigh(\vec{v}) \cap \vsupp(\gamma)$ with pairwise $\|\cdot\|_1-$distances $\geq 2.$ From $\textbf{Claim}_1,$ we have $\pN_1(\pv) \cap \vsupp(\gamma) = \emptyset.$ Hence, from Lemma~\ref{lem:MissingFrontBackVertex}, $\pv - \vec{e}_1 \notin \vsupp(\gamma).$ Combining this with $\vec{v}(1) \leq 0,$ we have $\|\pv - \vec{v}_i\|_1 \geq 2$ for all $i \in \{1, \ldots, 2k\}.$ As $\vec{v}(j) = -1$ and $\vec{t}(j) = 2,$ we additionally have $\|\vec{t} - \vec{v}_i\|_1 \geq 2$ for all $i \in \{1, \ldots, 2k\}.$ Separately, from $\textbf{Claim}_2,$ we have $\pN_j(\vec{t}) \cap \vsupp(\gamma) = \emptyset.$ Hence, from Lemma~\ref{lem:MissingFrontBackVertex}, $\vec{t} - \vec{e}_j \notin \vsupp(\gamma).$ This necessarily implies that $\|\vec{t} - \pv\|_1 \geq 2.$ Putting all the above together, it follows that $\{\vec{v}_1, \ldots, \vec{v}_{2k}, \pv, \vec{t}\}$ is a set of $2k + 2$ vertices with pairwise $\|\cdot\|_1-$distances $\geq 2.$ This contradicts Assumption $\Cr{a:pwDist}$ above.
\end{itemize}
Hence, it follows that there cannot exist $\vec{v} \in \vsupp(\gamma)$ such that $\vec{v}(j) = -1.$ In particular, $\mN_j(\vec{e}_1) \cap \vsupp(\gamma) = \emptyset.$ But from Lemma~\ref{lem:MissingFrontBackVertex}, we then immediately have $\vec{e}_1 + \vec{e}_j \notin \vsupp(\gamma).$ This means that $\|\vec{e}_1 - \vec{t}_i\|_1 \geq 2$ for all $i \in \{1, \ldots, 2k\}.$ Consequently, as in the first subcase above, $\{\vec{t}_1, \ldots, \vec{t}_{2k}, \mv, \vec{e}_1\}$ is a set of $2k + 2$ vertices with pairwise $\|\cdot\|_1-$distances $\geq 2.$ This contradicts Assumption $\Cr{a:pwDist}$ again. So there exists no $\vec{t} \in \vsupp(\gamma)$ such that $\vec{t}(1) \geq 1$ and $\vec{t}(j) = 2$ for some $j \neq 1.$ Similarly, by symmetric arguments, one can show that there cannot exist any $\vec{t} \in \vsupp(\gamma)$ such that $|\vec{t}(j)| = 2$ for some $j \neq 1.$ The desired claim now follows.

Combining the three claims above, it follows that all vertices in $\vsupp(\gamma)$ must be sufficiently close to either $\vec{0}$ or $\vec{e}_1;$ i.e., for all $\vec{t} \in \vsupp(\gamma),$  either $\|\vec{t} - \vec{0}\|_\infty \leq 1$ or $\|\vec{t} - \vec{e}_1\|_\infty \leq 1.$ Consider $\mv$ as defined earlier. From Lemma~\ref{lem:RelLinkNTC}, $\gamma \cap \lk(\mv)$ is a $(k - 1)$-NTC in $\cCp:= \lk(\mv) \cap \{\sigma \in \cC: \sigma \subseteq \vsupp(\gamma)\},$ i.e., $\beta_{k - 1}(\cCp) \geq 1.$ But observe that, since all vertices in $\vsupp(\gamma)$ are sufficiently close to either $\vec{0}$ or $\vec{e}_1$ as described above, $\cF_0(\cCp) \subseteq \tNeigh(-\vec{e}_1) \setminus \mN_1(-\vec{e}_1).$ Further, since $\cC$ is an induced subcomplex of $\cK,$ it follows that $\vec{0} \in \cCp.$ Together, these contradict Lemma~\ref{lem:NeighNonEmpty}. Thus, $\gamma$ cannot exist as desired.
\end{proof}

We next prove the deterministic variant of Theorem~\ref{thm:RBettiBounds}.
Let $\cSk(\cK),$ $\cNk(\cK),$ and $\cLk(\cK)$ be the respective deterministic analogues of $\cSnk(u),$ $\cNnk(u),$ and $\cLnk(u)$ from Section~\ref{sec:ProofOutline} obtained by replacing $\cK(n; u)$ there with $\cK.$ The statement of Theorem~\ref{thm:RBettiBounds} then translates to the following.

\begin{theorem}
\label{thm:BettiBounds}
For $0 \leq k < d,$
\[
\cSk(\cK) \leq \beta_k(\cK) \leq \cSk(\cK) + \cNk(\cK) + {\Cr{c:Betti}}_{, k} \, \cLk(\cK).
\]
\end{theorem}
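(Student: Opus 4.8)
First I would dispose of the case $k = 0$. There the three counts reduce: $\cSk(\cK)$ is the number of isolated vertices, $\cNk(\cK) = 0$, and, since $\cP_0$ consists precisely of single edges, $\cLk(\cK)$ is the number of edges; so the claimed inequality reads $\#(\text{isolated vertices}) \le \beta_0(\cK) \le \#(\text{isolated vertices}) + \#(\text{edges})$. Both halves are immediate, as $\beta_0(\cK)$ equals the number of connected components, the isolated vertices form a sub-collection of these, and each non-isolated component contains at least one edge while distinct components have disjoint edge sets. So assume $1 \le k < d$ henceforth.

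For $k \ge 1$ I would argue component by component. Write $\cK = \bigsqcup_i \cC_i$ for the decomposition into connected components; each $\cC_i$ is an isolated induced subcomplex, hence by the additivity recalled in Section~\ref{sec:BgPrelim}, $\beta_k(\cK) = \sum_i \beta_k(\cC_i)$, and only components with $\beta_k(\cC_i) \ge 1$ matter. By Lemma~\ref{lem:CardNTC}, such a component has at least $2k+2$ vertices, and when it has exactly $2k+2$, a minimal-vertex-support $k$-NTC exhausts its vertex set, so (again by Lemma~\ref{lem:CardNTC}, using that $\cK$ is a clique complex) its $1$-skeleton is isomorphic to $O_k$. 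Since $O_k$ is the $1$-skeleton of the boundary complex of the $(k+1)$-dimensional cross-polytope and that boundary is a flag complex, the clique complex of $O_k$ is exactly that boundary, homeomorphic to $S^k$; hence every such component contributes exactly $1$ to $\beta_k(\cK)$, irrespective of whether it is $\|\cdot\|_1$-isometric to $O_k$ (the isometry split into $\cSk$ and $\cNk$ is topologically immaterial and is retained only for the later covariance estimates). The lower bound $\beta_k(\cK) \ge \cSk(\cK)$ is then immediate, since the $\cSk(\cK)$ components isomorphic and $\|\cdot\|_1$-isometric to $O_k$ each contribute $1$ and all remaining terms are nonnegative.

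For the upper bound I would split the contributing components into those with exactly $2k+2$ vertices — which together contribute exactly $\cSk(\cK) + \cNk(\cK)$ — and those with at least $2k+3$ vertices. For a component $\cC$ of the second type I would bound $\beta_k(\cC)$ by the number of $k$-faces of $\cC$ occurring with nonzero coefficient in some minimal-vertex-support $k$-NTC of $\cC$; this is legitimate because a basis of $H_k(\cC)$ is represented by minimal-vertex-support $k$-NTCs, and linearly independent chains supported on a face set $F$ number at most $|F|$. The crux is then a \emph{local} extension: for each such $k$-face $\sigma$, I would build a subgraph $G_\sigma$ of the $1$-skeleton of $\cK$, lying in $\cC$, that is isomorphic to an element of $\cP_k$ and from which $\sigma$ is recoverable. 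The construction I have in mind applies Lemma~\ref{lem:RelLinkNTC} at the vertices of $\sigma$ to get $(k-1)$-NTCs in their links, feeds these into Theorem~\ref{thm:DistantVertices} in dimension $k-1$ (and Theorem~\ref{thm:DistantVertices} on $\cC$ itself where needed) to extract $2k+2$ vertices, pairwise at $\|\cdot\|_1$-distance $\ge 2$, lying in a bounded $\|\cdot\|_\infty$-neighbourhood of $\sigma$ — hence inside a lattice translate of $\Idx_{2k+1}$ — and then adjoins to $\sigma$ a minimal connected edge set reaching those vertices, chosen to form a forest hanging off $V(\sigma)$, plus one more vertex if the total is still $2k+2$. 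Then $G_\sigma$ is connected, has at least $2k+3$ vertices, carries the required $2k+2$ spread-out vertices, lies in a translate of $\cG_k$, and, for $k \ge 2$, has $\sigma$ as its unique $(k+1)$-clique, so $G_\sigma$ is isomorphic to an element of $\cP_k$. Since an element of $\cP_k$ has a unique $(k+1)$-clique, $\sigma \mapsto G_\sigma$ is injective, so the large components together contribute at most $\cLk(\cK)$ and $\beta_k(\cK) \le \cSk(\cK) + \cNk(\cK) + \cLk(\cK)$ for $k \ge 2$. For $k = 1$, where $\cP_1$ has no uniqueness clause, I would instead use $\beta_1(\cC) \le f_1(\cC)$ (the number of edges of $\cC$), note that each edge occurring in a minimal $1$-NTC lies in some $G_\sigma \in \cP_1$, and that each $\cP_1$-graph has at most ${\Cr{c:Betti}}_{,1}$ edges, yielding $\beta_1(\cK) \le \cSk(\cK) + \cNk(\cK) + {\Cr{c:Betti}}_{,1}\,\cLk(\cK)$.

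The hard part will be making the witness $G_\sigma$ \emph{local}. A contributing component, and even a single minimal-support $k$-NTC inside it, can be arbitrarily large — already for $k = 1$ one may have a long, thin induced cycle — so one cannot merely join $\sigma$ to globally chosen distant vertices and expect to land in $\cP_k$. What must be shown is that the $2k+2$ witnesses, pairwise at $\|\cdot\|_1$-distance $\ge 2$, can be produced within the bounded $\|\cdot\|_\infty$-neighbourhood of $\sigma$, by descending in dimension through Lemma~\ref{lem:RelLinkNTC} and invoking Theorem~\ref{thm:DistantVertices} at each level, using crucially that link vertices sit at $\|\cdot\|_\infty$-distance $1$. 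The secondary difficulty, present only for $k \ge 2$, is routing the connecting edges so that $\sigma$ stays the unique $(k+1)$-clique of $G_\sigma$; the impossibility of arranging this when $k = 1$ is exactly why the constant ${\Cr{c:Betti}}_{,1}$ must enter, mirroring \cite[Fig.~3]{kahle2011random}, and the overall scheme parallels \cite[Fig.~1]{kahle2011random}.
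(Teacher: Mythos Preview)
Your overall architecture — split into components, observe that $(2k+2)$-vertex components with $\beta_k\ge1$ have $1$-skeleton isomorphic to $O_k$ and contribute exactly one each, and bound the contribution of each large component by an injection of its $k$-faces into $\cP_k$-subgraphs — matches the paper's proof. The $k=0$ case and the lower bound are identical.

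Where you diverge is in how $G_\sigma$ is manufactured for a $k$-face $\sigma$ in a large component $\cC$. You aim to make the $2k+2$ spread-out witnesses \emph{local to $\sigma$} by descending through Lemma~\ref{lem:RelLinkNTC} at vertices of $\sigma$ and invoking Theorem~\ref{thm:DistantVertices} in dimension $k-1$; you flag this as the hard part, and it is indeed delicate (the link argument supplies $2k$ witnesses in $\tNeigh(v)$, and the remaining two require further work). The paper avoids this altogether. It fixes \emph{one} tree $G_\cC$ per component, of $\|\cdot\|_\infty$-diameter at most $4k+3$, already carrying the $2k+2$ spread-out vertices: if the graph diameter of $\cC$ is small the whole component fits, and Theorem~\ref{thm:DistantVertices} supplies the witnesses; if the diameter is large, a geodesic path of length $4k+3$ does the job, its alternate vertices being pairwise at $\|\cdot\|_\infty$-distance $\ge2$ (hence $\|\cdot\|_1$-distance $\ge2$). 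Then for each $\sigma$ one simply connects $\vsupp(\sigma)$ to $G_\cC$ by a path in the $1$-skeleton; if that path is short the union sits inside a translate of $\cG_k$, and if it is long one truncates and uses the path itself as the source of spread-out vertices. Either way one lands in $\cP_k$ with $\vsupp(\sigma)$ the unique $(k+1)$-clique. This per-component-tree trick makes your ``locality'' worry disappear, and lets the paper use the cruder bound $\beta_k(\cC)\le|\cF_k(\cC)|$ in place of your restriction to faces of minimal-support NTCs (which you needed only because Lemma~\ref{lem:RelLinkNTC} requires minimality). Your route is not wrong, but it is considerably harder to close; the paper's is shorter and needs no link descent at this stage.
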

\begin{proof}
The $k = 0$ case is trivially true. So suppose that $1 \leq k < d.$

Because $\cK$ is also a \Rips complex, the induced subcomplex associated with a subgraph component that is isomorphic to $O_k$ contributes exactly $1$ to the  $k-$th Betti number. From this, the lower bound is easy to see.

It now only remains to show the upper bound. Suppose that $k \geq 2.$ As discussed above Lemma~\ref{lem:CardNTC}, $\beta_k(\cK) = \sum_{\cC} \beta_k(\cC),$ where the sum is over the isolated induced subcomplexes of $\cK.$ Using Lemma~\ref{lem:CardNTC} and the standard inequality $\beta_k(\cC) \leq |\cF_k (\cC)|$ from the simplicial homology theory, we then have
\[
\beta_k(\cK) \leq \cSk(\cK) + \cNk(\cK) + \sum_{\cC: |\cF_0(\cC)| \geq 2k + 3} |\cF_k(\cC)| \; \indc[\beta_k(\cC) \geq 1].
\]
Consider the $1-$skeleton of an arbitrary $\cC$ with $|\cF_0(\cC)| \geq 2k + 3$ and $\beta_k(\cC) \geq 1.$ Using Theorem~\ref{thm:DistantVertices}, in this skeleton there exists a tree $G_{\cC}$ such that: (a) it has at least $2k + 3$ vertices, (b) it has a further subset of $2k + 2$ vertices having pairwise $\|\cdot\|_1-$distances at least two, and (c) its $\|\cdot\|_\infty-$diameter is at most $4k +3.$ The vertex properties in (a) and (b) trivially hold in a tree with diameter $4k + 3,$ justifying the $4k + 3$ limit. Now let $\sigma$ be any $k-$face in $\cC.$ Clearly, there exists a subgraph in $\cC$ connecting $\vsupp(\sigma)$ to $G_{\cC}.$ Depending on the $\|\cdot\|_\infty-$distance of $\vsupp(\sigma)$ from $G_{\cC},$ and removing edges if necessary, it is easy to see that there exists a subgraph isomorphic to one in $\cP_k$ such that $\vsupp(\sigma)$ forms the only  $(k + 1)-$sized clique in this subgraph. Therefore, $|\cF_k(\cC)| \indc[\beta_k(\cC) \geq 1] \leq \cLk(\cC).$ The desired result now follows.

By making suitable changes to the above argument, in the exact same spirit as discussed below \cite[Fig. 3]{kahle2011random}, the desired result follows for the $k = 1$ case as well.
\end{proof}

% The first result shows that there is a unique vertex at precisely Linf distance 2.

% The second result concerns isometric patterns

% The third result concerns non-isometric pattern

% The fourth result shows that there is $m$ such tht

The rest of the section describes some useful properties concerning subgraph components in the $1-$skeleton of $\cK$ that are isomorphic to $O_k.$ Fix $k \geq 1$ and let $V \equiv \{\vec{t}_1, \ldots, \vec{t}_{2k + 2}\} \subseteq \dGrid$ be such that the geometric graph $G$ on these vertices, with respect to $\sB_\infty(\cdot, \tfrac{1}{2}),$ is isomorphic to $O_k.$

The following result is a consequence of Theorem~\ref{thm:DistantVertices} and the fact that the induced subcomplex of $\cK$ associated with $G$ has non-trivial $k-$th Betti number.

\begin{corollary}
\label{cor:DistVertMC}
For any $i_1, i_2 \in \{1, \ldots, 2k + 2\}$ with $i_1 \neq i_2,$ $\|\vec{t}_{i_1} - \vec{t}_{i_2}\|_1  \geq 2.$
\end{corollary}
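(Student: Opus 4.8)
The plan is to apply Theorem~\ref{thm:DistantVertices} to the induced subcomplex of $\cK$ carried by $V$ and then finish with a trivial counting argument. First I would let $\cC$ denote the induced subcomplex of $\cK$ whose vertex set is $V \equiv \{\vec{t}_1, \ldots, \vec{t}_{2k+2}\}$. Since the faces of $\cK$ are governed by the $\|\cdot\|_\infty$-proximity of their vertices (and $\cK$ coincides with the corresponding Rips, hence clique, complex by Lemma~\ref{lem:EqGenCechRips}), the $1$-skeleton of $\cC$ is exactly the geometric graph $G$ on $V$ with respect to $\sB_\infty(\cdot, \tfrac12)$, which by hypothesis is isomorphic to $O_k$. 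As recorded just above Theorem~\ref{thm:RDistantVertices}, the induced subcomplex whose $1$-skeleton is isomorphic to $O_k$ has non-trivial $k$-th Betti number, so $\beta_k(\cC)$ is non-trivial.

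Next I would invoke Theorem~\ref{thm:DistantVertices}: since $\cC \subseteq \cK$ is an induced subcomplex with non-trivial $k$-th Betti number, it contains at least $2k+2$ vertices whose pairwise $\|\cdot\|_1$-distances are all at least $2$. But $\cC$ has exactly $2k+2$ vertices, namely those of $V$. Hence the set of $2k+2$ pairwise-distant vertices produced by the theorem must be all of $V$, and therefore $\|\vec{t}_{i_1} - \vec{t}_{i_2}\|_1 \geq 2$ for every $i_1 \neq i_2$, which is the claim.

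I do not expect a genuine obstacle here; the proof is a one-line deduction from Theorem~\ref{thm:DistantVertices}. The only points that need care are bookkeeping ones: checking that the geometric graph on $V$ really is the $1$-skeleton of the induced subcomplex $\cC$ (so that the hypotheses of Theorem~\ref{thm:DistantVertices} are met), and noting that the range $1 \leq k < d$ implicit in the surrounding setup is in force so that Theorem~\ref{thm:DistantVertices} applies verbatim.
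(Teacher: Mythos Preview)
Your proposal is correct and matches the paper's own argument exactly: the corollary is stated there as an immediate consequence of Theorem~\ref{thm:DistantVertices} together with the fact that the induced subcomplex associated with $G$ has non-trivial $k$-th Betti number. The counting observation that $|V|=2k+2$ forces the distant vertices to be all of $V$ is precisely the one-line deduction the paper has in mind.
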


Observe that, for every vertex in $O_k,$ there is precisely one other vertex to which it is not connected to. The following result shows that the $\|\cdot\|_\infty-$distance between the corresponding vertex pairs in $G$ is precisely $2.$

\begin{lemma}
\label{lem:LInf2}
Fix an arbitrary $i \in \{1, \ldots, 2k + 2\}.$ Then, there exists a unique $i^\prime$ such that $\|\vec{t}_i - \vec{t}_{i^\prime}\|_\infty \geq 2;$ in fact, $\|\vec{t}_i - \vec{t}_{i^\prime}\|_\infty = 2.$
\end{lemma}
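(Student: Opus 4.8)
The plan is to first single out the index $i^\prime$ purely from the graph structure of $O_k$, and then pin down the $\|\cdot\|_\infty$-distance by a short coordinate-wise argument; no topological input is needed here, only the combinatorial description of $O_k$ and the fact that $G$ is a geometric graph with respect to $\sB_\infty(\cdot,\tfrac12)$.

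First I would observe that $O_k$, being the $1$-skeleton of the $(k+1)$-dimensional cross-polytope, is the graph on $\{\pm e_1,\dots,\pm e_{k+1}\}$ in which every vertex is non-adjacent to exactly one other vertex (its antipode). Transporting this through the isomorphism $G\cong O_k$, there is a unique index $i^\prime$ for which $\{\vec{t}_i,\vec{t}_{i^\prime}\}$ is not an edge of $G$. Since $G$ is the geometric graph for $\sB_\infty(\cdot,\tfrac12)$, an edge is present between two of our lattice points exactly when their $\|\cdot\|_\infty$-distance equals $1$; hence $\|\vec{t}_i-\vec{t}_j\|_\infty=1$ for every $j\notin\{i,i^\prime\}$, so $i^\prime$ is the only index with $\|\vec{t}_i-\vec{t}_{i^\prime}\|_\infty\neq 1$, and as $\vec{t}_i\neq\vec{t}_{i^\prime}$ are distinct integer points we get $\|\vec{t}_i-\vec{t}_{i^\prime}\|_\infty\geq 2$. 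This already delivers the uniqueness assertion and the lower bound.

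It then remains to establish $\|\vec{t}_i-\vec{t}_{i^\prime}\|_\infty\leq 2$. I would translate so that $\vec{t}_i=\vec{0}$ and set $\vec{s}:=\vec{t}_{i^\prime}$. Deleting $i$ and $i^\prime$ from $O_k$ leaves the other $2k\geq 2$ vertices, each of which is a neighbour in $O_k$ of both $i$ and $i^\prime$ (the antipode of $i^\prime$ being $i$ itself); picking one such vertex $\vec{t}_j$ we therefore have $\|\vec{t}_j\|_\infty=1$ and $\|\vec{t}_j-\vec{s}\|_\infty=1$. Reading these two conditions coordinate by coordinate forces $\vec{t}_j(\ell)\in\{-1,0,1\}\cap\{\vec{s}(\ell)-1,\vec{s}(\ell),\vec{s}(\ell)+1\}$ for every $\ell\in\{1,\dots,d\}$; if $|\vec{s}(\ell)|\geq 3$ for some $\ell$ this intersection is empty, a contradiction. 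Hence $|\vec{s}(\ell)|\leq 2$ for all $\ell$, i.e.\ $\|\vec{s}\|_\infty\leq 2$, and combined with the lower bound above we conclude $\|\vec{t}_i-\vec{t}_{i^\prime}\|_\infty=2$.

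The argument is elementary, so there is no serious obstacle; the only points needing a little care are verifying that a third vertex is actually available for the coordinate-wise step (guaranteed since $k\geq 1$ gives $|V\setminus\{\vec{t}_i,\vec{t}_{i^\prime}\}|=2k\geq 2$) and that such a vertex is adjacent in $G$ to both $\vec{t}_i$ and $\vec{t}_{i^\prime}$, which is precisely where the explicit cross-polytope description of $O_k$ is invoked. Note in particular that Corollary~\ref{cor:DistVertMC} is not needed here, since it only yields $\|\vec{t}_i-\vec{t}_{i^\prime}\|_1\geq 2$ and hence $\|\vec{t}_i-\vec{t}_{i^\prime}\|_\infty\geq 1$, which is weaker than the bound the non-edge argument supplies.
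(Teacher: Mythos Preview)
Your proposal is correct and takes essentially the same approach as the paper: identify the unique non-neighbour $i^\prime$ from the $O_k$ structure, then use a common neighbour $\vec{t}_j$ (adjacent to both $\vec{t}_i$ and $\vec{t}_{i^\prime}$) to cap the distance at $2$. The only cosmetic difference is that the paper states the upper bound via the triangle inequality $\|\vec{t}_i-\vec{t}_{i^\prime}\|_\infty\le\|\vec{t}_i-\vec{t}_j\|_\infty+\|\vec{t}_j-\vec{t}_{i^\prime}\|_\infty=2$ in one line, whereas you unpack this coordinate-wise; the content is identical.
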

\begin{proof}
Fix an arbitrary $i.$ Because $G$ is isomorphic to $O_k,$ clearly, there exists a unique $i^\prime$ such that $\|\vec{t}_i - \vec{t}_{i^\prime}\|_\infty \geq 2.$ However, for each $j \neq i, i^\prime,$ we additionally have $\|\vec{t}_j - \vec{t}_i\|_\infty = \|\vec{t}_j - \vec{t}_{i^\prime}\|_\infty = 1.$ But the latter condition can hold only if $\|\vec{t}_i - \vec{t}_{i^\prime}\|_\infty = 2.$ The desired result thus follows.
\end{proof}
From the above result and the definition of $G,$ the below statement is immediate.

\begin{corollary}
\label{cor:LInfIsometry}
$G$ is $\|\cdot\|_\infty-$isometric to $O_k.$
\end{corollary}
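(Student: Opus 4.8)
The plan is to observe that in both $G$ and $O_k$ the $\|\cdot\|_\infty$-distance between two distinct vertices takes only the two values $1$ and $2$, with value $1$ occurring exactly on the edges and value $2$ exactly on the non-edges; a graph isomorphism between them is then automatically a $\|\cdot\|_\infty$-isometry.

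First I would fix a graph isomorphism $\psi \colon G \to O_k$, which exists by hypothesis. Recalling that $O_k$ is, by definition, the graph on $\{\pm e_1, \ldots, \pm e_{k+1}\}$ with an edge between two points precisely when their $\|\cdot\|_\infty$-distance equals $1$, a one-line check shows that the only non-edges of $O_k$ are the antipodal pairs $\{e_\ell, -e_\ell\}$, and for such a pair the $\|\cdot\|_\infty$-distance is $\|2e_\ell\|_\infty = 2$. Thus in $O_k$ the $\|\cdot\|_\infty$-distance is $1$ on edges and $2$ on non-edges.

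Next I would record the corresponding dichotomy for $G$. By the definition of the geometric graph $G$ (with respect to $\sB_\infty(\cdot, \tfrac{1}{2})$), two vertices of $G$ are joined by an edge exactly when their $\|\cdot\|_\infty$-distance is $1$. Since the $\vec{t}_i$ are distinct lattice points, any non-edge pair has $\|\cdot\|_\infty$-distance at least $2$, and Lemma~\ref{lem:LInf2} shows that for each vertex there is a \emph{unique} such partner and the $\|\cdot\|_\infty$-distance to it is exactly $2$. Hence in $G$ too the $\|\cdot\|_\infty$-distance is $1$ on edges and $2$ on non-edges.

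Finally I would conclude: because $\psi$ sends edges to edges and non-edges to non-edges (it being a graph isomorphism), it preserves which of the two values the $\|\cdot\|_\infty$-distance takes on each pair, i.e., $\|\vec{t}_i - \vec{t}_j\|_\infty = \|\psi(\vec{t}_i) - \psi(\vec{t}_j)\|_\infty$ for all $i \neq j$. Therefore $G$ is $\|\cdot\|_\infty$-isometric to $O_k$. There is no real obstacle in this argument; the substantive content is entirely carried by Lemma~\ref{lem:LInf2}, and this corollary is merely its packaging together with the two-value distance dichotomy on $O_k$.
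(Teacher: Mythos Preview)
Your proof is correct and is precisely the argument the paper has in mind: the paper simply says the corollary is immediate from Lemma~\ref{lem:LInf2} and the definition of $G$, and you have spelled out the two-value $\|\cdot\|_\infty$-distance dichotomy (1 on edges, 2 on non-edges) that makes the graph isomorphism an isometry.
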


For a general $k,$ the graph $G$ can either be $\|\cdot\|_1-$isometric to $O_k$ or not. We now show that, if $k = d - 1,$ then it has to be.

\begin{lemma}
\label{lem:AllwaysIsoCase}
If $k = d - 1,$ then $G$ is also $\|\cdot\|_1-$isometric to $O_{d - 1}.$
\end{lemma}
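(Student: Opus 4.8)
The plan is to show that, for $k=d-1$, the finitely many $\|\cdot\|_\infty$-constraints force $V$ — up to a $\|\cdot\|_1$-preserving symmetry of $\dGrid$ — to be the standard vertex set of $O_{d-1}$, whose pairwise $\|\cdot\|_1$-distances are all $2$. First I would extract the combinatorial skeleton from what is already proved: by Lemma~\ref{lem:LInf2} each vertex has a unique non-neighbour in $G$ (its antipode) at $\|\cdot\|_\infty$-distance exactly $2$, whereas two vertices in distinct antipodal pairs are adjacent, hence at $\|\cdot\|_\infty$-distance exactly $1$; in particular all pairwise $\|\cdot\|_\infty$-distances are $\le 2$, and $V$ splits into $d$ antipodal pairs. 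Translate so that $\vec{t}_1=\vec 0$ and let $\vec{t}_2$ be its antipode, so $\|\vec{t}_2\|_\infty=2$; after permuting coordinates and flipping a sign, assume $\vec{t}_2(1)=2$. Each $\vec{t}_j$ with $j\ge 3$ is adjacent to both $\vec{t}_1$ and $\vec{t}_2$, so $\vec{t}_j\in\{-1,0,1\}^d$ with $|\vec{t}_j(1)-2|\le 1$, forcing $\vec{t}_j(1)=1$. The remaining $2d-2$ vertices cannot be antipodal to $\vec{t}_1$ or $\vec{t}_2$, so they form $d-1$ antipodal pairs among themselves, each contained in $\{-1,0,1\}^d$ and agreeing in coordinate $1$.

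The heart of the argument is the claim that, for each such pair $\{p,q\}$, the witness set $L:=\{\ell:\ |p(\ell)-q(\ell)|=2\}$ is a nonempty subset of $\{2,\dots,d\}$ and the $d-1$ witness sets are pairwise disjoint. Nonemptiness holds because $\|p-q\|_\infty=2$, and $L\subseteq\{2,\dots,d\}$ because $p(1)=q(1)=1$; on any $\ell\in L$ one of $p(\ell),q(\ell)$ equals $+1$ and the other $-1$, since both lie in $\{-1,0,1\}$. For disjointness, suppose $\ell$ lies in the witness sets of two distinct pairs $\{p,q\}$ and $\{p',q'\}$; relabelling the two members within each pair, take $p(\ell)=p'(\ell)=1$ and $q(\ell)=q'(\ell)=-1$. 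Then $|q(\ell)-p'(\ell)|=2$, so $\|q-p'\|_\infty=2$ (it is at most $2$), which forces $\{q,p'\}$ to be an antipodal pair and contradicts the uniqueness of the antipode of $q$, namely $p\ne p'$. Hence the $d-1$ witness sets are pairwise disjoint nonempty subsets of the $(d-1)$-element set $\{2,\dots,d\}$: each is a singleton, and together they partition $\{2,\dots,d\}$. Relabelling coordinates, write the pair with witness coordinate $i$ as $\{a_i,b_i\}$ with $a_i(1)=b_i(1)=1$, $a_i(i)=1$, $b_i(i)=-1$, for $2\le i\le d$.

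It then remains to read off the remaining entries. For distinct $i,j\in\{2,\dots,d\}$, adjacency of $a_i$ to $a_j$ gives $|a_i(j)-1|\le 1$, i.e.\ $a_i(j)\in\{0,1\}$, while adjacency of $a_i$ to $b_j$ gives $|a_i(j)+1|\le 1$, i.e.\ $a_i(j)\in\{-1,0\}$; so $a_i(j)=0$, and similarly $b_i(j)=0$. Thus $a_i=\vec{e}_1+\vec{e}_i$ and $b_i=\vec{e}_1-\vec{e}_i$. Finally, adjacency of $\vec{t}_2$ to both $a_i$ and $b_i$ forces $\vec{t}_2(i)=0$ for $2\le i\le d$, so $\vec{t}_2=2\vec{e}_1$, and hence
\[
V=\{\vec 0,\ 2\vec{e}_1\}\cup\{\,\vec{e}_1+\vec{e}_i,\ \vec{e}_1-\vec{e}_i\ :\ 2\le i\le d\,\},
\]
up to a translation, a permutation of coordinates, and coordinate sign flips. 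A one-line computation shows that every pairwise $\|\cdot\|_1$-distance in this set equals $2$, matching the standard vertex set $\{\pm\vec{e}_1,\dots,\pm\vec{e}_d\}$ of $O_{d-1}$; the given graph isomorphism $G\cong O_{d-1}$ is therefore automatically a $\|\cdot\|_1$-isometry, which proves the lemma.

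I expect no real obstacle here: the single new idea is the disjointness of the witness coordinates, obtained from the uniqueness of antipodes (Lemma~\ref{lem:LInf2}), after which everything reduces to elementary bookkeeping with the finitely many $\|\cdot\|_\infty$-distance-$\le 1$ inequalities. The only point requiring care is that each ``without loss of generality'' normalization used above — translation, permutation of coordinates, sign flips of coordinates, and relabelling the two members of an antipodal pair — is a $\|\cdot\|_1$-isometry of $\dGrid$, so none of them affects the conclusion; note also that Corollary~\ref{cor:DistVertMC}, which gives only $\|\cdot\|_1$-distances $\ge 2$, is not needed for this argument.
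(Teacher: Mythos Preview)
Your proof is correct and follows the same overall strategy as the paper: split $V$ into $d$ antipodal pairs via Lemma~\ref{lem:LInf2}, associate to each pair its set of coordinates where the two members differ by $2$, show these witness sets are pairwise disjoint, and conclude by pigeonhole that each is a singleton.

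The one genuine difference is in how disjointness is obtained. The paper argues directly via Fact~\ref{Fact:AbsD2}: if $l\in\cI_i$, then every vertex in a different pair $\{\vec t_j,\vec t_{j'}\}$, being $\|\cdot\|_\infty$-adjacent to both $\vec t_i$ and $\vec t_{i'}$, must sit at the midpoint in coordinate $l$, so $\vec t_j(l)=\vec t_{j'}(l)$ and $l\notin\cI_j$. You instead argue by contradiction: a shared witness coordinate would produce two vertices from different pairs at $\|\cdot\|_\infty$-distance $2$, violating the uniqueness of antipodes from Lemma~\ref{lem:LInf2}. Both arguments are short and valid; the paper's has the slight advantage of treating all $d$ pairs symmetrically (disjoint nonempty subsets of $\{1,\dots,d\}$) rather than singling out $\{\vec t_1,\vec t_2\}$ as an anchor and working with the remaining $d-1$ pairs inside $\{2,\dots,d\}$. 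On the other hand, your explicit coordinate computation at the end makes the final ``easy to see'' step in the paper fully transparent, and your remark that Corollary~\ref{cor:DistVertMC} is not needed is accurate for both proofs.
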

\begin{proof}
First, observe that $V$ has $2d$ vertices. Separately, from Corollary~\ref{cor:LInfIsometry}, we have that, for each $i,$ there is a unique $i^\prime$ such that $\|\vec{t}_i - \vec{t}_{i^\prime}\|_\infty = 2.$ Hence, $V$ can be written as $\{\vec{t}_{1}, \vec{t}_{1^\prime}, \ldots, \vec{t}_{d}, \vec{t}_{d^\prime}\}.$ Let $\cI_i$ be the set of coordinate indices in which $\vec{t}_i$ and $\vec{t}_{i^\prime}$ differ by $2;$ this is clearly non-empty.

Fix some arbitrary $i, j.$ From Corollary~\ref{cor:LInfIsometry},
$\|\vec{t}_{j_1} - \vec{t}_{j_2}\|_\infty = 1$ for $j_1 \in \{j, j^\prime\}$ and $j_2 \in \{i, i^\prime\}.$ From Fact~\ref{Fact:AbsD2}, it then follows that, for each $l \in I_i,$ $\vec{t}_{j}(l) = \vec{t}_{j^\prime}(l) = (\vec{t}_{i}(l) + \vec{t}_{i^\prime}(l))/2,$ and viceversa. Consequently, $\cI_i \cap \cI_j = \emptyset.$  As $i,j$ were arbitrary, and since there are only $d$ distinct coordinate indices, the desired result is easy to see.
\end{proof}

The next result states that, for general $k,$ if $G$ is indeed $\|\cdot\|_1-$isometric to $O_k,$ then its vertices have a unique ordered representation.

\begin{lemma}
\label{lem:CentrePtSP}
$G$ is $\|\cdot\|_1-$isometric to $O_k$ if and only if there is a unique $\vec{t} \in \dGrid$ and a unique $\vec{\Omega} \equiv (\omega_1, \ldots, \omega_{k + 1}) \in \tbinom{\setd}{b_k}$ such that the vertices of $G$ are, up to permutation,
\[
(\vec{t} + \vec{e}_{\omega_1}, \vec{t} - \vec{e}_{\omega_1}, \ldots, \vec{t} + \vec{e}_{\omega_{k + 1}}, \vec{t} - \vec{e}_{\omega_{k + 1}}).
\]
\end{lemma}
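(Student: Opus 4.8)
The plan is to first translate the $\|\cdot\|_1$-isometry hypothesis into a concrete metric condition and then recover the center $\vec{t}$ and the index set $\vec{\Omega}$ by a coordinate-by-coordinate analysis. Note that in the standard embedding of $O_k$ on $\{\pm\vec{e}_1,\ldots,\pm\vec{e}_{k+1}\}$ every pair of vertices is at $\|\cdot\|_1$-distance exactly $2$; since, by Corollary~\ref{cor:DistVertMC}, all pairwise $\|\cdot\|_1$-distances among $\vec{t}_1,\ldots,\vec{t}_{2k+2}$ are at least $2$, the hypothesis that $G$ is $\|\cdot\|_1$-isometric to $O_k$ is equivalent to the statement that \emph{all} these pairwise distances equal $2$. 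With this reformulation the ``if'' direction is immediate: for the displayed configuration one checks directly that within a pair $\{\vec{t}+\vec{e}_{\omega_i},\vec{t}-\vec{e}_{\omega_i}\}$ the $\|\cdot\|_\infty$-distance is $2$ and between vertices of distinct pairs it is $1$ (so the geometric graph is $O_k$), while every pair of the $2k+2$ vertices is at $\|\cdot\|_1$-distance $2$, so $G$ is $\|\cdot\|_1$-isometric to $O_k$. Uniqueness of $\vec{t}$ and $\vec{\Omega}$, once the configuration is established, will follow because $\vec{t}$ is forced to be the centroid $\tfrac{1}{2k+2}\sum_i\vec{t}_i$ and $\vec{\Omega}=\{j:\vec{t}+\vec{e}_j\in V\}$.

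For the ``only if'' direction I would start from Corollary~\ref{cor:LInfIsometry}: $G$ is $\|\cdot\|_\infty$-isometric to $O_k$, so $V$ partitions into $k+1$ ``antipodal'' pairs, each pair $\{\vec{a},\vec{b}\}$ satisfying $\|\vec{a}-\vec{b}\|_\infty=2$, and with $\|\vec{a}'-\vec{b}'\|_\infty=1$ whenever $\vec{a}',\vec{b}'$ lie in distinct pairs. Fix one pair $\{\vec{a},\vec{b}\}$. Since $\|\cdot\|_\infty\le\|\cdot\|_1$ and, by the reformulation above, $\|\vec{a}-\vec{b}\|_1=2=\|\vec{a}-\vec{b}\|_\infty$, the difference $\vec{a}-\vec{b}$ must be concentrated in a single coordinate, i.e.\ $\vec{a}-\vec{b}=\pm 2\vec{e}_j$ for some $j\in\setd$. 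Setting $\vec{t}:=(\vec{a}+\vec{b})/2\in\dGrid$, this pair equals $\{\vec{t}+\vec{e}_j,\vec{t}-\vec{e}_j\}$. Performing this for every antipodal pair yields indices $j_1,\ldots,j_{k+1}$ and centers $\vec{t}^{(1)},\ldots,\vec{t}^{(k+1)}$, and it remains to show all centers coincide and all indices are distinct.

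To do so I would compare two pairs $P=\{\vec{t}+\vec{e}_j,\vec{t}-\vec{e}_j\}$ and $P'=\{\vec{t}'+\vec{e}_{j'},\vec{t}'-\vec{e}_{j'}\}$. For any $\vec{c}\in P'$ the cross-pair relations give $|\vec{c}(j)-(\vec{t}(j)+1)|\le 1$ and $|\vec{c}(j)-(\vec{t}(j)-1)|\le 1$, so Fact~\ref{Fact:AbsD2} (with $r_1=\vec{t}(j)+1$, $r_2=\vec{t}(j)-1$) forces $\vec{c}(j)=\vec{t}(j)$. Hence both vertices of $P'$ agree in coordinate $j$; since they differ (by $2$) only in coordinate $j'$, we get $j'\ne j$ and $\vec{t}'(j)=\vec{t}(j)$, and by symmetry $\vec{t}'(j')=\vec{t}(j')$. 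Finally, taking $\vec{a}\in P$ and $\vec{c}\in P'$, the above shows $|\vec{a}(j)-\vec{c}(j)|=1$ and $|\vec{a}(j')-\vec{c}(j')|=1$, so $\|\vec{a}-\vec{c}\|_1\ge 2$; since $\|\vec{a}-\vec{c}\|_1=2$ by the reformulation, $\vec{a}$ and $\vec{c}$ must agree in every coordinate outside $\{j,j'\}$. As $\vec{a}$ agrees with $\vec{t}$ off coordinate $j$ and $\vec{c}$ agrees with $\vec{t}'$ off coordinate $j'$, this gives $\vec{t}(l)=\vec{t}'(l)$ for all $l\notin\{j,j'\}$, and combined with the previous two equalities, $\vec{t}=\vec{t}'$. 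Thus all pairs share a common center $\vec{t}$, their axes $\omega_1<\cdots<\omega_{k+1}$ (an increasing reordering of the $j_i$, valid since $k+1\le d$) are pairwise distinct, and $V=\{\vec{t}+\vec{e}_{\omega_i},\vec{t}-\vec{e}_{\omega_i}:1\le i\le k+1\}$, which is the asserted configuration; uniqueness then follows from the centroid and index-set characterizations noted in the first paragraph.

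The main obstacle is this third step—showing a single center $\vec{t}$ serves all $k+1$ antipodal pairs and that the axes are distinct. The entire leverage comes from two sharp facts: Fact~\ref{Fact:AbsD2}, which pins a coordinate to the midpoint when two values differ by $2$ and a third is within distance $1$ of both, and the tightness of the bound $\|\vec{a}-\vec{c}\|_1\ge|\vec{a}(j)-\vec{c}(j)|+|\vec{a}(j')-\vec{c}(j')|=2$ against the equality $\|\vec{a}-\vec{c}\|_1=2$; once these are in place the remainder is routine bookkeeping.
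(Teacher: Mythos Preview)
Your proposal is correct and follows essentially the same route as the paper: both arguments combine Corollary~\ref{cor:LInfIsometry} with the $\|\cdot\|_1$-isometry hypothesis to force all pairwise $\|\cdot\|_1$-distances to equal $2$, then use Fact~\ref{Fact:AbsD2} to pin down the common center $\vec{t}$ as the centroid $\tfrac{1}{2k+2}\sum_i \vec{t}_i$. The paper's version is considerably terser---it simply records the distance constraints and asserts that the conclusion follows ``from these observations and Fact~\ref{Fact:AbsD2}''---whereas you have unpacked the coordinate-by-coordinate bookkeeping that shows the centers of the antipodal pairs coincide and the axes are distinct; your expanded argument is a faithful fleshing-out of what the paper leaves implicit.
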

\begin{proof}
We only show the necessary part of the statement as the other direction is obvious. Suppose $G$ is $\|\cdot\|_1-$isometric to $O_k.$ Then, using Corollary~\ref{cor:LInfIsometry}, we have that, for any $i,$ there exists a unique $i^\prime \neq i$ such that $\|\vec{t}_i - \vec{t}_{i^\prime}\|_\infty = \|\vec{t}_i - \vec{t}_{i^\prime}\|_1 =  2;$ further, for each $j_1 \in \{i, i^\prime\}$ and $j_2 \notin \{i, i^\prime\},$ $\|\vec{t}_{j_1} - \vec{t}_{j_2}\|_\infty = 1$ and $\|\vec{t}_{j_1} - \vec{t}_{j_2}\|_1 = 2.$ From these observations and Fact~\ref{Fact:AbsD2}, it is easy to see that the desired result holds with $\vec{t} = (\sum_{i = 1}^{2k + 2} \vec{t}_i)/(2k + 2).$
\end{proof}

Suppose $k < d - 1.$ Then, it is possible that $G$ is only isomorphic but not $\|\cdot\|_1-$isometric to $O_k.$ The next result show that, while there is no unique representation, there does exist a common neighbour to all its vertices with respect to the $\|\cdot\|_\infty-$distance.

\begin{lemma}
\label{lem:CentrePtNSP}
Suppose $G$ is not $\|\cdot\|_1-$isometric to $O_k.$ Then, there is a $\vec{t} \in \dGrid$ such that $\|\vec{t} - \vec{t}_i\|_\infty = 1$ for all $j.$ \end{lemma}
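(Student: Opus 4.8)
The plan is to construct an explicit common $\|\cdot\|_\infty$-neighbour $\vec{t}$ by choosing its entries one coordinate at a time, using Fact~\ref{Fact:AbsD2} to pin down the entries sitting on the coordinates that witness the antipodal distances. First I would record the structure of $G$ via Corollary~\ref{cor:LInfIsometry} (equivalently Lemma~\ref{lem:LInf2}): the $2k+2$ vertices split into $k+1$ pairs $\{\vec{t}_i, \vec{t}_{i'}\}$ with $\|\vec{t}_i - \vec{t}_{i'}\|_\infty = 2$ (the non-edges of $O_k$), while every other pair of vertices is at $\|\cdot\|_\infty$-distance exactly $1$. For each such pair let $\cI$ be the set of coordinates $l$ at which $|\vec{t}_i(l) - \vec{t}_{i'}(l)| = 2$; it is nonempty precisely because $\|\vec{t}_i - \vec{t}_{i'}\|_\infty = 2$.

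The crucial observation is that if $l \in \cI$ for the pair $\{\vec{t}_i, \vec{t}_{i'}\}$, then every third vertex $\vec{t}_j$ with $j \notin \{i, i'\}$ satisfies $\max\{|\vec{t}_j(l) - \vec{t}_i(l)|,\, |\vec{t}_j(l) - \vec{t}_{i'}(l)|\} \leq 1$, since $\vec{t}_j$ is $\|\cdot\|_\infty$-adjacent to both; hence Fact~\ref{Fact:AbsD2} forces $\vec{t}_j(l) = (\vec{t}_i(l) + \vec{t}_{i'}(l))/2 =: c(l) \in \Grid$, and moreover $\{\vec{t}_i(l), \vec{t}_{i'}(l)\} = \{c(l)-1,\, c(l)+1\}$. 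In particular, the witness sets $\cI$ of distinct pairs are pairwise disjoint: a coordinate $l$ in two of them would, by the displayed identity applied to the second pair, force the two vertices of the first pair to share the value $c(l)$ at $l$, contradicting $l \in \cI$ for the first pair. Thus I may unambiguously set $\vec{t}(l) := c(l)$ for every $l$ lying in the $\cI$ of some pair. For each remaining coordinate $l$, no two vertices can differ by $2$ there (an antipodal pair would put $l$ in its witness set, and an edge differs by at most $1$), so $\max_i \vec{t}_i(l) - \min_i \vec{t}_i(l) \leq 1$; I would then simply set $\vec{t}(l) := \min_i \vec{t}_i(l)$.

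It remains to verify the two required properties of this $\vec{t} \in \dGrid$. First, $\|\vec{t} - \vec{t}_i\|_\infty \leq 1$: on the pinned coordinates, $\vec{t}(l) = c(l)$ lies within $1$ of each vertex value (which is $c(l)$ or $c(l)\pm 1$), and on the remaining coordinates $\vec{t}(l)$ was chosen inside an interval of length at most $1$ containing all the vertex values. Second, $\vec{t} \neq \vec{t}_i$ for every $i$: writing $\cI$ for the witness set of the pair containing $\vec{t}_i$, at any $l \in \cI$ we have $\vec{t}_i(l) = c(l) \pm 1 \neq c(l) = \vec{t}(l)$, and $\cI$ is nonempty. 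Since $\vec{t}$ and $\vec{t}_i$ are distinct lattice points, $\|\vec{t} - \vec{t}_i\|_\infty \leq 1$ forces $\|\vec{t} - \vec{t}_i\|_\infty = 1$, which is the assertion.

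I do not expect a serious obstacle here: the only point that needs care is the disjointness of the witness sets $\cI$, which guarantees that the definition of $\vec{t}$ on the pinned coordinates is consistent, and this follows from the same application of Fact~\ref{Fact:AbsD2}. One may note that this construction never uses the hypothesis that $G$ is not $\|\cdot\|_1$-isometric to $O_k$ — it produces a common $\|\cdot\|_\infty$-neighbour in every case — but the lemma is phrased with that restriction only because the $\|\cdot\|_1$-isometric case is already handled, more precisely, by Lemma~\ref{lem:CentrePtSP}.
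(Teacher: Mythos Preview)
Your argument is correct and rests on the same two ingredients as the paper's proof: the antipodal-pair structure from Corollary~\ref{cor:LInfIsometry} and the pinning of coordinate values via Fact~\ref{Fact:AbsD2}. The difference is in the construction of $\vec{t}$. The paper fixes a \emph{single} antipodal pair $\{\vec{t}_i,\vec{t}_{i'}\}$ and sets $\vec{t}(l)$ to be the midpoint on coordinates where this pair differs by $2$, and $\vec{t}_i(l)$ elsewhere; for any other vertex $\vec{t}_j$, Fact~\ref{Fact:AbsD2} forces $\vec{t}_j(l)=\vec{t}(l)$ on the first kind of coordinate, while $\|\vec{t}_j-\vec{t}_i\|_\infty=1$ handles the second kind. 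You instead treat all $k+1$ pairs simultaneously, which obliges you to prove that their witness sets are pairwise disjoint before your coordinate-wise definition of $\vec{t}$ is even consistent. That extra step is correct but avoidable: one pair already suffices. Your closing observation that the hypothesis ``not $\|\cdot\|_1$-isometric'' is never used is accurate --- the paper's proof does not use it either.
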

\begin{proof}
Fix an arbitrary $i.$ It follows from Corollary~\ref{cor:LInfIsometry} that there exists a unique $i^\prime \neq i$ such that $\|\vec{t}_i - \vec{t}_{i^\prime}\|_\infty = 2.$ Now define $\vec{t}$ using the following rule: for each $l \in \setd,$
\[
\vec{t}(l)  :=
\begin{cases}
(\vec{t}_i(l) + \vec{t}_{i^\prime}(l))/2, & \text{ if $|\vec{t}_i(l) - \vec{t}_{i^\prime}(l)| = 2,$}\\
\vec{t}_i(l), & \text{ otherwise. }
\end{cases}
\]
Keeping in mind $\|\vec{t}_i - \vec{t}_{i^\prime}\|_\infty = 2,$ it is easy to see that $\|\vec{t} - \vec{t}_i\|_\infty = \|\vec{t} - \vec{t}_{i^\prime}\|_\infty = 1.$

Now consider any arbitrary $j \notin \{i, i^\prime\}.$ Again, from Corollary~\ref{cor:LInfIsometry}, we have that $\|\vec{t}_{j} - \vec{t}_{j^\prime}\|_\infty = 1$ for all $j^\prime \in \{i, i^\prime\}.$ From these observations and Fact~\ref{Fact:AbsD2}, it is easy to see that $\|\vec{t}_{j} - \vec{t}\|_\infty = 1.$

Since $j$ is arbitrary, the desired result follows.
\end{proof}

Whenever $G$ is only isomorphic but not $\|\cdot\|_1-$isometric to $O_k,$ our final result here shows that the vertex set of $G$ can be partitioned into two non-empty subsets such that the pairwise $\|\cdot\|_1-$distances between vertices in the two subsets is at least $3.$

\begin{lemma}
\label{lem:PwL1Dist}
Suppose $G$ is not $\|\cdot\|_1-$isometric to $O_k.$ Then, its vertex set $V$ can be partitioned into non-empty subsets $V_1$ and $V_2$ so that $1 \leq |V_1| \leq k + 1;$ further, for every $\vec{t}_{j_1} \in V_1$ and $\vec{t}_{j_2} \in V_2,$ $\|\vec{t}_{j_1} - \vec{t}_{j_2}\|_1 \geq 3.$
\end{lemma}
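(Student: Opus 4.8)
The plan is to reduce the statement to a single connectivity fact about an auxiliary graph on $V$, and then extract that fact from the coordinate structure already developed in Corollaries~\ref{cor:DistVertMC} and~\ref{cor:LInfIsometry} and Lemmas~\ref{lem:CentrePtNSP} and~\ref{lem:AllwaysIsoCase}.

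First I would introduce the graph $H$ on the vertex set $V$ in which $\vec{t}_a$ and $\vec{t}_b$ are adjacent exactly when $\|\vec{t}_a - \vec{t}_b\|_1 = 2$; by Corollary~\ref{cor:DistVertMC} this is the same condition as $\|\vec{t}_a - \vec{t}_b\|_1 \leq 2$. A partition as in the statement is precisely a splitting of the connected components of $H$ into two nonempty classes with one class of size at most $k+1$. Since $|V| = 2k+2$, if $H$ has at least two components then a smallest one has at most $k+1$ vertices, is nonempty, and (again by Corollary~\ref{cor:DistVertMC}) is at $\|\cdot\|_1$-distance $\geq 3$ from everything in its complement. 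So it suffices to prove: if $G$ is not $\|\cdot\|_1$-isometric to $O_k$, then $H$ is disconnected.

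Next I would fix coordinates. By Lemma~\ref{lem:CentrePtNSP} we may translate so that $\vec{0}$ is a common $\|\cdot\|_\infty$-neighbour of all vertices; then $V \subseteq \{-1,0,1\}^d \setminus \{\vec{0}\}$. By Corollary~\ref{cor:LInfIsometry} the vertices split into $k+1$ antipodal pairs $(\vec{t}_i, \vec{t}_{i^\prime})$ with $\|\vec{t}_i - \vec{t}_{i^\prime}\|_\infty = 2$, all other pairwise $\|\cdot\|_\infty$-distances being $1$; let $\mathcal{I}_i$ be the (nonempty) set of coordinates at which $\vec{t}_i$ and $\vec{t}_{i^\prime}$ differ by $2$. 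The argument in the proof of Lemma~\ref{lem:AllwaysIsoCase}, via Fact~\ref{Fact:AbsD2}, shows the sets $\mathcal{I}_i$ are pairwise disjoint, that every vertex not in pair $i$ vanishes on $\mathcal{I}_i$, and that $\mathcal{I}_i$ is contained in the set $\operatorname{supp}(\vec{t}_j)$ of nonzero coordinates of $\vec{t}_j$ for both $j \in \{i, i^\prime\}$. Setting $U := \bigcup_i \mathcal{I}_i$ and $R_j := \operatorname{supp}(\vec{t}_j) \setminus U$, this yields the decomposition $\operatorname{supp}(\vec{t}_j) = \mathcal{I}_i \sqcup R_j$ with $R_j \subseteq U^c$ for each vertex of pair $i$. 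Since two vertices at $\|\cdot\|_\infty$-distance $1$ must agree on every coordinate where both are nonzero (a difference of $2$ being impossible), each coordinate contributing to an $\|\cdot\|_1$-distance is one where exactly one of the two vectors vanishes; combining this with the $\mathcal{I}_i$-structure gives the two distance formulas $\|\vec{t}_j - \vec{t}_{j^\prime}\|_1 = 2|\mathcal{I}_i| + |R_j \triangle R_{j^\prime}|$ within a pair, and $\|\vec{t}_j - \vec{t}_l\|_1 = |\mathcal{I}_i| + |\mathcal{I}_{i^{\prime\prime}}| + |R_j \triangle R_l|$ for vertices of distinct pairs $i$ and $i^{\prime\prime}$.

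From these formulas the conclusion drops out. An $H$-edge joins $\vec{t}_a$ and $\vec{t}_b$ if and only if the pair(s) containing them have $|\mathcal{I}| = 1$ and $R_a = R_b$. Hence, if some $\mathcal{I}_i$ has size $\geq 2$, both vertices of pair $i$ are isolated in $H$, so $H$ is disconnected (as $|V| = 2k+2 \geq 4$); and if every $|\mathcal{I}_i| = 1$, then $H$ is a disjoint union of cliques indexed by the distinct values taken by the sets $R_j$, so $H$ is connected exactly when all the $R_j$ coincide. But ``$|\mathcal{I}_i| = 1$ for all $i$ and all $R_j$ equal'' is precisely the situation in which every pairwise $\|\cdot\|_1$-distance equals $2$, which by Lemma~\ref{lem:CentrePtSP} is exactly when $G$ is $\|\cdot\|_1$-isometric to $O_k$. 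Therefore non-isometry forces $H$ to be disconnected, which completes the reduction, and then $V_1 :=$ a smallest component of $H$ (together with $V_2 := V \setminus V_1$) is the required partition. The only delicate point — the ``main obstacle'', such as it is — is the bookkeeping that establishes the support decomposition and the two distance formulas cleanly; once those are in hand, the rest is a one-line case distinction.
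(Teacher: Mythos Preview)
Your proposal is correct and takes a genuinely different route from the paper's. The paper fixes a specific pair $\vec{t}_{i_1},\vec{t}_{i_2}$ with $\|\vec{t}_{i_1}-\vec{t}_{i_2}\|_1\geq 3$, declares $V_1$ to be the closed $\|\cdot\|_1$-ball of radius $2$ about $\vec{t}_{i_1}$, and then verifies by a somewhat lengthy case analysis (splitting on whether $j_1=i_1$, on whether $\|\vec{t}_{j_1}-\vec{t}_{j_2}\|_\infty$ is $1$ or $2$, and then on where the antipode $j_1'$ sits) that every cross distance is at least $3$. You instead recast the conclusion as disconnectedness of the auxiliary graph $H$, center via Lemma~\ref{lem:CentrePtNSP}, and derive exact closed-form $\|\cdot\|_1$-distance formulas in terms of the sets $\mathcal{I}_i$ and the residual supports $R_j$; disconnectedness of $H$ then drops out with essentially no casework. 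Your approach is more structural and yields more information (the full component structure of $H$ and exact distances), at the cost of importing the support-decomposition argument from the proof of Lemma~\ref{lem:AllwaysIsoCase}; the paper's approach avoids setting up the centered coordinate system but pays for this with the case analysis. One minor remark: the appeal to Lemma~\ref{lem:CentrePtSP} at the end is not strictly needed, since once all pairwise $\|\cdot\|_1$-distances equal $2$ the isometry to $O_k$ is immediate from the fact that all pairwise $\|\cdot\|_1$-distances in $O_k$ are also $2$.
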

\begin{proof}
From Corollary~\ref{cor:DistVertMC} and the fact that $G$ is not $\|\cdot\|_1-$isometric to $O_k,$ it follows that there exists a pair $i_1, i_2$ such that $\|\vec{t}_{i_1} - \vec{t}_{i_2}\|_1 \geq 3.$ With such a pair, partition $V$ into two subsets $V_1$ and $V_2$ where
%s
\[
V_1 := \{\vec{t}_j : \|\vec{t}_{i_1} - \vec{t}_j\|_1 \leq 2\} \text{ and }  V_2 := V \setminus V_1.
\]
Clearly, $V_1$ contains $\vec{t}_{i_1}$ and $V_2$ contains $\vec{t}_{i_2};$ so, both are non-empty. We now show that for $\vec{t}_{j_1} \in V_1$ and $\vec{t}_{j_2} \in V_2$
\[
\|\vec{t}_{j_1} - \vec{t}_{j_2}\|_1 \geq 3.
\]

Fix $\vec{t}_{j_1} \in V_1$ and $\vec{t}_{j_2} \in V_2.$ From Corollary~\ref{cor:LInfIsometry}, either %
\[
\|\vec{t}_{j_1} - \vec{t}_{j_2}\|_\infty = 2 \text{ or } \|\vec{t}_{j_1} - \vec{t}_{j_2}\|_\infty = 1.
\]
Keeping this in mind, we break our arguments into several cases:
\begin{enumerate}[leftmargin=*]
\item $j_1 = i_1:$ Here the claim follows from the definition of $V_1.$

\vspace{2ex}

\item $j_1 \neq i_1$ and $\|\vec{t}_{j_1} - \vec{t}_{j_2}\|_\infty = 2:$ Clearly, there exists a coordinate index $l$ such that $|\vec{t}_{j_1}(l) - \vec{t}_{j_2}(l)| = 2.$ If there is more than one such coordinate index, then we are already done. So, consider the case when there is precisely one such $l.$ It suffices to show that there is a coordinate index, other than $l,$ where $\vec{t}_{j_1}$ and $\vec{t}_{j_2}$ differ by at least $1.$ Due to Corollary~\ref{cor:LInfIsometry}, it follows that $\|\vec{t}_{i_1} - \vec{t}_{j_1}\|_\infty = \|\vec{t}_{i_1} - \vec{t}_{j_2}\|_\infty = 1.$ Hence, using Fact~\ref{Fact:AbsD2}, $|\vec{t}_{i_1}(l) - \vec{t}_{j_1}(l)| = |\vec{t}_{i_1}(l) - \vec{t}_{j_2}(l)| = 1.$ From the definition of $V_1,$ we also have $\|\vec{t}_{i_1} -\vec{t}_{j_2}\|_1 \geq 3.$ Further, by combining the definition of $V_1$ and  Corollary~\ref{cor:DistVertMC}, we have $\|\vec{t}_{i_1} -\vec{t}_{j_1}\|_1 = 2.$ These observations imply that, other than $l,$ $\vec{t}_{j_2}$ must differ from $\vec{t}_{i_1}$ in at least two other coordinates, and the value of the difference must be exactly $1;$ while $\vec{t}_{j_1}$ must vary from $\vec{t}_{i_1}$ in only one other coordinate, with the difference being exactly one again. The desired claim is now easy to see.

\vspace{2ex}

\item $j_1 \neq i_1$ and $\|\vec{t}_{j_1} - \vec{t}_{j_2}\|_\infty = 1:$ From Lemma~\ref{lem:LInf2}, there exists a unique $j_1^\prime$ such that $\|\vec{t}_{j_1} - \vec{t}_{j_1^\prime}\|_\infty = 2.$ The subsequent arguments are broken into further subcases.

\begin{enumerate}

\item  $j_1^\prime = i_1:$ From the definition of $V_1$ and Corollary~\ref{cor:DistVertMC}, $\|\vec{t}_{i_1} - \vec{t}_{j_1}\|_1 = 2.$ This implies there is precisely one coordinate index $l$ in which $\vec{t}_{i_1}$ and $\vec{t}_{j_1}$ differ and that $|\vec{t}_{i_1}(l) - \vec{t}_{j_1}(l)| = 2.$ Due to Corollary~\ref{cor:LInfIsometry}, $\|\vec{t}_{j_1} - \vec{t}_{j_2}\|_\infty = \|\vec{t}_{i_1} - \vec{t}_{j_2}\|_\infty = 1.$ But, from the definition of $V_1,$ $\|\vec{t}_{i_1} - \vec{t}_{j_2}\|_1 \geq 3.$ Combining these observations with Fact~\ref{Fact:AbsD2}, the desired claim is now easy to see.

\item $j_1^\prime \neq i_1 :$ Again, from Corollary~\ref{cor:LInfIsometry}, $\|\vec{t}_{\ell_1} - \vec{t}_{\ell_2}\|_\infty = 1$ for each $\ell_1 \in \{j_1, j_1^\prime\}$ and $\ell_2 \in \{i_1, j_2\}.$  Along with Fact~\ref{Fact:AbsD2}, the above implies that, for each coordinate index $l$ where $|\vec{t}_{j_1}(l) - \vec{t}_{j_1^\prime}(l)| = 2,$ we have $|\vec{t}_{i_1}(l) - \vec{t}_{j_2}(l) | = 0$ and $|\vec{t}_{j_1}(l) -  \vec{t}_{j_2}(l) | = 1.$ Let $\cI$ be the collection of all such $l.$ Now fix an $l \in \cI.$ From the definition of $V_1$ and Corollary~\ref{cor:DistVertMC}, $\|\vec{t}_{i_1} - \vec{t}_{j_1}\|_1 = 2.$ Therefore, $\vec{t}_{i_1}$ and $\vec{t}_{j_2}$ differ in only one coordinate, other than $l,$ and that difference is precisely $1.$ But, from the definition of $V_1,$ $\|\vec{t}_{i_1} - \vec{t}_{j_2}\|_1 \geq 3.$ Now, whether $\|\vec{t}_{i_1} - \vec{t}_{j_2}\|_\infty$ equals $1$ or $2,$ the desired result is easy to see.
\end{enumerate}

\end{enumerate}

Since $V_1$ and $V_2$ are non-empty and $|V_1| + |V_2| = 2k + 2,$ it is easy to see that $1 \leq \min\{|V_1|, |V_2|\} \leq k + 1.$ Hence, by using the above arguments and interchanging the labels of $V_1$ and $V_2$ if necessary, we have the desired result.
\end{proof}

\section{Asymptotic Behavior of Betti Approximators}
\label{sec:IntResults}

The intermediate results concerning the asymptotic behaviour of Betti approximators from Section~\ref{sec:ProofOutline} are proved here. Often, some technical results are needed. Their proofs are given either in Section~\ref{sec:ConclResults} or the Appendix.

Using \eqref{Defn:Snk}, \eqref{Defn:Nnk}, \eqref{Defn:Lnk}, and Lemma~\ref{lem:CentrePtSP}, we first make the following observations.

\begin{remark}
\label{rem:SnkImp}
For $k = 0,$ $S_{n, 0}$ is the number of vertices in $\cK(n; u).$ On the other hand, for $k \geq 1,$ $\Snk(u)$ is roughly the number of subgraphs in $\cK(n; u)$ which are isomorphic and $\|\cdot\|_1-$isometric to $O_k.$ We say `roughly' because $\Snk(u)$ counts additional subgraphs at the boundary of $\Idx_n$ which cannot be part of $\cK(n; u).$ Separately, and as remarked earlier in Section~\ref{sec:ProofOutline}, note that $\Snk(u)$ does not require that these subgraphs be components themselves.
\end{remark}

\begin{remark}
\label{rem:NnkImp}
For $k \geq 1,$ $\Nnk(u)$ overcounts (by a constant factor) the number of subgraphs in $\cK(n; u)$ that are isomorphic but not $\|\cdot\|_1-$isometric to $O_k.$ The overcounting is due to the two facts: (i) the same subgraph is counted for more than one $\vec{t},$ and (ii) extra subgraphs are counted at the boundary of $\Idx_n$ that cannot be part of $\cK(n; u)$. But this does not matter for us, since it will only add a constant factor in the associated estimates.
\end{remark}

\begin{remark}
\label{rem:LnkImp}
In the same spirit as in Remark~\ref{rem:NnkImp}, $L_{n, k}(u)$ overestimates the number of subgraphs in $\cK(n; u)$ that is isomorphic to a graph in $\cP_k.$ In particular, we note that $L_{n, 0}(u)$ is at least twice the number of edges in $\cK(n; u).$
\end{remark}

Separately, using \eqref{Defn:cNk}  and Lemma~\ref{lem:PwL1Dist}, we note the following.

\begin{remark}
\label{rem:NonUniPartition}
For each $G \equiv (V, E) \in \cN_k,$ Lemma~\ref{lem:PwL1Dist} gives a partition of $V$ with certain properties. It is not difficult to see that such a partition is not unique. For convenience, however, we shall henceforth assume that each $G$ is uniquely associated with one such partition.
\end{remark}
%
% Look at the example -e_1 - e_2, e_1 + e_2, e_3, -e_3. One partition is \{e_3, -e_3\} and \{-e_1 - e_2, e_1 + e_2\}. The other partition is \{-e_1 - e_2\} and \{e_3, -e_3, e_1 + e_2\}.

We are now ready to prove Lemma~\ref{lem:BettiApprox}.

\begin{proof}[Proof of Lemma~\ref{lem:BettiApprox}]\label{Pf:BettiApprox}
We prove each statement separately.
\begin{itemize}[leftmargin=*]
\item Arguments for Statement~\ref{itm:BdSnku}: Consider the $k = 0$ case first. From Remark~\ref{rem:SnkImp} above, the lower bound follows trivially. It remains to show the upper bound. Note that a vertex in $\cK(n; u)$ can either be isolated or not. Separately, given a graph, recall that a connected component with $i$ vertices must have at least  $i - 1$ edges within it. Therefore, the number of vertices in a connected component, with at least two vertices, is bounded from above by twice the number of edges in it. From these observations and Remark~\ref{rem:LnkImp}, the upper bound is easy to see.

Now suppose that $1 \leq k < d.$ For $\tOm \in \dGridCrDir,$ let $\GtOm$ be the geometric graph on the vertex set $\{\vec{t} \pm \vec{e}_{\omega_1}, \ldots, \vec{t} \pm \vec{e}_{\omega_{b_k}}\}$ with respect to $\sB_\infty(\cdot, \tfrac{1}{2}).$ Then, from \eqref{Defn:Snk}, we have

\begin{multline}
\label{eqn:SnkDecom}
\Snk(u) =  \sum_{\tOm \in \IdxnCrDir} \indc[\vecXtOm \geq u \ones_{2b_k}] \; \indc[\GtOm \text{ forms a component}]  \\[2ex]
+  \sum_{\tOm \in \IdxnCrDir}  \indc[\vecXtOm \geq u \ones_{2b_k}] \; \indc[\GtOm \text{ does not form a component}].
\end{multline}
Therefore, it follows from Remark~\ref{rem:SnkImp} and the definition of $\cSnk(u)$ that
\[
\cSnk(u) \leq \sum_{\tOm \in \IdxnCrDir} \indc[\vecXtOm \geq u \ones_{2b_k}] \; \indc[\GtOm \text{ forms a component}].
\]
From this, we get the lower bound.

With regards to the upper bound, first observe that
\[
\sum_{\tOm \in \IdxnCrDir} \indc[\vecXtOm \geq u \ones_{2b_k}] \; \indc[\GtOm \text{ forms a component}] \leq \cSnk(u) + \Dnk(u).
\]
On the other hand, from the definition of $\cLnk(u),$ we have
\[
\sum_{\tOm \in \IdxnCrDir} \indc[\vecXtOm \geq u \ones_{2b_k}] \; \indc[\GtOm \text{ does not form a component}] \leq \cLnk(u).
\]
From these observations and \eqref{eqn:SnkDecom}, the upper bound is easy to see. The desired result follows.

\item Arguments for Statement~\ref{itm:BdNnku}: Consider the definition of $\cNnk(u),$ \eqref{Defn:Nnk}, Lemmas~\ref{lem:CentrePtNSP} and \ref{lem:AllwaysIsoCase}, and Remark~\ref{rem:NnkImp}. By the same argument used for deriving the lower bound above, the desired result follows.

\item Arguments for Statement~\ref{itm:BdLnku}: From the definition of $\cLnk(u),$ \eqref{Defn:Lnk}, and Remark~\ref{rem:LnkImp}, the desired result follows trivially.
\end{itemize}
This completes the proof.
\end{proof}

\label{pf:limRates}
We now aim to prove Lemma~\ref{lem:limRates}. But, for that, we first need probability estimates for the events associated with the indicators in \eqref{Defn:Snk}, \eqref{Defn:Nnk}, \eqref{Defn:Lnk}, and \eqref{Defn:Dnk}. Due to the stationarity assumption on field $X$ (see Section~\ref{sec:intro}), it suffices to obtain these estimates for $\vec{t} = 0.$ For the $k \geq 1$ case in \eqref{Defn:Snk} and \eqref{Defn:Dnk}, by additionally using the $\|\cdot\|_1-$isotropy assumption on $X,$ it in fact follows that we only need to consider $\vec{\Omega} = \set{b_k}.$ The next result gives these bounds. Its proof is in Section~\ref{sec:ConclResults}, p.~\pageref{Pf:ProbEst}, and uses Lemmas~\ref{lem:GaussianTailBounds} and~\ref{lem:SlepainInequality}; the Savage condition holds due to our assumptions in Table~\ref{tab:Assumptions}.
\begin{lemma}
\label{lem:ProbEst}
Let $\{\un\} \in \pSeq$ be such that $\lim_{n \to \infty} \un = \infty.$ Then, the following claims are true.
\begin{enumerate}
\setlength\itemsep{1em}
\item \label{itm:Sn0Ind} If $\Cr{a:r0Asm}$ holds, then $\left|\dfrac{\Pr\{X_{\vec{0}} \geq \un\}}{\tau_0 \un^{-2b_0} \exp[-a_0 \un^2]} - 1\right| = O\left(\dfrac{1}{\un^2}\right).$

\item \label{itm:SnkInd} Suppose $1 \leq k < d.$ If $\Cr{a:r0Asm}$ and  $\Cr{a:r2Asm}$ hold, then $\left|\dfrac{\Pr\{\vec{X}_{\vec{0}, \set{b_k}} \geq \un \ones_{2b_k}\}}{\left[\tau_k/\tbinom{d}{b_k}\right] \un^{-2b_k} \exp[-a_k \un^2]} - 1\right| = O\left(\dfrac{1}{\un^2}\right).$

\item \label{itm:NnkInd} Suppose $1 \leq k < d - 1.$ If $\Cr{a:r0Asm}, \Cr{a:r2Asm},$ and $\Cr{a:rqAsm}$ hold, and $G \in \cN_k,$ then
\[
\Pr\{\vec{X}_{\vec{0}, G} \geq \un \ones_{2b_k}\} = O\left(\un^{-2b_k} \exp[-\varrho_k \, \un^2]\right).
\]

\item \label{itm:Ln0Ind} If $\Cr{a:r0Asm}$ and $\Cr{a:r1Asm}$ hold, then %
\[
\Pr\{(X_{\vec{0}}, X_{\vec{e}_1}) \geq \un \ones_{2}\} = O\left(\un^{-(2b_0 + 1)} \exp[- \varphi_0 \, \un^2]\right).
\]

\item \label{itm:SnkIndWC} Suppose $1 \leq k < d.$ If $\Cr{a:r0Asm}, \Cr{a:r1Asm},$ and $\Cr{a:ComC}(k)$ hold, then
\[
\Pr\{(\vec{X}_{\vec{0}, \set{k + 1}}, X_{\vec{0}}) \geq \un \ones_{2b_k + 1}\} = O\left(\un^{-(2b_k + 1)}\exp[-\varphi_k \, \un^2]\right),
\]

\item \label{itm:LnkInd} Suppose $0 \leq k < d.$ If $\Cr{a:r0Asm}, \Cr{a:r1Asm},$ $\Cr{a:r2LAsm},$ and $\Cr{a:ComC}(k)$ hold, and $G \in \cP_k,$ then
\[
\Pr\{\vec{X}_{\vec{0}, G} \geq \un \ones_{|V|}\} = O\left(\un^{-(2b_k + 1)}\exp[-\varphi_k \, \un^2]\right),
\]
\end{enumerate}
\end{lemma}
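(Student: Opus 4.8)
The plan is to evaluate each tail probability by feeding the relevant Gaussian vector into Savage's multivariate estimate (Lemma~\ref{lem:GaussianTailBounds}), after first, where necessary, replacing that vector by a comparably distributed one with a cleaner covariance via Slepian's inequality (Lemma~\ref{lem:SlepainInequality}). By stationarity and $\|\cdot\|_1$-isotropy of $X$ it suffices to treat $\vec{t}=\vec{0}$ and $\vec{\Omega}=\set{b_k}$, as noted just before the statement. The six claims fall into two groups. In Statements~\ref{itm:Sn0Ind}, \ref{itm:SnkInd}, \ref{itm:Ln0Ind} and \ref{itm:SnkIndWC} the Gaussian vector has an \emph{exact}, highly structured covariance matrix and Savage applies directly; in Statements~\ref{itm:NnkInd} and \ref{itm:LnkInd} the covariance matrix depends on the fine geometry of $G$, so I would use Slepian to dominate it by one of the structured matrices from the first group and then reuse that computation.

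For the structured cases the key point is that all covariances are known explicitly. Statement~\ref{itm:Sn0Ind} is the $i=1$ instance of \eqref{eqn:GaussianTailEst}, i.e.\ the classical Mills-ratio expansion. For Statement~\ref{itm:SnkInd}, the $2b_k$ vertices $\vec{0}\pm\vec{e}_1,\dots,\vec{0}\pm\vec{e}_{b_k}$ are pairwise at $\|\cdot\|_1$-distance exactly $2$, so the covariance matrix of $\vec{X}_{\vec{0},\set{b_k}}$ is $(1-\rho_2)I+\rho_2 J$ with $J$ the all-ones matrix; this is explicitly invertible, the Savage vector $\vec{\Delta}:=\vec{u}M^{-1}$ with $\vec{u}=u_n\ones_{2b_k}$ equals the positive constant vector $[1+(2b_k-1)\rho_2]^{-1}u_n\ones_{2b_k}$ (positive since $\rho_2\ge 0$ under \Cr{a:r2Asm}), and $\tfrac12\vec{u}M^{-1}\vec{u}^{\top}=a_ku_n^2$. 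As every $\vec{\Delta}(j)\asymp u_n$ and every entry of $M^{-1}$ is a constant, the subtracted term in the lower bound of \eqref{eqn:GaussianTailEst} is $O(1/u_n^2)$, so both sides of \eqref{eqn:GaussianTailEst} give $[\tau_k/\tbinom{d}{b_k}]\,u_n^{-2b_k}e^{-a_ku_n^2}(1+O(1/u_n^2))$, which is exactly the two-sided estimate claimed. Statement~\ref{itm:Ln0Ind} is the same computation with the $2\times2$ matrix of unit diagonal and off-diagonal $\rho_1$, giving exponent $\varphi_0=\phi_1(\rho_2,\rho_1)=1/(1+\rho_1)$. For Statement~\ref{itm:SnkIndWC} the vector is $\vec{X}_{\vec{0},\set{b_k}}$ bordered by $X_{\vec{0}}$, whose covariance with each $X_{\pm\vec{e}_i}$ is $\rho_1$; writing $\vec{\Delta}=(\alpha\ones_{2b_k},\beta)$ and solving $\vec{\Delta}M=\vec{u}$ yields $\alpha\propto 1-\rho_1$ and $\beta\propto 1+(2b_k-1)\rho_2-2b_k\rho_1$ with a common positive factor, so the Savage condition $\vec{\Delta}>0$ holds \emph{precisely} when \Cr{a:ComC}(k) holds (together with $\rho_1<1$, which is \Cr{a:r1Asm}), and one computes $\tfrac12\vec{u}M^{-1}\vec{u}^{\top}=\varphi_ku_n^2$ with $\varphi_k=\phi_{2b_k}(\rho_2,\rho_1)$ as in \eqref{Defn:VarPhi}; Savage's upper bound then gives $O(u_n^{-(2b_k+1)}e^{-\varphi_ku_n^2})$.

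For the two Slepian cases I would first majorize all off-diagonal covariances using the boundedness hypotheses and then reduce to a structured matrix. In Statement~\ref{itm:NnkInd}, $G\in\cN_k$ is $\|\cdot\|_\infty$-isometric but not $\|\cdot\|_1$-isometric to $O_k$, so Corollary~\ref{cor:DistVertMC} gives pairwise $\|\cdot\|_1$-distances $\ge 2$ and Lemma~\ref{lem:PwL1Dist} splits $V$ into blocks $V_1,V_2$ with $1\le|V_1|\le k+1$ and cross-distances $\ge 3$ (fix one such partition per $G$, cf.\ Remark~\ref{rem:NonUniPartition}); then, using \Cr{a:r2Asm} and \Cr{a:rqAsm}, within-block covariances are $\le\rho_2$ and cross-block ones $\le\rho_3$, so Slepian upper-bounds $\Pr\{\vec{X}_{\vec{0},G}\ge u_n\ones_{2b_k}\}$ by the tail of the block matrix with these majorants (positive definite since $\rho_3<\rho_2<1$), and Savage's upper bound evaluates it. Minimizing the resulting exponent over $1\le|V_1|\le k+1$ gives the worst case $|V_1|=1$, whose matrix has the $\phi_{2b_k-1}$-form and exponent $\varrho_k=\phi_{2b_k-1}(\rho_2,\rho_3)$ of \eqref{Defn:VarRho}, yielding $O(u_n^{-2b_k}e^{-\varrho_ku_n^2})$. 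In Statement~\ref{itm:LnkInd}, $G\in\cP_k$ contains $2k+2$ vertices at pairwise $\|\cdot\|_1$-distance $\ge 2$ together with at least one further vertex; restricting $\vec{X}_{\vec{0},G}$ to those $2k+3$ coordinates only increases the probability, the covariances among the $2k+2$ are $\le\rho_2$ and those involving the extra vertex are $\le\rho_1$ (valid as $\rho_1\ge\rho_2$ under \Cr{a:r1Asm} and \Cr{a:r2LAsm}), and Slepian dominates this sub-vector by one with exactly the bordered covariance structure of Statement~\ref{itm:SnkIndWC}, so the computation there gives $O(u_n^{-(2b_k+1)}e^{-\varphi_ku_n^2})$.

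The main obstacles I anticipate are verification rather than conceptual. First, checking the Savage positivity condition $\vec{u}M^{-1}>0$ in each case: the delicate one is Statement~\ref{itm:SnkIndWC}, where \Cr{a:ComC}(k) turns out to be exactly the inequality making $\beta>0$, and this propagates through the reduction to Statement~\ref{itm:LnkInd}. Second, in the Slepian steps one must pick a comparison covariance that simultaneously dominates every off-diagonal entry, remains positive definite, and is structured enough to invert in closed form, and then argue that the extremal admissible configuration is the minimal cycle $O_k$ (so the governing rates are exactly $\varrho_k$ and $\varphi_k$); the positive-definiteness checks and the worst-case-partition step are where the bookkeeping is heaviest.
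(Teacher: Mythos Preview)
Your proposal is correct and matches the paper's proof essentially step for step: direct Savage for Statements~\ref{itm:Sn0Ind}, \ref{itm:SnkInd}, \ref{itm:Ln0Ind}, \ref{itm:SnkIndWC}, and Slepian-then-Savage for Statements~\ref{itm:NnkInd} and \ref{itm:LnkInd}, with the worst-case partition in \ref{itm:NnkInd} being $|V_1|=1$ (the paper verifies this by an explicit sign computation of $\varrho_{k,j+1}-\varrho_{k,j}$). One small gap: your reduction for Statement~\ref{itm:LnkInd} tacitly assumes $|V|\ge 2k+3$, which fails for $k=0$ since $\cP_0$ consists of two-vertex graphs; the paper handles that case separately by Slepian-reducing to Statement~\ref{itm:Ln0Ind} instead of \ref{itm:SnkIndWC}.
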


We are now ready to prove Lemma~\ref{lem:limRates}.

\begin{proof}[Proof of Lemma~\ref{lem:limRates}]
Each statement is proved individually.
\begin{itemize}[leftmargin=*]
\item Arguments for Statement~\ref{itm:ExPSnku}: First consider the case $1 \leq k < d.$ Using \eqref{Defn:Snk}, we have
\[
\ExP[S_{n, k}(\un)] = \sum_{\tOm \in \IdxnCrDir} \Pr\{\vecXtOm \geq u \ones_{2b_k}\},
\]
where $\vecXtOm$ is as in \eqref{Defn:vecX}. For each $\tOm \in \dGridCrDir,$ note that the pairwise $\|\cdot\|_1-$distances between the indices involved in the definition of $\vecXtOm$ is exactly $2.$ Consequently, using the fact that the field $X$ is both stationary and $\|\cdot\|_1-$isotropic, it follows that
\[
\ExP[S_{n, k}(\un)] = (2n + 1)^d \binom{d}{b_k} \Pr\{\vec{X}_{\vec{0}, \set{b_k}} \geq \un \ones_{2b_k}\}.
\]
From \eqref{Defn:Lambdanku}, we then have
\begin{equation}
\label{eqn:SnkLbnkRatio}
\frac{\ExP[S_{n, k}(\un)]}{\lbnk(\un)} = \dfrac{\Pr\{\vec{X}_{\vec{0}, \set{b_k}} \geq \un \ones_{2b_k}\}}{\left[\tau_k/\tbinom{d}{b_k}\right] \un^{-2b_k} \exp[-a_k \un^2]}.
\end{equation}
The desired result now follows from Lemma~\ref{lem:ProbEst}.\ref{itm:SnkInd}.

The case $k = 0$ follows similarly using Lemma~\ref{lem:ProbEst}.\ref{itm:Sn0Ind}, except, no condition on $\rho_2$ is needed here.

\item Arguments for Statement~\ref{itm:ExPNnku}: The expectation bound is trivially true when $k = 0$ or $d - 1.$ Suppose that $1 \leq k < d -1.$ From \eqref{Defn:Nnk} and the fact that the field $X$ is stationary,
\[
\ExP[\Nnk(\un)] = (2n + 1)^d \sum_{G \in \cN_k} \Pr\{\vec{X}_{\vec{0}, G} \geq \un \ones_{2b_k}\}.
\]
Therefore, from \eqref{Defn:Lambdanku},
\[
\frac{\ExP[\Nnk(\un)]}{\lbnk(\un)} \leq \frac{|\cN_k| \left[\sup_{G \in \cN_k} \Pr\{\vec{X}_{\vec{0}, G} \geq \un \ones_{2k + 2}\}\right]}{\tau_k \un^{-2b_k} \exp[-a_k \un^2]}.
\]
Since $|\cN_k|$ is finite, by invoking Lemma~\ref{lem:ProbEst}.\ref{itm:NnkInd}, the desired bound follows.

It only remains to show that $\varrho_k > a_k$ for $1 \leq k < d - 1.$ But, from \eqref{Defn:Tauk} and \eqref{Defn:VarRho},
\[
\varrho_k - a_k = \frac{(2b_k - 1) (\rho_2 - \rho_3)(1 - \rho_3 + (b_k - 1)(\rho_2 - \rho_3))}{(1 - \rho_3^2 + 2(b_k - 1)(\rho_2 - \rho_3^2))(1 + (2b_k - 1) \rho_2)} > 0,
\]
where the positivity follows since $0 \leq \rho_3 < \rho_2 < 1,$ which itself holds on due to $\Cr{a:r0Asm},$ $\Cr{a:r2Asm},$ and $\Cr{a:rqAsm}.$ This completes the proof.

\item Arguments for Statement~\ref{itm:ExPLnku}: Using \eqref{Defn:Lnk},  Lemma~\ref{lem:ProbEst}.\ref{itm:LnkInd}, and similar arguments as in the proof of Statement~\ref{itm:ExPNnku} above, the expectation bound is easy see. It only remains to show $\varphi_k > a_k$ for $k \geq 0.$

Using \eqref{Defn:Tau0}, \eqref{Defn:Tauk}, and \eqref{Defn:VarPhi}, note that
\begin{equation}
\label{eqn:DiffPhiA}
\varphi_k - a_k = (1 - 2a_k \rho_1)^2/[2(1- 2a_k\rho_1^2)].
\end{equation}
For the $k = 0$ case, the expression on the right is positive due to $\Cr{a:r0Asm}$ and  $\Cr{a:r1Asm};$ for the $1 \leq k < d$ case, it is positive due to $\Cr{a:ComC}(k)$ and the facts that $\rho_2 \geq 0$ and $0 \leq \rho_1 < 1,$ which itself hold due to $\Cr{a:r0Asm},$ $\Cr{a:r1Asm},$ and $\Cr{a:r2LAsm}.$ We are now done.

\item Arguments for Statement~\ref{itm:ExPDnku}: The $k = 0$ case is trivial. So, suppose that $1 \leq k < d.$ A simple observation here is that $|\{\tOm \in \IdxnCrDir : \|\vec{t}\|_\infty = n\}| \leq 2d (2n + 1)^{d - 1}.$ Further, from Lemma~\ref{lem:ProbEst}.\ref{itm:SnkInd},
\[
\dfrac{\Pr\{\vec{X}_{\vec{0}, \set{b_k}} \geq \un \ones_{2b_k}\}}{\left[\tau_k/\tbinom{d}{b_k}\right] \un^{-2b_k} \exp[-a_k \un^2]} = O\left(1 + \frac{1}{\un^2}\right) = O(1),
\]
where the latter holds since $\un \to \infty.$ Arguing now as in the proof of Statement~\ref{itm:ExPSnku} above, the desired result is easy to see.
\end{itemize}
This completes the proof.
\end{proof}

\label{Pf:PoiResSnk}
Our next objective is to establish Theorem~\ref{thm:PoiResSnk} by making use of Theorem~\ref{thm:SuffCondPoissonConv}. As mentioned earlier, the $k = 0$ case of this result has been proved in \cite[Theorem 3.6]{holst1990poisson}. That proof makes use of Lemma~3.4 there, which discusses several covariance bounds concerning the indicators that add up to give $S_{n, 0}(u);$ see \eqref{Defn:Snk} for the definition of the latter. Below we restate a few of these covariance bounds in a form that is convenient to us. While these bounds have been shown in \cite{holst1990poisson}, they also follow from our proof for  Lemma~\ref{lem:CovBounds} stated below.

From now on, whenever appropriate, we shall write $\indc[X_{\vec{t}} \geq u]$ as $\xit(u).$
\begin{lemma}
\label{lem:CovBounds0}
Let $\vec{t}, \vecp{t} \in \mathbb{Z}^d$ and $\{\un\} \in \pSeq$ be such that $\lim_{n \to \infty} \un = \infty.$ Let $a_0$ and $b_0$ be as in \eqref{Defn:Tau0}. Suppose $\Cr{a:r0Asm},$ $\Cr{a:r1Asm},$ and $\Cr{a:r2LAsm}$ hold. Then, the following statements are true.
\begin{enumerate}
\setlength\itemsep{1em}
\item $\cov[\xit(\un), \xitp(\un)] \geq 0$ for all $n \geq 0.$

\item If $\|\vec{t} - \vecp{t}\|_1 = q \geq 1,$ then $\cov[\xit(\un), \xitp(\un)] = O\left(\un^{-4b_0} \exp \left[\dfrac{-2 a_0 \un^2}{1 + 2 a_0 \rho_q}\right]\right).$

\item \label{itm:StBd0} If $\|\vec{t} - \vecp{t}\|_1 = q \geq 1,$ then
\[
\cov[\xit(\un), \xitp(\un)] = O\left(\rho_q \un^{-(4b_0 - 2)} \exp \left[\dfrac{-2 a_0 \un^2}{1 + 2 a_0 \rho_q}\right] \right) = O\left(\rho_q \exp \left[\dfrac{-2 a_0 \un^2}{1 + 2 a_0 \rho_q}\right] \right).
\]
\end{enumerate}
In each statement, the constants involved in the $O$ notation are independent of both $\vec{t}$ and $\vecp{t}.$
\end{lemma}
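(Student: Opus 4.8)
The plan is to reduce the entire lemma to a single bivariate Gaussian computation, keeping the dependence on $\vec t, \vecp t$ only through $q := \|\vec t - \vecp t\|_1$. By stationarity, $\|\cdot\|_1$-isotropy, and $\Cr{a:r0Asm}$, the pair $(X_{\vec t}, X_{\vecp t})$ is centered bivariate Gaussian with unit variances and correlation $\rho_q$; by $\Cr{a:r1Asm}$ and $\Cr{a:r2LAsm}$ one has $0 \le \rho_q \le \rho_1 < \rho_0 = 1$ for every $q \ge 1$, so the correlation lives in the fixed compact interval $[0,\rho_1]\subset[0,1)$. This uniform separation from $1$ is exactly what will force every $O$-constant below to be independent of $\vec t$ and $\vecp t$. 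Writing $p(\rho):=\Pr\{Y_1\ge\un,\,Y_2\ge\un\}$ for a centered bivariate Gaussian $(Y_1,Y_2)$ with unit variances and correlation $\rho$, and using that stationarity makes the two marginal tails equal, we get $\cov[\xit(\un),\xitp(\un)] = p(\rho_q) - p(0)$.

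For Statement~1 I would note that $\rho\mapsto p(\rho)$ is nondecreasing, which follows from Slepian's lemma (Lemma~\ref{lem:SlepainInequality}) by comparing the pair of correlation $\rho_q$ with the independent pair having the same marginals. Hence $\cov = p(\rho_q) - p(0) \ge 0$; when $\vec t = \vecp t$ the quantity is a variance and nonnegativity is immediate.

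For Statements~2 and~3 I would invoke Plackett's formula: $p$ is differentiable with $p'(\rho) = \tfrac{1}{2\pi\sqrt{1-\rho^2}}\exp[-\un^2/(1+\rho)]$ (the centered bivariate normal density with correlation $\rho$ evaluated at $(\un,\un)$), so integrating from $0$ to $\rho_q$,
\[
\cov[\xit(\un),\xitp(\un)] = \int_0^{\rho_q}\frac{1}{2\pi\sqrt{1-r^2}}\exp\!\left[-\frac{\un^2}{1+r}\right]dr .
\]
The integrand is nonnegative, which re-proves Statement~1. Since $(1-r^2)^{-1/2} \le (1-\rho_1^2)^{-1/2}$ on $[0,\rho_q]$, it remains to estimate $I := \int_0^{\rho_q}\exp[-\un^2/(1+r)]\,dr$ in two complementary ways. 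Bounding the integrand by its value at the right endpoint $r=\rho_q$ gives $I \le \rho_q\exp[-\un^2/(1+\rho_q)]$; since $a_0 = b_0 = \tfrac12$ makes $4b_0-2 = 0$ and $2a_0/(1+2a_0\rho_q) = 1/(1+\rho_q)$, this is exactly Statement~3 (so its two displayed right-hand sides indeed coincide). Substituting $s = 1/(1+r)$ instead turns $I$ into $\int_{1/(1+\rho_q)}^{1}s^{-2}e^{-\un^2 s}\,ds \le (1+\rho_q)^2\int_{1/(1+\rho_q)}^{\infty}e^{-\un^2 s}\,ds = \tfrac{(1+\rho_q)^2}{\un^2}\exp[-\un^2/(1+\rho_q)]$, which is $O(\un^{-4b_0}\exp[-2a_0\un^2/(1+2a_0\rho_q)])$ and hence Statement~2. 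The prefactors $(1-\rho_q^2)^{-1/2}$ and $(1+\rho_q)^2$ are bounded by absolute constants because $\rho_q \le \rho_1 < 1$, which gives the uniformity in $\vec t, \vecp t$.

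The one genuinely delicate point is Statement~3. The crude bound $\cov \le p(\rho_q)$ — which one could also extract from the upper half of Savage's estimate (Lemma~\ref{lem:GaussianTailBounds}) applied to $(X_{\vec t}, X_{\vecp t})$ with covariance matrix having $1$ on the diagonal and $\rho_q$ off it and threshold $(\un,\un)$, where $\vec\Delta = \tfrac{\un}{1+\rho_q}(1,1) > 0$ makes the Savage condition hold — already delivers Statement~2, but it is blind to the fact that $\cov$ must vanish \emph{linearly} in $\rho_q$ as $\rho_q\downarrow 0$. Capturing that linear scaling is precisely why the derivative (equivalently mean-value) representation above is needed, and it is this sharper form that is later summed against $\sum_q \rho_q$. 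Finally, these three estimates are just the $k=0$ instance of Lemma~\ref{lem:CovBounds}; the same plan, carried out for the general vector $\vec{X}_{\vec{0}, G}$ with the same Slepian-plus-density bookkeeping, subsumes them.
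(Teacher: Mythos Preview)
Your proposal is correct and follows the same architecture as the paper: Slepian for nonnegativity, the Savage/upper-tail bound for Statement~2, and the integral-of-the-derivative representation for the sharper Statement~3. The paper does not give a separate proof of this lemma but defers to \cite{holst1990poisson} and to its own proof of Lemma~\ref{lem:CovBounds}; your argument is precisely the $k=0$ specialization of that proof, with one pleasant simplification: you invoke Plackett's exact bivariate identity $p'(\rho)=\tfrac{1}{2\pi\sqrt{1-\rho^2}}\exp[-\un^2/(1+\rho)]$ in place of the general derivative bound of Lemma~\ref{lem:ZmuTailBeh}, which in the paper is obtained via smoothing, integration by parts, and another application of Savage. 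Your substitution argument for Statement~2 is a nice alternative to the direct Savage bound on $p(\rho_q)$ used in Lemma~\ref{lem:CovBounds}.\ref{itm:WkBd}, and you correctly note that the latter also suffices.
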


In order to handle the $1 \leq k < d$ case in Theorem~\ref{thm:PoiResSnk}, our first aim is to obtain similar covariance bounds for the indicators that sum up to $\Snk(u);$ the latter is as defined in \eqref{Defn:Snk}. These bounds are stated next.

Henceforth, as and when convenient, we shall shorten $\indc[\vecXtOm \geq u\ones_{2b_k}]$ to $\xitOm(u).$ Separately, keeping in mind that the covariance sequence $\{\rho_q\}$ may not be monotonically decreasing, for $\tOm, \tpOmp \in \dGridCrDir,$ let
\begin{equation}
\label{Defn:rhoa}
\rhoa_{\tOm, \tpOmp} := \max\left\{\rho_{\|[\vec{t} + \alpha \vec{e}_{\omega}] - [\vecp{t} + \alpha^\prime \vec{e}_{\omegap}]\|_1} : \alpha, \alpha^\prime \in \{-1, +1\}, \omega \in \vec{\Omega}, \omegap \in \vecp{\Omega}\right\}.
\end{equation}
The triangle inequality shows that, whenever $\rhoa_{\tOm, \tpOmp} = \rho_q,$
\begin{equation}
\label{eqn:rhoaBd}
\max\{q - 2, 0\} \leq \|\vec{t} - \vecp{t}\|_1 \leq q + 2.
\end{equation}
From this, it immediately follows that there exists a $k-$dependent positive constant ${\Cl[Const]{c:upBdRho}}_{, k}$ such that, for any $\tOm \in \dGridCrDir$ and $q \geq 1,$
\begin{equation}
\label{eqn:upBdRho}
|\{\tpOmp \in \dGridCrDir: \rhoa_{\tOm, \tpOmp} = \rho_q\}| \leq {\Cr{c:upBdRho}}_{,k} \; q^{d - 1}.
\end{equation}

\begin{lemma}
\label{lem:CovBounds}
Fix $k$ such that $1 \leq k < d.$ Let $\tOm, \tpOmp \in \dGridCrDir,$ and $\{\un\} \in \pSeq$ be such that $\lim_{n \to \infty} \un = \infty.$ Let $a_k$ and $b_k$ be as in \eqref{Defn:Tauk} and let $\varphi_k$ be as in \eqref{Defn:VarPhi}. Suppose $\Cr{a:r0Asm}, \Cr{a:r1Asm}, \Cr{a:r2LAsm},$ and $\Cr{a:ComC}(k)$ hold. Then, the following statements are true.
\begin{enumerate}
\setlength\itemsep{1em}
\item \label{itm:NonNegCov} $\cov[\xitOm(\un), \xitpOmp(\un)] \geq 0$ for all $n.$
\item \label{itm:WkstBd} If $\tOm \neq \tpOmp,$ then
\[
\cov[\xitOm(\un), \xitpOmp(\un)] = O \left( \un^{-(2b_k + 1)} \exp[-\varphi_k \un^2]\right).
\]

\item \label{itm:WkBd} If $\rhoa_{\tOm, \tpOmp}= \rho_q$ with $q \geq 1,$ then
\[
\cov[\xitOm(\un), \xitpOmp(\un)] = O\left( \un^{-4b_k} \exp\left[\frac{ -2a_k \un^2}{1 + 2 a_k\rho_q}\right] \right).
\]

\item \label{itm:StBd} If $\rhoa_{\tOm, \tpOmp}= \rho_q$ with $q \geq 1,$ then
\[
\cov[\xitOm(\un), \xitpOmp(\un)] = O \left(
\rho_q \; \un^{-(4b_k - 2)} \exp\left[\frac{-2a_k\un^2}{1 + 2a_k \rho_q}\right]\right).
\]
\end{enumerate}
In each statement, while the constants involved in the $O$ notation do depend on $k,$ they are independent of both $\tOm$ and $\tpOmp.$
\end{lemma}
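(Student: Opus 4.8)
The plan is to prove all four bounds along the route already used for the $k=0$ analogue, Lemma~\ref{lem:CovBounds0}: pass from the indicator covariance to a tail probability of the Gaussian vector built from the points underlying $\vecXtOm$ and $\vecXtpOmp$, use Slepian's lemma (Lemma~\ref{lem:SlepainInequality}) to replace the awkward covariance matrix by a highly symmetric reference one, and then read the bound off either from Savage's estimate (Lemma~\ref{lem:GaussianTailBounds}) or from the one–configuration estimates already established in Lemma~\ref{lem:ProbEst}. Statement~\ref{itm:NonNegCov} is immediate: $\Cr{a:r0Asm},\Cr{a:r1Asm},\Cr{a:r2LAsm}$ force $\rho_q\ge 0$ for every $q$, so $(\vecXtOm,\vecXtpOmp)$ has nonnegative covariances and is therefore associated (the Gaussian–association fact already invoked in the proof of Theorem~\ref{thm:ProbSpaceExistence}); hence the two nondecreasing indicators $\xitOm(\un)$ and $\xitpOmp(\un)$ are nonnegatively correlated.

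For Statement~\ref{itm:WkstBd}, recall that the point set of a cross–polytope determines its centre and orientation, so $\tOm\neq\tpOmp$ forces the union $U$ of the point sets of $\vecXtOm$ and $\vecXtpOmp$ to have at least $2b_k+1$ elements. Pick all $2b_k$ points of $\vecXtOm$ together with one extra point $\vec s^{*}\in U$ lying outside them. Inside the cross–polytope all pairwise $\|\cdot\|_1$–distances equal $2$, so those correlations are $\rho_2$, while $\vec s^{*}$ is a distinct lattice point and hence sits at $\|\cdot\|_1$–distance $\ge 1$ from each of them, so those correlations are at most $\rho_1$ (indeed $\rho_q\le\rho_1$ for every $q\ge1$ by $\Cr{a:r1Asm}$ and $\Cr{a:r2LAsm}$). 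Matching $\vec s^{*}$ with the centre $X_{\vec 0}$ and the cross–polytope points arbitrarily with $\vec X_{\vec 0,\set{k+1}}$, every off–diagonal entry of our matrix is dominated by the corresponding entry of the covariance matrix of $(\vec X_{\vec 0,\set{k+1}},X_{\vec 0})$; Slepian's lemma and Lemma~\ref{lem:ProbEst}.\ref{itm:SnkIndWC} then give $\cov[\xitOm(\un),\xitpOmp(\un)]\le\Pr\{(\vec X_{\vec 0,\set{k+1}},X_{\vec 0})\ge\un\ones_{2b_k+1}\}=O(\un^{-(2b_k+1)}\exp[-\varphi_k\un^2])$.

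For Statement~\ref{itm:WkBd}, note that if the two point sets overlapped then $\rhoa_{\tOm,\tpOmp}=\rho_0$, contradicting $q\ge1$, so $U$ has exactly $4b_k$ points. Bound $\cov$ by the joint tail $\Pr\{(\vecXtOm,\vecXtpOmp)\ge\un\ones_{4b_k}\}$ and use Slepian to raise every cross–block correlation (each $\le\rho_q$) to $\rho_q$; what is left is the $4b_k\times 4b_k$ matrix $M=\left(\begin{smallmatrix}A&\rho_qJ\\ \rho_qJ&A\end{smallmatrix}\right)$ with $A=(1-\rho_2)I+\rho_2J$ the (equicorrelated) cross–polytope covariance. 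This $M$ is positive definite since $\Cr{a:ComC}(k)$ together with $\rho_q\le\rho_1$ gives $1+(2b_k-1)\rho_2>2b_k\rho_q$. A block computation yields $\ones M^{-1}\ones^{\tr}=4b_k/[1+(2b_k-1)\rho_2+2b_k\rho_q]=4a_k/(1+2a_k\rho_q)$ (using $a_k=b_k/(1+(2b_k-1)\rho_2)$), and by symmetry $\un\ones M^{-1}$ is a positive multiple of $\ones$, so the Savage condition holds; Lemma~\ref{lem:GaussianTailBounds} then produces a bound $C\,\un^{-4b_k}\exp[-2a_k\un^2/(1+2a_k\rho_q)]$, with $C$ depending only on $k$ because $|M|$ and $\un\ones M^{-1}$ vary continuously over the compact range $\rho_q\in[0,\rho_1]$ and stay bounded away from degeneracy.

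Statement~\ref{itm:StBd} is the delicate one. Using Statement~\ref{itm:NonNegCov} and the fact that altering the cross–block correlations leaves the marginal laws of $\vecXtOm$ and $\vecXtpOmp$ unchanged, it suffices to bound $\cov_{\vec W}[\indc_1,\indc_2]$ for the block matrix $\vec W$ of Statement~\ref{itm:WkBd}. I would interpolate the off–diagonal block by a scalar $s\in[0,1]$ and apply Plackett's (Price's) differentiation identity to write $\cov_{\vec W}[\indc_1,\indc_2]=\int_0^1\rho_q\sum_{(i,j)}p^{(s)}_{ij}(\un,\un)\,\Pr^{(s)}\{Z_\ell\ge\un,\ \ell\neq i,j\mid Z_i=Z_j=\un\}\,ds$, the sum over the at most $(2b_k)^2$ cross–block pairs (this is where the $\rho_q$ factor comes from). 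The bivariate density contributes $p^{(s)}_{ij}(\un,\un)=O(\exp[-\un^2/(1+s\rho_q)])$, while Savage's estimate applied to the $(4b_k-2)$–variate conditional Gaussian — whose mean is a vector dominated by $\rho_2\un\ones$ and whose covariance is the relevant Schur complement — contributes $\un^{-(4b_k-2)}$ together with an exponent $E^{(s)}\un^2$. The crux is the inequality $\tfrac1{1+s\rho_q}+E^{(s)}\ge\tfrac{2a_k}{1+2a_k\rho_q}$ for all $s\in[0,1]$; at $s=0$ the conditional factorizes over the two cross–polytopes and a direct Savage computation gives the exact identity $1+\tfrac{(2b_k-1)(1-\rho_2)}{1+(2b_k-1)\rho_2}=2a_k$, and the general $s$ follows by monotonicity considerations on the bounded range of $s\rho_q$. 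The main obstacle is exactly this last step together with the Schur–complement bookkeeping: one must propagate the within–cross–polytope $\rho_2$–correlations through the double conditioning so that the exponent improves from the ``bivariate'' $\un^2$ of a naive Berman bound to the full $2a_k\un^2/(1+2a_k\rho_q)$. Uniformity of all constants over $\rho_q\in[0,\rho_1]$ is then routine compactness, as in Statement~\ref{itm:WkBd}.
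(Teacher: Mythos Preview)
Your treatment of Statements~\ref{itm:NonNegCov}--\ref{itm:WkBd} is essentially the paper's own: the association argument for \ref{itm:NonNegCov} is a clean substitute for the paper's Slepian two-step decomposition, and for \ref{itm:WkstBd} and \ref{itm:WkBd} you reproduce exactly the reduction to $(\vec X_{\vec 0,\set{k+1}},X_{\vec 0})$ and to the symmetric block matrix $W_{2b_k,2b_k}(\rho_2,\rho_q)$, followed by Savage.

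For Statement~\ref{itm:StBd} you also share the paper's outline --- Slepian-reduce to the reference model $\vec Z_\mu\sim\Gau(\vec 0,W_{2b_k,2b_k}(\rho_2,\mu))$, interpolate $\mu$ from $0$ to $\rho_q$, and differentiate via the Plackett/Price identity $\partial_{\sigma_{ij}}f=\partial^2_{z_iz_j}f$ --- but your execution of the exponent bound is where the sketch is incomplete. You split the integrand into $p_{ij}^{(s)}(\un,\un)$ and the conditional tail and then aim for the inequality $\tfrac{1}{1+s\rho_q}+E^{(s)}\ge\tfrac{2a_k}{1+2a_k\rho_q}$ by ``monotonicity considerations''. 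This is harder than necessary and the monotonicity hand-wave is not obviously justifiable. The paper's route (carried out in its Lemma~\ref{lem:ZmuTailBeh}) is to bound the conditional tail by Savage, which yields an upper bound proportional to $u^{-(4b_k-2)}\,f_{\hat\imath\hat\jmath}(u\ones\,|\,Z_{ij}=u\ones_2)$; then one simply uses the density factorisation
\[
f_{ij}(u\ones_2)\;f_{\hat\imath\hat\jmath}\bigl(u\ones_{4b_k-2}\mid Z_{ij}=u\ones_2\bigr)\;=\;f_{\vec Z_\mu}(u\ones_{4b_k}),
\]
whose exponent is \emph{exactly} $\dfrac{2b_k\,u^2}{1+(2b_k-1)\rho_2+2b_k\mu}=\dfrac{2a_ku^2}{1+2a_k\mu}$. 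So your ``crux inequality'' is in fact an identity, and no monotonicity argument is needed. A second point you glide over: to invoke Savage on the conditional Gaussian you must verify the Savage condition $\vec\Delta>0$ for the shifted vector $u\ones-\mathscr{M}(u\ones_2)$ against the Schur complement; the paper does this via the block-inversion formula and the eigenvector relation $\ones_{4b_k}W_{2b_k,2b_k}^{-1}(\rho_2,\mu)=[1+(2b_k-1)\rho_2+2b_k\mu]^{-1}\ones_{4b_k}$, and this step deserves at least a sentence.
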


We prove this result in Section~\ref{sec:ConclResults}, p.~\pageref{Pf:CovBounds}. Statements~\ref{itm:NonNegCov}, \ref{itm:WkstBd}, and \ref{itm:WkBd} of this result follow by an application of Lemmas~\ref{lem:GaussianTailBounds} and \ref{lem:SlepainInequality}; these ideas are multivariate extensions of those discussed in the proof of \cite[Lemma 3.4]{holst1990poisson}. In contrast, our proof of Statement~\ref{itm:StBd} above is significantly different to the one used in \cite{holst1990poisson} to derive (the analogous version of) Lemma~\ref{lem:CovBounds0}.\ref{itm:StBd0}. There, given that the setup is much simpler, the proof proceeds via first principles. In contrast, here we first approximate the desired covariance by a definite integral and then make use of Lemma~\ref{lem:ZmuTailBeh}, which provides bounds on the integrand.

\begin{proof}[Proof of Theorem~\ref{thm:PoiResSnk}]
Because $\Cr{a:r0Asm}, \Cr{a:r1Asm},$ $\Cr{a:ComC}(k),$ and $\Cr{a:r2LAsm}$ hold, both $1 + (2b_k - 1)\rho_2 - 2b_k \rho_1$ and $\rho_1$ lie in $(0, 1),$ while $\rho_2 \in [0, 1).$ Using this, it is easy to see that $0 < \vartheta_k < 1,$ as desired.

It now remains to establish the total variation bounds. The $k = 0$ case has been proved in \cite[Theorem 3.6]{holst1990poisson}. We only deal with the $1 \leq k < d$ case here. So, fix one such $k.$

For any $n \geq 0,$ it is not difficult to see from \eqref{Defn:Snk}, Theorems~\ref{thm:SuffCondPoissonConv} and ~\ref{thm:ProbSpaceExistence}, and Lemma~\ref{lem:CovBounds}.\ref{itm:NonNegCov} that
\begin{eqnarray}
& & \tv{\Snk(\un)}{\Poi(\ExP[\Snk(\un)])} \nonumber \\
& \leq & \frac{1 - e^{-\lbnk(\un)}}{\lbnk(\un)} \Bigg[ \sum_{\tOm \in \IdxnCrDir} \hspace{-0.5em} [\ExP[\xitOm(\un)]]^2 +  \sum_{\substack{\tOm, \tpOmp \in \IdxnCrDir, \\ \tOm \neq \tpOmp}} \hspace{-0.5em} \cov[\xitOm(\un), \xitpOmp(\un)]\Bigg] \nonumber\\
& \leq & \frac{1}{\lbnk(\un)} \Bigg [ \sum_{\tOm \in \IdxnCrDir} \hspace{-0.5em} [\ExP[\xitOm(\un)]]^2  +  \sum_{\substack{\tOm, \tpOmp \in \IdxnCrDir, \\ \tOm \neq \tpOmp}} \hspace{-0.5em} \cov[\xitOm(\un), \xitpOmp(\un)]\Bigg] \label{eqn:dtvBd},
\end{eqnarray}
where the last relation follows since $1 - e^{-x} \leq 1.$ Since $\Cr{a:r1Asm}$ and $\Cr{a:r2LAsm}$ imply $\Cr{a:r2Asm}$ and also because $\vartheta_k/2b_k \in (0, 1),$ the desired result clearly follows from the following two claims:
\begin{enumerate}

\item \label{itm:SqMeanSum} If $\Cr{a:r0Asm}$ and $\Cr{a:r2Asm}$ hold, then $\dfrac{1}{\lbnk(\un)} \sum_{\tOm \in \IdxnCrDir} [\ExP[\xitOm(\un)]]^2 = O\left(\dfrac{1}{n^d}\right).$

\vspace{2ex}

\item \label{itm:CovSum} If $\Cr{a:r0Asm}, \Cr{a:r1Asm}, \Cr{a:ComC}(k), \Cr{a:PoiG},$ and $\Cr{a:r2LAsm},$ hold, then
\begin{eqnarray*}
& & \frac{1}{\lbnk(\un)} \sum_{\substack{\tOm, \tpOmp \in \IdxnCrDir \\ \tOm \neq \tpOmp}} \cov[\xitOm(\un), \xitpOmp(\un)]\\
& = & O\left(n^{-d \vartheta_k/(2 b_k)} [\log n]^{-(1 - \vartheta_k)/2} + \frac{\log n}{n} \sum_{q = 1}^{2dn} \rho_q\right).
\end{eqnarray*}

\end{enumerate}
In the remainder of this proof, we establish the above bounds.
\begin{itemize}[leftmargin=*]
\item Arguments for Claim~\ref{itm:SqMeanSum}: We have
\begin{eqnarray}
& & \frac{1}{\lbnk(\un)} \sum_{\tOm \in \IdxnCrDir} [\ExP[\xitOm(\un)]]^2 \nonumber \\
& = & (2n + 1)^d \tbinom{d}{b_k} [\Pr\{\vec{X}_{\vec{0}, \set{b_k}} \geq \un \ones_{2b_k}\}]^2 / \lbnk(\un) \nonumber \\
& = & \frac{\lbnk(\un)}{(2n + 1)^d \tbinom{d}{b_k}} \left[\frac{\Pr\{\vec{X}_{\vec{0}, \set{b_k}} \geq \un \ones_{2b_k}\}}{\left[\tau_k/\tbinom{d}{b_k}\right] \un^{-2b_k} \exp[-a_k \un^2]}\right]^2 \nonumber \\
& = &  \frac{\lbnk(\un)}{(2n + 1)^d \tbinom{d}{b_k}} O\left(1 + \frac{1}{\un^2} + \frac{1}{\un^4}\right) \label{eqn:IntSqSnkBd},
\end{eqnarray}
where the first relation is due to the stationarity and $\|\cdot\|_1-$isotropy of the field $X,$ the second is because of \eqref{Defn:Lambdanku}, while third one is a consequence of Lemma~\ref{lem:ProbEst}.\ref{itm:SnkInd}. The desired result now follows from the fact that $\un \to \infty$ and $\lbnk(\un) \to \lambda,$ a constant.

\item Arguments for Claim~\ref{itm:CovSum}: Pick a $\delta$ satisfying
\begin{equation}
\label{eqn:DeltaCond}
0 < (4a_k + 1) \delta < \frac{2 a_k \rho_1}{1 + 2a_k \rho_1} < 1.
\end{equation}
While the effectiveness of this choice will become clear below, the choice itself can be made because $\Cr{a:r0Asm}, \Cr{a:r1Asm},$ and $\Cr{a:r2LAsm}$ ensure $\rho_1 \in (0, 1)$ and $\rho_2 \in [0, 1).$ For any $\tOm, \tpOmp \in \IdxnCrDir,$ it is easy to see from \eqref{eqn:rhoaBd} that $\rhoa_{\tOm, \tpOmp} = \rho_q$ for some $q \in \{0, \ldots, 2dn + 2\}.$ Keeping this in mind, we rewrite the covariance sum in the claim as $\Term_1(n) + \ldots + \Term_4(n),$ where
\begin{equation}
\label{eqn:vCloseRangeSum}
\Term_1(n) :=  \frac{1}{\lbnk(\un)} \sum_{\substack{\tOm, \tpOmp \in \IdxnCrDir: \\[0.5ex] \tOm \, \neq \, \tpOmp, \; \rhoa_{\tOm, \tpOmp} \, = \,\rho_0}} \cov[\xitOm(\un), \xitpOmp(\un)],
\end{equation}
\begin{equation}
\label{eqn:EarlyRangeSum}
\Term_2(n) := \frac{1}{\lbnk(\un)}  \sum_{q: \rho_q \geq \delta,\;  q \neq 0} \sum_{\substack{\tOm, \tpOmp \in \IdxnCrDir: \\[0.5ex] \rhoa_{\tOm, \tpOmp} \, = \,\rho_q}}  \cov[\xitOm(\un), \xitpOmp(\un)],
\end{equation}
\begin{equation}
\label{eqn:MidRangeSum}
\Term_3(n) := \frac{1}{\lbnk(\un)}  \sum_{q: \rho_q < \delta,\;  q \leq (2n + 1)^\delta} \sum_{\substack{\tOm, \tpOmp \in \IdxnCrDir: \\[0.5ex]  \rhoa_{\tOm, \tpOmp} \, = \, \rho_q }} \cov[\xitOm(\un), \xitpOmp(\un)],
\end{equation}
and
\begin{equation}
\label{eqn:LongRangeSum}
\Term_4(n) := \frac{1}{\lbnk(\un)}  \sum_{\substack{q: \rho_q < \delta, \\[0.5ex] (2n + 1)^\delta < q \leq 2dn + 2}} \sum_{\substack{\tOm, \tpOmp \in \IdxnCrDir: \\[0.5ex] \rhoa_{\tOm, \tpOmp} \, = \, \rho_q}} \cov[\xitOm(\un), \xitpOmp(\un)].
\end{equation}
We now sequentially bound each of these four terms. Since $\lbnk(\un) \to \lambda,$ a constant, we ignore the $1/\lbnk(\un)$ scaling factor, present in each term, in the computations below.

Using the stationarity property of the field $X,$ the fact that $\{\tpOmp: \rhoa_{\tOm, \tpOmp} = \rho_0\}$ is finite for each $\tOm,$ and Lemma~\ref{lem:CovBounds}.\ref{itm:WkstBd}, it follows that
\begin{equation}
\label{eqn:T1Bd}
\Term_1(n) = O\left( (2n + 1)^d \un^{-(2b_k + 1)} \exp[-\varphi_k \un^2]\right).
\end{equation}
Hence, writing $\exp[-\varphi_k \un^2]$ as $[\tau_k (2n + 1)^d \un^{-2b_k}]^{-\varphi_k/a_k} [\lbnk(\un)]^{\varphi_k/a_k}$ and, again, using the fact that $\lbnk(\un) \to \lambda,$ it follows from \eqref{eqn:DiffPhiA} that
\[
\Term_1(n) = O\left(n^{-d\vartheta_k/2b_k} \un^{-(1 - \vartheta_k)}\right).
\]

Consider the second term. We have
\begin{eqnarray*}
\Term_2(n) & = & O\Biggl(\sum_{q: \rho_q \geq \delta,\;  q \neq 0} \sum_{\substack{\tOm, \tpOmp \in \IdxnCrDir: \\[0.5ex] \rhoa_{\tOm, \tpOmp} \, = \,\rho_q}} \un^{-4b_k} \exp\left[\frac{ -2a_k \un^2}{1 + 2 a_k\rho_q}\right]\Biggr)\\
& = & O\Biggl(\sum_{q: \rho_q \geq \delta,\;  q \neq 0} \sum_{\substack{\tOm, \tpOmp \in \IdxnCrDir: \\[0.5ex] \rhoa_{\tOm, \tpOmp} \, = \,\rho_q}} \un^{-4b_k} \exp\left[\frac{ -2a_k \un^2}{1 + 2 a_k\rho_1}\right]\Biggr) \\
& = & O\left((2n + 1)^d \un^{-4b_k} \exp\left[\frac{ -2a_k \un^2}{1 + 2 a_k\rho_1}\right]\sum_{q: \rho_q \geq \delta,\;  q \neq 0} q^{d - 1}\right) \\
& = & O\left((2n + 1)^d \un^{-4b_k} \exp\left[\frac{ -2a_k \un^2}{1 + 2 a_k\rho_1}\right]\right),
\end{eqnarray*}
where the first relation follows from Lemma~\ref{lem:CovBounds}.\ref{itm:WkBd}, the second one holds since $\rho_q \leq \rho_1$ which itself holds due to $\Cr{a:r2LAsm},$ the truth of the third one is on account of   \eqref{eqn:upBdRho} and the fact that $|\IdxnCrDir| = O((2n + 1)^d),$ while the last one follows because $\{q: \rho_q \geq \delta\}$ is a finite set which is true due to $\Cr{a:PoiG}.$ As we did for $\Term_1(n),$ by expressing the exponential term suitably so as to enable use of the fact that $\lbnk(\un) \to \lambda,$ we get
\[
\Term_2(n) = O\left(n^{-d(1 - 2a_k \rho_1)/(1 + 2a_k \rho_1)} \; \un^{-8b_k a_k \rho_1/(1 + 2a_k \rho_1)} \right).
\]

We now handle the third term. As above, using \eqref{eqn:upBdRho} and Lemma~\ref{lem:CovBounds}.\ref{itm:WkBd}, we have
\[
\Term_3(n) = O\left((2n + 1)^d \un^{-4b_k} \sum_{q: \rho_q < \delta, \; q \leq (2n + 1)^\delta} q^{d- 1} \exp\left[\frac{ -2a_k \un^2}{1 + 2 a_k\rho_q}\right] \right).
\]
Using the fact that the sum only concerns those $q$ where $\rho_q \leq \delta,$ it then follows that
\[
\Term_3(n) = O\left(n^d \un^{-4b_k} \exp\left[\frac{ -2a_k \un^2}{1 + 2 a_k\delta}\right] \sum_{q: q \leq (2n + 1)^\delta} q^{d- 1}  \right) =  O\left(n^{d(1 + \delta)} \un^{-4b_k} \exp\left[\frac{ -2a_k \un^2}{1 + 2 a_k \delta}\right] \right).
\]
The idea now is to show that our choice of $\delta$ in \eqref{eqn:DeltaCond} ensures  $\Term_2(n)$ and $\Term_3(n)$ have similar rates. Towards this, we have
\begin{eqnarray*}
\Term_3(n) & = & O\left(n^{d [(\delta + 1) - 2/(1 + 2a_k \delta)]} \; \un^{-8b_k a_k \delta/(1 + 2a_k \delta)} \right) \\
& = & O\left(n^{d [(\delta + 1) - 2/(1 + 2a_k \delta)]}\right)\\
& = & O\left(n^{-d} n^{d \delta} n^{2d[1 - 1/(1 + 2a_k \delta)]}\right)\\
& = & O\left(n^{-d} n^{d \delta} n^{4a_k \delta d/(1 + 2a_k \delta)]}\right)\\
& = & O\left(n^{-d} n^{d (1 + 4a_k)\delta} \right)\\
& = & O\left(n^{-d} n^{2a_k\rho_1 d /(1 + 2a_k \rho_1)} \right)\\
& = & O\left(n^{-d} n^{2a_k\rho_1 d /(1 + 2a_k \rho_1)} \left[n^d \un^{-4b_k}\right]^{2a_k\rho_1/(1 + 2a_k \rho_1)} \right),
\end{eqnarray*}
where the first relation is obtained by expressing the exponential term suitably as before, the second relation follows by dropping the $\un$ expression, which itself can be done because $\delta > 0;$ the third relation follows by multiplying and dividing by $n^d;$ the fifth relation is obtained by ignoring the $1 + 2a_k \delta$ term, which can be done since $1 + a_k \delta > 1;$ the sixth relation is due to \eqref{eqn:DeltaCond}; while we get the last relation by artificially introducing $\left[n^d \un^{-4b_k}\right]^{2a_k\rho_1/(1 + 2a_k \rho_1)},$ which can done since it grows to $\infty$ on account of the exponent being positive and $\lim_{n \to \infty} a_k \un^2/(d \log n)$ being $1.$ From this last relation, it is easy to see that
\[
\Term_3(n) = O\left(n^{-d(1 - 2a_k \rho_1)/(1 + 2a_k \rho_1)} \; \un^{-8b_k a_k \rho_1/(1 + 2a_k \rho_1)} \right),
\]
as desired.

We finally deal with the fourth term. As before, using \eqref{eqn:upBdRho} and Lemma~\ref{lem:CovBounds}.\ref{itm:StBd},
\begin{eqnarray*}
\Term_4(n) & = & O\left((2n + 1)^d \sum_{q: \rho_q < \delta, \; (2n + 1)^\delta < q \leq 2dn + 2} q^{d - 1} \rho_q \, \un^{- (4b_k - 2)} \, \exp\left[\frac{-2a_k \un^2}{1 + 2a_k\rho_q}\right] \right) \\
& = & O\Bigg(n^{2 d - 1}  \sum_{q: \rho_q < \delta, \; (2n + 1)^\delta < q \leq 2dn + 2} \rho_q \, \un^{- (4b_k - 2)} \, \exp\left[\frac{-2a_k \un^2}{1 + 2a_k\rho_q}\right] \Bigg).
\end{eqnarray*}
Due to $\Cr{a:PoiG},$ there exists a positive constant $\Cl[Const]{c:CovDecay}$ so that $\rho_q \log q \leq \Cr{c:CovDecay}$ for each $q \geq 1.$ Hence, for any $q > (2n + 1)^\delta,$ we have
\begin{equation}
\label{eqn:RhoiLognBound}
\rho_q \log(2n + 1) \leq \frac{\Cr{c:CovDecay}}{\delta}.
\end{equation}
Now, observe that
\begin{eqnarray*}
& & \un^{- (4b_k - 2)} \, \exp\left[\frac{-2a_k \un^2}{1 + 2a_k \rho_q}\right] \\
& = & O\left((2n + 1)^{-2d/(1 + 2a_k \rho_q)} \;  \un^2 \; \un^{-8b_k a_k \rho_q/(1 + 2a_k \rho_q)}\right)\\
& = & O\left((2n + 1)^{d[4a_k\rho_q/(1 + 2a_k \rho_q) - 2]} \;  \un^2 \; \un^{-8b_k a_k \rho_q/(1 + 2a_k \rho_q)}\right)\\
& = & O\left((2n + 1)^{d[4a_k\rho_q/(1 + 2a_k \rho_q) - 2]} \;  \un^2\right)\\
& = & O\left((2n + 1)^{-2d} [(2n + 1)^{\rho_q}]^{4a_k d} \; \un^2 \right) \\
& = & O\left(n^{-2d} \; \un^2 \right),
\end{eqnarray*}
where the first relation follows by suitably modifying the exponential term as before; the third relation follows by dropping the last $\un$ expression as the exponent is negative; the fourth relation follows by dropping the $1 + 2a_k \rho_q$ term; while the last relation follows from \eqref{eqn:RhoiLognBound} which shows that $(2n + 1)^{\rho_q}$ is a constant with respect to $q$ and $n.$ Consequently, we have
\[
\Term_4(n) = O\left(\frac{\un^2}{n} \sum_{q = 1}^{2dn} \rho_q \right).
\]
The desired result now follows from the bounds on $\Term_1(n), \ldots, \Term_4(n)$ obtained above, and the following reasons: first, $a_k \un^2/[d \log n] \to 1;$ second,
\[
\frac{\vartheta_k}{2b_k} = \frac{[1 - 2a_k\rho_1]^2}{2a_k[1 - 2a_k \rho_1^2]} < \frac{1 - 2a_k \rho_1}{1 + 2a_k \rho_1};
\]
the latter holds because $\rho_1 \in (0, 1),$ $\rho_2 \in [0, 1)$ and $1 - 2a_k \rho_1 > 0,$ which itself are true on account of $\Cr{a:r0Asm}, \Cr{a:r1Asm}, \Cr{a:ComC}(k)$ and $\Cr{a:r2LAsm}.$
\end{itemize}

This completes the proof.
\end{proof}

\label{Pf:VarianceEstimates}
We now proceed towards proving Lemma~\ref{lem:VarianceEstimates}. As the first step, we establish Lemma~\ref{lem:CovBdsNL} which, at a loose level, provides covariance bounds between indicators in \eqref{Defn:Nnk} and \eqref{Defn:Lnk}. Before stating it, we introduce few notations.

From Remark~\ref{rem:NonUniPartition}, recall that the vertex set of each $G \equiv (V, E) \in \cN_k,$ $1 \leq k < d - 1,$ has a uniquely associated partition ($V_1$ and $V_2$) so that the properties mentioned in Lemma~\ref{lem:PwL1Dist} hold. Keeping this in mind, let
\begin{equation}
\label{Defn:Nnkj}
N_{n, k, j}(u) :=
\begin{cases}
0, & \text{ if $k = 0, d - 1,$ } \\
\sum_{\tG \in \IdxnCrNk: |V_1| = j} \indc[\vecXtG \geq u \ones_{2b_k}] & \text{ if $1 \leq k < d - 1,$}
\end{cases}
\end{equation}
where $1 \leq j \leq b_k$ and $u \in \pReal.$ It is easy to see that
\begin{equation}
\label{eqn:NnkDecom}
\Nnk(\un) = \sum_{j = 1}^{b_k} N_{n, k, j}(\un).
\end{equation}

For $1 \leq k < d - 1;$ $\vec{t}, \vecp{t} \in \Gamma_n;$ and $G \equiv (V, E), \Gp \equiv (\Vp, \Ep) \in \cN_k;$ let
\begin{equation}
\label{Defn:rhoaN}
\rhoaN_{\tG, \tpGp} := \max\left\{\rho_{\|[\vec{t} + \vec{v}] - [\vecp{t} + \vecp{v}]\|_1} : \vec{v} \in V, \vecp{v} \in \Vp \right\};
\end{equation}
this mimics the definition in \eqref{Defn:rhoa}. In the same spirit, for $0 \leq k < d; \vec{t}, \vecp{t} \in \Gamma_n;$ and $G, \Gp \in \cP_k;$ define $\rhoaL_{\tG, \tpGp}.$ Recall that $|\cG_k|,$ which is used in \eqref{Defn:cNk}, \eqref{Defn:cPk}, \eqref{Defn:cP1}, and \eqref{Defn:cP0}, is finite for each $k \geq 0.$ Thus, by repeating the arguments that were used to derive \eqref{eqn:upBdRho}, it is not difficult to see that, for each $\tG \in \dGrid \times \cN_k$ and $q \geq 1,$
\begin{equation}
\label{eqn:upBdRhoN}
|\{\tpGp \in \dGrid \times \cN_k : \rhoaN_{\tG, \tpGp} = \rho_q\}| \leq {\Cl[Const]{c:upBdRhoN}}_{, k} \; q^{d - 1},
\end{equation}
where ${\Cr{c:upBdRhoN}}_{,k}$ is a positive constant (depending on $k$). Similarly, for each $\tG \in \dGrid \times \cP_k$ and $q \geq 1,$
\begin{equation}
\label{eqn:upBdRhoL}
|\{\tpGp \in \dGrid \times \cP_k : \rhoaL_{\tG, \tpGp} = \rho_q\}| \leq {\Cl[Const]{c:upBdRhoL}}_{, k} \; q^{d - 1},
\end{equation}
where ${\Cr{c:upBdRhoL}}_{,k}$ is another positive constant. Separately, let
\begin{equation}
\label{Defn:Nxitk}
\Nxitk(u) \equiv \indc[\vec{X}_{\vec{t}, G^*} \geq u \ones_{2b_k}], \quad \text{ if $1 \leq k < d - 1,$}
\end{equation}
where $G^* \equiv (V, E) \in \cN_k$ is some arbitrary but fixed graph such that, for a partition $V_1$ and $V_2$ of $V$ as in Lemma~\ref{lem:PwL1Dist}, we have $|V_1| = 1,$ and $\vec{X}_{\vec{t}, G^*}$ is as defined above \eqref{Defn:Nnk}. Also, let
\begin{equation}
\label{Defn:Lxitk}
\Lxitk(u) \equiv
\begin{cases}
\indc[(X_{\vec{t}}, X_{\vec{t} + \vec{e}_1})\geq u \ones_{2b_0 + 1}], & \text{ if $k = 0,$ } \\[1ex]
\indc[(\vec{X}_{\vec{t}, \set{k + 1}}, X_{\vec{t}}) \geq u \ones_{2b_k + 1}], &  \text{ if $1 \leq k < d,$ }
\end{cases}
\end{equation}
where $\vec{X}_{\vec{t}, \set{b_k}}$ is as defined in \eqref{Defn:vecX}.

\begin{lemma}
\label{lem:CovBdsNL}
The following statements are true.
\begin{enumerate}
\setlength\itemsep{1em}

\item \label{itm:NCovBdk} Suppose $\Cr{a:r0Asm}, \Cr{a:r2Asm},$ and $\Cr{a:rqAsm}$ hold. Fix $k$ so that $1 \leq k < d - 1$ and let $\{\un\} \in \pSeq$ be such that $\lim_{n \to \infty} \un = \infty.$ Then, there is a constant $\delN > 0$ and a family Gaussian random vectors $\{\vec{Z}_\mu \in \Real^{4b_k} : \mu \in [0, \delN]\}$ such that
\begin{equation}
\label{eqn:NCovBd}
0 \leq \Pr\{\vec{Z}_\mu \geq \un \ones_{4b_k}\} - \left[\ExP[\Nxik(\un)]\right]^2  = O \left(
\mu \; \un^{-(4b_k - 2)} \exp\left[\frac{-2\varrho_k \un^2}{1 + 2 \mu \varrho_k}\right]\right),
\end{equation}
where the hidden constants are independent of $\mu,$ while $\varrho_k$ is as in \eqref{Defn:VarRho}. Moreover, if $\tG, \tpGp \in \dGridCrNk$ is such that $\rhoaN_{\tG, \tpGp} = \mu \leq \delN$ and $|V_1| = |\Vp_1|,$ then for all sufficiently large $\un$
\begin{equation}
\label{eqn:NdomByY}
\Pr\{(\vecXtG, \vecXtpGp) \geq \un \ones_{4b_k} \} \leq \Pr\{\vec{Z}_{\mu} \geq \un \ones_{4b_k}\}.
\end{equation}
Above, $V_1$ (resp. $\Vp_1$) is the first subset in the uniquely associated partition of the vertex set of $G$ (resp. $\Gp$); see Remark~\ref{rem:NonUniPartition} and Lemma~\ref{lem:PwL1Dist}.

\item \label{itm:LCovBdk} Suppose $\Cr{a:r0Asm}, \Cr{a:r1Asm}, \Cr{a:ComC}(k),$ and $\Cr{a:r2LAsm}$ hold. Fix $k$ so that $0 \leq k < d$ and let $\{\un\} \in \pSeq$ be such that $\lim_{n \to \infty} \un = \infty.$ Then, there is a constant $\delL > 0$ and a family of Gaussian random vectors $\{\vec{Z}_\mu : \mu \in [0, \delL]\}$ such that
\begin{equation}
\label{eqn:LCovBd}
0 \leq \Pr\{\vec{Z}_\mu \geq \un \ones_{4b_k + 2}\} - \left[\ExP[\Lxik(\un)]\right]^2 =  O\left(\mu \; \un^{-4b_k} \exp\left[\frac{-2 \varphi_k \un^2}{1 + 2 \mu \varphi_k}\right]\right),
\end{equation}
where the hidden constants are independent of $\mu,$ while $\varphi_k$ is as in \eqref{Defn:VarPhi}. Moreover, if $\tG, \tpGp \in \dGridCrPk$ is such that $\rhoaL_{\tG, \tpGp} = \mu \leq \delL,$ then
\begin{equation}
\label{eqn:LDom}
\Pr\{(\vecXtG, \vecXtpGp) \geq \un \ones_{|V| + |\Vp|}\} \leq \Pr\{\vec{Z}_{\mu} \geq \un \ones_{4 b_k + 2}\}.
\end{equation}
Above, $V$ and $\Vp$ are the vertex sets of $G$ and $\Gp,$ respectively.
\end{enumerate}
\end{lemma}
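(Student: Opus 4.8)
The two parts of Lemma~\ref{lem:CovBdsNL} have the same structure, so I would prove Statement~\ref{itm:NCovBdk} in detail and then record the bookkeeping changes for \ref{itm:LCovBdk}. The plan mirrors the proof of Lemma~\ref{lem:CovBounds}.\ref{itm:StBd}: first dominate the joint law of $(\vecXtG,\vecXtpGp)$ by a \emph{canonical} Gaussian vector $\vec Z_\mu$ with a clean block covariance, using Slepian's lemma (Lemma~\ref{lem:SlepainInequality}); then estimate $\Pr\{\vec Z_\mu\geq\un\ones\}$ by writing it as its $\mu=0$ value plus an integral of its $\mu$-derivative, the latter controlled by Lemma~\ref{lem:ZmuTailBeh}. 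Concretely, let $M_\ast$ be the covariance matrix of the vector $\vec X_{\vec0,G^\ast}$ from \eqref{Defn:Nxitk}; by the choice of $G^\ast$ (one vertex at pairwise $\|\cdot\|_1$-distance $\geq 3$ from the other $2b_k-1$, those at pairwise distance $\geq 2$) its tail exponent is $\tfrac12\ones_{2b_k}M_\ast^{-1}\ones_{2b_k}^{\tr}=\phi_{2b_k-1}(\rho_2,\rho_3)=\varrho_k$. Set $\Sigma_\mu:=\left(\begin{smallmatrix}M_\ast&\mu J\\ \mu J&M_\ast\end{smallmatrix}\right)$ with $J$ the all-ones $2b_k\times2b_k$ matrix; since $M_\ast$ is positive definite, pick $\delN>0$ so that $\Sigma_\mu$ is positive definite for all $\mu\in[0,\delN]$, and let $\vec Z_\mu\sim\Gau(\vec0,\Sigma_\mu)$. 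For \eqref{eqn:NdomByY}, I would label the coordinates of $\vecXtG$ and of $\vecXtpGp$ compatibly with their partitions from Lemma~\ref{lem:PwL1Dist} (this is where $|V_1|=|\Vp_1|$ enters: it permits a labeling under which both diagonal blocks of $\Sigma_\mu$ dominate simultaneously), and verify the Slepian hypotheses — all marginal variances are $\rho_0=1$ ($\Cr{a:r0Asm}$); every within-copy covariance is $\rho_q$ with $q\geq2$, and $q\geq3$ across the $V_1$–$V_2$ split, so by $\Cr{a:r2Asm}$ and $\Cr{a:rqAsm}$ it is $\leq\rho_2$, resp.\ $\leq\rho_3$, hence bounded by the matching entry of $M_\ast$; every cross-copy covariance is $\leq\rhoaN_{\tG,\tpGp}=\mu$ by \eqref{Defn:rhoaN}. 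Slepian's lemma then yields \eqref{eqn:NdomByY}.

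Next, put $f(\mu):=\Pr\{\vec Z_\mu\geq\un\ones_{4b_k}\}$; by construction $f(0)=[\Pr\{\vec X_{\vec0,G^\ast}\geq\un\ones_{2b_k}\}]^2=[\ExP[\Nxik(\un)]]^2$, so \eqref{eqn:NCovBd} asks to bound $f(\mu)-f(0)=\int_0^\mu f'(s)\,ds$. Differentiating in the single scalar $s$ that parametrizes the off-diagonal block, $f'(s)$ is a finite sum over cross-pairs of a bivariate Gaussian density evaluated at the threshold $(\un,\un)$ times a conditional tail probability (the higher-dimensional analogue of the computation behind Lemma~\ref{lem:CovBounds0}.\ref{itm:StBd0}); in particular $f'(s)\geq0$, which already gives the lower bound $f(\mu)\geq f(0)$. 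For the upper bound I would invoke Lemma~\ref{lem:ZmuTailBeh} to bound each summand, uniformly in $s\in[0,\delN]$, by $O\bigl(\un^{-(4b_k-2)}\exp[-2\varrho_k\un^2/(1+2s\varrho_k)]\bigr)$: the exponent is the tail exponent of two $M_\ast$-copies coupled at correlation $s$, and $\un^{-(4b_k-2)}=\un^{-4b_k}\cdot\un^{2}$ is the Savage-type polynomial factor (two copies contribute $\un^{-4b_k}$, while differentiating the exponent in $s$ produces the extra $\un^{2}$). Since $s\mapsto-2\varrho_k\un^2/(1+2s\varrho_k)$ is increasing, the integrand is maximized at $s=\mu$, so integrating over $[0,\mu]$ gives $O\bigl(\mu\,\un^{-(4b_k-2)}\exp[-2\varrho_k\un^2/(1+2\mu\varrho_k)]\bigr)$, which is \eqref{eqn:NCovBd}. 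The Savage condition needed to run the Gaussian tail estimates holds under the assumptions of Table~\ref{tab:Assumptions}, exactly as in Lemma~\ref{lem:CovBounds}.

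Statement~\ref{itm:LCovBdk} follows from the identical two steps after replacing $\vec X_{\vec0,G^\ast}$ by the $(2b_k+1)$-vector $(\vec X_{\vec0,\set{k+1}},X_{\vec0})$ (by $(X_{\vec0},X_{\vec e_1})$ when $k=0$), whose covariance has tail exponent $\varphi_k=\phi_{2b_k}(\rho_2,\rho_1)$; then $\vec Z_\mu$ lives in $\Real^{4b_k+2}$, the controlling assumptions are $\Cr{a:r0Asm},\Cr{a:r1Asm},\Cr{a:ComC}(k),\Cr{a:r2LAsm}$ — which give $\rho_1\in(0,1)$, $\rho_2\in[0,1)$, $\varphi_k>a_k$ (via \eqref{eqn:DiffPhiA}), and the within-copy bound $\rho_q\leq\rho_1$ for $q\geq1$ used in the Slepian step (the central coordinate $X_{\vec0}$ sits at $\|\cdot\|_1$-distance $1$ from the others) — and the polynomial factor is $\un^{-4b_k}$, since two copies of a $(2b_k+1)$-vector give $\un^{-(4b_k+2)}$ and differentiation again supplies one $\un^2$. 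The main obstacle, in both parts, is Lemma~\ref{lem:ZmuTailBeh}: unlike the scalar case treated in \cite{holst1990poisson}, the integrand is the product of a genuinely two-dimensional marginal density and a high-dimensional conditional tail, so obtaining the $s$-uniform estimate — and checking that every covariance matrix arising stays inside the Savage regime as $s$ sweeps $[0,\delN]$ (resp.\ $[0,\delL]$) — is where essentially all the work sits; the construction of $\vec Z_\mu$ and the Slepian comparison are routine once the $|V_1|=|\Vp_1|$ labeling is in place.
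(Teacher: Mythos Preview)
Your overall strategy --- Slepian domination to a canonical $\vec Z_\mu$, then the $\mu$-derivative estimate via a Lemma~\ref{lem:ZmuTailBeh}-type bound --- is the paper's, and for Statement~\ref{itm:LCovBdk} your sketch is essentially correct and matches the paper.

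The gap is in Statement~\ref{itm:NCovBdk}, in the Slepian step for \eqref{eqn:NdomByY} when $j:=|V_1|=|\Vp_1|>1$. You build $\vec Z_\mu$ from $M_\ast$, the covariance associated with $G^\ast$ (which has $|V_1^\ast|=1$), and assert that a compatible labeling makes the within-copy covariances of $\vecXtG$ entry-wise $\leq M_\ast$. This is false. Take $k=1$ (so $2b_k=4$) and $j=2$: after dominating intra-part covariances by $\rho_2$ and cross-part by $\rho_3$, the within-copy block for $G$ is $W_{2,2}(\rho_2,\rho_3)$, which has $\rho_2$ at the index pairs $\{1,2\}$ and $\{3,4\}$; the target $W_{1,3}(\rho_2,\rho_3)$ has $\rho_2$ only among indices $\{2,3,4\}$. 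No permutation of $\{1,2,3,4\}$ sends both pairs into $\{2,3,4\}$, so no entry-wise domination exists and Slepian does not apply. (A related slip: $\tfrac12\ones_{2b_k} M_\ast^{-1}\ones_{2b_k}^\tr$ equals $\varrho_k$ only if the pairwise distances in $G^\ast$ are \emph{exactly} $2$ and $3$, which you do not arrange.)

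The paper handles this with a two-stage comparison. First, Slepian bounds $(\vecXtG,\vecXtpGp)$ by $\vec Y_{\mu,j}\sim\Gau(\vec 0,Q_{2b_k,j,j}(\rho_2,\rho_3,\mu))$ with the \emph{same} $j$; this is where $|V_1|=|\Vp_1|$ really enters, since it makes both diagonal blocks equal to the common $W_{j,2b_k-j}(\rho_2,\rho_3)$. Second --- and this is the missing idea --- one shows $\Pr\{\vec Y_{\mu,j}\geq u\ones\}\leq\Pr\{\vec Y_{\mu,1}\geq u\ones\}$ for all large $u$, uniformly in $\mu\in[0,\delN]$, \emph{not} via Slepian but by playing the Savage upper bound for $\vec Y_{\mu,j}$ against the Savage lower bound for $\vec Y_{\mu,1}$. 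This rests on the strict exponent gap $\ones_{4b_k} Q_j^{-1}(0)\ones_{4b_k}^\tr = 2\varrho_{k,j} > 2\varrho_{k,1} = \ones_{4b_k} Q_1^{-1}(0)\ones_{4b_k}^\tr$ for $j\geq 2$ (inequality \eqref{eqn:DomNInd} from the proof of Lemma~\ref{lem:ProbEst}.\ref{itm:NnkInd}), extended by continuity to $\mu\in[0,\delN]$, together with uniform-in-$\mu$ control of $|Q_j(\mu)|$, of $\ones_{4b_k} Q_j^{-1}(\mu)$, and of the Savage remainder term. Only after this reduction does one set $\vec Z_\mu:=\vec Y_{\mu,1}$ and run the derivative argument, here via Lemma~\ref{lem:GenContDiff}, the $Q$-matrix analogue of Lemma~\ref{lem:ZmuTailBeh}.
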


The proof for this result is given in Section~\ref{sec:ConclResults}, p.~\pageref{Pf:CovBdsNL}.

\begin{proof}[Proof of Lemma~\ref{lem:VarianceEstimates}]
We discuss each statement individually.
\begin{itemize}[leftmargin=*]
\item Arguments for Statement~\ref{itm:SnkVarEst}: We first consider the case $1 \leq k < d.$ We claim that
\begin{equation}
\label{eqn:SnkVarSuffCond}
\left|\frac{\Var[\Snk(\un)]}{\lbnk(\un)} - \frac{\ExP[\Snk(\un)]}{\lbnk(\un)}\right| = O\left(\un^{-1} \exp[-[\varphi_k - a_k] \un^2]\right).
\end{equation}
Using the triangle inequality, Lemma~\ref{lem:limRates}.\ref{itm:ExPSnku}, and the fact that $\varphi_k > a_k$ (see Lemma~\ref{lem:limRates}.\ref{itm:ExPLnku}),
%and $1/\un^2 = O(1/[d\log n - b_k\log\log n +  \nu_n])$ here,
it is then easy to see that the desired result holds.

It remains to establish \eqref{eqn:SnkVarSuffCond}. From \eqref{Defn:Snk}, we have
\[
\ExP[\Snk(\un)] = \sum_{\tOm \in \IdxnCrDir} \ExP[\xitOm(\un)]
\]
and
\[
\Var[\Snk(\un)] = \sum_{\tOm \in \IdxnCrDir} \Var[\xitOm(\un)] +  \sum_{\substack{\tOm, \tpOmp \in \IdxnCrDir \\ \tOm \neq \tpOmp}} \cov[\xitOm(\un), \xitpOmp(\un)].
\]
Hence, using Lemma~\ref{lem:CovBounds}.\ref{itm:NonNegCov} and the fact that $\Var[\xitOm(\un)] = \ExP[\xitOm(\un)] - [\ExP[\xitOm(\un)]]^2,$ we get
\begin{eqnarray}
& & \left|\frac{\Var[\Snk(\un)]}{\lbnk(\un)} - \frac{\ExP[\Snk(\un)]}{\lbnk(\un)}\right| \label{eqn:VarExpDiff} \\
& \leq & \frac{\sum_{\tOm \in \IdxnCrDir} [\ExP[\xitOm(\un)]]^2}{\lbnk(\un)} + \frac{\sum_{\substack{\tOm, \tpOmp \in \IdxnCrDir \\ \tOm \neq \tpOmp}} \cov[\xitOm(\un), \xitpOmp(\un)]}{\lbnk(\un)}. \nonumber
\end{eqnarray}

Observe that $\Cr{a:r0Asm}$ and $\Cr{a:r2Asm}$ hold; the latter being a consequence of $\Cr{a:r2LAsm}$ and $\Cr{a:r1Asm}.$ Arguing as we did to obtain \eqref{eqn:IntSqSnkBd} and then substituting $\eqref{Defn:Lambdanku},$ it then follows that
\begin{eqnarray}
\frac{\sum_{\tOm \in \IdxnCrDir} [\ExP[\xitOm(\un)]]^2}{\lbnk(\un)} & = & O\left(\un^{-2b_k} \exp[-a_k \un^2] \left(1 + \frac{1}{\un^2} + \frac{1}{\un^4}\right) \right) \nonumber \\
& = & O\left(\un^{-2b_k} \exp[-a_k \un^2]\right). \label{eqn:FracSqMeanSumLb}
\end{eqnarray}

With $\rhoa_{\tOm, \tpOmp}$ as in \eqref{Defn:rhoa}, now write the covariance sum in \eqref{eqn:VarExpDiff} as $\Term_1(n) + \Term_2(n),$ where $\Term_1(n)$ is exactly as in \eqref{eqn:vCloseRangeSum}, while
\[
\Term_2(n) := \frac{1}{\lbnk(\un)} \; \sum_{q = 1}^{2dn + 2}\sum_{\substack{\tOm, \tpOmp \in \IdxnCrDir: \\[0.5ex] \rhoa_{\tOm, \tpOmp} \, = \,\rho_q}}  \cov[\xitOm(\un), \xitpOmp(\un)].
\]
Note that $\Cr{a:r0Asm}, \Cr{a:r1Asm}, \Cr{a:r2LAsm},$ and $\Cr{a:ComC}(k)$ hold. Therefore, making the same arguments as those that were used to derive \eqref{eqn:T1Bd}, and, additionally, using the fact that $\lbnk(\un)$ cannot be ignored unlike there, it follows from \eqref{Defn:Lambdanku} that
\begin{equation}
\label{eqn:FracInitCovSumLb}
\Term_1(n) = O\left(\un^{-1} \exp[-[\varphi_k - a_k]\un^2]\right).
\end{equation}

On the other hand, we have
\begin{eqnarray}
\Term_2(n) & = & \frac{1}{\lbnk(\un)} \; O\left((2n + 1)^d \sum_{q = 1}^{2dn + 2} \rho_q q^{d - 1} \un^{-(4b_k - 2)} \exp\left[\frac{-2a_k\un^2}{1 + 2a_k \rho_q}\right]\right) \nonumber \\
& = & \frac{1}{\lbnk(\un)} O\left((2n + 1)^d \un^{-(4b_k - 2)} \exp\left[\frac{-2a_k\un^2}{1 + 2a_k \rho_1}\right]  \sum_{q = 1}^{2dn + 2} \rho_q q^{d - 1} \right) \nonumber \\
& = & \frac{1}{\lbnk(\un)} O\left((2n + 1)^d \un^{-(4b_k - 2)} \exp\left[\frac{-2a_k\un^2}{1 + 2a_k \rho_1}\right]\right) \nonumber \\
& = & O\left(\un^{-2(b_k - 1)} \exp\left[\frac{-a_k(1 - 2a_k \rho_1)\un^2}{1 + 2a_k \rho_1}\right] \right), \label{eqn:FracEventualCovSumLb}
\end{eqnarray}
where the first relation holds due to \eqref{eqn:upBdRho}, Lemma~\ref{lem:CovBounds}.\ref{itm:StBd}, and the stationarity property of the field $X;$ the second one is obtained by using the fact that $\rho_q \leq \rho_1$ which itself holds on account of $\Cr{a:r2LAsm},$ the third is true to due to $\Cr{a:CLTG},$ while the last one is got by substituting \eqref{Defn:Lambdanku}.

On account of $\Cr{a:r0Asm}, \Cr{a:r1Asm}, \Cr{a:ComC}(k),$ and $\Cr{a:r2LAsm},$ note that $\rho_1 \in (0, 1),$ $\rho_2 \in [0, 1)$ and $1 - 2a_k \rho_1 > 0.$ This shows that the coefficient before $\un^2$ in the exponential terms  in \eqref{eqn:FracSqMeanSumLb} and \eqref{eqn:FracEventualCovSumLb} is negative; while the same holds in \eqref{eqn:FracInitCovSumLb} due to $\varphi_k > a_k$ (see Lemma~\ref{lem:limRates}.\ref{itm:ExPLnku}). Therefore, the terms in \eqref{eqn:FracSqMeanSumLb}, \eqref{eqn:FracInitCovSumLb}, and \eqref{eqn:FracEventualCovSumLb}, all decay to zero with $n;$ it remains the identify the one that decays the slowest. Also, from the conditions on $\rho_1, \rho_2$ discussed above, $(1 - 2a_k \rho_1)/(1 + 2a_k \rho_1) < 1,$ which shows that the coefficient in the exponential term in \eqref{eqn:FracEventualCovSumLb} is larger than that in \eqref{eqn:FracSqMeanSumLb}. On the other hand, additionally using \eqref{eqn:DiffPhiA}, one can easily see that \eqref{eqn:FracInitCovSumLb} decays slower than \eqref{eqn:FracEventualCovSumLb}. Thus, our claim in \eqref{eqn:SnkVarSuffCond} is true which gives the desired result.

The case $k = 0$ follows similarly by using Lemma~\ref{lem:CovBounds0}.

\item Arguments for Statement~\ref{itm:SqNnkEst}: The result is trivially true for $k = 0$ and $k = d - 1$ cases. So, suppose that $1 \leq k < d - 1.$

Since $\Cr{a:r0Asm}, \Cr{a:r2Asm},$ and $\Cr{a:rqAsm}$ hold, we have $0 \leq \rho_3 < \rho_2 < 1.$ Further, from Lemma~\ref{lem:limRates}.\ref{itm:ExPNnku}, we have $\varrho_k > a_k,$ where $\varrho_k$ is as in \eqref{Defn:VarRho}. Combining the two, it is then easy to see that $\varrho_k \in (0, \infty).$

Let $\delN > 0$ be as in Lemma~\ref{lem:CovBdsNL}.\ref{itm:NCovBdk} and let $\delta \in (0, 1)$ be such that $\delta \leq \min\{\delN, \frac{1}{2\varrho_k}\}.$ From \eqref{eqn:NnkDecom} and since $k$ is finite, we get
\[
\ExP[\Nnk^2(\un)] = O\left(\sum_{j = 1}^{b_k} \ExP[N^2_{n, k, j}(\un)]\right).
\]
Now, fix an arbitrary $j$ so that $1 \leq j \leq b_k.$ Then, $\ExP[N_{n, k, j}^2(\un)]/\lbnk^2(\un)$ can be written as $\Term_1(n) + \Term_2(n) + \Term_3(n),$ where
\[
\Term_1(n) = \frac{1}{\lbnk^2(\un)} \sum_{\tG \in \IdxnCrNk: |V_1| = j} \Pr\{\vecXtG  \geq \un \ones_{2b_k}\},
\]
\[
\Term_2(n) = \frac{1}{\lbnk^2(\un)} \sum_{\substack{q: \rho_q > \delta, \\[0.5ex] q \leq Q_n}} \; \sum_{\substack{\tG, \tpGp \in \IdxnCrNk: |V_1| = |\Vp_1| = j\\[0.5ex] \tG \neq \tpGp, \;  \rhoaN_{\tG, \tpGp} = \rho_q}} \Pr\{(\vecXtG, \vecXtpGp) \geq \un \ones_{4b_k}\},
\]
and
\[
\Term_3(n) := \frac{1}{\lbnk^2(\un)} \sum_{\substack{q: \rho_q \leq \delta, \\[0.5ex] q \leq Q_n}} \; \sum_{\substack{\tG, \tpGp \in \IdxnCrNk: |V_1| = |\Vp_1| = j\\[0.5ex] \tG \neq \tpGp, \;  \rhoaN_{\tG, \tpGp} = \rho_q}} \Pr\{(\vecXtG, \vecXtpGp) \geq \un \ones_{4b_k}\}.
\]
Here, $Q_n = O(2n + 1);$ this bound for $q$ follows from \eqref{Defn:rhoaN}, the fact that $\|\vec{t} - \vecp{t}\| \leq 2dn$ for any $\tG, \tpGp \in \IdxnCrNk,$  and since each $G \in \cN_k$ is a subset of a bounded graph $\cG_0$ (defined above \eqref{Defn:cPk}). This bound is in similar spirit to $2dn + 2$ that was used in the proof of Theorem~\ref{thm:PoiResSnk} (see the discussion above \eqref{eqn:vCloseRangeSum}). In the remainder of this proof, we bound each of the above three terms.

Consider $\Term_1(n).$ We have
\begin{eqnarray*}
\Term_1(n) & \leq & \frac{1}{\lbnk^2(\un)} (2n + 1)^d |\cN_k| \un^{-2b_k} \exp[-\varrho_k \un^2] \\
& = & O\left(\frac{\exp[-(\varrho_k - a_k)\un^2]}{\lbnk(\un)} \right),
\end{eqnarray*}
where the first relation follows from Lemma~\ref{lem:ProbEst}.\ref{itm:NnkInd}, and by making use of the facts that the field $X$ is stationary and $|\{G \in \cN_k: |V_1| = j\}| \leq |\cN_k|;$ while the next one follows from \eqref{Defn:Lambdanku}.

With regards to $\Term_2(n),$ note that
\begin{eqnarray*}
\Term_2(n) & = & \frac{1}{\lbnk^2(\un)} \sum_{\substack{q: \rho_q > \delta, \\[0.5ex] q \leq Q_n}} \; \sum_{\substack{\tG, \tpGp \in \IdxnCrNk: |V_1|= |\Vp_1| = j \\[0.5ex] \tG \neq \tpGp, \, \rhoaN_{\tG, \tpGp} = \rho_q}} \Pr\{\vecXtG \geq \un \ones_{2b_k}\} \\
& = & O\left(\frac{1}{\lbnk^2(\un)}  \left[\sum_{q:\rho_q > \delta} q^{d - 1} \right] \sum_{\tG \in \IdxnCrNk: |V_1| = j} \Pr\{\vecXtG \geq \un \ones_{2b_k}\} \right)\\
& = & O\left(\frac{1}{\lbnk^2(\un)}  \sum_{\tG \in \IdxnCrNk: |V_1| = j} \Pr\{\vecXtG \geq \un \ones_{2b_k}\} \right)\\
& = & O\left(\frac{\exp[-(\varrho_k - a_k)\un^2]}{\lbnk(\un)} \right),
\end{eqnarray*}
where the first relation follows by dropping variables, the second one follows on account of \eqref{eqn:upBdRhoN}, the third one holds due to fact that $|\{q: \rho_q > \delta\}|$ is finite which itself is true since $\lim_{q \to \infty} \rho_q = 0$ on account of $\Cr{a:CLTG},$ while the truth of the last one can be seen using arguments similar to those used to bound $\Term_1(n)$ above.

Moving onto $\Term_3(n),$ observe that
\begin{eqnarray*}
\Term_3(n) & = & O\Bigg(\frac{1}{\lbnk^2(\un)} \sum_{q: \rho_q \leq \delta, \; q \leq Q_n} \; \sum_{\substack{\tG, \tpGp \in \IdxnCrNk: |V_1| = |\Vp_1| = j\\[0.5ex] \tG \neq \tpGp, \;  \rhoaN_{\tG, \tpGp} = \rho_q}} \Pr\{\vec{Z}_{\rho_q} \geq \un \ones_{4b_k}\}\Bigg) \\
& = & O\left( \frac{(2n + 1)^d}{\lbnk^2(\un)} \sum_{q: \rho_q \leq \delta, \; q \leq Q_n} q^{d - 1} \Pr\{\vec{Z}_{\rho_q} \geq \un \ones_{4b_k}\} \right)\\
& = & O\left(\frac{(2n + 1)^d}{\lbnk^2(\un)}  \sum_{q: \rho_q \leq \delta, \; q \leq Q_n} q^{d - 1} \left[\Pr\{\vec{Z}_{\rho_q} \geq \un \ones_{4b_k}\} - \left[\ExP[\Nxik(\un)]\right]^2 \right]\right) \\
& & + \; O\left(\frac{(2n + 1)^d}{\lbnk^2(\un)}  \sum_{q: \rho_q \leq \delta, \; q \leq Q_n} q^{d - 1} \left[\ExP[\Nxik(\un)]\right]^2\right) \\
& = & O\left(\frac{(2n + 1)^d}{\lbnk^2(\un)} \un^{-(4b_k - 2)}  \sum_{q: \rho_q \leq \delta, \; q \leq Q_n}  \rho_q q^{d - 1} \exp\left[\frac{-2\varrho_k \un^2}{1 + 2\rho_q \varrho_k}\right] \right) \\
& & + \; O\left(\frac{(2n + 1)^d}{\lbnk^2(\un)} \left[\ExP[\Nxik(\un)]\right]^2 \sum_{q: \rho_q \leq \delta, \; q \leq Q_n} q^{d - 1}\right)\\
& = &  O\left(\frac{(2n + 1)^d}{\lbnk^2(\un)} \un^{-(4b_k - 2)} \exp\left[\frac{-2\varrho_k \un^2}{1 + 2 \delta \varrho_k}\right] \right) \\
& & + \;  O\left(\frac{(2n + 1)^d}{\lbnk^2(\un)} \left[\ExP[\Nxik(\un)]\right]^2 \sum_{q: \rho_q \leq \delta, q \leq Q_n} q^{d - 1}\right)\\
& = & O\left(\frac{(2n + 1)^d}{\lbnk^2(\un)} \un^{-(4b_k - 2)} \exp\left[-\varrho_k \un^2\right] \right) +
O\left(\frac{(2n + 1)^{2d}}{\lbnk^2(\un)} \left[\ExP[\Nxik(\un)]\right]^2 \right)\\
& = & O\left(\un^{-2(b_k - 1)} \dfrac{\exp[-(\varrho_k - a_k) \un^2]}{\lbnk(\un)}\right) + O\left(\exp[-2(\varrho_k - a_k) \un^2]\right),
\end{eqnarray*}
where the first relation follows from \eqref{eqn:NdomByY} and the fact that $\delta \leq \delN;$ the second one holds due to \eqref{eqn:upBdRhoN}, and since  $|\Idx_n| = (2n + 1)^d$ and $|\cN_k|$ is finite; the third one follows by adding and subtracting $\left[\ExP[\Nxik(\un)]\right]^2;$ the fourth one is true due to \eqref{eqn:NCovBd}, the fifth one holds due to $\Cr{a:CLTG}$ and since $\rho_q \leq \delta;$ the sixth one follows because $2 \delta \varrho_k \leq 1$ and $\sum_{q \leq Q_n} q^{d - 1} = O((2n + 1)^d);$ while the last one is got by using Lemma~\ref{lem:ProbEst}.\ref{itm:NnkInd} and then substituting \eqref{Defn:Lambdanku}.

Since $j$ was arbitrary and $b_k \geq 1$ for $1\leq k < d - 1,$ the desired result is now easy to see.

\item Arguments for Statement~\ref{itm:SqLnkEst}: The discussion here is similar to that in the proof for Statement~\ref{itm:SqNnkEst} above. So, we only highlight the major differences.

Since $\Cr{a:r0Asm}, \Cr{a:r1Asm}, \Cr{a:ComC}(k),$ and $\Cr{a:r2LAsm}$ hold, we have $0 \leq \rho_2 < \rho_1 < 1;$ also, $1 + (2b_k - 1) \rho_2 > 2b_k \rho_1.$ Further, we have from Lemma~\ref{lem:limRates}.\ref{itm:ExPLnku} that $\varphi_k > a_k,$ where $\varphi_k$ is as in \eqref{Defn:VarPhi}. Combining the two, it is easy to see that $\varphi_k \in (0, \infty).$

Let $\delL > 0$ be as in Lemma~\ref{lem:CovBdsNL}.\ref{itm:LCovBdk} and let $\delta \in (0, 1)$ such that $\delta \leq \min\{\delL, \frac{1}{2\varphi_k}\}.$ Now, from \eqref{Defn:Lnk}, it is easy to see that one can write $\ExP[\Lnk^2(\un)]/\lbnk^2(\un)$ as $\Term_1(n) + \Term_2(n) + \Term_3(n),$ where
\[
\Term_1(n) = \frac{1}{\lbnk^2(\un)} \sum_{\tG \in \IdxnCrPk} \Pr\{\vecXtG \geq \un \ones_{|V|}\},
\]
\[
\Term_2(n) = \frac{1}{\lbnk^2(\un)} \sum_{\substack{q: \rho_q > \delta, \\[0.5ex] q \leq Q_n}} \; \sum_{\substack{\tG, \tpGp \in \IdxnCrPk: \\[0.5ex] \tG \neq \tpGp, \, \rhoaL_{\tG, \tpGp} = \rho_q}} \Pr\{(\vecXtG, \vecXtpGp) \geq \un \ones_{|V| + |\Vp|}\},
\]
and
\[
\Term_3(n) = \frac{1}{\lbnk^2(\un)} \sum_{\substack{q: \rho_q \leq \delta, \\[0.5ex] q \leq Q_n}} \; \sum_{\substack{\tG, \tpGp \in \IdxnCrPk: \\[0.5ex] \tG \neq \tpGp, \, \rhoaL_{\tG, \tpGp} = \rho_q}} \Pr\{(\vecXtG, \vecXtpGp) \geq \un \ones_{|V| + |\Vp|}\}.
\]
Above, $V$ and $\Vp$ are the vertex sets of $G$ and $\Gp,$ respectively. Since each $G \in \cP_k$ is a subset of bounded graph $\cG_k$ (see \eqref{Defn:cPk}, \eqref{Defn:cP1}, and \eqref{Defn:cP0}), it follows, as in our discussion for Statement~\ref{itm:SqNnkEst} above, that the upper bound for $q$ satisfies $Q_n = O(2n + 1).$

By making use of Lemma~\ref{lem:ProbEst}.\ref{itm:LnkInd}, we then have
\[
\Term_1(n) = O\left(\frac{\un^{-1} \exp[-(\varphi_k - a_k) \un^2]}{\lbnk(\un)} \right).
\]
Similarly,
\[
\Term_2(n) = O\left(\frac{\un^{-1} \exp[-(\varphi_k - a_k) \un^2]}{\lbnk(\un)} \right).
\]
Lastly, by using \eqref{eqn:LDom}, then adding and subtracting $\left[\ExP[\Lxik(\un)]\right]^2$ and, finally, using Lemmas~\ref{lem:CovBdsNL}.\ref{itm:LCovBdk} and \ref{lem:ProbEst}.\ref{itm:LnkInd}, we get
\[
\Term_3(n) = O\left(\frac{\un^{-2b_k} \exp[-(\varphi_k - a_k)\un^2]}{\lbnk(\un)}\right) + O\left(\un^{-2} \exp[-2(\varphi_k - a_k)\un^2]\right).
\]
Since $2b_k \geq 1$ for $0 \leq k < d,$ the desired result is now easy to see.

\item Arguments for Statement~\ref{itm:SqDnkEst}: Our discussion here mimics the proofs of Statements~\ref{itm:SqNnkEst} and \ref{itm:SqLnkEst} above. So, our arguments are brief. The $k = 0$ case is trivial. So, suppose that $1 \leq k < d.$

Set $\delta = 1.$ From \eqref{Defn:Dnk}, observe that $\ExP[\Dnk^2(\un)]/\lbnk^2(\un)$ can be written as $\Term_1(n) + \cdots + \Term_3(n),$ where
\[
\Term_1(n) := \frac{1}{\lbnk^2(\un)} \sum_{\substack{\tOm \in \IdxnCrDir : \\[0.5ex] \|\vec{t}\|_\infty = n}} \Pr\{\vecXtOm \geq \un \ones_{2b_k}\},
\]
\[
\Term_2(n) := \frac{1}{\lbnk^2(\un)} \sum_{\substack{\tOm, \tpOmp \in \IdxnCrDir: \tG \neq \tpGp \\[0.5ex] \|\vec{t}\|_\infty = \|\vecp{t}\|_\infty = n, \, \rhoa_{\tOm, \tpOmp} = \delta }} \Pr\{(\vecXtOm, \vecXtpOmp) \geq \un \ones_{4b_k}\},
\]
\[
\Term_3(n) = \frac{1}{\lbnk^2(\un)} \sum_{\substack{q: \rho_q < \delta, \\[0.5ex] q \leq 2dn + 2}} \; \sum_{\substack{\tOm, \tpOmp \in \IdxnCrDir: \\[0.5ex] \|\vec{t}\|_\infty = \|\vecp{t}\|_\infty = n, \, \rhoa_{\tOm, \tpOmp} = \rho_q }} \Pr\{(\vecXtOm, \vecXtpOmp) \geq \un \ones_{4b_k}\}.
\]

Using Lemma~\ref{lem:ProbEst}.\ref{itm:SnkInd} and the fact that $|\{\tG \in \IdxnCrDir: \|\vec{t}\|_\infty = n\}| = O((2n + 1)^{d - 1}),$ we have
\[
\Term_1(n) = O\left(\frac{1}{n \lbnk(\un)}\right),
\]
and
\[
\Term_2(n) = O\left(\frac{1}{n \lbnk(\un)}\right).
\]
Consider $\Term_3(n).$ We first make the following observation. As in \eqref{eqn:upBdRho}, for any $\tOm \in \dGridCrDir$ with $\|\vec{t}\| = n,$
\[
|\{\tpOmp \in \dGridCrDir: \|\vecp{t}\| = n, \rhoa_{\tOm, \tpOmp} = \rho_q\}| \leq {\Cl[Const]{c:upBdRhon}}_{,k} \; q^{d - 2}
\]
for some $k-$dependent constant ${\Cr{c:upBdRhon}}_{, k} \geq 0.$ Because $\Cr{a:CLTG}$ holds, we trivially have that
\[
\sum_{q = 0}^{\infty} q^{d - 2} \rho_q < \infty.
\]
Separately, observe that, for any $\tOm, \tpOmp \in \dGridCrDir$ with $\rhoa_{\tOm, \tpOmp} = \rho_q,$
\begin{eqnarray*}
& & \Pr\{(\vecXtOm, \vecXtpOmp) \geq \un \ones_{4b_k}\} - \left[\ExP[\indc_{\vec{0}, \set{b_k}}]\right]^2 \\
& = & \Pr\{(\vecXtOm, \vecXtpOmp) \geq \un \ones_{4b_k}\} - \ExP[\xitOm(\un)] \ExP[\xitpOmp(\un)] \\
& = & O \left(\rho_q \; \un^{-(4b_k - 2)} \exp\left[\frac{-2a_k\un^2}{1 + 2a_k \rho_q}\right]\right) \\
& = & O \left(\rho_q \; \un^{-(4b_k - 2)} \exp\left[\frac{-2a_k\un^2}{1 + 2a_k \rho_1}\right]\right)
\end{eqnarray*}
where the first relation follows since $X$ is stationary, the next one holds due to Lemma~\ref{lem:CovBounds}.\ref{itm:StBd}, while the last holds since $\rho_q \leq \rho_1$ which itself is true due to $\Cr{a:r2LAsm}.$ Thus, by adding and subtracting $\left[\ExP[\indc_{\vec{0}, \set{b_k}}]\right]^2$ to the summand in $\Term_3(n),$ it then follows, as in the proof of Statement~\ref{itm:SqNnkEst} above, that
\[
\Term_3(n) = O\left(\frac{\un^{-(2b_k - 2)}}{n\lbnk(\un)}\exp\left[\frac{-a_k(1 - 2a_k \rho_1)\un^2}{1 + 2a_k \rho_1}\right]\right) + O\left(\frac{1}{n^2}\right).
\]
The desired result is now easy to see.
\end{itemize}

This completes the proof.
\end{proof}

\begin{remark}
\label{rem:VarNL}
Consider $\Nxitk(u)$ and $\Lxitk(u)$ as in \eqref{Defn:Nxitk} and \eqref{Defn:Lxitk}, respectively. Then, using ideas from the above proof, it is not difficult to see that
\[
\lim_{n \to \infty} \frac{\Var[\sum_{\vec{t} \in \Idx_n} \Nxitk(\un)]}{\lbnk(\un)} = 0
\]
if $1 \leq k < d - 1$ and $\Cr{a:r0Asm}, \Cr{a:CLTG}, \Cr{a:r2Asm}$ and $\Cr{a:rqAsm}$ hold; while
\[
\lim_{n \to \infty} \frac{\Var[\sum_{\vec{t} \in \Idx_n} \Lxitk(\un)]}{\lbnk(\un)} = 0
\]
if $0 \leq k < d$ and $\Cr{a:r0Asm}, \Cr{a:r1Asm}, \Cr{a:ComC}(k), \Cr{a:CLTG},$ and $\Cr{a:r2LAsm}$ hold.
\end{remark}

Finally, we discuss the proof of Theorem~\ref{thm:SCLT}.

\begin{proof}[Proof of Theorem~\ref{thm:SCLT}] \label{Pf:SCLT}
The proof is essentially the same for $k = 0$ and $1 \leq k < d$ cases. Therefore, we detail the steps only for the latter.

As in the proof of Theorem~\ref{thm:PoiResSnk}, \eqref{eqn:dtvBd} holds. Bounds for the two terms on the RHS there have already been obtained in the proof of Lemma~\ref{lem:VarianceEstimates}.\ref{itm:SnkVarEst}. Therefore, we have
\begin{equation}
\label{eqn:tvBdCLT}
\tv{\Snk(\un)}{\Poi(\ExP[\Snk(\un)])} = O\left(\un^{-1} \exp[-(\varphi_k - a_k)\un^2]\right).
\end{equation}
Again, since $\Cr{a:r1Asm}$ and $\Cr{a:r2LAsm}$ imply $\Cr{a:r2Asm}$ and because $\nu_n \to -\infty$ is equivalent to $\lbnk(\un) \to \infty,$ it follows from Lemma~\ref{lem:limRates}.\ref{itm:ExPSnku} that $\ExP[\Snk(\un)] \to \infty.$ Therefore, we have
\[
\frac{\Poi(\ExP[\Snk(\un)]) - \ExP[\Snk(\un)]}{\sqrt{\ExP[\Snk(\un)]}} \CiD \Gau(0,1);
\]
this can be easily seen by using the characteristic function. Separately, using \eqref{eqn:dtvBd} and Definition~\ref{Defn:DistTV},
\begin{eqnarray*}
& & \left\|\left[\frac{\Snk(\un) - \ExP[\Snk(\un)]}{\sqrt{\ExP[\Snk(\un)]}}\right] - \left[\frac{\Poi(\ExP[\Snk(\un)]) - \ExP[\Snk(\un)]}{\sqrt{\ExP[\Snk(\un)]}}\right]\right\|_{\textnormal{TV}} \\
& = & \tv{\Snk(\un)}{\Poi(\ExP[\Snk(\un)])}\\
& = & O\left(\un^{-1} \exp[-(\varphi_k - a_k)\un^2]\right).
\end{eqnarray*}
Therefore, combining the above two relations, it follows that
\[
\frac{\Snk(\un) - \ExP[\Snk(\un)]}{\sqrt{\ExP[\Snk(\un)]}} \CiD \Gau(0, 1).
\]
Now, since Lemma~\ref{lem:limRates}.\ref{itm:ExPSnku} holds, we get the desired result.
\end{proof}

\begin{remark}
We note that the above result could have been proved using Berman's idea (\cite{Berman}, Chapter 8), where he showed a CLT for the sojourn time of a continuous time Gaussian process. In order to understand the technique, let us focus first on proving a CLT for the number of exceedances $S_{n,0}(u_n)=\sum_{\bar{t}\in \Gamma_n}1(X_{\bar{t}}\geq u_n)$. If the process $X$ is stationary with absolutely continuous spectral distribution function, we have the following representation for its covariance function:
$$\ExP[X_0X_{\bar{t}}]=\rho(\bar{t})=\sum_{m_1=-\infty}^\infty\sum_{m_2=-\infty}^\infty\cdots\sum_{m_d=-\infty}^\infty b(m_1,m_2,\cdots,m_d)b(t_1+m_1,t_2+m_2,\cdots,t_d+m_d)$$
for some $b\in l^2$ (can be chosen as inverse Fourier transform of the spectral density). This also means that we have the following representation for $X$:
$$X_{\bar{t}}=\sum_{m_1=-\infty}^\infty\sum_{m_2=-\infty}^\infty\cdots\sum_{m_d=-\infty}^\infty b(t_1+m_1,t_2+m_2,\cdots,t_d+m_d)\zeta_{m_1\cdots m_d}$$
with $\zeta$ being IID Gaussians.

The main idea is to show that the limiting distribution of exceedances of $X$ is the same as that of an auxiliary process $X_v$ for some $v\in\mathbb{R}$. To define this process, first let
$$b_{v}(\bar{t})=b_{\bar{t}},\,\,\,|t_i|\leq \left\lceil \frac{v}{2}\right \rceil\,\,\forall i,$$ and zero otherwise. Using this, define the auxiliary process by
$$X_v(\bar{t})=\frac{\sum_{m_1=-\infty}^\infty\sum_{m_2=-\infty}^\infty\cdots\sum_{m_d=-\infty}^\infty b(t_1+m_1,t_2+m_2,\cdots,t_d+m_d)\zeta_{m_1m_2\cdots m_d}}{\sqrt{\sum_{m_1=-\infty}^\infty\sum_{m_2=-\infty}^\infty\cdots\sum_{m_d=-\infty}^\infty b_v^2(m_1,\cdots,m_d)}}.$$
The core concept then is that the distribution of exceedances of this auxiliary process can be boiled down to studying limits of sums of i.i.d. random variables.

It is also worth noting that one can generalize the above to random vectors in order to study the limiting behaviour of $S_{n,k}(u_n)$ for $k\geq 1$.
\end{remark}

\section{Completing the Proofs}
\label{sec:ConclResults}
The technical results from Section~\ref{sec:IntResults} are proved here.

For an arbitrary $m \geq 1$ and $\rho \in \Real,$ let $\covM_m(\rho)$ be the $m \times m$ matrix whose diagonal entries are all $1$  and the off-diagonal entries are all  equal to $\rho.$ Similarly, for $m_1, m_2 \geq 1$ and $\rho, \mu \in \Real,$ let $W_{m_1, m_2}(\rho, \mu)$ be the $(m_1 + m_2) \times (m_1 + m_2)$ matrix given by
\begin{equation}
\label{Def:WMat}
W_{m_1, m_2}(\rho, \mu) :=
\begin{bmatrix}
\covM_{m_1}(\rho) & \mu \times \onesM_{m_1, m_2} \\
\mu \times \onesM_{m_2, m_1} & \covM_{m_2}(\rho)
\end{bmatrix},
\end{equation}
where $\onesM_{i, j}$ denotes the all ones matrix of dimension $i \times j.$ Further, for $m \geq 2,$ $1 \leq m_1, m_2 < m,$ and $\rho, \mup, \mu \in \Real,$ let $Q_{m, m_1, m_2}(\rho, \mup, \mu) \in \Real^{2m \times 2m}$ be given by
\begin{equation}
\label{Def:QMat}
Q_{m, m_1, m_2}(\rho, \mup, \mu) =
\begin{bmatrix}
W_{m_1, m - m_1}(\rho, \mup) & \mu \onesM_{m, m} \\
\mu \onesM_{m, m} & W_{m_2, m - m_2}(\rho, \mup)
\end{bmatrix} \in \Real^{2 m \times 2 m}.
\end{equation}
Separately, for a rational function $h$, let $\den(h)$ denote its denominator. Lastly, given vectors $\vec{u}_1 \in \Real^{m_1},$ and $\vec{u_2} \in \Real^{m_2},$ let $(\vec{u}_1, \vec{u}_2)$ denote the row vector of dimension $m_1 + m_2$ whose first $m_1$ entries are those of $\vec{u_1}$ and the last $m_2$ entries are those of $\vec{u}_2.$

We begin by deriving the probability estimates stated in Lemma~\ref{lem:ProbEst}.

\begin{proof}[Proof of Lemma~\ref{lem:ProbEst}]\label{Pf:ProbEst}
The truth of each statement can be seen from Lemma~\ref{lem:GaussianTailBounds} and the following lines of argument. Below, the notations $i,$ $M,$ and $\vec{\Delta}$ are as in Lemma~\ref{lem:GaussianTailBounds}, while $u$ is an arbitrary but fixed positive real number.

\begin{itemize}[leftmargin=*]

\item Arguments for Statement~\ref{itm:Sn0Ind}: Because $\Cr{a:r0Asm}$ holds, $X_{\vec{0}} \sim \Gau(0, 1).$ With respect to $\Pr\{X_{\vec{0}} \geq u\},$ we thus have $i = 1,$ $M = 1,$ and $\vec{\Delta} = u.$ As $u$ is positive, the Savage condition holds trivially. Hence, from Lemma~\ref{lem:GaussianTailBounds}, it follows that %
\[
1 - \frac{1}{u^2} \leq \dfrac{\Pr\{X_{\vec{0}} \geq u\}}{\tau_0 u^{-2b_0} \exp[-a_0 u^2]} \leq 1.
\]
This completes the argument.

\item Arguments for Statement~\ref{itm:SnkInd}: Since $\Cr{a:r0Asm}$ holds, $\vec{X}_{\vec{0}, \set{b_k}} \sim \Gau(\vec{0}, \covM_{2b_k}(\rho_2)).$ On the other hand, because $\Cr{a:r2Asm}$ holds, it follows from Lemma~\ref{lem:eigM} that $\covM_{2b_k}(\rho_2)$ is positive definite with eigenvalues $(1 - \rho_2),$ repeated $2b_k  - 1$ times, and $(1 + (2b_k - 1)\rho_2);$ also, $\ones_{2b_k}$ is the eigenvector with eigenvalue $1 + (2b_k - 1) \rho_2.$ With $i = 2b_k,$ and $M = W_{2b_k}(\rho_2),$ we then have
\[
|M| = (1 - \rho_2)^{2b_k - 1} (1 + (2b_k - 1) \rho_2),
\]
\[
[u \ones_{2b_k}] M^{-1} [u \ones_{2b_k}]^\tr =  2 a_k u^2,
\]
\text{ and }
\[
\vec{\Delta} = [u \ones_{2b_k}] M^{-1} = [u /(1 + (2b_k - 1) \rho_2)] \ones_{2k + 2} > 0.
\]
The latter verifies the Savage condition and also shows that
\[
\prod_{j = 1}^{2b_k} \vec{\Delta}(j) = \left[\frac{u}{1 + (2b_k - 1)\rho_2}\right]^{2b_k} \text{ and } \vec{\Delta}(j) \vec{\Delta}(\ell) = \left[\frac{u}{1 + (2b_k - 1)\rho_2}\right]^2.
\]
From Lemma~\ref{lem:GaussianTailBounds}, we then have
\[
1 - \left[\dfrac{[1 + (2b_k - 1)\rho_2]^2 \sum_{j, \ell = 1}^{i} M_{j\ell} (1 + \delta_{j \ell})}{2}\right] \frac{1}{u^2} \leq  \dfrac{\Pr\{\vec{X}_{\vec{0}, \set{b_k}} \geq u \ones_{2b_k}\}}{\left[\tau_k/\tbinom{d}{b_k}\right] u^{-2b_k} \exp[-a_k u^2]} \leq 1,
\]
Because $\sum_{j, \ell = 1}^{i} M_{j\ell} (1 + \delta_{j \ell})$ is a constant, we are done.

\item Arguments for Statement~\ref{itm:NnkInd}: We use divide and conquer strategy. In that, we first classify the elements of $\cN_k$ based on $|V_1|,$ where $V_1$ is the first subset in the partition uniquely associated with each $G \in \cN_k;$ see Remark~\ref{rem:NonUniPartition} and Lemma~\ref{lem:PwL1Dist}. We then obtain an upper bound for $\Pr\{\vecXtG \geq u\ones_{2b_k}\}$ that holds for every $G$ in a class. Thereafter, identifying the relationship between the bounds for each class, we obtain an universal bound.

%can be obtained. With these points in mind, let $G \equiv (V, E) \in \cN_k.$ Then Lemma~\ref{lem:PwL1Dist}  implies that $V$ can be partitioned into $V_1$ and $V_2$ so that $|V_1|, |V_2| \geq 1$ and $\|\vec{t} - \vecp{t}\|_1 \geq 3$  for all $\vec{t} \in V_1$ and $\vecp{t} \in V_2.$ Further, it follows from Corollary~\ref{cor:DistVertMC} that for all $\vec{t}, \vecp{t} \in V_1,$ $\|\vec{t} - \vecp{t}\|_1 \geq 2.$ Similarly, for all $\vec{t}, \vecp{t} \in V_2,$ $\|\vec{t} - \vecp{t}\|_1 \geq 2.$ Thus, one way to categorize the elements of $\cN_k$ is using $|V_1|.$

To begin with, we derive some basic facts with regards to $W_{j, 2b_k - j}(\rho_2, \rho_3)$ for $1 \leq j \leq b_k.$ Because $\Cr{a:r0Asm}, \Cr{a:r2Asm},$ and $\Cr{a:rqAsm}$ hold, we have
\begin{equation}
\label{eqn:IneqRho320}
0 \leq \rho_3 < \rho_2 < \rho_0 = 1.
\end{equation}
This implies that $1 + (j - 1)\rho_2 - j\rho_3$ and $1 + (2b_k - 1 - j)\rho_2 - (2b_k - j) \rho_3$ are positive. From Lemma~\ref{lem:PDofW}, it then follows that $W_{j, 2b_k - j}(\rho_2, \rho_3)$ is positive definite. Further, from its definition, we also have that this matrix is symmetric. Both these facts, put together, show that $Y_j \sim \Gau(\vec{0}, W_{j, 2b_k - j}(\rho_2, \rho_3))$ is well defined.

Separately, for $1 \leq j \leq 2b_k - 1,$ let
\[
r_j = \frac{1 - \rho_3 + (2b_k - 1 - j)(\rho_2 - \rho_3)}{1 - \rho_3^2 + (2b_k - 2)(\rho_2 - \rho_3^2) + (j - 1)(2b_k - 1 - j)(\rho_2^2 - \rho_3^2)},
\]
and
\[
\varrho_{k, j} := j r_j + (2b_k  - j) r_{2b_k - j}.
\]
Then, using brute force, it is not difficult to see that
\[
\ones_{2b_k} \; W^{-1}_{j, 2b_k - j}(\rho_2, \rho_3)  = (r_j \ones_{j},  r_{2b_k - j} \ones_{2b_k - j})
\]
and
\[
\ones_{2b_k} \; [W_{j, 2b_k - j}(\rho_2, \rho_3)]^{-1} \;  \ones_{2 b_k}^\tr =  \varrho_{k, j}
\]
for $1 \leq j \leq b_k.$ Clearly, $\den(r_j) = \den(r_{2b_k - j});$ this implies that $\den(\varrho_{k, j}) = \den(r_j).$ Further, since \eqref{eqn:IneqRho320} holds,  both the numerator and denominator of each $r_j$ are positive. This implies that, for each $1 \leq j \leq b_k,$
\begin{equation}
\label{eqn:DelN_Pos}
\ones_{2b_k} \; W^{-1}_{j, 2b_k - j}(\rho_2, \rho_3)  > 0 \text{ and } \varrho_{k, j} > 0.
\end{equation}

Lastly, for $1 \leq j \leq b_k - 1,$  by using brute force or a symbolic calculator, we have
\[
\varrho_{k, j + 1} - \varrho_{k, j} =
\dfrac{2 (2(b_k - 1) - 2j + 1)(\rho_2 - \rho_3)(1 - \rho_2)(1 - \rho_3 + (b_k - 1) (\rho_2 -  \rho_3))}{\den(\varrho_{k, j}) \den(\varrho_{k, j + 1})};
\]
hence, and since \eqref{eqn:IneqRho320} holds, we get $\varrho_{k, j + 1} - \varrho_{k, j} > 0.$ In other words,
\begin{equation}
\label{eqn:DomNInd}
\varrho_{k , 1} = \ones_{2 b_k} [W^{-1}_{1, 2b_k - 1}(\rho_2, \rho_3)] \ones_{2b_k}^\tr < \ones_{2 b_k} [W^{-1}_{j, 2 b_k - j}(\rho_2, \rho_3)] \ones_{2b_k}^\tr = \varrho_{k , j}
\end{equation}
for each $j$ such that $2 \leq j \leq b_k.$

Now, fix some arbitrary $G \in \cN_k$ and suppose that $|V_1| = m.$ Lemma~\ref{lem:PwL1Dist} implies that $\|\vec{t} - \vecp{t}\|_1 \geq 3$ for $\vec{t} \in V_1$ and $\vecp{t} \in V \setminus V_1.$ Further, if $\vec{t}, \vecp{t}$ are both in $V_1$ or are both in $V_2,$ then it follows from Corollary~\ref{cor:DistVertMC} that $\|\vec{t} - \vecp{t}\|_1 \geq 2.$ Since $\Cr{a:r0Asm}$ and $\Cr{a:rqAsm}$ hold, after a permutation of the coordinates of $\vec{X}_{\vec{0}, G}$ if necessary, it now follows from Lemma~\ref{lem:SlepainInequality} that
\[
\Pr\{\vec{X}_{\vec{0}, G} \geq u \ones_{2b_k} \} \leq \Pr\{\vec{Y}_{m} \geq u \ones_{2b_k}\}.
\]
Consider $\Pr\{\vec{Y}_{m} \geq u \ones_{2b_k}\}.$ With $i = 2b_k$ and $M = W_{m, 2b_k - m}(\rho_2, \rho_3),$ \eqref{eqn:DelN_Pos} shows that
\begin{equation}
\label{eqn:DeltaNnk}
\vec{\Delta} := [u \ones_{2b_k}] \; M^{-1} > 0,
\end{equation}
which verifies the Savage condition. From Lemma~\ref{lem:GaussianTailBounds}, we then have
\begin{equation}
\label{eqn:YmBd}
\Pr\{\vec{Y}_{m} \geq u \ones_{2b_k}\} \leq  \frac{1}{(2 \pi)^{b_k}|M|^{1/2}\prod_{l = 1}^{2b_k} \vec{\Delta}(l)} \; \exp\left[-\frac{\varrho_{k, m} u^2}{2} \right].
\end{equation}
Combining this with \eqref{eqn:DomNInd}, we finally have
\begin{equation}
\label{eqn:YmBd2}
\Pr\{\vec{Y}_{m} \geq u \ones_{2b_k}\} \leq {\Cl[Const]{c:PrNnk}}_{,k} \; u^{-2b_k} \exp\left[-\frac{\varrho_{k, 1} u^2}{2} \right],
\end{equation}
where
\[
{\Cr{c:PrNnk}}_{,k} := \frac{1}{(2\pi)^{b_k}}\max_{1\leq j \leq b_k} \left[\dfrac{1}{|W_{j, 2b_k - j}(\rho_2, \rho_3)|^{1/2} \prod_{l = 1}^{2b_k} \vecp{\Delta}_j(l)}\right]
\]
with $\vecp{\Delta}_j = \ones_{2b_k} W^{-1}_{j, 2b_k - j}(\rho_2, \rho_3).$ Note that $\varrho_{k , 1}/2 = \varrho_k,$ where the latter is as in \eqref{Defn:VarRho}. Since $G$ was arbitrary, this line of argument is now complete.

\item Arguments for  Statement~\ref{itm:Ln0Ind}:  Consider $\Pr\{(X_{\vec{0}}, X_{\vec{e}_1}) \geq u \ones_2\}.$ With regards to this expression, $i = 2$ and $M = \covM_{2}(\rho_1),$ where the latter is on account of $\Cr{a:r0Asm}.$ Clearly,
\[
|M| = (1 - \rho_1)(1 + \rho_1),
\]
\[
[u \ones_{2}]M^{-1}[u \ones_{2}]^\tr = 2u^2/(1 + \rho_1),
\]
and
\[
\vec{\Delta} = [u/(1 + \rho_1)] \ones_{2} > 0.
\]
The latter verifies the Savage condition and also shows that $\prod_{j = 1}^{2} \vec{\Delta}(j) = \frac{u^2}{(1 + \rho_1)^2}.$ Hence, from Lemma~\ref{lem:GaussianTailBounds}, we have
\[
\Pr\{(X_{\vec{0}}, X_{\vec{e}_1}) \geq u \ones_2\} \leq \frac{(1 + \rho_1)^2}{2 \pi \sqrt{1 - \rho_1^2}} u^{-(2b_0 + 1)} \exp[-\varphi_0 u^2],
\]
where $\varphi_0$ is as in \eqref{Defn:VarPhi}. We are now done.

\item Arguments for Statement~\ref{itm:SnkIndWC}: Here, $i = 2b_k + 1$ and $M = W_{2b_k, 1}(\rho_2, \rho_1).$ As $\Cr{a:r1Asm}$ and $\Cr{a:ComC}(k)$ hold, it follows from Lemma~\ref{lem:PDofW} that $M$ is positive definite. Now, by brute force,
\[
[u \ones_{2b_k + 1}] M^{-1} [u \ones_{2b_k + 1}]^\tr = u^2 \, [(2b_k) r_{2b_k} + r_1] = 2 u^2 \varphi_k
\]
and
\[
\vec{\Delta} = [u \ones_{2b_k + 1}] M^{-1} = (r_{2b_k} \ones_{2b_k}, \;  r_{1} ) \, u,
\]
where $\varphi_k$ is as in \eqref{Defn:VarPhi} and
\[
r_j = \frac{1 + (2b_k - j) \rho_2 - (2b_k + 1 - j) \rho_1}{[1 + (2b_k - 1) \rho_2] [1 - \rho_1] +  \rho_1 [1 + (2b_k - 1) \rho_2 - (2b_k) \rho_1]}
\]
for $j = 1, 2b_k.$ Since $\Cr{a:r1Asm}$ and $\Cr{a:ComC}(k)$ hold, both the numerator and denominator of $r_1$ and $r_{2b_k}$ are positive. This shows that $\vec{\Delta} > 0,$ which verifies the Savage condition. Hence, from Lemma~\ref{lem:GaussianTailBounds},
\[
\Pr\{(\vec{X}_{\vec{0}, \set{b_k}}, X_{\vec{0}}) \geq u \ones_{2b_k + 1}\} \leq \frac{1}{(2 \pi)^{(2b_k + 1)/2}|M|^{1/2} \prod_{j = 1}^{2b_k + 1}\vecp{\Delta}(j)} u^{-(2b_k + 1)} \exp[-\varphi_k u^2].
\]
With this, we are done.

\item Arguments for Statement~\ref{itm:LnkInd}: Let $k \geq 1.$ Fix an arbitrary $G \equiv (V, E) \in \cP_k.$ From \eqref{Defn:cPk} and \eqref{Defn:cP1}, $|V| \geq 2b_k + 1;$ also, there is further subset of $2b_k$ vertices whose pairwise $\|\cdot\|_1-$distances are at least $2.$ Retaining these latter $2b_k$ vertices and choosing any one of the remaining vertices, it is easy to see using Lemma~\ref{lem:SlepainInequality} and $\Cr{a:r2LAsm}$ that
\[
\Pr\{\vec{X}_{\vec{0}, G} \geq u \ones_{|V|}\} \leq \Pr\{(\vec{X}_{\vec{0}, \set{b_k}}, X_{\vec{0}}) \geq u \ones_{2b_k + 1}\}.
\]
The desired result now follows from Statement~\ref{itm:SnkIndWC}, proved above.

Now consider the case $k = 0.$ Let $G \equiv (V, E) \in \cP_0.$ From \eqref{Defn:cP0}, note that $|V| = 2.$ Because $\Cr{a:r2LAsm}$ holds, it follows from Lemma~\ref{lem:SlepainInequality} that
\[
\Pr\{\vec{X}_{\vec{0}, G} \geq u \ones_{2}\} \leq \Pr\{(X_{\vec{0}}, X_{\vec{e}_1}) \geq u \ones_2\}.
\]
The desired result now follows from Statement~\ref{itm:Ln0Ind}, proved above.
\end{itemize}

This completes the proof.
\end{proof}

We next derive the covariance bounds discussed in Lemma~\ref{lem:CovBounds}.

\begin{proof}[Proof of Lemma~\ref{lem:CovBounds}] \label{Pf:CovBounds}
We handle each statement separately. In each case, the idea is to first approximate $\vec{X}_{\tOm, \tpOmp}$ by a simpler Gaussian vector. In that regard, we first introduce a notation. For $k$ such that $1 \leq k < d$ and $\mu \in [0, \rho_1],$ let $\vec{Z} _\mu \sim \Gau(\vec{0}, W_{2b_k, 2b_k}(\rho_2, \mu)),$ where $W_{2b_k, 2b_k}(\rho_2, \mu)$ is as in \eqref{Def:WMat}. Because $\mu \in [0, \rho_1]$ and $\Cr{a:ComC}(k)$ holds, we have $1 + (2b_k - 1) \rho_2 - 2b_k\mu \geq 1 + (2b_k - 1) \rho_2 - 2b_k\rho_1 > 0.$ Using Lemma~\ref{lem:PDofW}, it then follows that the matrix $W_{2b_k, 2b_k}(\rho_2, \mu)$ is positive definite; by definition, it is also symmetric. Consequently, $\vec{Z}_\mu$ is well defined for each $\mu \in [0, \rho_1].$
\begin{itemize}[leftmargin=*]
\item Arguments for Statement~\ref{itm:NonNegCov}: If $\tOm = \tpOmp,$ then the non-negativity trivially follows as the covariance then is simply the variance. Suppose $\tOm \neq \tpOmp.$ Now, let
\[
\hat{\rho}_{\tOm, \tpOmp} := \min\{\rho_{\|(\vec{t} + \alpha \vec{e}_{\omega}) - (\vecp{t} + \alphap \vec{e}_{\omegap})\|_1}: \alpha, \alphap \in \{-1, + 1\}, \; \omega \in \vec{\Omega}, \omegap \in \vecp{\Omega}\}.
\]
As $\tOm \neq \tpOmp,$ we have $\hat{\rho}_{\tOm, \tpOmp} \neq \rho_0;$ in fact, due to $\Cr{a:r1Asm}$ and $\Cr{a:r2LAsm},$ $\hat{\rho}_{\tOm, \tpOmp} \in [0, \rho_1].$
Hence, the random vector $\vec{Z}_{\hat{\rho}_{\tOm, \tpOmp}}$ is well defined. Separately, note that
\begin{equation}
\label{eqn:Z0}
\Pr\{\vec{Z}_{0} \geq \un \ones_{4b_k}\} = \Pr\{\vecXtOm \geq \un \ones_{2b_k}\}\Pr\{\vecXtpOmp \geq \un \ones_{2b_k}\}.
\end{equation}

In terms of the above notations, we have
\begin{eqnarray*}
& & \cov[\xitOm(\un), \xitpOmp(\un)] \\
& = & \Pr\{(\vecXtOm, \vecXtpOmp) \geq \un \ones_{4b_k}\} - \Pr\{\vecXtOm \geq \un \ones_{2b_k}\} \Pr\{\vecXtOm \geq \un \ones_{2b_k}\} \\
& = & [\Pr\{(\vecXtOm, \vecXtpOmp) \geq \un \ones_{4b_k}\} - \Pr\{\vec{Z}_{\hat{\rho}_{\tOm, \tpOmp}} \geq \un \ones_{4b_k} \}]\\
& &  + \; [\Pr\{\vec{Z}_{\hat{\rho}_{\tOm, \tpOmp}} \geq \un \ones_{4b_k} \} - \Pr\{\vec{Z}_{0} \geq \un \ones_{4b_k} \}].
\end{eqnarray*}
Since the last two differences are positive on account of Lemma~\ref{lem:SlepainInequality}, we have the desired result.

\item Arguments for Statement~\ref{itm:WkstBd}: As $\tOm \neq \tpOmp,$ there exists $\alphap \in \{-1, +1\}$ and $\omegap \in \vecp{\Omega}$ such that
\begin{equation}
\label{eqn:distTerm}
\vecp{t} + \alphap \vec{e}_{\omegap} \neq \vec{t}  + \alpha \vec{e}_{\omega}
\end{equation}
for all $\alpha \in \{-1, + 1\}$ and $\omega \in \vec{\Omega}.$ Hence, by dropping some variables, we have
\[
\Pr\{(\vecXtOm, \vecXtpOmp) \geq \un \ones_{4b_k}\} \leq \Pr\{(\vecXtOm, X_{\vecp{t} + \alphap \vec{e}_{\omegap}}) \geq \un \ones_{2b_k + 1}\}.
\]
Again, as \eqref{eqn:distTerm} holds, $\Cr{a:r2LAsm}$ and $\Cr{a:r1Asm}$ imply that $\cov[(X_{\vec{t} + \alpha \vec{e}_{\omega}}, X_{\vecp{t} + \alphap \vec{e}_{\omegap}})] \leq \rho_1$ for all $\alpha \in \{-1, +1\}$ and $\omega \in \vec{\Omega}.$ Using Lemma~\ref{lem:SlepainInequality}, we then have
\[
\Pr\{(\vecXtOm, X_{\vecp{t} + \alphap \vec{e}_{\omegap}}) \geq u \ones_{2b_k + 1}\} \leq \Pr\{(\vec{X}_{\vec{0}, \set{b_k}}, X_{\vec{0}}) \geq u \ones_{2b_k + 1}\}.
\]
The desired result now follows from Lemma~\ref{lem:ProbEst}.\ref{itm:SnkIndWC}.

\item Arguments for Statement~\ref{itm:WkBd}: Since $q \geq 1$ and $\Cr{a:r2LAsm}$ holds, we have $\rhoa_{\tOm, \tpOmp} = \rho_q \in [0, \rho_1].$ Hence, as discussed above, the random vector $\vec{Z}_{\rho_q}$ is well defined.

From Lemma~\ref{lem:SlepainInequality}, observe that
\[
\cov[\xitOm(\un),\xitpOmp(\un)] \leq \Pr\{\vec{Z}_{\rho_q} \geq \un \ones_{4b_k}\}.
\]
With regards to the expression on the right, the notations from Lemma~\ref{lem:GaussianTailBounds} have the following values: $i = 4b_k$ and $M = W_{2b_k, 2b_k}(\rho_2, \rho_q).$ From Lemma~\ref{lem:eValW}, it is easy to see that $\ones_{4b_k}$ is an eigenvector of $M$ with eigenvalue $1 + (2b_k - 1) \rho_2 + 2b_k \rho_q.$ Hence,
\[
\vec{\Delta} =  \un \ones_{4b_k} \; M^{-1}  =
\frac{\un}{1 + (2b_k -1)\rho_2 + 2b_k \rho_q} \ones_{4 b_k} > 0,
\]
where the positivity holds due to $\Cr{a:r2LAsm}.$ Having verified the Savage condition, we have
\begin{eqnarray*}
& & \Pr\{\vec{Z}_{\rho_q} \geq \un \ones_{4b_k)}\} \\
& \leq & \frac{[1 + (2b_k - 1)\rho_2 + 2b_k\rho_q]^{4b_k}}{(2\pi)^{2b_k} (1 - \rho_2)^{2b_k - 1} \sqrt{[1 + (2b_k - 1)\rho_2]^2 - 4b_k^2 \rho_q^2}} \un^{-4b_k} \exp\left[\frac{ -2 a_k \un^2}{1 + 2a_k \rho_q}\right] \\
& \leq & \frac{[1 + (2b_k - 1)\rho_2 + 2b_k\rho_1]^{4b_k}}{(2\pi)^{2b_k} (1 - \rho_2)^{2b_k - 1} \sqrt{[1 + (2b_k - 1)\rho_2]^2 - 4b_k^2 \rho_1^2}} \un^{-4b_k} \exp\left[\frac{ -2 a_k \un^2}{1 + 2a_k \rho_q}\right],
\end{eqnarray*}
where the first relation is due to Lemma~\ref{lem:GaussianTailBounds} and the determinant formula in Lemma~\ref{lem:eValW}, while the last relation holds since $\rho_q \leq \rho_1.$ The desired result is now easy to see.

\item Arguments for Statement~\ref{itm:StBd}: Observe that
\begin{eqnarray*}
& & \cov[\xitOm(\un), \xitpOmp(\un)] \\
& = & \Pr\{(\vecXtOm, \vecXtpOmp) \geq \un \ones_{4b_k}\} - \Pr\{\vecXtOm \geq \un \ones_{2b_k}\} \Pr\{\vecXtpOmp \geq \un \ones_{2b_k}\} \\
& \leq & \Pr\{\vec{Z}_{\rho_q} \geq \un \ones_{4b_k} \} - \Pr\{\vec{Z}_{0} \geq \un \ones_{4b_k}\} \\
& = & \int_0^{\rho_q}  \frac{\partial }{\partial \mu} \left[\Pr\{\vec{Z}_\mu \geq \un \ones_{4b_k}\}\right] \d{\mu} \\
& \leq & \rho_q \; \max_{\mu \in (0, \rho_q)} \left|\frac{\partial }{\partial \mu} \left[\Pr\{\vec{Z}_\mu \geq \un \ones_{4b_k}\}\right]\right|,
\end{eqnarray*}
where the second relation follows from Lemma~\ref{lem:SlepainInequality} and \eqref{eqn:Z0} and the third relation follows from the second fundamental theorem of Calculus. The latter applies since $\rho_q \leq \rho_1,$ which holds due to $\Cr{a:r2LAsm},$  and because $\Pr\{\vec{Z}_\mu \geq \un \ones_{4b_k} \},$ as a function of $\mu,$ is continuous over $[0, \rho_1]$ and differentiable in $(0, \rho_1),$ which itself follows from Lemma~\ref{lem:ZmuTailBeh}. This latter result also shows that, for $\mu \in (0, \rho_q),$
\[
\left|\frac{\partial}{\partial \mu} \left[\Pr\{\vec{Z}_\mu \geq \un \ones_{4b_k}\}\right] \right| \leq O\left( \un^{-(4b_k - 2)} \exp\left[\frac{-2a_k \un^2}{1 + 2 a_k \rho_q}\right]\right).
\]
The desired result is now easy to see.
\end{itemize}
This completes the proof.
\end{proof}

We now obtain the bounds discussed in Lemma~\ref{lem:CovBdsNL}.

\begin{proof}[Proof of Lemma~\ref{lem:CovBdsNL}] \label{Pf:CovBdsNL}
As we have done so far, we talk about each statement separately.
\begin{itemize}[leftmargin=*]
\item Arguments for Statement~\ref{itm:NCovBdk}: Our approach here is a mixture of ideas from the proofs of Lemmas~\ref{lem:ProbEst}.\ref{itm:NnkInd} and ~\ref{lem:CovBounds}.\ref{itm:StBd}.

For $1 \leq j \leq b_k,$ let $W_{j, 2b_k - j}(\rho_2, \rho_3)$ and $Q_{2b_k, j, j}(\rho_2, \rho_3, \mu)$ be defined as in \eqref{Def:WMat} and \eqref{Def:QMat}, respectively. For brevity, and since $\rho_2, \rho_3,$ and $k$ are constants, we shall denote these matrices as $W_j$ and $Q_j(\mu),$ respectively. These simplified notations apply only in this proof; the reader should not  confuse $W_j$ here with $W_m(\rho)$ defined above \eqref{Def:WMat}.

We claim that there exist $\delN > 0$ such that, for all $\mu \in [0, \delN]$ and $1 \leq j \leq b_k,$ the following conditions hold:
\begin{enumerate}
\item \label{rel:ImplHold} $Q_j(\mu)$ is positive definite, hence, invertible; further, its maximum eigenvalue is bounded from above by $\kappa \geq 0,$ a constant which is independent of $\mu$ and $j;$

\item \label{rel:dom} for all large enough $u,$ $\Pr\{\vec{Y}_{\mu, j} \geq u \ones_{4 b_k}\}  \leq \Pr\{\vec{Y}_{\mu, 1} \geq u \ones_{4 b_k}\},$ where $\vec{Y}_{\mu, j} \sim \Gau(\vec{0}, Q_j(\mu));$ this random vector is well-defined since $Q_j(\mu)$ is a symmetric positive define matrix.
\end{enumerate}
Let $\vec{Z}_{\mu} = \vec{Y}_{\mu, 1},$ $\mu \in [0, \delN].$ Assuming the above claim to be true, we now prove the desired result.

We first prove \eqref{eqn:NCovBd}. Using \eqref{Defn:Nxitk}, note that
\[
\Pr\{\vec{Z}_\mu \geq \un \ones_{4b_k}\} - \left[\ExP[\Nxik(\un)]\right]^2 = \Pr\{\vec{Z}_\mu \geq \un \ones_{4b_k}\} - \Pr\{\vec{Z}_0 \geq \un \ones_{4b_k}\}.
\]
The non-negativity is now a simple consequence of Lemma~\ref{lem:SlepainInequality}. To establish the decay rate, set $m = 2b_k,$ $\rho = \rho_2,$ and $\mup = \rho_3.$  Because $\Cr{a:r0Asm}, \Cr{a:r2Asm}$ and $\Cr{a:rqAsm}$ hold, we have \eqref{eqn:IneqRho320}. Further, from condition~\ref{rel:ImplHold} above, it follows that implications of Lemma~\ref{lem:PDofQ} hold in relation to the matrix $Q_j(\mu)$ for $\mu \in [0, \delN]$ and $1 \leq j \leq b_k.$ Now, by arguing as in the proof of Lemma~\ref{lem:CovBounds}.\ref{itm:StBd} and making use of Lemma~\ref{lem:GenContDiff}, it is easy to see that the desired result holds.

Let $\tG, \tpGp$ be as in the statement with $|V_1| = |\Vp_1| = j.$ Also, let $V$ and $\Vp$ be the vertex sets of $G$ and $\Gp,$ respectively. From Lemma~\ref{lem:PwL1Dist}, recall that for each $\vec{t}_{i_1} \in V_1$ (resp. $\Vp_1$) and $\vec{t}_{i_2} \in V \setminus V_1$ (resp. $\Vp \setminus \Vp_1$), we have $\|\vec{t}_{i_1} - \vec{t}_{i_2}\|_1 \geq 3.$ Further, whenever $\vec{t}_{i_1}, \vec{t}_{i_2} \in V_1$ (resp. $\Vp_1$) or $\vec{t}_{i_1}, \vec{t}_{i_2} \in V \setminus V_1$ (resp. $\Vp \setminus \Vp_1$), we have from Corollary~\ref{cor:DistVertMC} that $\|\vec{t}_{i_1} - \vec{t}_{i_2}\| \geq 2.$ Using \eqref{Defn:rhoaN} and the fact that $\Cr{a:r0Asm}$ and $\Cr{a:rqAsm}$ hold, after permuting the coordinates of $\vecXtG$ and $\vecXtpGp$ if necessary, it then follows from Lemma~\ref{lem:SlepainInequality} that
\[
\Pr\{(\vecXtG, \vecXtpGp) \geq \un \ones_{4b_k} \} \leq \Pr\{ \vec{Y}_{\mu, j} \geq \un \ones_{4b_k}\}.
\]
Now, by using condition~\ref{rel:dom} above, we get \eqref{eqn:NdomByY}.

It remains to show the claim above. Towards this, we first prove the alternate claim: there exists $\delN > 0$ so that, for $\mu \in [0, \delN]$ and $1 \leq j \leq b_k,$ (a) condition~\ref{rel:ImplHold} holds; (b) there exist suitable constants (with respect to $\mu$ and $j$) which bound, from below and above, each of $\vecp{\Delta}_{\mu, j} := \ones_{4b_k} Q_j^{-1}(\mu),$ $\ones_{4 b_k} [Q_j^{-1}(\mu)] \ones_{4b_k}^\tr,$ and $|Q_j(\mu)|;$ and, finally, (c) if $j \geq 2,$ then
\[
\ones_{4 b_k} [Q_j^{-1}(\mu)] \ones_{4b_k}^\tr - \ones_{4 b_k} [Q_1^{-1}(\mu)] \ones_{4b_k}^\tr \geq \conN > 0,
\]
where $\conN$ is some constant independent of $\mu$ and $j.$

Because $\Cr{a:r0Asm},$ $\Cr{a:r2Asm},$ and $\Cr{a:rqAsm}$ hold, recall that we have \eqref{eqn:IneqRho320}. Hence, it follows from Lemma~\ref{lem:PDofQ} that there exists some $\delta \equiv \delta(2b_k, \rho_2, \rho_3) > 0$ and $\kappa \equiv \kappa(2b_k, \rho_2, \rho_3) > 0$ such that, for $\mu \in [0, \delta]$ and $1 \leq j \leq b_k,$  the matrix $Q_j(\mu)$ is positive definite; further, its maximum eigenvalue is bounded from above by $\kappa.$

Separately, observe that
\[
\vecp{\Delta}_{0, j} = \ones_{4b_k} Q_j^{-1}(0)
= (\ones_{2b_k} W_j^{-1},  \ones_{2b_k} W_j^{-1})
> 0,
\]
where the second relation follows from the definition of $Q_j(0),$ while the last relation holds as in \eqref{eqn:DelN_Pos} which itself is true due to \eqref{eqn:IneqRho320}.

Also, note that, for $2 \leq j \leq b_k,$
\begin{eqnarray*}
\ones_{4b_k} \, [Q^{-1}_j(0)] \, \ones_{4 b_k}^\tr & = & 2 \, [\ones_{2b_k} \, W_j^{-1} \, \ones_{2 b_k}^\tr] \\
& > &  2 \, [\ones_{2b_k} \, W_1^{-1} \, \ones_{2 b_k}^\tr] \\
& = & \ones_{4b_k} \, [Q^{-1}_1(0)] \, \ones_{4 b_k}^\tr \\
& > & 0,
\end{eqnarray*}
where the first and the third relation follows from the definition of $Q_j(0),$  the second one holds as in \eqref{eqn:DomNInd}, while the last one is true as in \eqref{eqn:DelN_Pos}; recall, both \eqref{eqn:DelN_Pos} and \eqref{eqn:DomNInd} are themselves true due to \eqref{eqn:IneqRho320}.

Further, for $1 \leq j \leq b_k,$ we have
\[
|Q_j(0)| = |W_j|^2 > 0,
\]
where the first relation follows by definition, while the last one follows from Lemma~\ref{lem:PDofW} in which necessary conditions holds due to \eqref{eqn:IneqRho320}.

Lastly, since $\rho_2$ and $\rho_3$ are constants satisfying \eqref{eqn:IneqRho320}, we also have from their definitions that each of $\vecp{\Delta}_{0, j},$ $\ones_{4 b_k} Q_j^{-1}(0) \ones_{4b_k}^\tr,$ and $|Q_j(0)|$ is finite for all $j.$

Using the above arguments and then invoking continuity of rational functions, it is now easy to see that there exists some positive number smaller than $\delta,$ which we denote by $\delN$ henceforth, such that our alternate claim holds. With this $\delN,$ we now establish condition~\ref{rel:dom}.

Let $u > 0,$ $\mu \in [0, \delN],$ and $j$ be so that $2 \leq j \leq b_k.$ Also, let $\vec{\Delta}_{\mu, j} = u \vecp{\Delta}_{\mu, j}.$ Clearly, $\vec{\Delta}_{\mu, j} > 0$ for each $j.$ Hence, it follows from Lemma~\ref{lem:GaussianTailBounds} that
\begin{eqnarray*}
\Pr\{\vec{Y}_{\mu, j} \geq u \ones_{4 b_k}\} & \leq & (2\pi)^{-2b_k} |Q_j(\mu)|^{-1/2} \exp\left[- u^2 [\ones_{4b_k} Q_j^{-1}(\mu) \ones_{4b_k}^\tr]/2 \right] \left[\textstyle \prod_{l = 1}^{4b_k} \vec{\Delta}_{\mu, j}(l)\right]^{-1} \\
& = & (2\pi)^{-2b_k} u^{-4b_k} |Q_j(\mu)|^{-1/2} \exp\left[- u^2 [\ones_{4b_k} Q_j^{-1}(\mu) \ones_{4b_k}^\tr]/2 \right] \left[\textstyle \prod_{l = 1}^{4b_k} \vecp{\Delta}_{\mu, j}(l)\right]^{-1}.
\end{eqnarray*}
Similarly, by using the lower bound in Lemma~\ref{lem:GaussianTailBounds}, we have
\begin{multline*}
\Pr\{\vec{Y}_{\mu, 1} \geq u \ones_{4 b_k}\} \geq
\left[1 - \frac{1}{2} \sum_{l_1, l_2 = 1}^{4b_k} \frac{M_{l_1 l_2}(\mu) [1 + \indc_{l_1 l_2}]}{u^2 \vecp{\Delta}_{\mu, 1}(l_1) \vecp{\Delta}_{\mu, 1}(l_2)}\right] \\ (2\pi)^{-2b_k} u^{-4b_k} |Q_1(\mu)|^{-1/2} \exp\left[- u^2 [\ones_{4b_k} Q_1^{-1}(\mu) \ones_{4b_k}^\tr]/2 \right] \left[\textstyle \prod_{l = 1}^{4b_k} \vecp{\Delta}_{\mu, 1}(l)\right]^{-1},
\end{multline*}
where $M_{l_1 l_2}(\mu)$ is the $l_1 l_2-$th entry of $Q_1^{-1}(\mu).$ Since $\indc_{l_1 l_2} \leq 1,$ note that
\[
1 - \frac{1}{2} \sum_{l_1, l_2 = 1}^{4b_k} \frac{M_{l_1 l_2}(\mu) [1 + \indc_{l_1 l_2}]}{u^2 \vecp{\Delta}_{\mu, 1}(l_1) \vecp{\Delta}_{\mu, 1}(l_2)} \geq 1 - \frac{\ones_{4 b_k} Q_1^{-1}(\mu) \ones_{4 b_k}^\tr}{u^2 \vecp{\Delta}_{\mu, 1}(l_1) \vecp{\Delta}_{\mu, 1}(l_2)}.
\]
Cancelling off the common terms and using part~(c) of our alternate claim, it now follows that
\[
\frac{\Pr\{\vec{Y}_{\mu, j} \geq u \ones_{4 b_k}\}}{\Pr\{\vec{Y}_{\mu, 1} \geq u \ones_{4 b_k}\}} \leq \frac{h_j(\mu) \exp\left[- u^2 \conN / 2 \right]}{\left[1 - g(\mu)/ u^2\right] h_1(\mu)}
\]
for some suitably defined continuous functions $g, h_j: [0, \delN] \to \Real,$ each of which is bounded from below and above by constants (with respect to $\mu$ and $j$) in $(0, \infty).$ Consequently, and because $\conN > 0,$ it follows that for all sufficiently large enough $u,$
\[
\frac{\Pr\{\vec{Y}_{\mu, j} \geq u \ones_{4 b_k}\}}{\Pr\{\vec{Y}_{\mu, 1} \geq u \ones_{4 b_k}\}} \leq 1.
\]
This verifies Condition~\ref{rel:dom}, as desired.

\item Arguments for Statement~\ref{itm:LCovBdk}: Let $W_{1, 2b_k}(\rho_2, \rho_1)$ and $Q_{2b_k + 1, 1, 1}(\rho_2, \rho_1, \mu)$ be defined as in \eqref{Def:WMat} and \eqref{Def:QMat}, respectively. Set $m = 2b_k + 1,$ $\rho = \rho_2,$ and $\mup = \rho_1.$ Because $\Cr{a:r0Asm}, \Cr{a:r1Asm},$ and $\Cr{a:r2LAsm}$ hold, we have $\rho_2, \rho_1 \in [0, 1).$ Since $\Cr{a:ComC}(k)$ also holds, it follows from Lemma~\ref{lem:PDofQSp} that there exists some $\delL = \delta(2b_k + 1, \rho_2, \rho_1)> 0$ and $\kappa \equiv \kappa(2b_k + 1, \rho_2, \rho_1) > 0$ such that the matrix $Q_{2b_k + 1, 1, 1}(\rho_2, \rho_1, \mu)$ is positive definite and, hence, invertible; further, its maximum eigenvalue is bounded from above by $\kappa.$ Now, since $Q_{2b_k + 1, 1, 1}(\rho_2, \rho_1, \mu)$ is also symmetric, it follows that the random variable $\vec{Z}_\mu \sim \Gau(\vec{0}, Q_{2b_k + 1, 1, 1}(\rho_2, \rho_3, \mu))$ is well defined for all $\mu \in [0, \delL].$

Using \eqref{Defn:Lxitk}, observe that
\[
\Pr\{\vec{Z}_\mu \geq \un \ones_{4b_k + 2}\} - \left[\ExP[\Lxik(\un)]\right]^2 = \Pr\{\vec{Z}_\mu \geq \un \ones_{4b_k + 2}\} - \Pr\{\vec{Z}_0 \geq \un \ones_{4b_k + 2}\}.
\]
Now, using Lemma~\ref{lem:GenContDiff} and arguing as in the proof of Lemma~\ref{lem:CovBounds}.\ref{itm:StBd}, we get that \eqref{eqn:LCovBd} holds.

We now prove \eqref{eqn:LDom}. Let $k \geq 1.$ Consider $\tG, \tpGp$ as in the statement. As argued in the proof of Lemma~\ref{lem:ProbEst}.\ref{itm:LnkInd}, it is easy to see from \eqref{Defn:cPk} and \eqref{Defn:cP1} that $|V| \geq 2k + 3$ (resp. $|\Vp| \geq 2k + 3$); further, there exist $2k + 2$ vertices in $V$ (resp. $\Vp$) such that their pairwise $\|\cdot\|_1-$distance is at least $2.$ Retaining only these $2k + 2$ vertices and an additional vertex from remaining ones both in $V$ and $\Vp,$  and then permuting them if necessary, it is not difficult to see using the definition of $\rhoaL_{\tG, \tpGp}$ (given below \eqref{Defn:rhoaN}) and Lemma~\ref{lem:SlepainInequality} that \eqref{eqn:LDom} holds.

Using arguments similar to those above and the ones used in the proof of the $k = 0$ case of Lemma~\ref{lem:ProbEst}.\ref{itm:LnkInd}, it is easy to see that the desired result holds for the $k = 0$ case here as well.
\end{itemize}
This completes the proof.
\end{proof}

\section{Discussion}
\label{sec:Discussion}
Here we expand on some on the remarks given in Section~\ref{sec:intro} and also provide several directions for future research.

We begin by elaborating on Remark~\ref{rem:L1Iso}. Recall that the indicator associated with the random vector given in \eqref{Defn:vecX} approximates the presence of minimal subcomplexes with non-trivial Betti numbers. The $\|\cdot\|_1-$isotropy assumption significantly simplified the covariance matrix associated with this vector. On the other hand, if we had assumed $\|\cdot\|_2-$isotropy, then instead of $W_{2b_k}(\rho_2)$ we would have ended with a matrix whose off-diagonal entries were made up of the two distinct entries $\rho_2$ and $\rho_{\sqrt{2}}.$ This would have made our subsequent computations slightly more involved.

Next, to understand the restriction mentioned in Remark~\ref{rem:LocRest}, let us consider a Gaussian field on the continuum with a smooth covariance function, say $\cov[X_{\vec{t}}, X_{\vecp{t}}] = e^{-\|\vec{t} - \vecp{t}\|_2}.$ Then, note that fine sampling would result in $\rho_1$ and $\rho_2$ being close to each other and also being close to $1,$ thereby reversing the inequality in $\Cr{a:ComC}(k).$ In this sense, this latter covariance condition puts a lower bound on the sampling distances for our results to hold.

Moving on, recall that a key ingredient across our proofs are the covariance bounds given in Lemma~\ref{lem:CovBounds}. These extend the bounds given in \cite[Lemma 3.4 (i), (ii)]{holst1990poisson} to the multivariate case. A multivariate bound similar to the one in Lemma~\ref{lem:CovBounds}.\ref{itm:StBd} was also obtained in \cite[Lemma 1]{arcones1994limit} and plays an important role across \cite{Estrade,kratz2001central, kratz2016central}. While that bound is for more general functions of Gaussian vectors, it is weaker for the specific indicator function that we work with.

Under the independence assumptions mentioned in Remark~\ref{rem:Ind}, note that the tighter covariance bound obtained in Lemma~\ref{lem:CovBounds}.\ref{itm:StBd} would have been no longer necessary, thereby drastically simplifying our calculations. In fact, one could have then used a weaker Stein-Chen bound, as in \cite{kahle2013} for example, to establish the distributional convergences. Another notable difference would have been that the events $\{\vecXtOm \geq u \ones_{2b_k}\}$ and $\{\vecXtG \geq u \ones_{2b_k}\}$ for $G \in \cN_k$ would then have become equiprobable. Recall that these events are associated with $\|\cdot\|_1-$isometric and non-isometric minimal subcomplexes having non-trivial Betti numbers. Due to this, the leading constant in $\lbnk(\un)$ would have changed.

We now point to several interesting questions that arise from the present work. Since Assumption $\Cr{a:ComC}(k)$ was central to all of our theorems, one of the first questions to ask is: `What happens when this condition is not satisfied?'. When $\Cr{a:ComC}(k)$ holds, then it is easy to see from Lemmas~\ref{lem:BettiApprox}.\ref{itm:BdSnku} and \ref{lem:limRates} that $\lim_{n \to \infty} \ExP[\cSnk(\un)]/\ExP[\Snk(\un)] = 1.$ On the other hand, if this condition is not satisfied, the first consequence is that, whenever $\un \to \infty,$
\begin{equation}
\label{eqn:ExPSpnk}
\frac{\ExP[\Spnk(\un)]}{\ExP[\Snk(\un)]}  \to 0,
\end{equation}
where
\[
\Spnk(\un) := \sum_{\tOm \in \IdxnCrDir} \indc[\vecXtOm \geq \un 2b_k] \indc[X_{\vec{t}} < \un].
\]
This can be shown using the bounds for $\Pr\{\vecXtOm \geq \un \ones_{2b_k}, X_{\vec{t}} \leq \un\}$ given in \cite[Theorem 2.2]{dai2001identification}. Since $\Cr{a:ComC}(k)$ does not hold anymore, note that the above result cannot be shown directly using Lemma~\ref{lem:GaussianTailBounds}. This is because one cannot obtain bounds for $\Pr\{(\vecXtOm, X_{\vec{t}}) \geq \un \ones_{2b_k + 1}\}$ as the Savage condition associated with this expression no longer holds. Using \eqref{eqn:ExPSpnk}, it is now easy to see that
\[
\frac{\ExP[\cSnk(\un)]}{\ExP[\Snk(\un)]} \leq \frac{\ExP[\Spnk(\un)]}{\ExP[\Snk(\un)]} \to 0.
\]
This implies that $\Snk(\un)$ can no longer be used to study $\cSnk(\un)$ and, consequently, $\Bnk(\un).$ In other words, one would need to use better approximators for $\Bnk(\un).$ At present, we are unsure if $\cSnk(\un)$ will once again dictate the behaviour of $\Bnk(\un).$ In particular, it is not clear if $\ExP[\Spnk(\un)]$ will dominate the expected count of the more intricate subcomplexes having non-trivial Betti numbers. But if this behaviour indeed holds, then, perhaps, $\Bnk(\un)$ can be studied using $\Spnk(\un).$

As we noted in Remark \ref{rem:CLTlimit}, another obvious work for the future is to show the CLT for Betti numbers in the entire non-vanishing regime. One way to proceed could be to build upon the ideas from the CLT proof in \cite{kahle2013}. There, the Betti number are approximated by the contribution from all those isolated components whose vertex support is bounded by some sufficiently large $m.$ It is then shown that if the regime is so chosen that components of size $m+1$ cannot occur, then a CLT for the approximated Betti number implies one for the actual Betti number itself. Presently, the difficulty in adopting this approach to our setup is in proving the CLT for the isolated components. It appears that the isolation condition, which requires dealing simultaneously with Gaussian random variables exceeding and being below some threshold, entails use of a modified Stein-Chen approach with negative covariances. It is not clear how to generalize Theorem~\ref{thm:ProbSpaceExistence} to handle this case. We expect the computations to be a bit involved. Nevertheless, this should be quite interesting since we would then be able to obtain a better estimate of the variance of $\beta_{n,k}(\un)$ itself; see Remark~\ref{rem:VarEstBnk}.

In a sense, this work studies the distribution of the number of holes for all sufficiently large but fixed excursion level $u$ and window size $n.$ An alternate way to look at this setup would be fix $n$ and only vary $u,$ and then ask questions about the statistics of the range of $u-$values over which each hole persists; note that holes can appear and disappear as $u$ is varied. A formal way to record such birth and death times is via what is known as the `persistence diagram'. In studying the persistence diagram associated with Gaussian excursions, ideas from \cite{skraba2017randomly} should be of help.

Since $\ExP[\beta_{n,k}(\un)]/ \ExP[\beta_{n,0}(\un)]\to 0$ for each $k\geq 1$, and also since there are only finitely many different Betti numbers in any given dimension $d$, we believe that the EPC of $\cK(n; u)$ should also exhibit trivial, Poisson, and CLT behaviour with regimes being determined by those of $\beta_{n,0}(\un)$. In fact, one should also be able to easily prove limit theorems for the LKCs using the results already established in this paper.

Recall that our theorems are proved in the sparse regime. The next logical step would be to derive limiting results in the thermodynamic regime, but without assuming the fast decay conditions as in \cite{Reddy2018}. Here, we believe that the ideas from \cite{yogeshwaran2015topology, yogeshwaran2017random} may turn out be very useful. The resulting theorems would supplement those in \cite{Estrade}, \cite{kratz2016central}, \cite{muller2017central}, etc.

As has been mentioned in \cite{holst1990poisson}, and as is demonstrated by \cite{husler1995rate}, one could use similar techniques as in this paper to establish Poisson approximation theorems for number of exceedances of nonstationary Gaussian sequences as well. This suggests that our results on Betti numbers should also be generalizable to the nonstationary scenario.

Considering $m-$dependent stationary Gaussian sequences, \cite{raab1999compound} and \cite{hashorva2002remarks} obtained compound Poisson approximations for the number of exceedances by suitably modifying the Stein-Chen method used in \cite{holst1990poisson}. In such cases when there are strong local covariances, it is interesting to ask whether we can extend these results to Betti numbers of $m-$dependent Gaussian fields. Since the basic ideas involved are similar, we believe that this should be possible. This work assumes significance from the perspective of ARMA models.

Regarding extending our results to general fields, \cite{raab1997poisson} suggests that our theorems can be also be shown for $\chi^2-$fields. For other fields, we first note that even though Theorem \ref{thm:SuffCondPoissonConv} is quite general, an analogue of Theorem \ref{thm:ProbSpaceExistence} needs to be established. Also, since we relied on precise multivariate Gaussian tail estimates, we will be required to estimate the corresponding tail probabilities for the particular field under consideration.

Lastly, it would be interesting to extend the ideas in this paper and those discussed above to the dynamic setup where the random field of interest also evolves with time. In this direction, ideas from \cite{thoppe2016evolution} should be of help.

\appendix

\section{}

\begin{lemma}
\label{lem:eigM}
Let $\covM_m(\rho)$ be as defined above \eqref{Def:WMat}. Then, the eigenvalues of the matrix $\covM_m(\rho)$ are $(1 - \rho),$ repeated $(m - 1)$  times, and $(1 + (m - 1) \rho).$ The corresponding linearly independent eigenvectors are $\vec{w}_1, \ldots, \vec{w}_m,$ where $\vec{w}_m = \ones_m$ and, for $j = 1, \ldots, m - 1,$
\[
\vec{w}_j(\ell) =
\begin{cases}
1, & \text{ if $\ell = 1,$}\\
-1, & \text{ if $\ell = j + 1,$}\\
0, & \text{otherwise.}
\end{cases}
\]
\end{lemma}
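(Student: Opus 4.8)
The plan is to exploit the decomposition $\covM_m(\rho) = (1 - \rho)\,\eye_m + \rho\,\onesM_{m, m}$, where $\onesM_{m, m} = \ones_m^\tr\ones_m$ is the $m \times m$ all-ones matrix. First I would record the elementary spectral picture of $\onesM_{m, m}$: being a rank-one matrix with trace $m$, it has the single nonzero eigenvalue $m$ with eigenspace $\mathrm{span}\{\ones_m\}$ (since $\onesM_{m, m}\,\ones_m^\tr = m\,\ones_m^\tr$), and its kernel --- the eigenspace for the eigenvalue $0$ --- is the sum-zero hyperplane $H := \{\vec{x} \in \Real^m : \textstyle\sum_{\ell = 1}^m \vec{x}(\ell) = 0\}$, which has dimension $m - 1$. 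Adding the scalar matrix $(1 - \rho)\eye_m$ shifts every eigenvalue by $1 - \rho$ while leaving eigenvectors unchanged, so $\covM_m(\rho)$ acts as multiplication by $1 + (m - 1)\rho$ on $\mathrm{span}\{\ones_m\}$ and by $1 - \rho$ on $H$. Since $\Real^m = \mathrm{span}\{\ones_m\} \oplus H$ (because $\ones_m \notin H$), these exhaust the spectrum, and the multiplicities are $1$ and $m - 1$ as claimed.

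Next I would confirm that the vectors in the statement realize these two eigenspaces. For $\vec{w}_m = \ones_m$ a direct computation gives $\covM_m(\rho)\,\vec{w}_m^\tr = \bigl(1 + (m - 1)\rho\bigr)\,\vec{w}_m^\tr$. For $1 \le j \le m - 1$, the coordinates of $\vec{w}_j$ sum to $1 + (-1) = 0$, so $\vec{w}_j \in H$, whence $\onesM_{m, m}\,\vec{w}_j^\tr = \vec{0}$ and $\covM_m(\rho)\,\vec{w}_j^\tr = (1 - \rho)\,\vec{w}_j^\tr$. For linear independence of the full list, note that if $\sum_{j = 1}^{m - 1} c_j \vec{w}_j = \vec{0}$ then reading off coordinate $j + 1$ gives $-c_j = 0$ for each $j$; thus $\vec{w}_1, \dots, \vec{w}_{m - 1}$ are independent and, lying in $H$, form a basis of it, while $\vec{w}_m = \ones_m$ lies outside $H$. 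Hence $\vec{w}_1, \dots, \vec{w}_m$ are linearly independent.

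There is no genuine obstacle here; the argument is routine linear algebra, and the only points deserving a careful sentence are the dimension count $1 + (m - 1) = m$ (which certifies that no further eigenvalues exist) and the coordinate-reading step that establishes independence of $\vec{w}_1, \dots, \vec{w}_{m - 1}$.
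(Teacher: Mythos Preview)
Your proposal is correct and complete. The paper's own proof consists of the single sentence ``This is immediate,'' so your argument via the decomposition $\covM_m(\rho) = (1-\rho)\eye_m + \rho\,\onesM_{m,m}$ is exactly the routine verification that the paper leaves to the reader.
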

\begin{proof}
This is immediate.
\end{proof}

\begin{lemma}
\label{lem:eValW}
Let $W_{m_1, m_2}(\rho, \mu)$ be as defined in \eqref{Def:WMat} with $m_1, m_2 \geq 1.$ Then, $m_1 + m_2 - 2$ eigenvalues of $W_{m_1, m_2}(\rho, \mu)$ are equal to $(1 - \rho).$ The corresponding linearly independent eigenvectors are $ \vec{w}_1^\prime, \ldots, \vec{w}_{m_1 - 1}^\prime,$ $\vec{w}_1^{\prime\prime}, \ldots, \vec{w}_{m_2 - 1}^{\prime\prime},$ where
\[
\vec{w}_j^\prime(\ell) =
\begin{cases}
1, & \text{ if $\ell = 1,$}\\
-1, & \text{ if $\ell = j + 1,$}\\
0, & \text{ otherwise;}
\end{cases}
\]
and
\[
\vec{w}_j^{\prime\prime}(\ell) =
\begin{cases}
1, & \text{ if $\ell =  m_1 + 1,$}\\
-1, & \text{ if $\ell = m_1 + j + 1,$}\\
0, & \text{ otherwise.}
\end{cases}
\]
Additionally, if $\rho \neq 1$ and if at least one of $(1 + (m_1 - 1)\rho)$ and $(1 + (m_2 - 1)\rho)$ is non-zero, then
\[
|W_{m_1, m_2}(\rho, \mu)| = (1 - \rho)^{m_1 + m_2 - 2} [(1 + (m_1 - 1) \rho) (1 + (m_2 - 1) \rho) - m_1 m_2 \mu^2].
\]
Lastly, the following statements hold when $m_1, m_2$ satisfy some special conditions:
\begin{enumerate}
\item If $m_1 = m_2,$ then the remaining two eigenvalues of $W_{m_1, m_2}(\rho, \mu)$ are $1 + (m_1 - 1)\rho \pm m_1\mu$ with the corresponding eigenvectors being $\ones_{2 m_1}$ and $(\ones_{m_1},  - \ones_{m_1}).$

\item If $m_2 = 1,$ then the remaining two eigenvalues are $[2 + (m_1 - 1)\rho \pm \sqrt{(m_1 - 1)^2 \rho^2 + 4m_1 \mu^2}]/2.$
\end{enumerate}
\end{lemma}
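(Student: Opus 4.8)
The plan is to prove the three assertions in turn, each by an explicit eigenvector computation, with a single application of the Schur complement formula for the determinant.

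\textbf{Step 1 (the $(1-\rho)$-eigenspace).} I would first record that $\covM_m(\rho) = (1-\rho)I + \rho\,\onesM_m$, so that $\covM_m(\rho)$ acts as multiplication by $(1-\rho)$ on any vector whose coordinates sum to zero, while the all-ones block $\mu\,\onesM_{m_2,m_1}$ annihilates such a vector (its entries sum to zero). Writing a generic element of $\Real^{m_1+m_2}$ as a pair $(x,y)$ with $x\in\Real^{m_1}$, $y\in\Real^{m_2}$, the block structure \eqref{Def:WMat} gives $W_{m_1,m_2}(\rho,\mu)(x,\vec 0) = ((1-\rho)x,\vec 0)$ whenever $x$ has zero coordinate-sum, and symmetrically for $(\vec 0,y)$. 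The vectors $\vec w_1',\dots,\vec w_{m_1-1}'$ and $\vec w_1'',\dots,\vec w_{m_2-1}''$ are exactly of this form, they are linearly independent (the primed ones among themselves and the double-primed ones among themselves, as in Lemma~\ref{lem:eigM}), and the two families have disjoint supports; this produces the claimed $m_1+m_2-2$ eigenvalues equal to $(1-\rho)$.

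\textbf{Step 2 (the determinant).} Under the stated hypothesis that $\rho\neq 1$ and, by symmetry, without loss of generality $1+(m_1-1)\rho\neq 0$, Lemma~\ref{lem:eigM} says $\covM_{m_1}(\rho)$ is invertible, so the Schur complement identity applied to \eqref{Def:WMat} yields
\[
|W_{m_1,m_2}(\rho,\mu)| = |\covM_{m_1}(\rho)|\cdot\bigl|\covM_{m_2}(\rho) - \mu^2\,\onesM_{m_2,m_1}\,\covM_{m_1}(\rho)^{-1}\,\onesM_{m_1,m_2}\bigr|.
\]
Since $\ones_{m_1}$ is an eigenvector of $\covM_{m_1}(\rho)$ with eigenvalue $1+(m_1-1)\rho$, one gets $\onesM_{m_2,m_1}\,\covM_{m_1}(\rho)^{-1}\,\onesM_{m_1,m_2} = \tfrac{m_1}{1+(m_1-1)\rho}\,\onesM_{m_2}$, so the Schur complement is again of the form $(1-\rho)I + c\,\onesM_{m_2}$ with $c = \rho - \tfrac{m_1\mu^2}{1+(m_1-1)\rho}$, whose determinant Lemma~\ref{lem:eigM} evaluates to $(1-\rho)^{m_2-1}\bigl[(1-\rho)+m_2 c\bigr]$. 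Multiplying by $|\covM_{m_1}(\rho)| = (1-\rho)^{m_1-1}(1+(m_1-1)\rho)$ and simplifying gives the stated formula. (An alternative avoiding any invertibility hypothesis: $W$ is block diagonal with respect to $\Real^{m_1+m_2} = \operatorname{span}\{\vec w_j',\vec w_j''\}\oplus U$, where $U$ is the $W$-invariant plane spanned by $(\ones_{m_1},\vec 0)$ and $(\vec 0,\ones_{m_2})$, so $|W| = (1-\rho)^{m_1+m_2-2}\det(W|_U)$; but the Schur route matches the hypotheses as written.)

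\textbf{Step 3 (the two special cases).} Here the key observation is that, because $\operatorname{span}\{\vec w_j',\vec w_j''\}$ is a $W$-invariant complement of the $W$-invariant plane $U$ above, the two remaining eigenvalues of $W_{m_1,m_2}(\rho,\mu)$ are precisely the eigenvalues of the restriction $W|_U$. When $m_1=m_2=m$, a one-line computation with $\covM_m(\rho)\ones_m = (1+(m-1)\rho)\ones_m$ and $\onesM_{m,m}\ones_m = m\ones_m$ shows $W(\ones_{2m}) = (1+(m-1)\rho+m\mu)\ones_{2m}$ and $W(\ones_m,-\ones_m) = (1+(m-1)\rho-m\mu)(\ones_m,-\ones_m)$, giving eigenvalues $1+(m-1)\rho\pm m\mu$ with the stated eigenvectors. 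When $m_2=1$, in the basis $\{(\ones_{m_1},0),(0,1)\}$ of $U$ the matrix of $W|_U$ has first row $(1+(m_1-1)\rho,\ \mu)$ and second row $(m_1\mu,\ 1)$, hence trace $2+(m_1-1)\rho$ and determinant $1+(m_1-1)\rho-m_1\mu^2$; solving the resulting quadratic, the discriminant collapses to $(m_1-1)^2\rho^2+4m_1\mu^2$, yielding the eigenvalues $\bigl[2+(m_1-1)\rho\pm\sqrt{(m_1-1)^2\rho^2+4m_1\mu^2}\,\bigr]/2$.

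No step here is genuinely deep; the only place that needs care is the determinant, where one must either invoke the case hypothesis to guarantee that one diagonal block is invertible (for the Schur complement) or instead justify separately the block-diagonal decomposition $\Real^{m_1+m_2} = \operatorname{span}\{\vec w_j',\vec w_j''\}\oplus U$ and that $U$ is invariant. Everything else is routine bookkeeping with the all-ones matrices.
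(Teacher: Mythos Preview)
Your proposal is correct and follows essentially the same route as the paper: direct verification of the $(1-\rho)$-eigenvectors, the Schur complement for the determinant (under the stated invertibility hypothesis), and then handling the two special cases. The only cosmetic difference is in the $m_2=1$ case: you compute the $2\times 2$ matrix of $W|_U$ explicitly and read off trace/determinant, whereas the paper obtains the same trace and determinant by subtracting the known $(1-\rho)$-contributions from the full trace and dividing out $(1-\rho)^{m_1-1}$ from the full determinant.
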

\begin{proof}
It is straightforward to verify that $ \vec{w}_1^\prime, \ldots, \vec{w}_{m_1 - 1}^\prime,$ $\vec{w}_1^{\prime\prime}, \ldots, \vec{w}_{m_2 - 1}^{\prime\prime}$ are indeed independent eigenvectors with eigenvalue $1 - \rho.$

We now establish the determinant formula. Without loss of generality, let $(1 + (m_1 - 1)\rho)$ be non-zero. This, combined with the fact that $\rho \neq 1,$ shows that all the eigenvalues of $W_{m_1}(\rho),$ given by Lemma~\ref{lem:eigM}, are non-zero; hence, it is invertible. Then, from matrix theory concerning determinant of block matrices,
\begin{equation}
\label{eqn:BlockDet}
|W_{m_1, m_2} (\rho, \mu)| = |\covM_{m_1}(\rho)| \; |\covM_{m_2}(\rho) - \mu^2 \onesM_{m_2, m_1} \; \covM_{m_1}^{-1}(\rho) \; \onesM_{m_1, m_2}|.
\end{equation}
From Lemma~\ref{lem:eigM} again, observe that
\begin{equation}
\label{eqn:Det1}
|\covM_{m_1}(\rho)| = (1 - \rho)^{m_1 - 1} (1 + (m_1 - 1)\rho),
\end{equation}
and that $\ones_{m_1}$ is an eigenvector of $W_{m_1}(\rho).$ This latter fact and that $\onesM_{m_2, m_2} = W_{m_2}(1)$ show
\[
\onesM_{m_2, m_1} \; \covM_{m_1}^{-1}(\rho) \; \onesM_{m_1, m_2} = \frac{m_1}{1 + (m_1 - 1) \rho} W_{m_2}(1).
\]
Using this calculation and Lemma~\ref{lem:eigM} one last time, it is now easy to see that the eigenvectors of $\covM_{m_2}(\rho) - \mu^2 \onesM_{m_2, m_1} \; \covM_{m_1}^{-1}(\rho) \; \onesM_{m_1, m_2}$ are exactly as those of $\covM_{m_2}(\rho)$ and the corresponding eigenvalues are $(1 - \rho),$ repeated $(m_2 - 1)$ times, and $(1 + (m_2 - 1) \rho) - m_1 m_2 \mu^2/(1 + (m_1  - 1) \rho).$ The last eigenvalue is well defined since $(1 + (m_1  - 1) \rho)$ is non-zero. Therefore,
\begin{equation}
\label{eqn:Det2}
|\covM_{m_2}(\rho) - \mu^2 \onesM_{m_2, m_1} \; \covM_{m_1}^{-1}(\rho) \; \onesM_{m_1, m_2}| = (1 - \rho)^{m_2 - 1} \left[1 + (m_2 - 1) \rho) - \frac{m_1 m_2 \mu^2}{1 + (m_1  - 1) \rho}\right].
\end{equation}
Substituting \eqref{eqn:Det1} and \eqref{eqn:Det2} in \eqref{eqn:BlockDet}, the desired result follows.

It remains to show the statements on the remaining eigenvalues. The first one is trivially true. So, consider the case that $m_2 = 1.$ From our earlier calculations, we have
\[
|W_{m_1, 1}(\rho, \mu)| = (1 - \rho)^{m_1 - 1}(1 + (m_1- 1) \rho - m_1 \mu^2)
\]
and that $m_1 - 1$ eigenvalues of $W_{m_1, 1}(\rho, \mu)$ are $1 - \rho.$ Separately, the trace of $W_{m_1, 1}(\rho, \mu)$ is $m_1 + 1.$ If we let $\kappa_1$ and $\kappa_2$ be the remaining two eigenvalues, it then follows that
\[
\kappa_1 + \kappa_2 = 2 + (m_1 - 1) \rho \text{ and } \kappa_1 \kappa_2 = 1 + (m_1 - 1) \rho - m_1 \mu^2.
\]
From this, the desired result is easy to see.
\end{proof}

\begin{lemma}
\label{lem:PDofW}
Fix $m_1, m_2 \geq 1.$ Let $\rho, \mu \in [0,1)$ be such that both $1 + (m_1 - 1) \rho - m_1 \mu$ and $1 + (m_2 - 1)\rho - m_2 \mu$ are positive. Then, $W_{m_1, m_2}(\rho, \mu)$ from \eqref{Def:WMat} is positive definite.
\end{lemma}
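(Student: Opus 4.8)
The plan is to diagonalize $W := W_{m_1,m_2}(\rho,\mu)$ using the eigenstructure already recorded in Lemma~\ref{lem:eValW}, and then check that every eigenvalue is positive; since $W$ is real and symmetric, positive definiteness follows. First I would observe that the hypotheses $\rho \in [0,1)$ and $1 + (m_i-1)\rho - m_i\mu > 0$ for $i = 1,2$ put us in the scope of Lemma~\ref{lem:eValW}: indeed $\rho \neq 1$, and each $1 + (m_i-1)\rho \geq 1 \neq 0$ because $\rho \geq 0$. Lemma~\ref{lem:eValW} then gives that $1-\rho$ is an eigenvalue of $W$ of multiplicity $m_1 + m_2 - 2$, and since $\rho < 1$ these are all strictly positive. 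It thus remains only to control the two remaining eigenvalues $\kappa_1, \kappa_2$, which I would pin down through their sum and product rather than individually.

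For the sum: the trace of $W$ equals $m_1 + m_2$ (all diagonal entries are $1$), so $\kappa_1 + \kappa_2 = (m_1+m_2) - (m_1+m_2-2)(1-\rho) = 2 + (m_1+m_2-2)\rho \geq 2 > 0$, using $\rho \geq 0$. For the product: the determinant formula in Lemma~\ref{lem:eValW} reads $|W| = (1-\rho)^{m_1+m_2-2}\bigl[(1+(m_1-1)\rho)(1+(m_2-1)\rho) - m_1 m_2\mu^2\bigr]$, and since $|W| = (1-\rho)^{m_1+m_2-2}\kappa_1\kappa_2$ with $1-\rho \neq 0$, we get $\kappa_1\kappa_2 = (1+(m_1-1)\rho)(1+(m_2-1)\rho) - m_1 m_2\mu^2$. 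Writing $a_i := 1+(m_i-1)\rho$ and $c_i := m_i\mu$, the hypothesis is $a_i > c_i$, while $a_i \geq 1 > 0$ and $c_i \geq 0$ because $\rho,\mu \geq 0$. Multiplying the two strict inequalities among nonnegative numbers with positive left sides gives $a_1 a_2 > c_1 c_2 = m_1 m_2 \mu^2$: if $c_1 = 0$ the right side is $0 < a_1 a_2$, and if $c_1 > 0$ then $a_1 a_2 > c_1 a_2 \geq c_1 c_2$. Hence $\kappa_1\kappa_2 > 0$, and together with $\kappa_1 + \kappa_2 > 0$ this forces $\kappa_1,\kappa_2 > 0$. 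All eigenvalues of the symmetric matrix $W$ are therefore strictly positive, so $W$ is positive definite.

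There is no real obstacle here; the one spot warranting care is the final inequality manipulation, where one must separate the case $\mu = 0$ from $\mu > 0$ to legitimately multiply the two strict inequalities, and everything else is bookkeeping via Lemma~\ref{lem:eValW} and the trace identity. An alternative, if one wished to avoid invoking Lemma~\ref{lem:eValW}, is to decompose $\Real^{m_1+m_2}$ into the ``block-constant'' plane spanned by $(\ones_{m_1},\vec{0})$ and $(\vec{0},\ones_{m_2})$ together with its orthogonal complement: $W$ preserves both subspaces, acts as $(1-\rho)\eye$ on the complement, and on the plane its action has the trace and determinant computed above — but routing through Lemma~\ref{lem:eValW} is the shorter path.
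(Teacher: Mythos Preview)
Your proposal is correct and follows essentially the same route as the paper: invoke Lemma~\ref{lem:eValW} to identify the $(m_1+m_2-2)$-fold eigenvalue $1-\rho>0$, then use the trace and determinant to show the remaining two eigenvalues have positive sum and positive product. The only cosmetic difference is in verifying $\kappa_1\kappa_2>0$: the paper expands $(1+(m_1-1)\rho-m_1\mu)(1+(m_2-1)\rho-m_2\mu)+m_1\mu(1+(m_2-1)\rho-m_2\mu)+m_2\mu(1+(m_1-1)\rho-m_1\mu)$ and observes each summand is nonnegative with the first strictly positive, whereas you chain the inequalities $a_1a_2>c_1a_2\geq c_1c_2$ directly with a case split on $\mu=0$.
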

\begin{proof}
From Lemma~\ref{lem:eValW}, $m_1  + m_2 - 2$ eigenvalues of $W_{m_1, m_2}(\rho, \mu)$ are $1 - \rho;$ these are already positive as $\rho < 1.$ Let $\kappa_1$ and $\kappa_2$ be the remaining two eigenvalues. Clearly, the trace of $W_{m_1, m_2}(\rho, \mu)$ is $m_1 + m_2.$ Consequently, we have $\kappa_1 + \kappa_2 = 2(1 - \rho) + (m_1 + m_2) \rho;$ this is positive since $\rho \in [0, 1).$ Thus, to prove that both $\kappa_1$ and $\kappa_2$ are positive and hence show that $W_{m_1, m_2}(\rho, \mu)$ is positive definite, it suffices to show that $\kappa_1 \kappa_2 > 0.$

Now observe that
\begin{eqnarray*}
\kappa_1 \kappa_2 & = & (1 + (m_1 - 1) \rho)(1 + (m_2 - 1)\rho) - m_1 m_2 \mu^2 \\
& = & (1 + (m_1 - 1)\rho - m_1 \mu)(1 + (m_2 - 1)\rho - m_2 \mu) \\
& & + \; m_1 \mu (1 + (m_2 - 1)\rho - m_2\mu) + m_2 \mu (1 + (m_1 - 1) \rho - m_1 \mu)  \\
& > & 0,
\end{eqnarray*}
where the first relation follows from the determinant formula given in Lemma~\ref{lem:eValW} and the fact that the remaining $m_1 + m_2 - 2$ eigenvalues of $W_{m_1, m_2}(\rho, \mu)$ are $1 - \rho,$ while the last relation follows from the given conditions on $\rho$ and $\mu.$ The desired result is now easy to see.
\end{proof}

\begin{lemma}
\label{lem:ZmuTailBeh}
Fix $m \geq 1$ and $u \in \pReal.$ Let $\rho, \rhop \in [0, 1)$ be such that $1 + (m - 1) \rho - m \rhop > 0.$ For $\mu \in [0, \rhop],$ let $\vec{Z}_\mu \sim \Gau(\vec{0}, W_{m, m}(\rho, \mu)),$ where $W_{m, m}(\rho, \mu)$ is as in \eqref{Def:WMat}. Let $f_{\vec{Z}_\mu}$ denote the density of $\vec{Z}_\mu$ and let $g(\mu) := \Pr\{\vec{Z}_\mu \geq u \ones_{2m}\}.$  Then, the following statements are true.
\begin{enumerate}

\item \label{itm:Cont} $g(\mu)$ is continuous in $[0, \rhop].$

\item \label{itm:DerBd} $g(\mu)$ is differentiable on $(0, \rhop).$ Further, for any $\mu \in (0, \rhop),$
\begin{equation}
\label{eqn:gDerBd}
\left|\frac{\partial}{\partial \mu}[g(\mu)] \right|  \leq  \Cl[Const]{c:IB} u^{-2(m - 1)} \exp\left[\frac{-mu^2}{1 + (m - 1) \rho + m \mu}\right]
\end{equation}

for some constant $\Cr{c:IB} \geq 0$ which depends on $m.$

\end{enumerate}
\end{lemma}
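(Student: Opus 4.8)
The plan is to express $g(\mu) = \Pr\{\vec{Z}_\mu \geq u\ones_{2m}\}$ as an integral of the Gaussian density $f_{\vec{Z}_\mu}$ over the orthant $\{\vec{y} \geq u\ones_{2m}\}$, and then differentiate under the integral sign using the classical Plackett-type formula relating $\partial_\mu f_{\vec{Z}_\mu}$ to second-order partial derivatives of the density in the spatial variables. First I would record, via Lemma~\ref{lem:PDofW} (applied with $m_1 = m_2 = m$, using the hypothesis $1 + (m-1)\rho - m\rhop > 0$ together with $\rho, \rhop \in [0,1)$), that $W_{m,m}(\rho,\mu)$ is positive definite for every $\mu \in [0,\rhop]$, so $\vec{Z}_\mu$ is well defined and has a smooth density. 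For Statement~\ref{itm:Cont}, continuity of $g$ on the closed interval $[0,\rhop]$ follows from dominated convergence: the entries of $W_{m,m}(\rho,\mu)^{-1}$ and $|W_{m,m}(\rho,\mu)|$ are continuous in $\mu$ on $[0,\rhop]$ (the determinant formula in Lemma~\ref{lem:eValW} shows $|W_{m,m}(\rho,\mu)| = (1-\rho)^{2m-2}[(1+(m-1)\rho)^2 - m^2\mu^2]$, which stays bounded away from zero on $[0,\rhop]$ since $1+(m-1)\rho - m\mu \geq 1+(m-1)\rho - m\rhop > 0$), and one has a uniform integrable bound on $f_{\vec{Z}_\mu}$ over the orthant.

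For Statement~\ref{itm:DerBd}, the differentiability on the open interval $(0,\rhop)$ is standard once the above smoothness is in hand. The key computation is the derivative bound. The block structure of $W_{m,m}(\rho,\mu)$ means that $\mu$ is exactly the common off-diagonal covariance between the first block of $m$ coordinates and the second block of $m$ coordinates; hence, by the Plackett / diffusion identity,
\[
\frac{\partial}{\partial \mu}[g(\mu)] = \sum_{i=1}^{m}\sum_{j=m+1}^{2m} \int_{\{\vec{y} \geq u\ones_{2m}\}} \frac{\partial^2 f_{\vec{Z}_\mu}}{\partial y_i \partial y_j}(\vec{y})\, \d{\vec{y}}.
\]
Each such term integrates out to a boundary expression: integrating $\partial^2 f/\partial y_i\partial y_j$ over the orthant collapses the $y_i$ and $y_j$ integrals, leaving $f_{\vec{Z}_\mu}$ evaluated with $y_i = y_j = u$ and integrated over the remaining $2m-2$ coordinates each ranging over $[u,\infty)$. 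This is (up to a $\mu$-dependent normalizing constant that is bounded above and below on $[0,\rhop]$ by the determinant formula) the probability that a $(2m-2)$-dimensional Gaussian exceeds $u\ones_{2m-2}$, weighted by the value of a bivariate marginal density at the corner $(u,u)$. The bivariate marginal of $(Z_\mu(i), Z_\mu(j))$ for $i$ in the first block and $j$ in the second is $N(\vec 0, W_{1,1}(\,\cdot\,))$ with correlation $\mu$, so its density at $(u,u)$ is $O\!\big(\exp[-u^2/(1+\mu)]\big)$ up to constants; combining this with a Savage-type (Lemma~\ref{lem:GaussianTailBounds}) estimate of the residual $(2m-2)$-dimensional orthant probability — whose exponent, after computing the relevant quadratic form for the conditioned covariance, works out to $-(m-1)u^2/(1+(m-1)\rho+m\mu)$ times the appropriate factor, and whose polynomial prefactor is $u^{-2(m-1)}$ — and bundling the two exponents yields the claimed bound
\[
\left|\frac{\partial}{\partial\mu}[g(\mu)]\right| \leq \Cr{c:IB}\, u^{-2(m-1)}\exp\left[\frac{-mu^2}{1+(m-1)\rho + m\mu}\right],
\]
with $\Cr{c:IB}$ absorbing the finitely many $i,j$ pairs and all the $\mu$-uniform constants.

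The main obstacle I anticipate is the bookkeeping in the boundary/exponent calculation: one must verify that conditioning the Gaussian on $Z_\mu(i) = Z_\mu(j) = u$ produces a $(2m-2)$-dimensional Gaussian whose mean vector and covariance, when fed into the Savage tail estimate, combine with the corner-density factor $\exp[-u^2/(1+\mu)]$ to give precisely the exponent $-mu^2/(1+(m-1)\rho+m\mu)$ rather than something larger. This amounts to a quadratic-form identity for the Schur complement of the $2\times 2$ block $\begin{bmatrix}1 & \mu\\ \mu & 1\end{bmatrix}$ inside $W_{m,m}(\rho,\mu)$; it is a finite linear-algebra computation (and closely parallels the eigenvector/eigenvalue bookkeeping already done in Lemma~\ref{lem:eValW} and in the proof of Lemma~\ref{lem:CovBounds}), but it is the step where an arithmetic slip would be easy. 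Everything else — the Plackett identity, the interchange of differentiation and integration, the uniform control of normalizing constants on $[0,\rhop]$ — is routine given the positive-definiteness guaranteed by Lemma~\ref{lem:PDofW}.
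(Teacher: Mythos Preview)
Your approach is essentially the paper's: Plackett's identity $\partial f/\partial\sigma_{ij}=\partial^2 f/\partial z(i)\partial z(j)$, integration by parts to collapse the two boundary coordinates, then a Savage bound (Lemma~\ref{lem:GaussianTailBounds}) on the remaining $(2m-2)$-dimensional conditional tail. Two points of difference are worth flagging.

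First, the paper does not integrate by parts against the raw indicator; it mollifies by a smooth monotone $\psi_\epsilon$ approximating $\indc[z>u]$, integrates by parts against $\Psi_\epsilon(\vec z;u)=\prod_i\psi_\epsilon(z(i);u)$, and then sends $\epsilon\to 0$ via dominated convergence. Your ``collapses the $y_i$ and $y_j$ integrals'' is the correct heuristic, but the mollification is what makes the boundary evaluation rigorous.

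Second---and this is exactly the obstacle you anticipated---your proposed split of the exponent into a bivariate corner piece $-u^2/(1+\mu)$ and a conditional-Savage piece $-(m-1)u^2/(1+(m-1)\rho+m\mu)$ does \emph{not} sum to $-mu^2/(1+(m-1)\rho+m\mu)$; equating them forces $(m-1)(\rho+\mu)=0$. The paper avoids this bookkeeping entirely: after applying Savage to the conditional tail, it observes that the bivariate marginal density at $(u,u)$ times the conditional density at $u\ones_{2m-2}$ is simply the full joint density $f_{\vec Z_\mu}(u\ones_{2m})$, and the exponent $-\tfrac12\,u\ones_{2m}\,W_{m,m}^{-1}(\rho,\mu)\,[u\ones_{2m}]^\tr = -mu^2/(1+(m-1)\rho+m\mu)$ drops out immediately because $\ones_{2m}$ is an eigenvector of $W_{m,m}(\rho,\mu)$ with eigenvalue $1+(m-1)\rho+m\mu$ (Lemma~\ref{lem:eValW}). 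The same eigenvector relation, pushed through the block-inverse formula, gives the Savage vector $\vec\Delta_{u\ones_2}$ for the conditional problem (the paper's \eqref{eqn:MeanAtu}--\eqref{eqn:DeltaAtu}) and hence the $u^{-2(m-1)}$ prefactor. So your Schur-complement plan is right in the aggregate; just recombine the two densities into $f_{\vec Z_\mu}(u\ones_{2m})$ rather than trying to isolate the two exponents.
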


\begin{remark}
Because of Lemma~\ref{lem:PDofW}, $W_{m, m}(\rho, \mu)$ is a symmetric positive definite matrix and, hence, $\vec{Z}_{\mu}$ is well defined.
\end{remark}

\begin{proof}[Proof of Lemma~\ref{lem:ZmuTailBeh}]
We provide arguments for each statement separately.

\begin{itemize}[leftmargin=*]
\item Arguments for Statement~\ref{itm:Cont}: By definition, for any $\vec{z} \in \Real^{2m}$ and $\mu \in [0, \rhop],$ we have
\begin{equation}
\label{eqn:denDf}
f_{\vec{Z}_\mu}(\vec{z}) = \frac{1}{(2\pi)^m  \sqrt{|W_{m, m}(\rho, \mu)|}} \exp\left[\frac{-\vec{z} \, W^{-1}_{m, m}(\rho, \mu)\,  \vec{z}^\tr}{2}\right].
\end{equation}

For each $\mu \in [0, \rhop],$ observe that
\begin{eqnarray}
|W_{m, m}(\rho, \mu)| & = & (1 - \rho)^{2m - 2} [(1 + (m - 1)\rho)^2 - m^2 \mu^2] \label{eqn:DetW}\\
& \geq & (1 - \rho)^{2m - 2} [(1 + (m - 1)\rho)^2 - (m\rhop)^2] \label{eqn:DetWBd}\\
& > & 0 \nonumber,
\end{eqnarray}
where the first relation follows from Lemma~\ref{lem:eValW}, the second holds since $\mu \in [0, \rhop],$ while the last one holds because $\rho, \rhop  \in [0, 1)$ and $1 + (m - 1) \rho - \rhop > 0.$ Again, from Lemma~\ref{lem:eValW} and because $\rho, \mu \geq 0,$ note that the largest eigenvalue of $W_{m, m}(\rho, \mu)$ is $1 + (m - 1) \rho + m \mu;$ since $\mu \leq \rhop,$ it is bounded from above by $1 + (m - 1) \rho + m \rhop.$

Using these calculations, it then follows that
\begin{multline}
\label{eqn:denBd}
|f_{\vec{Z}_\mu}(\vec{z})| \\ \leq \frac{1}{(2\pi)^m (1 - \rho)^{m - 1} \sqrt{[1 + (m - 1)\rho]^2 - [m \rhop]^2}} \exp\left[\frac{-\vec{z}\vec{z}^\tr}{2(1 + (m - 1) \rho + m \rhop)}\right].
\end{multline}
Note that the RHS is integrable and does not depend on $\mu.$

Now, consider any sequence $\{\mu_n\} \subset [0, \rhop]$ with  $\lim_{n \to \infty} \mu_n = \mu^*.$ As $f_{\vec{Z}_\mu}$ is continuous in $\mu$ on $[0, \rhop],$ we have $\lim_{n \to \infty} f_{\vec{Z}_{\mu_n}}(\vec{z}) = f_{\vec{Z}_{\mu^*}}(\vec{z}).$ Using this and the fact that the RHS of \eqref{eqn:denBd} is integrable, it follows from the dominated convergence theorem that $\lim_{n \to \infty} g(\mu_n) = g(\mu^*).$ This shows that $g(\mu)$ is continuous on $[0, \rhop],$ as desired.

\item Arguments for Statement~\ref{itm:DerBd}: For $\hat{h}_{ij}(\mu) = h_{ij}(\mu) / |W_{m, m}(\rho, \mu)|,$ where $\{h_{ij}(\mu)\}$ are some suitable polynomials in $\mu,$ it is easy to see that \eqref{eqn:denDf} can be rewritten as
\[
f_{\vec{Z}_\mu}(\vec{z}) =  \frac{1}{(2\pi)^m \sqrt{|W_{m, m}(\rho,\mu)|}} \exp\left[\frac{-\sum_{i,j=1}^{2m}z(i)\; z(j) \; \hat{h}_{ij}(\mu)}{2}\right].
\]
Hence, for $\mu \in (0, \rhop),$
\[
\frac{\partial}{\partial \mu} \left[f_{\vec{Z}_\mu}(\vec{z})\right] = -\frac{f_{\vec{Z}_\mu}(\vec{z})}{2}\left[\frac{1}{|W_{m,m}(\rho,\mu)|} \frac{\partial}{ \partial \mu}[|W_{m,m}(\rho,\mu)|] +\sum_{i,j}z(i) \; z(j)\frac{\partial}{\partial \mu} \hat{h}_{ij}(\mu)\right].
\]
Like $h_{ij}(\mu),$ note from \eqref{eqn:DetW} that $|W_{m, m}(\rho, \mu)|$ is also a polynomial in $\mu.$ Consequently, we have that $\frac{1}{|W_{m, m}(\rho, \mu)|}\frac{\partial}{ \partial \mu}[|W_{m,m}(\rho,\mu)|]$ and $\frac{\partial}{\partial \mu} \hat{h}_{ij}(\mu)$ are rational functions in $\mu,$ whose denominators are $|W_{m, m}(\rho, \mu)|$ and $|W_{m, m}(\rho, \mu)|^2,$ respectively. Because $[0, \rhop]$ is a compact set and since \eqref{eqn:DetWBd} holds, it then follows that
\[
\sup_{\mu \in (0, \rhop)} \left|\frac{1}{|W_{m, m}(\rho, \mu)|}\frac{\partial}{ \partial \mu}[|W_{m,m}(\rho,\mu)|]\right| = \sup_{\mu \in [0, \rhop]} \left| \frac{1}{|W_{m, m}(\rho, \mu)|}\frac{\partial}{ \partial \mu}[|W_{m,m}(\rho,\mu)|]\right| < \infty;
\]
similarly,
\[
\sup_{\mu \in (0, \rhop)} \left|\frac{\partial}{\partial \mu} \hat{h}_{ij}(\mu)\right| < \infty.
\]
Combining these observations with \eqref{eqn:denBd}, it then follows that there exists a constant $\Cr{c:IB}^{\prime} \geq 0$ depending on $m$ such that
\begin{equation}
\label{eqn:DerDenBd}
\left|\frac{\partial}{\partial \mu} \left[f_{\vec{Z}_\mu}(\vec{z})\right]\right| \leq
\Cr{c:IB}^{\prime} \left[1 + \vec{z} \vec{z}^\tr \right] \exp\left[\frac{-\vec{z} \vec{z}^\tr}{2(1 + (m - 1) \rho + m \rhop)}\right]
\end{equation}
for each $\mu \in (0, \rhop).$ As the RHS is independent of $\mu$ and integrable and since % the next condition is not required to apply DCT (see Miscellaneous section of Dropbox/drawer/papers)
\[
\Pr\{\vec{Z}_{\mu} \geq u \ones_{2m}\} = \int_{[u, \infty)^{2m}} f_{\vec{Z}_\mu}(\vec{z})\d{} \vec{z} = \int_{(u, \infty)^{2m}} f_{\vec{Z}_\mu}(\vec{z})\d{} \vec{z},
\]
the dominated convergence theorem shows that, for each $\mu \in (0, \rhop),$
\begin{equation}
\label{eqn:DiffISign}
\frac{\partial}{\partial \mu} \Pr\{\vec{Z}_{\mu} \geq u \ones_{2m}\} = \int_{(u, \infty)^{2m}} \frac{\partial}{ \partial \mu} \left[f_{\vec{Z}_\mu}(\vec{z})\right] \d{\vec{z}}.
\end{equation}
This shows that $g(\mu)$ is differentiable over $(0, \rhop),$ as desired.

It remains to prove \eqref{eqn:gDerBd}. Let $\psi: \Real \to \Real$ be a monotonically increasing $C^\infty-$function satisfying
\[
\psi(z) =
\begin{cases}
0 & \text{ if } z \leq 0,\\
1 & \text{ if } z \geq 1,\\
\end{cases}
\]
and $|\frac{d \psi(z)}{dz}| < \infty.$ Let $\Cr{c:IB}^{\prime \prime} := \sup_{z} |\frac{d \psi(z)}{dz}|.$ For $\epsilon \in (0, 1],$ let $\psi_{\epsilon}(z; u) := \psi\left(\frac{z - u}{\epsilon}\right).$ Clearly, if $\epsilon_1 < \epsilon_2,$ then $\psi_{\epsilon_1}(z; u) \geq \psi_{\epsilon_2}(z; u)$ for all $z.$ Also, $\lim_{\epsilon \to 0}\psi_{\epsilon}(z; u) = \indc[z > u].$ Thus, it follows that $\psi_\epsilon(z; u)$ monotonically increases to $\indc [z > u]$ as $\epsilon \to 0.$ For $\vec{z} \equiv (z(1), \ldots, z(2m)) \in \Real^{2m}$ and $\epsilon \in (0,1],$ let
\[
\Psi_{\epsilon}(\vec{z}; u) := \prod_{i = 1}^{2m} \psi_{\epsilon}(z(i); u).
\]
Then, from \eqref{eqn:DiffISign}, we have
\begin{eqnarray*}
\frac{\partial}{\partial \mu} \Pr\{\vec{Z}_{\mu} \geq u  \ones_{2m}\} &  = & \int_{\Real^{2m}} \left[\prod_{i = 1}^{2m} \indc[\vec{z}(i) > u] \right] \frac{\partial}{\partial \mu} \left[f_{\vec{Z}_\mu}(\vec{z})\right] \d{\vec{z}} \\
& = & \int_{\Real^{2m}} \left[\lim_{\epsilon \to 0} \Psi_{\epsilon}(\vec{z}; u)\right] \frac{\partial}{\partial \mu} \left[f_{\vec{Z}_\mu}(\vec{z})\right] \d{\vec{z}}.
\end{eqnarray*}
As $|\Psi_{\epsilon}(\vec{z}; u)| \leq 1,$ we have, for each $\epsilon > 0,$
\[
\left|\Psi_{\epsilon}(\vec{z}; u)  \frac{\partial}{\partial \mu}  \left[f_{\vec{Z}_\mu}(\vec{z})\right] \right| \leq \left|  \frac{\partial}{\partial \mu} \left[f_{\vec{Z}_\mu}(\vec{z})\right] \right|.
\]
The RHS is integrable for each $\mu \in (0, \rhop),$ as discussed above. Hence, by dominated convergence theorem, it follows that
\begin{equation}
\label{eqn:limitOutside}
\frac{\partial }{\partial \mu} \Pr\{\vec{Z}_{\mu} \geq u \ones_{2m}\} = \lim_{\epsilon \to 0} \int_{\Real^{2m}} \Psi_{\epsilon}(\vec{z}; u)\frac{\partial}{\partial \mu} \left[f_{\vec{Z}_\mu}(\vec{z})\right] \d{\vec{z}}.
\end{equation}

Let $\sigma_{ij}$ denote the $(i, j)-$th entry of $W_{m, m}(\rho, \mu).$ Since $\sigma_{ij} = \mu$ only if either $i \in \{1, \ldots, m\}$ and $j \in \{m + 1, \ldots, 2m\},$ or $i \in \{m + 1, \ldots, 2m\}$ and $j \in \{1, \ldots, m\},$ we have
\begin{equation}
\label{eqn:SumOfPD}
\frac{\partial}{\partial\mu}\left[f_{\vec{Z}_\mu}(\vec{z})\right] = \sum_{i = 1}^{m} \sum_{j = m + 1}^{2m}\frac{\partial}{\partial \sigma_{ij}} \left[f_{\vec{Z}_\mu}(\vec{z})\right] + \sum_{i = m + 1}^{2m} \sum_{j = 1}^{m}\frac{\partial}{\partial \sigma_{ij}} \left[f_{\vec{Z}_\mu}(\vec{z})\right].
\end{equation}
Pick $(i, j)$ where $\sigma_{ij} = \mu.$ Then, by some standard algebra, it is easy to see that
\[
\frac{\partial f_{\vec{Z}_\mu}(\vec{z})}{\partial \sigma_{ij}} = \frac{\partial^2 f_{\vec{Z}_\mu}(\vec{z})}{\partial z(i) \; \partial z(j)}.
\]
From this, using integration by parts twice, it follows that
\[
\int_{\Real^{2m}} \Psi_{\epsilon}(\vec{z}; u)\frac{\partial}{\partial \sigma_{ij}} \left[f_{\vec{Z}_\mu}(\vec{z})\right] \d{\vec{z}} = \int_{\Real^{2m}} \frac{\partial^2\left[\Psi_{\epsilon}(\vec{z}; u) \right]}{\partial z(i)\; \partial z(j)}  f_{\vec{Z}_\mu}(\vec{z}) \d{\vec{z}}.
\]
For some references on the above two steps, see the proof of \cite[Theorem~2.3]{adler1990introduction}.

Now, as $\left|\frac{ \partial \psi_\epsilon(z; u)}{\partial z}\right| \leq \frac{\Cr{c:IB}^{\prime\prime}}{\epsilon}$ for $z \in (u, u + \epsilon)$ and $0$ otherwise and, since, $\psi_\epsilon(z; u) \leq 1$ for $z \in (u, \infty)$ and $0$ otherwise, the above relation shows that
\begin{multline}
\label{eqn:TermijBd1}
\left|\int_{\Real^{2m}} \Psi_{\epsilon}(\vec{z}; u)\frac{\partial}{\partial \sigma_{ij}} \left[f_{\vec{Z}_\mu}(\vec{z})\right] \d{\vec{z}}\right| \\
\leq
[\Cr{c:IB}^{\prime\prime}]^2 \max_{\vec{z}_{ij} \in (u, u + \epsilon)^2} \left[f_{ij}(\vec{z}_{ij}) \; \Pr\{\vec{Z}_{\hat{i}\hat{j}} \geq u \ones_{2m - 2}| \vec{Z}_{ij} = \vec{z}_{ij}\} \right],
\end{multline}
where $\vec{Z}_{ij}$ is a $2-$dimensional vector made up of the $i-$th and $j-$th component of $\vec{Z}_\mu,$ $\vec{Z}_{\hat{i} \hat{j}}$ is $\vec{Z}_\mu$ with $i-$th and $j-$th components deleted, $f_{ij}$ is the density of $\vec{Z}_{ij},$ and $\vec{z}_{ij}$ is the $2-$dimensional vector made up of $z(i)$ and $z(j).$

Let $f_{\hat{i}\hat{j}}(\cdot|\vec{Z}_{ij} = \vec{z}_{ij})$ denote the conditional density of $\vec{Z}_{\hat{i}\hat{j}}.$ Clearly, $\vec{Z}_{\hat{i} \hat{j}} | \vec{Z}_{ij} = \vec{z}_{ij}$ is Gaussian with mean $\mathscr{M}(\vec{z}_{ij}) := \vec{z}_{ij}  \left[\var[\vec{Z}_{ij}]\right]^{-1} \cov[\vec{Z}_{ij}, \vec{Z}_{\hat{i}\hat{j}}] $ and covariance matrix
\begin{equation}
\label{eqn:condCov}
\var[\vec{Z}_{\hat{i}\hat{j}} | \vec{Z}_{ij}] =  \var[\vec{Z}_{\hat{i}\hat{j}}] - \cov[\vec{Z}_{\hat{i}\hat{j}}, \vec{Z}_{ij}] \left[\var[\vec{Z}_{ij}]\right]^{-1} \cov[\vec{Z}_{ij}, \vec{Z}_{\hat{i}\hat{j}}].
\end{equation}
Since $\var[\vec{Z}_{\hat{i}\hat{j}} | \vec{Z}_{ij}]$ is the Schur-complement of $\Var[\vec{Z}_{ij}],$ and since $\Var[\vec{Z}_\mu]$ is positive definite for each $\mu \in [0, \rhop],$ it follows that $\var[\vec{Z}_{\hat{i}\hat{j}} | \vec{Z}_{ij}]$ itself is positive definite for each $\mu \in [0, \rhop].$

From Lemma~\ref{lem:eValW} and, since $\rho, \mu > 0,$ note that
\begin{equation}
\label{eqn:EVRel}
\ones_{2m} \, [W_{m,m}(\rho, \mu)]^{-1}  = \frac{1}{1 + (m - 1) \rho + m\mu} \ones_{2m} >0.
\end{equation}
Hence, it follows that, for any permutation matrix $P,$
\begin{equation}
\label{eqn:PermuationSavage}
\ones_{2m} \, [P \, W_{m, m}(\rho, \mu) \, P^\tr]^{-1} = \frac{1}{1 + (m - 1) \rho + m\mu} \ones_{2m}  > 0.
\end{equation}
%
% Check this link to see why this matrix is invertible. https://math.stackexchange.com/questions/2155021/prove-that-if-m-is-positive-definite-the-schur-complement-is-invertible

Now, pick a permutation matrix $P$ so that
\[
P \, W_{m, m} (\rho, \mu) \, P^\tr =
\begin{bmatrix}
\var[\vec{Z}_{\hat{i}\hat{j}}] &  \cov[\vec{Z}_{\hat{i}\hat{j}}, \vec{Z}_{ij}] \\
\cov[\vec{Z}_{ij}, \vec{Z}_{\hat{i}\hat{j}}] & \var[\vec{Z}_{ij}]
\end{bmatrix}.
\]
From \eqref{eqn:condCov}, \eqref{eqn:PermuationSavage}, and the block matrix inversion formula, we then have
\[
\ones_{2m}
\begin{bmatrix}
[\var[\vec{Z}_{\hat{i}\hat{j}}| \vec{Z}_{ij}]]^{-1} \\[0.5ex] - \left[\var[\vec{Z}_{ij}]\right]^{-1} \cov[\vec{Z}_{ij}, \vec{Z}_{\hat{i}\hat{j}}]    [\var[\vec{Z}_{\hat{i}\hat{j}}| \vec{Z}_{ij}]]^{-1}
\end{bmatrix} = \frac{1}{1 + (m - 1) \rho + m\mu} \ones_{2m - 2}
> 0.
\]
Clearly, the above relation also holds if the vector $\ones_{2m}$ is replaced by $u \ones_{2m}.$ Hence,
\begin{equation}
\label{eqn:MeanAtu}
[u \ones_{2m - 2} - \mathscr{M}(u \ones_{2})] \, [\var[\vec{Z}_{\hat{i}\hat{j}}| \vec{Z}_{ij}]]^{-1} = \frac{1}{1 + (m - 1) \rho + m\mu} \ones_{2m - 2} > 0.
\end{equation}
Consequently, by continuity of affine functions, for sufficiently small $\epsilon > 0$ and each $\vec{z}_{ij} \in (u, u + \epsilon)^2,$ the Savage condition
\begin{equation}
\label{Defn:Deltazij}
\vec{\Delta}_{\vec{z}_{ij}} := [u \ones_{2m - 2} - \mathscr{M}(\vec{z}_{ij})] \, [\var[\vec{Z}_{\hat{i}\hat{j}}| \vec{Z}_{ij}]]^{-1} > 0
\end{equation}
holds. Therefore, from Lemma~\ref{lem:GaussianTailBounds}, we have
\begin{eqnarray*}
& & \Pr\{\vec{Z}_{\hat{i}\hat{j}} \geq u \ones_{2m -2} |\vec{Z}_{ij} = \vec{z}_{ij} \}\\
& \leq & \Pr\{\vec{Z}_{\hat{i}\hat{j}} - \mathscr{M}(\vec{z}_{ij}) \geq u \ones_{2m -2}  - \mathscr{M}(\vec{z}_{ij})|\vec{Z}_{ij} = \vec{z}_{ij}\}\\
& \leq & \frac{1}{\prod_{\ell = 1}^{2m - 2} \vec{\Delta}_{\vec{z}_{ij}}(\ell)} f_{\hat{i}\hat{j}}(u \ones_{2m - 2}|Z_{ij} = z_{ij}).
\end{eqnarray*}
Substituting this in \eqref{eqn:TermijBd1}, we get
\[
\left|\int_{\Real^{2m}} \Psi_{\epsilon}(\vec{z}; u)\frac{\partial}{\partial \sigma_{ij}} \left[f_{\vec{Z}_\mu}(\vec{z})\right] \d{\vec{z}}\right| \leq [\Cr{c:IB}^{\prime\prime}]^2 \max_{\vec{z}_{ij} \in (u, u + \epsilon)^2} \left[ \frac{f_{\vec{Z}_\mu}((\vec{z}_{ij}, u \ones_{2m - 2}) P)}{\prod_{\ell = 1}^{2m - 2} \vec{\Delta}_{\vec{z}_{ij}}(\ell)}  \right].
\]
Separately, from \eqref{eqn:MeanAtu} and the fact that $\mu \leq \rhop,$ observe that
\begin{equation}
\label{eqn:DeltaAtu}
\prod_{\ell = 1}^{2m - 2} \vec{\Delta}_{u \ones_{2m - 2}}(\ell) = \frac{u^{2m - 2}}{(1 + (m - 1) \rho + m \mu)^{2m - 2}} \geq \frac{u^{2m - 2}}{(1 + (m - 1) \rho + m \rhop)^{2m - 2}}.
\end{equation}
The above two relations and the fact that both $f_{\vec{Z}_\mu}$ and $\vec{\Delta}$ are continuous functions now show
\begin{multline}
\limsup_{\epsilon \to 0} \left|\int_{\Real^{2m}} \Psi_{\epsilon}(\vec{z}; u)\frac{\partial}{\partial \sigma_{ij}} \left[f_{\vec{Z}_\mu}(\vec{z})\right] \d{\vec{z}} \right| \\
\leq [\Cr{c:IB}^{\prime\prime}]^2 \left[\frac{(1 + (m - 1)\rho + m \rhop)^{2m - 2}}{u^{2m - 2}} f_{\vec{Z}_\mu}(u \ones_{2m}) \right].
\label{eq:boundSd}
\end{multline}
Therefore, by substituting \eqref{eqn:denDf} in \eqref{eq:boundSd} and then making use of the determinant bound given in \eqref{eqn:DetWBd} and the fact that
\begin{equation}
\label{eqn:expArgAtu}
u \ones_{2m} W^{-1}_{m, m}(\rho, \mu) u \ones^\tr_{2m} = 2u^2 m/(1+ (m - 1) \rho + m \mu),
\end{equation}
we have
\begin{eqnarray*}
& & \limsup_{\epsilon \to 0} \left|\int_{\Real^{2m}} \Psi_{\epsilon}(\vec{z}; u)\frac{\partial}{\partial \sigma_{ij}} \left[f_{\vec{Z}_\mu}(\vec{z})\right] \d{\vec{z}} \right|\\
& \leq & [\Cr{c:IB}^{\prime\prime}]^2 \frac{[1 + (m - 1)\rho + m \rhop]^{2m - 2}}{(2\pi)^m (1 - \rho)^{m - 1} \sqrt{[1 + (m - 1)\rho]^2 - [m \rhop]^2}} u^{-2(m - 1)} \exp\left[\frac{-mu^2}{1 + (m - 1) \rho + m \mu}\right].
\end{eqnarray*}
The desired result now follows from \eqref{eqn:limitOutside} and \eqref{eqn:SumOfPD}.
\end{itemize}
This completes the proof.
\end{proof}

\begin{lemma}
\label{lem:detQ}
Let $m \geq 2,$ $1 \leq m_1, m_2 < m,$ and $Q_{m, m_1, m_2}(\rho, \mup, \mu)$ be as in \eqref{Def:QMat}. Let $\rho, \mup \in [0,1)$ be such that one of the following holds:
\begin{enumerate}
\item $\rho > \mup,$ or

\item $m_1 = m_2 = 1$ and $1 + (m - 2) \rho - (m - 1) \mup > 0.$
\end{enumerate}
Then,
\begin{multline*}
|Q_{m, m_1, m_2}(\rho, \mup, \mu)| = |W_{m_1, m - m_1}(\rho, \mup)|  |W_{m_2, m - m_2}(\rho, \mup)| \Big[1 - \\
\mu^2 [\ones_m \, W^{-1}_{m_1, m - m_1}(\rho, \mup) \, \ones_m^\tr] \, [\ones_m \, W^{-1}_{m_2, m - m_2}(\rho, \mup) \, \ones_m^\tr]\Big].
\end{multline*}
\end{lemma}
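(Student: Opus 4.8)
The plan is to expand $|Q_{m,m_1,m_2}(\rho,\mup,\mu)|$ by the block-determinant (Schur-complement) identity, exactly as in the proof of Lemma~\ref{lem:eValW} (cf.\ \eqref{eqn:BlockDet}), and then to exploit the fact that the off-diagonal blocks $\mu\,\onesM_{m,m}$ have rank one, so that the resulting Schur complement is a rank-one perturbation handled by the matrix determinant lemma.

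First I would check that, under either hypothesis, the matrices $W_{m_1,m-m_1}(\rho,\mup)$ and $W_{m_2,m-m_2}(\rho,\mup)$ are invertible (in fact positive definite). Under condition~(1), since $\rho>\mup$ and $\rho<1$ one has $1+(j-1)\rho-j\mup=(1-\rho)+j(\rho-\mup)>0$ for every $j\in\{m_1,\,m-m_1,\,m_2,\,m-m_2\}$, so Lemma~\ref{lem:PDofW} applies to each of the two blocks. Under condition~(2) we have $m_1=m_2=1$, and then the positivity conditions required by Lemma~\ref{lem:PDofW} reduce to $1-\mup>0$, true because $\mup\in[0,1)$, and to the assumed $1+(m-2)\rho-(m-1)\mup>0$; hence Lemma~\ref{lem:PDofW} applies again. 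In either case $|W_{m_i,m-m_i}(\rho,\mup)|\neq 0$, which is what is needed below (and also guarantees the scalars $\ones_m W_{m_i,m-m_i}^{-1}(\rho,\mup)\ones_m^\tr$ are well defined).

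Next, the block-determinant identity gives
\[
|Q_{m,m_1,m_2}(\rho,\mup,\mu)| = |W_{m_1,m-m_1}(\rho,\mup)|\cdot\big|\,W_{m_2,m-m_2}(\rho,\mup) - \mu^2\,\onesM_{m,m}\,W^{-1}_{m_1,m-m_1}(\rho,\mup)\,\onesM_{m,m}\,\big|.
\]
Since $\ones_m$ is a row vector, $\onesM_{m,m}=\ones_m^\tr\ones_m$, so $\onesM_{m,m}\,W^{-1}_{m_1,m-m_1}(\rho,\mup)\,\onesM_{m,m} = c_1\,\onesM_{m,m}$ with $c_1:=\ones_m\,W^{-1}_{m_1,m-m_1}(\rho,\mup)\,\ones_m^\tr$ a scalar. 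Thus the Schur complement is the rank-one update $W_{m_2,m-m_2}(\rho,\mup) - \mu^2 c_1\,\ones_m^\tr\ones_m$, and the matrix determinant lemma yields $|W_{m_2,m-m_2}(\rho,\mup) - \mu^2 c_1\,\ones_m^\tr\ones_m| = |W_{m_2,m-m_2}(\rho,\mup)|\,(1-\mu^2 c_1 c_2)$, where $c_2:=\ones_m\,W^{-1}_{m_2,m-m_2}(\rho,\mup)\,\ones_m^\tr$. Substituting back gives precisely the asserted formula.

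I do not expect a genuine obstacle here; the content is bookkeeping. The one place deserving care is the invertibility step: the two hypotheses are tailored exactly so that the inequalities of Lemma~\ref{lem:PDofW} hold for all four block sizes that occur, and this must be verified case by case. A secondary caution is to keep the row/column conventions straight, so that $\onesM_{m,m}=\ones_m^\tr\ones_m$ and the scalars $c_i=\ones_m W^{-1}\ones_m^\tr$ are correctly positioned when combining the Schur-complement identity with the matrix determinant lemma.
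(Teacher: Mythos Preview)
Your proposal is correct and follows essentially the same approach as the paper: verify invertibility of the diagonal blocks via Lemma~\ref{lem:PDofW}, apply the Schur-complement block-determinant formula, rewrite $\onesM_{m,m}=\ones_m^\tr\ones_m$ to extract the scalar $c_1$, and then handle the resulting rank-one perturbation. The only cosmetic difference is that the paper factors out $W_{m_2,m-m_2}(\rho,\mup)$ explicitly and then invokes Sylvester's determinant identity on $\eye_m-\mu^2 c_1\,\ones_m^\tr\ones_m\,W^{-1}_{m_2,m-m_2}(\rho,\mup)$, whereas you apply the matrix determinant lemma directly; these are equivalent.
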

\begin{proof}
Due to the conditions on $\rho, \mup,$ it follows from Lemma~\ref{lem:PDofW} that both the matrices $W_{m_1, m - m_1}(\rho, \mup)$ and $W_{m_2, m - m_2}(\rho, \mup)$ are positive definite and, hence, invertible.

Now, observe that
\begin{eqnarray*}
& & |Q_{m, m_1, m_2}(\rho, \mup, \mu)|\\
& = & |W_{m_1, m - m_1}(\rho, \mup)| \, |W_{m_2, m - m_2}(\rho, \mup) - \mu^2 \onesM_{m, m} W^{-1}_{m_1, m - m_1}(\rho, \mup) \onesM_{m, m}|\\
& = & |W_{m_1, m - m_1}(\rho, \mup)| \, |W_{m_2, m - m_2}(\rho, \mup) - \mu^2 \ones_m^\tr \ones_m W^{-1}_{m_1, m - m_1}(\rho, \mup) \ones_m^\tr \ones_m|\\
& = & |W_{m_1, m - m_1}(\rho, \mup)| \, |W_{m_2, m - m_2}(\rho, \mup) - \mu^2 [\ones_m W^{-1}_{m_1, m - m_1}(\rho, \mup) \ones_m^\tr] \, \ones_m^\tr \, \ones_m|\\
& = & |W_{m_1, m - m_1}(\rho, \mup)| \, |W_{m_2, m - m_2}(\rho, \mup)| \, \Big|\eye_{m} - \\
& & \mu^2 [\ones_m W^{-1}_{m_1, m - m_1}(\rho, \mup) \ones_m^\tr] \, \ones_m^\tr \, \ones_m W^{-1}_{m_2, m - m_2}(\rho, \mup)\Big|\\
& = & |W_{m_1, m - m_1}(\rho, \mup)|  |W_{m_2, m - m_2}(\rho, \mup)| \Big[1 - \\
& & \mu^2 [\ones_m \, W^{-1}_{m_1, m - m_1}(\rho, \mup) \, \ones_m^\tr] \, [\ones_m \, W^{-1}_{m_2, m - m_2}(\rho, \mup) \, \ones_m^\tr]\Big],
\end{eqnarray*}
where the first relation follows from the determinant formula for block matrices, the second relation holds since $\onesM_{m, m} = \ones_{m}^\tr \ones_{m},$ the fourth relation follows by taking $W_{m_2, m - m_2}(\rho, \mup)$ common and then applying the product rule for determinants (here, $\eye_{m} \in \Real^{m \times m}$ is the identity matrix), while the final relation follows from Sylvester's determinant identity. This gives the desired result.
\end{proof}

For $m \geq 1,$ let $\hat{Q}_m(\mu)$ be the $2m \times 2m-$dimensional matrix given by
\begin{equation}
\label{Defn:hatQ}
\hat{Q}_m(\mu) =
\begin{bmatrix}
0 & \mu \onesM_{m, m} \\
\mu \onesM_{m, m} & 0
\end{bmatrix}.
\end{equation}

\begin{lemma}
\label{lem:eValhatQ}
The eigenvalues of $\hat{Q}_m(\mu)$ are $m \mu,$ $-m \mu,$ and $0$ repeated $2(m - 1)$ times. The corresponding eigenvectors are $\vec{v}_1 = \ones_{2m},$ $\vec{v}_2 = (\ones_m, -\ones_m),$ and $w^\prime_1, \ldots, w^\prime_{m - 1}, w^{\prime \prime}_1, \ldots, w^{\prime \prime}_{m - 1},$ with the latter defined as in Lemma~\ref{lem:eValW} (one needs to set $m_1 = m_2 = m$ there).
\end{lemma}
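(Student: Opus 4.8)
The plan is to reduce the statement to Lemma~\ref{lem:eValW}, whose spectral data for $W_{m_1,m_2}(\rho,\mu)$ is exactly what is needed. From \eqref{Def:WMat} and the definition of $\covM_m(\rho)$ given above it, note that $\covM_m(0)=\eye_m$, so taking $m_1=m_2=m$ and $\rho=0$ yields
\[
W_{m,m}(0,\mu)=\eye_{2m}+\hat{Q}_m(\mu),
\]
with $\hat{Q}_m(\mu)$ as in \eqref{Defn:hatQ}. Consequently $\hat{Q}_m(\mu)$ and $W_{m,m}(0,\mu)$ share the same eigenvectors, and each eigenvalue of $\hat{Q}_m(\mu)$ equals the corresponding eigenvalue of $W_{m,m}(0,\mu)$ minus $1$.

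Next I would instantiate Lemma~\ref{lem:eValW} at $m_1=m_2=m$ and $\rho=0$: it gives $2m-2$ eigenvalues equal to $1-\rho=1$, with linearly independent eigenvectors $\vec{w}_1^\prime,\dots,\vec{w}_{m-1}^\prime,\vec{w}_1^{\prime\prime},\dots,\vec{w}_{m-1}^{\prime\prime}$, and, by the $m_1=m_2$ clause, the two remaining eigenvalues $1+(m-1)\cdot 0\pm m\mu=1\pm m\mu$ with eigenvectors $\ones_{2m}$ and $(\ones_m,-\ones_m)$. Subtracting $1$ from each eigenvalue gives $0$ with multiplicity $2(m-1)$, together with $m\mu$ and $-m\mu$, attached to precisely the eigenvectors named in the statement. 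Since these $2m$ vectors are linearly independent (this is already part of Lemma~\ref{lem:eValW}) and $\hat{Q}_m(\mu)\in\Real^{2m\times 2m}$, this is a complete list of eigenvalues with multiplicities, which is the claim.

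For completeness I would also record the one-line direct verification, since it is no longer than the reduction: using $\onesM_{m,m}\ones_m^\tr=m\,\ones_m^\tr$ one gets $\hat{Q}_m(\mu)\,\ones_{2m}^\tr=m\mu\,\ones_{2m}^\tr$ and $\hat{Q}_m(\mu)\,(\ones_m,-\ones_m)^\tr=-m\mu\,(\ones_m,-\ones_m)^\tr$; and each $\vec{w}_j^\prime$ (resp.\ $\vec{w}_j^{\prime\prime}$) is supported in the first (resp.\ last) $m$ coordinates with coordinate sum $0$, so $\onesM_{m,m}$ kills its nonzero block and hence $\hat{Q}_m(\mu)$ kills the vector. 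The only thing deserving a remark is linear independence of the full system, which follows because the $\vec{w}_j^\prime$ span the sum-zero subspace of the first block while $\ones_{2m}+(\ones_m,-\ones_m)=(2\ones_m,\vec 0)$ supplies the missing direction in that block, and symmetrically for the second block, so the $2m$ vectors span $\Real^{2m}$. I do not expect any real obstacle here: the argument is elementary linear algebra, and the only care needed is the bookkeeping of multiplicities, which the reduction to Lemma~\ref{lem:eValW} handles automatically.
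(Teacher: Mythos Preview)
Your proposal is correct. The paper's own proof is simply ``This is straightforward to see,'' so your direct verification is exactly the kind of elementary check the paper has in mind, and your reduction via $W_{m,m}(0,\mu)=\eye_{2m}+\hat{Q}_m(\mu)$ and Lemma~\ref{lem:eValW} is a clean alternative that yields the same conclusion with the multiplicities and linear independence handled automatically.
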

\begin{proof}
This is straightforward to see.
\end{proof}

\begin{lemma}
\label{lem:PDofQ}
Let $m \geq 2$ and $\rho, \mup \in [0, 1)$ be such that  $\rho > \mup.$ For $1 \leq j < m,$ let $Q_{m, j, j}(\rho, \mup, \mu)$ be defined as in \eqref{Def:QMat}. Then, there exists $\delta \equiv \delta(m, \rho, \mup)$ and $\kappa \equiv \kappa(m, \rho, \mup)$ such that, for any $\mu \in [0, \delta]$ and $1 \leq j < m,$ the matrix $Q_{m, j, j}(\rho, \mup, \mu)$ is positive definite and its maximum eigenvalue is bounded from above by $\kappa.$
\end{lemma}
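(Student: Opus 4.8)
The plan is to treat $Q_{m,j,j}(\rho,\mup,\mu)$ as a small perturbation, in the parameter $\mu$, of its value at $\mu=0$, and to exploit that $j$ runs over the \emph{finite} set $\{1,\dots,m-1\}$ so that all constants can be taken uniform in $j$. First I would set $\mu=0$ in \eqref{Def:QMat}: this makes $Q_{m,j,j}(\rho,\mup,0)$ block diagonal with both diagonal blocks equal to $W_{j,m-j}(\rho,\mup)$. To see that Lemma~\ref{lem:PDofW} applies to $W_{j,m-j}(\rho,\mup)$ (with the roles of $m_1,m_2$ played by $j$ and $m-j$, and of ``$\rho,\mu$'' by $\rho,\mup$), note that since $\mup<1$ and $\rho\ge\mup$ we have $1+(j-1)\rho-j\mup=(1-\mup)+(j-1)(\rho-\mup)>0$, and likewise $1+(m-j-1)\rho-(m-j)\mup>0$; hence each diagonal block, and therefore $Q_{m,j,j}(\rho,\mup,0)$ itself, is symmetric positive definite.

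Next, because $\{1,\dots,m-1\}$ is finite, I would put $2c:=\min_{1\le j<m}\lambda_{\min}\!\big(Q_{m,j,j}(\rho,\mup,0)\big)>0$ and $\kappa_0:=\max_{1\le j<m}\lambda_{\max}\!\big(Q_{m,j,j}(\rho,\mup,0)\big)<\infty$, both depending only on $m,\rho,\mup$. The key structural observation is that, comparing \eqref{Def:QMat} with \eqref{Defn:hatQ},
\[
Q_{m,j,j}(\rho,\mup,\mu)=Q_{m,j,j}(\rho,\mup,0)+\hat{Q}_m(\mu),
\]
and Lemma~\ref{lem:eValhatQ} tells us that $\hat{Q}_m(\mu)$ has eigenvalues $\pm m\mu$ and $0$, so its operator norm is exactly $m\mu$ for $\mu\ge0$. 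A one-line Rayleigh-quotient bound (equivalently, Weyl's inequality) then gives, for every $j$ and every $\mu\ge0$, $\lambda_{\min}\!\big(Q_{m,j,j}(\rho,\mup,\mu)\big)\ge 2c-m\mu$ and $\lambda_{\max}\!\big(Q_{m,j,j}(\rho,\mup,\mu)\big)\le\kappa_0+m\mu$. Taking $\delta:=c/m$ and $\kappa:=\kappa_0+c$, which are functions of $(m,\rho,\mup)$ only, I conclude that for all $\mu\in[0,\delta]$ and all $1\le j<m$ one has $\lambda_{\min}\ge c>0$, so $Q_{m,j,j}(\rho,\mup,\mu)$ is positive definite, and $\lambda_{\max}\le\kappa$, which is exactly the claimed statement.

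There is no serious conceptual obstacle here: the only point requiring care is that $\delta$ and $\kappa$ must be independent of $j$, and this is guaranteed precisely by the finiteness of the range of $j$ together with the explicit perturbation norm $m\mu$ coming from Lemma~\ref{lem:eValhatQ}. One could alternatively invoke continuity of eigenvalues on a compact parameter region to get the existence of such $\delta,\kappa$, but the explicit perturbation estimate above is cleaner and, as a bonus, produces usable constants for the later applications of this lemma (e.g.\ in the proof of Lemma~\ref{lem:CovBdsNL}).
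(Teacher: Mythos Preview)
Your proof is correct and essentially identical to the paper's: both use the decomposition $Q_{m,j,j}(\rho,\mup,\mu)=Q_{m,j,j}(\rho,\mup,0)+\hat{Q}_m(\mu)$, invoke Lemma~\ref{lem:PDofW} for the block-diagonal piece and Lemma~\ref{lem:eValhatQ} for the perturbation, and then apply Weyl's inequality over the finite range of $j$. Your constants $2c,\kappa_0,\delta=c/m,\kappa=\kappa_0+c$ are exactly the paper's $\kmin,\kmax,\delta=\kmin/(2m),\kappa=\kmax+\kmin/2$.
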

\begin{proof}
On account of the conditions on $\rho, \mup,$ it follows from Lemma~\ref{lem:PDofW} that $W_{j, m - j}(\rho, \mup)$ is positive definite for each $1 \leq j < m.$ Let $\kmin_j \equiv \kmin_j(m, j, \rho, \mup)$ be the smallest eigenvalue of $W_{j, m - j}(\rho, \mup)$ and let
\[
\kmin \equiv \kmin(m, \rho, \mup) = \min_{1\leq j < m} \kmin_j.
\]
Similarly, define $\kmax$ by considering the largest eigenvalue of $W_{j, m - j}(\rho, \mup)$ for each $j.$ From positive definiteness and since $m$ is finite, we have $0 < \kmin \leq \kmax.$

Clearly, for each $1 \leq j < m,$ the eigenvalues of $Q_{m, j, j}(\rho, \mup, 0)$ lie between $\kmin$ and $\kmax.$ Also, as $\mu \geq 0,$ Lemma~\ref{lem:eValhatQ} shows that the eigenvalues of $\hat{Q}_m(\mu)$ lie between $-m \mu$ and $m\mu.$ Finally, observe that $Q_{m, j, j}(\rho, \mup, \mu) = Q_{m, j, j}(\rho, \mup, 0) + \hat{Q}_m(\mu)$ and that all the three matrices in this relation are symmetric. Therefore, by Weyl's inequality, it follows that all eigenvalues of $Q_{m, j, j}(\rho, \mu, \mup)$ lie between $\kmin - m \mu$ and $\kmax + m \mu$ for all $j.$

Now, set $\delta = \kmin/(2m)$ and $\kappa = \kmax + \kmin/2.$ The desired result is then easy to see.
\end{proof}

\begin{lemma}
\label{lem:PDofQSp}
Let $m \geq 2$ and $\rho, \mup \in [0, 1)$ be such that  $1 + (m - 2)\rho - (m - 1)\mup > 0.$ Then, there exists $\delta \equiv \delta(m, \rho, \mup) > 0$ and $\kappa \equiv \kappa(m, \rho, \mup) > 0$ such that, for any $\mu \in [0, \delta],$ the matrix $Q_{m, 1, 1}(\rho, \mup, \mu),$ defined as in \eqref{Def:QMat}, is positive definite and its maximum eigenvalue is bounded from above by $\kappa.$
\end{lemma}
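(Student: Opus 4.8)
The plan is to mimic the proof of Lemma~\ref{lem:PDofQ} almost verbatim, the only genuine difference being how we establish positive definiteness of the diagonal block. Observe from \eqref{Def:QMat} that $Q_{m, 1, 1}(\rho, \mup, \mu)$ has both its diagonal blocks equal to $W_{1, m - 1}(\rho, \mup)$. I would first apply Lemma~\ref{lem:PDofW} with $m_1 = 1$ and $m_2 = m - 1$: the quantity $1 + (m_1 - 1)\rho - m_1 \mup$ then equals $1 - \mup$, which is positive since $\mup \in [0, 1)$, and $1 + (m_2 - 1)\rho - m_2 \mup$ equals $1 + (m - 2)\rho - (m - 1)\mup$, which is positive by hypothesis. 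Hence $W_{1, m - 1}(\rho, \mup)$ is positive definite; by definition it is also symmetric. Note that this is exactly why the weaker hypothesis $1 + (m - 2)\rho - (m - 1)\mup > 0$ suffices here in place of $\rho > \mup$: when $j = 1$ the first of the two conditions in Lemma~\ref{lem:PDofW} is automatic.

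Next, let $\kmin \equiv \kmin(m, \rho, \mup) > 0$ be the smallest eigenvalue of $W_{1, m - 1}(\rho, \mup)$ and $\kmax \equiv \kmax(m, \rho, \mup)$ its largest; both are finite and $0 < \kmin \leq \kmax$ by positive definiteness. Since $Q_{m, 1, 1}(\rho, \mup, 0)$ is block diagonal with both blocks $W_{1, m - 1}(\rho, \mup)$, all its eigenvalues lie in $[\kmin, \kmax]$. Write $Q_{m, 1, 1}(\rho, \mup, \mu) = Q_{m, 1, 1}(\rho, \mup, 0) + \hat{Q}_m(\mu)$ with $\hat{Q}_m(\mu)$ as in \eqref{Defn:hatQ}; all three matrices are symmetric, and by Lemma~\ref{lem:eValhatQ} the eigenvalues of $\hat{Q}_m(\mu)$ lie in $[-m\mu, m\mu]$ (using $\mu \geq 0$). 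Weyl's inequality then forces every eigenvalue of $Q_{m, 1, 1}(\rho, \mup, \mu)$ to lie in $[\kmin - m\mu, \kmax + m\mu]$.

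Finally I would set $\delta := \kmin/(2m)$ and $\kappa := \kmax + \kmin/2$. For $\mu \in [0, \delta]$ every eigenvalue of $Q_{m, 1, 1}(\rho, \mup, \mu)$ is at least $\kmin - m\delta = \kmin/2 > 0$, so the matrix is positive definite, and at most $\kmax + m\delta = \kappa$, giving the claimed bound on the maximum eigenvalue. There is no real obstacle in this argument; the only thing to be careful about is the bookkeeping in checking that the hypothesis translates correctly into the two positivity conditions required by Lemma~\ref{lem:PDofW}, which as noted above is immediate.
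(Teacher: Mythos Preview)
Your proof is correct and essentially identical to the paper's own argument: both invoke Lemma~\ref{lem:PDofW} with $m_1 = 1$, $m_2 = m-1$ to get positive definiteness of the diagonal block, then apply Weyl's inequality to the decomposition $Q_{m,1,1}(\rho,\mup,\mu) = Q_{m,1,1}(\rho,\mup,0) + \hat{Q}_m(\mu)$ and set $\delta = \kmin/(2m)$, $\kappa = \kmax + \kmin/2$. If anything, your write-up is slightly more explicit than the paper's in verifying the two positivity conditions needed for Lemma~\ref{lem:PDofW}.
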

\begin{proof}
Let $m_2 = m - 1$ and $W_{1, m_2}(\rho, \mup)$ be defined as in \eqref{Def:WMat}. Let $\kmin \equiv \kmin(m, \rho, \mup)$ and $\kmax \equiv \kmax(m, \rho, \mup)$ be the smallest and largest eigenvalues of $W_{1, m_2}(\rho, \mup).$ From the given conditions on $\rho, \mup,$ it is easy to see from Lemma~\ref{lem:PDofW} that the matrix $W_{1, m_2}(\rho, \mup)$ is positive definite; hence, $0 < \kmin \leq \kmax.$

Set $\delta = \kmin/(2m)$ and $\kappa = \kmax + \kmin/2.$ By an application of Weyl's inequality  as in the proof of Lemma~\ref{lem:PDofQ}, the desired result is easy to see.
\end{proof}

\begin{lemma}
\label{lem:GenContDiff}
Fix $m \geq 2$ and $u \in \pReal.$ Suppose one of the following conditions is true:
\begin{enumerate}
\item $\rho, \mup \in [0, 1)$ are such that $\rho > \mup;$ further, $\delta, \kappa > 0$ are such that the implications of Lemma~\ref{lem:PDofQ} hold.

\item $\rho, \mup \in [0, 1)$ are such that $1 + (m - 2)\rho - (m - 1)\mup > 0;$ further, $\delta, \kappa > 0$ are such that the implications of Lemma~\ref{lem:PDofQSp} hold.
\end{enumerate}
For $\mu \in [0, \delta],$ let $\vec{Z}_\mu \sim \Gau(\vec{0}, Q_{m, 1, 1}(\rho, \mup, \mu)),$ where $Q_{m, 1, 1}(\rho, \mup, \mu)$ is as defined in \eqref{Def:QMat}. Let $f_{\vec{Z}_\mu}$ be the density of $\vec{Z}_\mu$ and $g(\mu) := \Pr\{\vec{Z}_\mu \geq u \ones_{2m}\}.$ Then, the following statements are true.
\begin{enumerate}
\item \label{itm:ContG} $g(\mu)$ is continuous in $[0, \delta].$

\item \label{itm:DerBdG} $g(\mu)$ is differentiable on $(0, \delta).$ Further, for any $\mu \in (0, \delta),$
\[
\left|\frac{\partial}{\partial \mu} [g(\mu)]\right|  \leq  \Cl[Const]{c:GenB} u^{-2(m - 1)} \exp\left[\frac{- 2\phi_{m - 1}(\rho, \mup) u^2}{1 + 2\phi_{m - 1}(\rho, \mup) \mu}\right],
\]
where $\Cr{c:GenB} \geq 0$ is some constant which depends on $m$ and $\phi_{m - 1}(\rho, \mup)$ is defined as in \eqref{Defn:phiCons}.
\end{enumerate}
\end{lemma}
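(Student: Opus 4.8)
The plan is to run the proof of Lemma~\ref{lem:ZmuTailBeh} essentially verbatim, replacing $W_{m,m}(\rho,\mu)$ there by $Q_{m,1,1}(\rho,\mup,\mu)$ here; the only genuinely new ingredient is handling the richer block structure, which now carries the fixed coupling $\mup$ inside the two diagonal blocks $W_{1,m-1}(\rho,\mup)$ and the varying coupling $\mu$ in the off-diagonal blocks. Under hypothesis~(1) or~(2), Lemma~\ref{lem:PDofQ} (resp.\ Lemma~\ref{lem:PDofQSp}) supplies $\delta,\kappa>0$ so that, for every $\mu\in[0,\delta]$, the matrix $Q_{m,1,1}(\rho,\mup,\mu)$ is symmetric positive definite with largest eigenvalue at most $\kappa$; hence $\vec{Z}_\mu$ is well defined. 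By Lemma~\ref{lem:detQ}, $|Q_{m,1,1}(\rho,\mup,\mu)|$ is a polynomial in $\mu$ that stays strictly positive on the compact set $[0,\delta]$, so it is bounded below there by a positive constant. Combining these two facts exactly as in the derivation of \eqref{eqn:denBd} yields an integrable, $\mu$-independent dominating function for $f_{\vec{Z}_\mu}$; Statement~\ref{itm:ContG} then follows from the dominated convergence theorem, just as before.

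For Statement~\ref{itm:DerBdG}, first note that $\mu$ appears in $Q_{m,1,1}(\rho,\mup,\mu)$ only in the entries coupling the first $m$ coordinates to the last $m$, so the Gaussian-density identity of \eqref{eqn:SumOfPD} still applies and $\partial_\mu f_{\vec{Z}_\mu}(\vec{z})=\sum_{(i,j):\sigma_{ij}=\mu}\partial^2 f_{\vec{Z}_\mu}(\vec{z})/\partial z(i)\,\partial z(j)$. Since $|Q_{m,1,1}(\rho,\mup,\mu)|$ and the cofactors of its inverse are polynomials in $\mu$, one gets on $(0,\delta)$ a $\mu$-independent integrable bound of the form $c\,[1+\vec{z}\vec{z}^\tr]\exp[-\vec{z}\vec{z}^\tr/(2\kappa)]$ for $\partial_\mu f_{\vec{Z}_\mu}$, which gives differentiability of $g$. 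To bound $|\partial_\mu g|$ I would reuse the smooth cut-offs $\psi_\epsilon$, integrate by parts twice in each term $\int \Psi_\epsilon\,\partial_{\sigma_{ij}}f_{\vec{Z}_\mu}$, and reduce, exactly as in \eqref{eqn:TermijBd1} and \eqref{eqn:condCov}, to estimating $f_{ij}(\vec{z}_{ij})\,\Pr\{\vec{Z}_{\hat{i}\hat{j}}\ge u\ones_{2m-2}\mid \vec{Z}_{ij}=\vec{z}_{ij}\}$ for $\vec{z}_{ij}$ close to $u\ones_2$.

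The step I expect to be the main obstacle is verifying the Savage condition for $u\ones_{2m}$ — and for the coordinate-permuted conditional versions that arise after the integration by parts — with respect to $Q_{m,1,1}(\rho,\mup,\mu)$, i.e.\ that $\ones_{2m}\,Q_{m,1,1}^{-1}(\rho,\mup,\mu)>0$. Unlike in Lemma~\ref{lem:ZmuTailBeh}, $\ones_{2m}$ is no longer an eigenvector, but writing $Q_{m,1,1}^{-1}$ in block form and applying the Sherman--Morrison formula one computes
\[
\ones_{2m}\,Q_{m,1,1}^{-1}(\rho,\mup,\mu)=\frac{1}{1+\mu s}\bigl(\ones_m W_{1,m-1}^{-1}(\rho,\mup),\;\ones_m W_{1,m-1}^{-1}(\rho,\mup)\bigr),\qquad s:=\ones_m W_{1,m-1}^{-1}(\rho,\mup)\ones_m^\tr ,
\]
so positivity reduces to $\ones_m W_{1,m-1}^{-1}(\rho,\mup)>0$, which holds in each case by a brute-force computation of the entries of this vector, exactly as in the proofs of Lemma~\ref{lem:ProbEst}.\ref{itm:NnkInd} and Lemma~\ref{lem:ProbEst}.\ref{itm:SnkIndWC}; the resulting vector is positive but, unlike before, not constant, which is harmless. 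The same algebra gives $s=2\phi_{m-1}(\rho,\mup)$ — this is the identity already appearing there through $\varrho_k$ and $\varphi_k$ — and hence $u\ones_{2m}Q_{m,1,1}^{-1}(\rho,\mup,\mu)u\ones_{2m}^\tr=4\phi_{m-1}(\rho,\mup)u^2/(1+2\mu\phi_{m-1}(\rho,\mup))$, which is precisely the exponent in the claimed bound, and it shows $\prod_\ell\vec{\Delta}_{u\ones_{2m-2}}(\ell)$ equals $u^{2m-2}$ times a quantity bounded below by a positive constant on $[0,\delta]$. Feeding all this into Lemma~\ref{lem:GaussianTailBounds}, using $\mu\le\delta$ to absorb the remaining $\mu$-dependent prefactors into one $m$-dependent constant $\Cr{c:GenB}$, and then sending $\epsilon\to0$ and summing over the finitely many pairs $(i,j)$ with $\sigma_{ij}=\mu$ as in \eqref{eqn:SumOfPD}, establishes Statement~\ref{itm:DerBdG}.
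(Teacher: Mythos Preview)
Your proposal is correct and follows essentially the same route as the paper: mimic the proof of Lemma~\ref{lem:ZmuTailBeh}, replacing $W_{m,m}(\rho,\mu)$ by $Q_{m,1,1}(\rho,\mup,\mu)$, and supply the analogues of the determinant lower bound, the eigenvalue upper bound, the Savage condition, and the quadratic form $u\ones_{2m}Q^{-1}u\ones_{2m}^\tr$. The only cosmetic difference is that the paper computes $\ones_{2m}Q_{m,1,1}^{-1}(\rho,\mup,\mu)$ by direct brute force, obtaining explicit entries $r_1,r_{m-1}$, whereas you exploit the block symmetry to reduce to $\ones_m W_{1,m-1}^{-1}(\rho,\mup)$ and the scalar $s=\ones_m W_{1,m-1}^{-1}\ones_m^\tr$; both arrive at the same positivity statement and the same identification $u\ones_{2m}Q^{-1}u\ones_{2m}^\tr = 4\phi_{m-1}(\rho,\mup)u^2/(1+2\mu\phi_{m-1}(\rho,\mup))$. (A minor naming quibble: the perturbation $\hat{Q}_m(\mu)$ is rank two, so ``Sherman--Morrison'' is not quite the right label; what you are really using is the symmetry $\vec{v}=(\vec{w},\vec{w})$ in solving $\vec{v}Q=\ones_{2m}$, together with the rank-one structure of $\onesM_{m,m}=\ones_m^\tr\ones_m$ --- but the resulting formula is correct.)
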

\begin{remark}
\label{rem:WellDefZ}
Since $\delta > 0$ is such that the implications of Lemmas~\ref{lem:PDofQ} or \ref{lem:PDofQSp} hold, the matrix $Q_{m, 1, 1}(\rho, \mup, \mu)$ is symmetric and positive definite and, hence, $\vec{Z}_\mu$ is well defined for all $\mu \in [0, \delta].$
\end{remark}

\begin{proof}[Proof of Lemma~\ref{lem:GenContDiff}]
As the line of reasoning is similar to that in the proof of Lemma~\ref{lem:ZmuTailBeh}, our arguments here are brief. We first recall the key intermediate steps from the proof of Lemma~\ref{lem:ZmuTailBeh} and then provide their corresponding variants here.
\begin{itemize}[leftmargin=*]
\item There, the continuity and differentiability of $g(\mu)$ were a simple consequence of the dominated convergence theorem once we obtained functions that dominated $|f_{\vec{Z}_{\mu}}|$ and $\left|\frac{\partial}{\partial \mu} \left[f_{\vec{Z}_\mu}(\vec{z})\right]\right|$ (see \eqref{eqn:denBd} and \eqref{eqn:DerDenBd}, respectively). These dominating functions were derived there by making use of a lower bound on the determinant and an upper bound on the maximum eigenvalue of $\Var[\vec{Z}_\mu]$ (see~\eqref{eqn:DetWBd} and the discussion below it).

\item Separately, after some algebra, the bound on $\left|\frac{\partial}{\partial \mu} [g(\mu)]\right|$ there followed by using the formula for $u \ones_{2m } W^{-1}_{m, m}(\rho, \mu) u \ones_{2m}$ given in \eqref{eqn:expArgAtu}, the fact that \eqref{eqn:EVRel} holds, and finally bounds on $\vec{\Delta}_{u \ones_{2m - 2}}$  and the determinant of $\Var[\vec{Z}_\mu]$ given in \eqref{eqn:DeltaAtu} and \eqref{eqn:DetWBd}, respectively.

\end{itemize}

We now give the equivalent statements here. For brevity, let $Q_m(\rho, \mup, \mu) \equiv Q_{m, 1, 1}(\rho, \mup, \mu).$ First, for each $\mu \in [0, \delta],$ we have
\begin{eqnarray*}
& & |Q_{m}(\rho, \mup, \mu)|\\
& = & |W_{1, m - 1}(\rho, \mup)|^2  \Big[1 - \mu^2 [\ones_m \, W^{-1}_{1, m - 1}(\rho, \mup) \, \ones_m^\tr]^2\Big] \\
& \geq & |W_{1, m - 1}(\rho, \mup)|^2  \Big[1 - \delta^2 [\ones_m \, W^{-1}_{1, m - 1}(\rho, \mup) \, \ones_m^\tr]^2\Big]\\
& = & |Q_m(\rho, \mup, \delta)| \\
& > & 0,
\end{eqnarray*}
where the first relation follows from Lemma~\ref{lem:detQ}, the second one holds due to the fact that $\mu \leq \delta,$ the third one holds due to Lemma~\ref{lem:detQ} again, while the last one follows since $Q_m(\rho, \mup, \delta)$ is positive definite, which itself holds due to Lemma~\ref{lem:PDofQ} or~\ref{lem:PDofQSp}.

Separately, for any $\mu \in [0, \delta],$ note that the largest eigenvalue of $Q_m(\rho, \mup, \mu)$ is bounded from above by some $\kappa > 0,$ independent of $\mu.$

Lastly, by brute force,
\[
(r_1, r_{m - 1} \ones_{m - 1},  r_1 , r_{m - 1} \ones_{m - 1}) \,  Q_m(\rho, \mup, \mu) = \ones_{2m},
\]
where
\[
r_j = \frac{1 + (m - 1 - j) \rho - (m - j)\mup}{1 +  (m - 2) \rho - (m - 1) [\mup]^2 + \mu [m + (m - 2)\rho - 2(m - 1)\mup]},
\]
for $j = 1, m - 1$ and $(r_1, r_{m - 1} \ones_{m - 1}, r_1, r_{m - 1} \ones_{m - 1})$ is the vector whose first entry is $r_1,$ the next $m - 1$ entries are $r_{m - 1}$ and so on. Note that, since $m + (m - 2)\rho - 2(m - 1)\mup = (m - 1)(1 - \mup) + [1 + (m - 2)\rho - (m - 1)\mup],$ the given conditions on $\rho, \mup$ ensure that both the numerators and denominators of $r_1$ and $r_{m - 1}$ are positive. Therefore,
\begin{eqnarray*}
\ones_{2m}  Q^{-1}_m(\rho, \mup, \mu) & \geq &  \ones_{2m} \frac{\min\{1 - \mup, 1 + (m - 2) \rho - (m - 1) \mup\}}{1 +  (m - 2) \rho - (m - 1) [\mup]^2 + \mu [m + (m - 2)\rho - 2(m - 1)\mup]} \\
& \geq & \ones_{2m} \frac{\min\{1 - \mup, 1 + (m - 2) \rho - (m - 1) \mup\}}{1 +  (m - 2) \rho - (m - 1) [\mup]^2 + \delta [m + (m - 2)\rho - 2(m - 1)\mup]}\\
& > & 0,
\end{eqnarray*}
where the second relation holds because $\mu \in [0, \delta],$ while the last one follows from the given conditions on $\rho$ and $\mup$ and since $\delta > 0.$ This shows that a condition equivalent to \eqref{eqn:EVRel} and, consequently, to \eqref{eqn:MeanAtu} holds here. Moving on, the above bound also shows that
\[
\prod_{\ell = 1}^{2m - 2} \vec{\Delta}_{u \ones_{2m - 2}}(\ell) \geq u^{2m - 2} \left[\frac{\min\{1 - \mup, 1 + (m - 2) \rho - (m - 1) \mup\}}{1 +  (m - 2) \rho - (m - 1) [\mup]^2 + \delta [m + (m - 2)\rho - 2(m - 1)\mup]}\right]^{2m - 2},
\]
where $\vec{\Delta}_{u \ones_{2m - 2}}$ is the term equivalent to the one defined in \eqref{Defn:Deltazij}. From this bound, it follows that a condition equivalent to \eqref{eqn:DeltaAtu} is true here. From the values of $r_1$ and $r_{m - 1}$ given above, one can also see that
\begin{eqnarray*}
u \ones_{2m} \, Q_{m}^{-1}(\rho, \mup, \mu) \, u \ones_{2m}^\tr  & = & 2u^2[r_1 +  (m - 1)r_{m - 1}] \\
& = & \frac{2u^2[m + (m - 2)\rho - 2(m - 1)\mup]}{1 +  (m - 2) \rho - (m - 1) [\mup]^2 + \mu [m + (m - 2)\rho - 2(m - 1)\mup]},
\end{eqnarray*}
which gives a formula similar to \eqref{eqn:expArgAtu}; in fact, they match when $\rho = \mup.$

From these relations, the desired bound is now easy to see. This completes the proof.
\end{proof}

\section*{Acknowledgements}
This problem was suggested to us by Robert Adler. He also gave us several key insights during the course of this work that benefitted us tremendously; we thank him for the same. We also sincerely thank D. Yogeshwaran and Primoz Skraba for several useful comments and suggestions.

\bibliographystyle{imsart-nameyear}
\bibliography{GaussianExcursion}

\begin{thebibliography}{43}
% BibTex style file: imsart-nameyear.bst, 2013-01-28
% Default style options (sort=1,type=nameyear).
% Used options (sort=1,type=nameyear).

\bibitem[\protect\citeauthoryear{Adell et~al.}{2005}]{adell2005sharp}
\begin{barticle}[author]
\bauthor{\bsnm{Adell},~\bfnm{Jos{\'e}~Antonio}\binits{J.~A.}},
  \bauthor{\bsnm{Lekuona},~\bfnm{Alberto}\binits{A.}} \betal{et~al.}
(\byear{2005}).
\btitle{Sharp estimates in signed Poisson approximation of Poisson mixtures}.
\bjournal{Bernoulli}
\bvolume{11}
\bpages{47--65}.
\end{barticle}
\endbibitem

\bibitem[\protect\citeauthoryear{Adler}{1990}]{adler1990introduction}
\begin{barticle}[author]
\bauthor{\bsnm{Adler},~\bfnm{Robert~J}\binits{R.~J.}}
(\byear{1990}).
\btitle{An introduction to continuity, extrema, and related topics for general
  Gaussian processes}.
\bjournal{Lecture Notes-Monograph Series}
\bvolume{12}
\bpages{i--155}.
\end{barticle}
\endbibitem

\bibitem[\protect\citeauthoryear{Adler, Moldavskaya and
  Samorodnitsky}{2014}]{adler2014existence}
\begin{barticle}[author]
\bauthor{\bsnm{Adler},~\bfnm{Robert~J}\binits{R.~J.}},
  \bauthor{\bsnm{Moldavskaya},~\bfnm{Elina}\binits{E.}} \AND
  \bauthor{\bsnm{Samorodnitsky},~\bfnm{Gennady}\binits{G.}}
(\byear{2014}).
\btitle{On the existence of paths between points in high level excursion sets
  of Gaussian random fields}.
\bjournal{The Annals of Probability}
\bvolume{42}
\bpages{1020--1053}.
\end{barticle}
\endbibitem

\bibitem[\protect\citeauthoryear{Adler and
  Samorodnitsky}{2017}]{adler2017climbing}
\begin{barticle}[author]
\bauthor{\bsnm{Adler},~\bfnm{Robert~J}\binits{R.~J.}} \AND
  \bauthor{\bsnm{Samorodnitsky},~\bfnm{Gennady}\binits{G.}}
(\byear{2017}).
\btitle{Climbing down Gaussian peaks}.
\bjournal{The Annals of Probability}
\bvolume{45}
\bpages{1160--1189}.
\end{barticle}
\endbibitem

\bibitem[\protect\citeauthoryear{Adler and Taylor}{2009}]{adler2009random}
\begin{bbook}[author]
\bauthor{\bsnm{Adler},~\bfnm{Robert~J}\binits{R.~J.}} \AND
  \bauthor{\bsnm{Taylor},~\bfnm{Jonathan~E}\binits{J.~E.}}
(\byear{2009}).
\btitle{Random fields and geometry}.
\bpublisher{Springer Science \& Business Media}.
\end{bbook}
\endbibitem

\bibitem[\protect\citeauthoryear{Adler, Taylor and
  Worsley}{2017}]{adler2017applications}
\begin{bbook}[author]
\bauthor{\bsnm{Adler},~\bfnm{Robert~J}\binits{R.~J.}},
  \bauthor{\bsnm{Taylor},~\bfnm{Jonathan~E}\binits{J.~E.}} \AND
  \bauthor{\bsnm{Worsley},~\bfnm{Keith}\binits{K.}}
(\byear{2017}).
\btitle{Applications of random fields and geometry: Foundations and case
  studies}.
\end{bbook}
\endbibitem

\bibitem[\protect\citeauthoryear{Arcones}{1994}]{arcones1994limit}
\begin{barticle}[author]
\bauthor{\bsnm{Arcones},~\bfnm{Miguel~A}\binits{M.~A.}}
(\byear{1994}).
\btitle{Limit theorems for nonlinear functionals of a stationary Gaussian
  sequence of vectors}.
\bjournal{The Annals of Probability}
\bpages{2242--2274}.
\end{barticle}
\endbibitem

\bibitem[\protect\citeauthoryear{Aza{\"\i}s and
  Wschebor}{2009}]{azais2009level}
\begin{bbook}[author]
\bauthor{\bsnm{Aza{\"\i}s},~\bfnm{Jean-Marc}\binits{J.-M.}} \AND
  \bauthor{\bsnm{Wschebor},~\bfnm{Mario}\binits{M.}}
(\byear{2009}).
\btitle{Level sets and extrema of random processes and fields}.
\bpublisher{John Wiley \& Sons}.
\end{bbook}
\endbibitem

\bibitem[\protect\citeauthoryear{Berman}{1989}]{berman1989central}
\begin{bincollection}[author]
\bauthor{\bsnm{Berman},~\bfnm{Simeon~M}\binits{S.~M.}}
(\byear{1989}).
\btitle{A central limit theorem for extreme sojourn times of stationary
  Gaussian processes}.
In \bbooktitle{Extreme Value Theory}
\bpages{81--99}.
\bpublisher{Springer}.
\end{bincollection}
\endbibitem

\bibitem[\protect\citeauthoryear{Berman}{1992}]{Berman}
\begin{bbook}[author]
\bauthor{\bsnm{Berman},~\bfnm{Simeon~M.}\binits{S.~M.}}
(\byear{1992}).
\btitle{Sojourns and extremes of stochastic processes}.
\bseries{The Wadsworth \& Brooks/Cole Statistics/Probability Series}.
\bpublisher{Wadsworth \& Brooks/Cole Advanced Books \& Software, Pacific Grove,
  CA}.
\bmrnumber{1126464}
\end{bbook}
\endbibitem

\bibitem[\protect\citeauthoryear{Bobrowski and
  Kahle}{2014}]{bobrowski2014topology}
\begin{barticle}[author]
\bauthor{\bsnm{Bobrowski},~\bfnm{Omer}\binits{O.}} \AND
  \bauthor{\bsnm{Kahle},~\bfnm{Matthew}\binits{M.}}
(\byear{2014}).
\btitle{Topology of random geometric complexes: a survey}.
\bjournal{Journal of Applied and Computational Topology}
\bpages{1--34}.
\end{barticle}
\endbibitem

\bibitem[\protect\citeauthoryear{Chiarini, Cipriani and
  Hazra}{2015}]{chiarini2015extremes}
\begin{barticle}[author]
\bauthor{\bsnm{Chiarini},~\bfnm{Alberto}\binits{A.}},
  \bauthor{\bsnm{Cipriani},~\bfnm{Alessandra}\binits{A.}} \AND
  \bauthor{\bsnm{Hazra},~\bfnm{Rajat~Subhra}\binits{R.~S.}}
(\byear{2015}).
\btitle{Extremes of the supercritical gaussian free field}.
\bjournal{arXiv preprint arXiv:1504.07819}.
\end{barticle}
\endbibitem

\bibitem[\protect\citeauthoryear{Dai and
  Mukherjea}{2001}]{dai2001identification}
\begin{barticle}[author]
\bauthor{\bsnm{Dai},~\bfnm{Ming}\binits{M.}} \AND
  \bauthor{\bsnm{Mukherjea},~\bfnm{Arunava}\binits{A.}}
(\byear{2001}).
\btitle{Identification of the parameters of a multivariate normal vector by the
  distribution of the maximum}.
\bjournal{Journal of Theoretical Probability}
\bvolume{14}
\bpages{267--298}.
\end{barticle}
\endbibitem

\bibitem[\protect\citeauthoryear{Estrade and Le\'on}{2016}]{Estrade}
\begin{barticle}[author]
\bauthor{\bsnm{Estrade},~\bfnm{Anne}\binits{A.}} \AND
  \bauthor{\bsnm{Le\'on},~\bfnm{Jos\'e~R.}\binits{J.~R.}}
(\byear{2016}).
\btitle{A central limit theorem for the {E}uler characteristic of a {G}aussian
  excursion set}.
\bjournal{Ann. Probab.}
\bvolume{44}
\bpages{3849--3878}.
\bdoi{10.1214/15-AOP1062}
\bmrnumber{3572325}
\end{barticle}
\endbibitem

\bibitem[\protect\citeauthoryear{Grimmett and
  Stirzaker}{2001}]{grimmett2001probability}
\begin{bbook}[author]
\bauthor{\bsnm{Grimmett},~\bfnm{Geoffrey}\binits{G.}} \AND
  \bauthor{\bsnm{Stirzaker},~\bfnm{David}\binits{D.}}
(\byear{2001}).
\btitle{Probability and Random Processes}.
\bpublisher{Oxford University Press}.
\end{bbook}
\endbibitem

\bibitem[\protect\citeauthoryear{Hashorva and
  H{\"u}sler}{2002}]{hashorva2002remarks}
\begin{barticle}[author]
\bauthor{\bsnm{Hashorva},~\bfnm{Enkelejd}\binits{E.}} \AND
  \bauthor{\bsnm{H{\"u}sler},~\bfnm{J{\"u}rg}\binits{J.}}
(\byear{2002}).
\btitle{Remarks on compound Poisson approximation of Gaussian random
  sequences}.
\bjournal{Statistics \& probability letters}
\bvolume{57}
\bpages{1--8}.
\end{barticle}
\endbibitem

\bibitem[\protect\citeauthoryear{Hiraoka and Tsunoda}{2016}]{hiraoka2016limit}
\begin{barticle}[author]
\bauthor{\bsnm{Hiraoka},~\bfnm{Yasuaki}\binits{Y.}} \AND
  \bauthor{\bsnm{Tsunoda},~\bfnm{Kenkichi}\binits{K.}}
(\byear{2016}).
\btitle{Limit theorems for random cubical homology}.
\bjournal{Discrete \& Computational Geometry}
\bpages{1--23}.
\end{barticle}
\endbibitem

\bibitem[\protect\citeauthoryear{Holst and Janson}{1990}]{holst1990poisson}
\begin{barticle}[author]
\bauthor{\bsnm{Holst},~\bfnm{Lars}\binits{L.}} \AND
  \bauthor{\bsnm{Janson},~\bfnm{Svante}\binits{S.}}
(\byear{1990}).
\btitle{Poisson approximation using the Stein-Chen method and coupling: number
  of exceedances of Gaussian random variables}.
\bjournal{The Annals of Probability}
\bvolume{18}
\bpages{713--723}.
\end{barticle}
\endbibitem

\bibitem[\protect\citeauthoryear{H{\"u}sler and Kratz}{1995}]{husler1995rate}
\begin{barticle}[author]
\bauthor{\bsnm{H{\"u}sler},~\bfnm{J}\binits{J.}} \AND
  \bauthor{\bsnm{Kratz},~\bfnm{M}\binits{M.}}
(\byear{1995}).
\btitle{Rate of Poisson approximation of the number of exceedances of
  nonstationary normal sequences}.
\bjournal{Stochastic processes and their applications}
\bvolume{55}
\bpages{301--313}.
\end{barticle}
\endbibitem

\bibitem[\protect\citeauthoryear{Joag-Dev, Perlman and
  Pitt}{1983}]{joag1983association}
\begin{barticle}[author]
\bauthor{\bsnm{Joag-Dev},~\bfnm{Kumar}\binits{K.}},
  \bauthor{\bsnm{Perlman},~\bfnm{Michael~D}\binits{M.~D.}} \AND
  \bauthor{\bsnm{Pitt},~\bfnm{Loren~D}\binits{L.~D.}}
(\byear{1983}).
\btitle{Association of normal random variables and Slepian's inequality}.
\bjournal{The Annals of Probability}
\bpages{451--455}.
\end{barticle}
\endbibitem

\bibitem[\protect\citeauthoryear{Kahle}{2009}]{Kahle2009}
\begin{barticle}[author]
\bauthor{\bsnm{Kahle},~\bfnm{Matthew}\binits{M.}}
(\byear{2009}).
\btitle{Topology of random clique complexes}.
\bjournal{Discrete Mathematics}
\bvolume{309}
\bpages{1658 - 1671}.
\end{barticle}
\endbibitem

\bibitem[\protect\citeauthoryear{Kahle}{2011}]{kahle2011random}
\begin{barticle}[author]
\bauthor{\bsnm{Kahle},~\bfnm{Matthew}\binits{M.}}
(\byear{2011}).
\btitle{Random geometric complexes}.
\bjournal{Discrete \& Computational Geometry}
\bvolume{45}
\bpages{553--573}.
\end{barticle}
\endbibitem

\bibitem[\protect\citeauthoryear{Kahle}{2014}]{kahle2014sharp}
\begin{barticle}[author]
\bauthor{\bsnm{Kahle},~\bfnm{Matthew}\binits{M.}}
(\byear{2014}).
\btitle{Sharp vanishing thresholds for cohomology of random flag complexes}.
\bjournal{Ann. of Math. (2)}
\bvolume{179}
\bpages{1085--1107}.
\bdoi{10.4007/annals.2014.179.3.5}
\bmrnumber{3171759}
\end{barticle}
\endbibitem

\bibitem[\protect\citeauthoryear{Kahle and Meckes}{2013}]{kahle2013}
\begin{barticle}[author]
\bauthor{\bsnm{Kahle},~\bfnm{Matthew}\binits{M.}} \AND
  \bauthor{\bsnm{Meckes},~\bfnm{Elizabeth}\binits{E.}}
(\byear{2013}).
\btitle{Limit the theorems for Betti numbers of random simplicial complexes}.
\bjournal{Homology Homotopy Appl.}
\bvolume{15}
\bpages{343--374}.
\bnote{(Erratum-ibid (2016), 129--142)}.
\end{barticle}
\endbibitem

\bibitem[\protect\citeauthoryear{Kamae, Krengel and
  O'Brien}{1977}]{kamae1977stochastic}
\begin{barticle}[author]
\bauthor{\bsnm{Kamae},~\bfnm{Teturo}\binits{T.}},
  \bauthor{\bsnm{Krengel},~\bfnm{Ulrich}\binits{U.}} \AND
  \bauthor{\bsnm{O'Brien},~\bfnm{George~L}\binits{G.~L.}}
(\byear{1977}).
\btitle{Stochastic inequalities on partially ordered spaces}.
\bjournal{The Annals of Probability}
\bpages{899--912}.
\end{barticle}
\endbibitem

\bibitem[\protect\citeauthoryear{Kratz and Le{\'o}n}{2001}]{kratz2001central}
\begin{barticle}[author]
\bauthor{\bsnm{Kratz},~\bfnm{Marie~F}\binits{M.~F.}} \AND
  \bauthor{\bsnm{Le{\'o}n},~\bfnm{Jos{\'e}~R}\binits{J.~R.}}
(\byear{2001}).
\btitle{Central limit theorems for level functionals of stationary Gaussian
  processes and fields}.
\bjournal{Journal of Theoretical Probability}
\bvolume{14}
\bpages{639--672}.
\end{barticle}
\endbibitem

\bibitem[\protect\citeauthoryear{Kratz and Vadlamani}{2016}]{kratz2016central}
\begin{barticle}[author]
\bauthor{\bsnm{Kratz},~\bfnm{Marie}\binits{M.}} \AND
  \bauthor{\bsnm{Vadlamani},~\bfnm{Sreekar}\binits{S.}}
(\byear{2016}).
\btitle{Central Limit Theorem for Lipschitz--Killing Curvatures of Excursion
  Sets of Gaussian Random Fields}.
\bjournal{Journal of Theoretical Probability}
\bpages{1--30}.
\end{barticle}
\endbibitem

\bibitem[\protect\citeauthoryear{Linial and
  Meshulam}{2006}]{linial2006homological}
\begin{barticle}[author]
\bauthor{\bsnm{Linial},~\bfnm{Nathan}\binits{N.}} \AND
  \bauthor{\bsnm{Meshulam},~\bfnm{Roy}\binits{R.}}
(\byear{2006}).
\btitle{Homological connectivity of random 2-complexes}.
\bjournal{Combinatorica}
\bvolume{26}
\bpages{475--487}.
\end{barticle}
\endbibitem

\bibitem[\protect\citeauthoryear{Marinucci and
  Peccati}{2011}]{marinucci2011random}
\begin{bbook}[author]
\bauthor{\bsnm{Marinucci},~\bfnm{Domenico}\binits{D.}} \AND
  \bauthor{\bsnm{Peccati},~\bfnm{Giovanni}\binits{G.}}
(\byear{2011}).
\btitle{Random fields on the sphere: representation, limit theorems and
  cosmological applications}
\bvolume{389}.
\bpublisher{Cambridge University Press}.
\end{bbook}
\endbibitem

\bibitem[\protect\citeauthoryear{Meshulam and
  Wallach}{2009}]{meshulam2009homological}
\begin{barticle}[author]
\bauthor{\bsnm{Meshulam},~\bfnm{Roy}\binits{R.}} \AND
  \bauthor{\bsnm{Wallach},~\bfnm{Nathan}\binits{N.}}
(\byear{2009}).
\btitle{Homological connectivity of random k-dimensional complexes}.
\bjournal{Random Structures \& Algorithms}
\bvolume{34}
\bpages{408--417}.
\end{barticle}
\endbibitem

\bibitem[\protect\citeauthoryear{M{\"u}ller}{2017}]{muller2017central}
\begin{barticle}[author]
\bauthor{\bsnm{M{\"u}ller},~\bfnm{Dennis}\binits{D.}}
(\byear{2017}).
\btitle{A central limit theorem for Lipschitz--Killing curvatures of Gaussian
  excursions}.
\bjournal{Journal of Mathematical Analysis and Applications}
\bvolume{452}
\bpages{1040--1081}.
\end{barticle}
\endbibitem

\bibitem[\protect\citeauthoryear{Nazarov and
  Sodin}{2016}]{nazarov2016asymptotic}
\begin{barticle}[author]
\bauthor{\bsnm{Nazarov},~\bfnm{F.}\binits{F.}} \AND
  \bauthor{\bsnm{Sodin},~\bfnm{M.}\binits{M.}}
(\byear{2016}).
\btitle{Asymptotic laws for the spatial distribution and the number of
  connected components of zero sets of {G}aussian random functions}.
\bjournal{Zh. Mat. Fiz. Anal. Geom.}
\bvolume{12}
\bpages{205--278}.
\bdoi{10.15407/mag12.03.205}
\bmrnumber{3522141}
\end{barticle}
\endbibitem

\bibitem[\protect\citeauthoryear{Raab}{1997}]{raab1997poisson}
\begin{barticle}[author]
\bauthor{\bsnm{Raab},~\bfnm{Mikael}\binits{M.}}
(\byear{1997}).
\btitle{Poisson approximation of the number of exceedances of a discrete-time
  x2-process}.
\bjournal{Stochastic processes and their applications}
\bvolume{66}
\bpages{41--54}.
\end{barticle}
\endbibitem

\bibitem[\protect\citeauthoryear{Raab}{1999}]{raab1999compound}
\begin{barticle}[author]
\bauthor{\bsnm{Raab},~\bfnm{Mikael}\binits{M.}}
(\byear{1999}).
\btitle{Compound Poisson approximation of the number of exceedances in Gaussian
  sequences}.
\bjournal{Extremes}
\bvolume{1}
\bpages{295--321}.
\end{barticle}
\endbibitem

\bibitem[\protect\citeauthoryear{Reddy, Vadlamani and
  Yogeshwaran}{2018}]{Reddy2018}
\begin{barticle}[author]
\bauthor{\bsnm{Reddy},~\bfnm{Tulasi~Ram}\binits{T.~R.}},
  \bauthor{\bsnm{Vadlamani},~\bfnm{Sreekar}\binits{S.}} \AND
  \bauthor{\bsnm{Yogeshwaran},~\bfnm{D.}\binits{D.}}
(\byear{2018}).
\btitle{Central Limit Theorem for Exponentially Quasi-local Statistics of Spin
  Models on Cayley Graphs}.
\bjournal{Journal of Statistical Physics}.
\bdoi{10.1007/s10955-018-2026-9}
\end{barticle}
\endbibitem

\bibitem[\protect\citeauthoryear{Sarnak and
  Wigman}{2016}]{sarnak2016topologies}
\begin{bincollection}[author]
\bauthor{\bsnm{Sarnak},~\bfnm{Peter}\binits{P.}} \AND
  \bauthor{\bsnm{Wigman},~\bfnm{Igor}\binits{I.}}
(\byear{2016}).
\btitle{Topologies of nodal sets of random band limited functions}.
In \bbooktitle{Advances in the theory of automorphic forms and their
  {$L$}-functions}.
\bseries{Contemp. Math.}
\bvolume{664}
\bpages{351--365}.
\bpublisher{Amer. Math. Soc., Providence, RI}.
\bdoi{10.1090/conm/664/13040}
\bmrnumber{3502990}
\end{bincollection}
\endbibitem

\bibitem[\protect\citeauthoryear{Savage}{1962}]{savage1962mills}
\begin{barticle}[author]
\bauthor{\bsnm{Savage},~\bfnm{I~Richard}\binits{I.~R.}}
(\byear{1962}).
\btitle{Mills’ ratio for multivariate normal distributions}.
\bjournal{J. Res. Nat. Bur. Standards Sect. B}
\bvolume{66}
\bpages{93--96}.
\end{barticle}
\endbibitem

\bibitem[\protect\citeauthoryear{Skraba, Thoppe and
  Yogeshwaran}{2017}]{skraba2017randomly}
\begin{barticle}[author]
\bauthor{\bsnm{Skraba},~\bfnm{Primoz}\binits{P.}},
  \bauthor{\bsnm{Thoppe},~\bfnm{Gugan}\binits{G.}} \AND
  \bauthor{\bsnm{Yogeshwaran},~\bfnm{D}\binits{D.}}
(\byear{2017}).
\btitle{Randomly Weighted $ d-$ complexes: Minimal Spanning Acycles and
  Persistence Diagrams}.
\bjournal{arXiv preprint arXiv:1701.00239}.
\end{barticle}
\endbibitem

\bibitem[\protect\citeauthoryear{Thoppe, Yogeshwaran and
  Adler}{2016}]{thoppe2016evolution}
\begin{barticle}[author]
\bauthor{\bsnm{Thoppe},~\bfnm{Gugan}\binits{G.}},
  \bauthor{\bsnm{Yogeshwaran},~\bfnm{D}\binits{D.}} \AND
  \bauthor{\bsnm{Adler},~\bfnm{Robert~J}\binits{R.~J.}}
(\byear{2016}).
\btitle{On the evolution of topology in dynamic clique complexes}.
\bjournal{Advances in Applied Probability}
\bvolume{48}
\bpages{989--1014}.
\end{barticle}
\endbibitem

\bibitem[\protect\citeauthoryear{Torquato}{2013}]{torquato2013random}
\begin{bbook}[author]
\bauthor{\bsnm{Torquato},~\bfnm{Salvatore}\binits{S.}}
(\byear{2013}).
\btitle{Random heterogeneous materials: microstructure and macroscopic
  properties}
\bvolume{16}.
\bpublisher{Springer Science \& Business Media}.
\end{bbook}
\endbibitem

\bibitem[\protect\citeauthoryear{Werman and Wright}{2016}]{Werman2016}
\begin{barticle}[author]
\bauthor{\bsnm{Werman},~\bfnm{Michael}\binits{M.}} \AND
  \bauthor{\bsnm{Wright},~\bfnm{Matthew~L.}\binits{M.~L.}}
(\byear{2016}).
\btitle{Intrinsic Volumes of Random Cubical Complexes}.
\bjournal{Discrete {\&} Computational Geometry}
\bvolume{56}
\bpages{93--113}.
\bdoi{10.1007/s00454-016-9789-z}
\end{barticle}
\endbibitem

\bibitem[\protect\citeauthoryear{Yogeshwaran and
  Adler}{2015}]{yogeshwaran2015topology}
\begin{barticle}[author]
\bauthor{\bsnm{Yogeshwaran},~\bfnm{D}\binits{D.}} \AND
  \bauthor{\bsnm{Adler},~\bfnm{Robert~J}\binits{R.~J.}}
(\byear{2015}).
\btitle{On the topology of random complexes built over stationary point
  processes}.
\bjournal{The Annals of Applied Probability}
\bvolume{25}
\bpages{3338--3380}.
\end{barticle}
\endbibitem

\bibitem[\protect\citeauthoryear{Yogeshwaran, Subag and
  Adler}{2017}]{yogeshwaran2017random}
\begin{barticle}[author]
\bauthor{\bsnm{Yogeshwaran},~\bfnm{D}\binits{D.}},
  \bauthor{\bsnm{Subag},~\bfnm{Eliran}\binits{E.}} \AND
  \bauthor{\bsnm{Adler},~\bfnm{Robert~J}\binits{R.~J.}}
(\byear{2017}).
\btitle{Random geometric complexes in the thermodynamic regime}.
\bjournal{Probability Theory and Related Fields}
\bvolume{167}
\bpages{107--142}.
\end{barticle}
\endbibitem

\end{thebibliography}

\end{document}